\newcolumntype{L}{>{\centering\arraybackslash}m{\dimexpr.15\linewidth-2\tabcolsep}}
\newcolumntype{C}{>{\centering\arraybackslash}m{\dimexpr.42\linewidth-2\tabcolsep}}
\definecolor{LightGreen}{HTML}{D1FFD1}
\definecolor{LightOrange}{HTML}{FFEBCC}
\definecolor{LightLightBlue}{HTML}{E0F5FF}
\definecolor{LightBlue}{HTML}{A3E0FF}
\definecolor{tan}{HTML}{FFFFCC}
\definecolor{DarkBlue}{HTML}{000099}
\definecolor{DarkGreen}{HTML}{009900}
\definecolor{DarkRed}{HTML}{990000}
\definecolor{BrightOrange}{HTML}{FF9900}
\definecolor{PurpleIsh}{HTML}{FF59FF}
\def\barint_#1{\mathchoice
{\mathop{\vrule width 6pt height 3 pt depth -2.5pt
\kern -8.8pt \intop}\nolimits_{#1}}%
{\mathop{\vrule width 5pt height 3 pt depth -2.6pt
\kern -6.5pt \intop}\nolimits_{#1}}%
{\mathop{\vrule width 5pt height 3 pt depth -2.6pt
\kern -6pt \intop}\nolimits_{#1}}%
{\mathop{\vrule width 5pt height 3 pt depth -2.6pt
\kern -6pt \intop}\nolimits_{#1}}}
\numberwithin{equation}{section}
\newcommand*{\CopyCounter}[2]{
  \expandafter\def\csname c@#2\endcsname{\csname c@#1\endcsname}
  \expandafter\def\csname p@#2\endcsname{\csname p@#1\endcsname}
  \expandafter\def\csname the#2\endcsname{\csname the#1\endcsname}}
\newcounter{Theorem}
\numberwithin{Theorem}{section}
\theoremstyle{plain}
\newtheorem{thm}[Theorem]{Theorem}
\newtheorem{lemma}[Lemma]{Lemma}
\theoremstyle{definition}
\newtheorem{remark}[Remark]{Remark}
\newtheorem{conjecture}[Conjecture]{Conjecture}
\newtheorem{observ}[Observation]{Observation}
\newif\ifnever\neverfalse
\newcommand{\bm}[1]{\mbox{\boldmath${#1}$}}
\newcommand{\N}{\mathbb{N}}
\newcommand{\R}{\mathbb{R}}
\newcommand{\bs}{\text{$\bm{s}$}}
\newcommand{\bt}{\text{$\bm{t}$}}
\newcommand{\bx}{\text{$\bm{x}$}}
\newcommand{\ba}{\mathbf{a}}
\newcommand{\by}{\text{$\bm{y}$}}
\newcommand{\abs}[1]{\left|{#1}\right|}
\newcommand{\braces}[1]{\left[{#1}\right]}
\newcommand{\cbraces}[1]{\left\{{#1}\right\}}
\newcommand{\norm}[1]{\left\|{#1}\right\|}
\newcommand{\parens}[1]{\left({#1}\right)}
\newcommand{\fillmeup}{\vspace{.1 in} \\%
}
\newcommand{\Forall}{\hspace{.03in} \forall \hspace{.03in}}
\newcommand{\FAR}{$\mbox{\textsc{\color{DarkRed}far}}$}
\newcommand{\CON}{\textsc{\color{DarkRed}considered}}
\newcommand{\ACC}{\textsc{\color{DarkRed}accepted}}
\newcommand{\ACCing}{\textsc{\color{DarkRed}accepting}}
\newcommand{\AC}{\textsc{\color{DarkRed}accept}}
\newcommand{\ACs}{\textsc{\color{DarkRed}accepts}}
\newcommand{\UplusVFigScale}{0.475}
\newcommand{\sinPicScale}{0.235}
\newcommand{\statsScale}{0.42}
\newcommand{\discErr}{\mathcal{E}^d}
\newcommand{\astarErr}{\mathcal{E}^*}
\newcommand{\astarErrOnly}{\mathcal{E}^*_N}
\newcommand{\Upper}{\Psi}
\newcommand{\exitset}{\mathcal{Q}}
\newcommand{\figstart}{\begin{figure}[H]}
\newcommand{\zachEdit}[1]{{#1}}
\newcommand{\alexEdit}[1]{{#1}}
\newcommand{\marginfix}{
\setlength{\parskip}{0.01cm}
\setlength{\textwidth}{6.0in}
\setlength{\oddsidemargin}{-0.0 in}
\setlength{\evensidemargin}{0.0 in}
\setlength{\topmargin}{-0.5in}
\setlength{\textheight}{9.0 in}
}
  \renewenvironment{thebibliography}[1]{%
    \begin{oldthebibliography}{#1}%
      \setlength{\parskip}{.3ex}%
      \setlength{\itemsep}{.3ex}%
  }%
  {%
    \end{oldthebibliography}%
  }
\begin{document}


\centerline{\Large\textbf{Causal Domain Restriction for Eikonal Equations}}

\vspace*{.1in}

\renewcommand*{\thefootnote}{\fnsymbol{footnote}}

{\Large
\centerline{Z. Clawson$\footnotemark[2]{}^{,}\footnotemark[8]$, A. Chacon$\footnotemark[3]{}^{,}\footnotemark[8]$, A. Vladimirsky\footnotemark[8]
\footnotetext[2]{\sc Supported in part by the NSF through Graduate Research Fellowship and the 2010 REU at Cornell.}
\footnotetext[3]{\sc Supported in part by Alfred P. Sloan Foundation Graduate Fellowship.}
\footnotetext[8]{\sc Supported in part by the National Science Foundation grant DMS-1016150.}
}}

\renewcommand*{\thefootnote}{\arabic{footnote}}

{\large
\vspace*{.1in}
\centerline{Center for Applied Mathematics and Department of Mathematics}
\vspace{-.05cm}
\centerline{Cornell University, Ithaca, NY 14853}
}

\vspace*{.1in}
\begin{abstract}
\noindent
Many applications require efficient methods for solving continuous shortest path problems. Such paths can be viewed as characteristics of static Hamilton-Jacobi equations. Several fast numerical algorithms have been developed to solve such equations on the whole domain. In this paper we consider a somewhat different problem, where the solution is needed at one specific point, so we restrict the computations to a neighborhood of the characteristic. We explain how heuristic under/over-estimate functions can be used to obtain a {\em causal} domain restriction, significantly decreasing the computational work without sacrificing convergence under mesh refinement. The discussed techniques are inspired by an alternative version of the classical A* algorithm on graphs. We illustrate the advantages of our approach on continuous isotropic examples in 2D and 3D. We compare its efficiency and accuracy to previous domain restriction techniques. We also analyze the behavior of errors under the grid refinement and show how Lagrangian (Pontryagin's Maximum Principle-based) computations can be used to enhance our method.
\end{abstract}

\vspace*{.1in}


\section{Introduction}
The Eikonal equation
\begin{equation}\label{Eikonal intro}
\left\{
\begin{array}{rcll}
\abs{\nabla u(\bx)}f(\bx) & = & 1 & \qquad \forall \ \bx \in \Omega \\
u(\bx) & = & q(\bx) & \qquad \forall \ \bx \in \exitset \subseteq \partial \Omega,
\end{array}
\right.
\end{equation}
arises naturally in many application\zachEdit{s} including
continuous optimal path planning, computational geometry, photolithography, optics, shape from shading, and image processing \cite{SethBook}.
One natural interpretation for the solution of \eqref{Eikonal intro} comes from
isotropic time-optimal control problems. For a vehicle traveling through $\bar{\Omega}$, $f$ describes the speed of travel
and $q$ gives the exit time-penalty charged on $\exitset$.  In this framework, $u(\bx)$ is the \emph{value function}; i.e., the minimum time to exit $\bar{\Omega}$ through $\exitset$ if we start from a point $\bx \in \Omega$. The characteristic curves of the PDE \eqref{Eikonal intro} define the optimal trajectories for the vehicle motion.
\fillmeup
The value function is Lipschitz continuous, but generally is not smooth on $\Omega$.  (The gradient of $u$ is undefined at all points for which an optimal trajectory is not unique.)
Correspondingly, the PDE \eqref{Eikonal intro} typically does not have a smooth solution and admits infinitely many Lipschitz continuous weak solutions.  Additional conditions introduced in \cite{Crandall_ViscOriginal} are used to restore the uniqueness: the {\em viscosity solution} is unique and coincides with the value function of the above control problem.
\fillmeup
In the last 20 years, many fast numerical methods have been developed  to solve \eqref{Eikonal intro} \emph{on the entire domain} $\bar{\Omega}$; e.g., see \cite{Adam, Seth_FMM, Tsitsiklis, Zhao_FSM}. Many of these fast methods were inspired by classical label-correcting and label-setting algorithms on graphs; e.g., Sethian's Fast Marching Method \cite{Seth_FMM} mirrors the logic of the classical Dijkstra's algorithm \cite{Dijkstra}, which finds the minimum time to a target-node from every other node in the graph.
\fillmeup
Our focus here is on a somewhat different situation, with the solution needed \emph{for one specific starting position only}. On graphs, an A* modification of Dijkstra's method \cite{Hart_Astar} is widely used for similar {\em single source / single target} shortest path problems.
There have been several prior attempts to extend A* techniques to algorithms for continuous optimal trajectory problems, but all of them have significant drawbacks:
 these methods either produce additional errors that do not vanish under numerical grid refinement \cite{Petres_thesis, Peyre_coarse, Peyre_landmark, Peyre_Geodesic}, or provide much more limited computational savings \cite{FergusonStentz, Yershov_1, Yershov_2}.
We believe that these disadvantages stem from an overly faithful mirroring of the ``standard" A* on graphs.
Our own approach is based on an alternative version of the A* algorithm 
\cite{Bertsekas_DPbook} that has clear advantages in continuous optimal control problems.  Numerical testing confirms that our method is both efficient (in terms of the percentage of domain restriction) and convergent under grid refinement.
\fillmeup
We begin by reviewing two flavors of A* techniques on graphs in \S\ref{s:graphs}.
We then describe the standard Fast Marching Method and its various A*-type modifications in \S\ref{s:continuous}.
The numerical tests in \S\ref{s:examples} are used to compare the efficiency and accuracy of competing domain restriction techniques.  We discuss the limitations of our approach and directions of future work in \S\ref{s:conclusions}.
The Appendix (\S\ref{s:it_works}) contains convergence analysis of the alternative A* under grid refinement, exploiting the probabilistic interpretation of the discretized equations.

\section{Domain restriction techniques on graphs}
\label{s:graphs}
We start by defining the shortest path problem on a graph:

\begin{itemize}[leftmargin=6mm]\itemsep-2pt
\item A graph $\mathcal{G}$ is defined by a set of nodes (vertices) $X = \cbraces{\bx_1,\bx_2,\ldots,\bx_{M+1} = \bt}$ and a set of directed arcs between these nodes.
\item Along each arc we prescribe a transition time penalty $C(\bx_i,\bx_j)=C_{ij} > 0$, and assume $C_{ij} = +\infty$ if there is no transition from $\bx_i$ to $\bx_j$.
\item The sets of \emph{in-neighbors} and \emph{out-neighbors} of a node $\bx_j$ are respectively defined by
\[
N^-_j \ = \ N^-(\bx_j) \ \triangleq \ \cbraces{\bx_i \mid C_{ij} < +\infty}, \qquad
N^+_j \ = \ N^+(\bx_j) \ \triangleq \ \cbraces{\bx_k \mid C_{jk} < +\infty }.
\]
\item Assume the graph is \emph{sparsely connected}, i.e. $\abs{N^{\pm}(\bx_i)} \leq \kappa \ll M \ \forall \ \bx_i \in X$ for some fixed $\kappa \in \N$.
\item The {\bf goal} is to find the {\em ``value function''} $U:X\to [0,+\infty)$, defined as
\[
U_i \ = \ U(\bx_i) \ \triangleq \ \text{the minimum total time to travel from $\bx_i$ to $\bt=\bx_{M+1}$.}
\]
\end{itemize}


Naturally, $U_{M+1} = 0$.  On the rest of the graph, {\em Bellman's Optimality Principle} \cite{Bellman_Opt} yields
a coupled system of $M$ nonlinear equations:
\begin{equation}\label{Bellmans}
U_i \; = \; \min_{\bx_j \in N^+_i} \left\{ U_j \ + \ C_{ij} \right\}, \qquad \forall \ i =1,2,\ldots, M.
\end{equation}
Once the value function is known, an optimal path from any node to the target $\bx_{M+1}$ can be quickly recovered by recursively transitioning to the minimizing neighbor.
A straight-forward iterative method for solving the system \eqref{Bellmans} would result in $O(M^2)$ computational cost.
Fortunately, this system is {\em monotone causal}: $U_i$ cannot depend on $U_j$ unless $U_i > U_j$.
This observation is the basis of the classical Dijkstra's method, which recovers the value function on the entire graph in  $O(M \log M)$ operations \cite{Dijkstra}.
In Dijkstra's method, all nodes are split into three classes: \FAR{} (no value yet assigned), \CON{} (assigned a tentative value), or \ACC{} (assigned a permanent value).
\begin{center}
\IncMargin{1em}
\begin{algorithm}[H]
\SetKwData{Left}{left}
\SetKwData{Up}{up}
\SetKwFunction{FindCompress}{FindCompress}
\SetKwInOut{Initialization}{Initialization}
\SetKwInOut{Algorithm}{Algorithm}
\Indm
\Initialization{}
\Indp
$U_i \gets +\infty$ and mark $\bx_i$ as \FAR{} for $i=1,2,\ldots,M$ \\
$U(\bt) \gets 0$ and mark $\bt$ as \ACC. \\
For all $\bx_i \in N^-(\bt)$, mark as \CON{} and $U_i \gets C(\bx_i,\bt)$
\BlankLine
\Indm
\Algorithm{}
\Indp
\While{$\exists$ a \CON{} node}{
Find the \CON{} node $\bx_j$ with minimal $U$-value and mark as \ACC{} \label{DijkstrasAlg:accept node} \\
\For{$\bx_i \in N^-_j$ such that $U_i > U_j$ and $\bx_i$ is \FAR{} or \CON{}}{
$\tilde{U} \gets U_j + C_{ij}$ \\
\If{$\tilde{U} < U_i$}{
$U_i \gets \tilde{U}$ \\
Mark $\bx_i$ as \CON{} \label{dijkstra:considered mark}
} 
} 
} 
{\color{DarkBlue}\caption{\em Dijkstra's Algorithm}}
\end{algorithm}
\DecMargin{1em}
\end{center}
Efficient implementations usually maintain the \CON{} nodes as a binary heap, resulting in the $\log M$ term in the computational complexity.

\subsection{Estimates for ``single-source / single-target" problems.}
If we are only interested in an optimal path from a single starting location $\bs \in X$, Dijkstra's method can be terminated as soon $\bs$ becomes \ACC.  (This changes the stopping criterion on line 4 of the pseudocode.)
Other modifications of the algorithm can be introduced to further reduce the computational cost on this narrower problem.  Consider a function
\[
V_i \ = \ V(\bx_i) \ \triangleq \ \text{minimum total time to travel from $\bs$ to $\bx_i$.}
\]
Any node $\bx_i$ lying on an optimal path from $\bs$ to $\bt$ must satisfy $U_i + V_i = U(\bs) = V(\bt).$  This provides an obvious relevance criterion, since for any $\bx_i$ that is not on an optimal path, $U_i + V_i > U(\bs)$. But since $V$ is generally unknown, all techniques for focusing computations on a neighborhood of this optimal path must instead rely on some ``heuristic underestimate''
\begin{equation}\label{discrete under}
\varphi_i \ = \ \varphi(\bx_i) \ \leq \ V_i.
\end{equation}
A stronger \textsl{``consistency''} requirement is often imposed instead:
\begin{equation}\label{monotone condition}
\varphi_j \ \leq \ C_{ij} + \varphi_i; \qquad \Forall i,j.
\end{equation}
(Note that $\varphi\equiv V$ is the maximum among all consistent heuristics that also satisfy $\varphi(\bs)=0$.)
\fillmeup
Such consistent underestimates are readily available for geometrically embedded graphs.
Suppose $X \subset \R^n$ and $d_{ij} = \norm{\bx_i - \bx_j}_2$.  If the ``maximum speed" $F_2>0$ is such that
$C_{ij} \geq d_{ij} / F_2$ for all $i$ and $j$, then $\varphi_i \ = \norm{\bx_i - \bs}_2 / F_2 \ \leq \ V_i.$
On a Cartesian grid-type graph, the Manhattan distance provides a better (tighter) underestimate
$\varphi_i \ = \norm{\bx_i - \bs}_1 / F_2.$
For more general embedded graphs, a much better underestimate $\varphi$ can be produced by
{\em ``Landmark sampling''} \cite{Goldberg_Landmarks}, but this requires additional precomputation and
increases the memory footprint of the algorithm.
\fillmeup
Some algorithms for this problem also rely on ``heuristic overestimates"
\[
\psi_i \ = \ \psi(\bx_i) \ \geq \ V_i.
\]
An overestimate can be obtained as a total cost of {\em any} path from $\bx_i$ to $\bs$ or can also be found using landmark precomputations \cite{Goldberg_Landmarks}. For structured geometrically embedded graphs, an analytic expression might also be available. E.g., on a Cartesian grid, if the ``minimum speed" $F_1>0$ is such that $C_{ij} \leq d_{ij} / F_1$, we can use
$\psi_i \ = \norm{\bx_i - \bs}_1 / F_1.$

\subsection{A* Restriction Techniques}
A* techniques restrict computations to potentially relevant nodes 
by limiting the number of nodes that become \CON.
A more accurate $\varphi$ restricts a larger number of nodes from becoming \CON,
and if $\varphi=V$ then only those nodes actually on the $\bs\to\bt$ optimal path are ever \ACC.
\fillmeup
{\bf Standard A* (SA*).} This version of A* is the one most often described in the literature \cite{Hart_Astar}.
Unlike in Dijkstra's algorithm, the \CON{} nodes are sorted and \ACC{} based on $(U_i + \varphi_i)$ values.
This change affects line \ref{DijkstrasAlg:accept node} in our pseudocode.
The resulting algorithm typically \ACs{} far fewer nodes before terminating: irrelevant nodes with large $\varphi$ values might still become \CON{} (if their neighbors are accepted) but will have lower priority and most of them will never become \ACC{} themselves.  Moreover, the consistency of $\varphi$ ensures that \ACC{} nodes receive exactly the same values as would have been produced by the original Dijkstra's method.  If $\bx_i$ actually depends on $\bx_j \in N^+_i$, then
\[
U_i \ = \ C_{ij} \ + \ U_j \qquad \Longrightarrow \qquad
U_i \ \geq \ \parens{\varphi_j - \varphi_i} \ + \ U_j \qquad \iff \qquad
U_i \ + \ \varphi_i \ \geq \ U_j \ + \ \varphi_j,
\]
guaranteeing that under SA* $\bx_i$ will not be \ACC{} before $\bx_j$. 
\fillmeup
{\bf Alternative A* (AA*).} A less common variant of A* is described in \cite{Bertsekas_DPbook}.
Instead of favoring nodes with small $\varphi$, AA* simply ignores nodes that are clearly irrelevant. AA* relies on an underestimate $\varphi$ (no longer required to satisfy \eqref{monotone condition}) and an additional upper bound $\Upper \geq U(\bs).$  (If an analytic or precomputed $\psi$ is available, we can take $\Upper = \psi(\bt)$.
But it is also possible to use the total cost of any feasible path from $\bs$ to $\bt$.)
\fillmeup
During Dijkstra's algorithm, a node $\bx_i$ with $U_i + \varphi_i > \Upper$ (hence $U_i + V_i > \Upper$) is surely not a part of the optimal path. Thus, to speed up Dijkstra's algorithm, in AA* we still sort \CON{} nodes based on $U$ values, but on line \ref{dijkstra:considered mark} we only mark $\bx_i$ \CON{} if $U_i + \varphi_i \leq \Upper$.
Since the order of acceptance is the same, it is clear that AA* produces the same values as Dijkstra's, but the efficiency of this technique is clearly influenced by the quality of $\Upper$ (the smaller it is, the smaller is the number of \CON{} nodes). This reliance on $\Upper$ is a downside (since SA* only needs $\varphi$), but has the advantage of making AA* also applicable to the label-correcting methods \cite{Bertsekas_DPbook}.  In section \S\ref{s:continuous} we argue that AA* is also more suitable for continuous optimal control problems, in which an $\Upper$ is often readily available.
\fillmeup
{\bf AA* with Branch \& Bound (B\&B).} In AA* $\Upper$ remains static throughout the algorithm. The idea of {\em Branch \& Bound} (B\&B) is to dynamically decrease $\Upper$ as we gain more information about the graph, making use of an overestimate function $\psi.$
When \ACCing{} a node $\bx_i$, we can also set
\[
\Upper \ \ \gets \ \ \min\cbraces{\Upper, \ U_i + \psi_i}.
\]
{\bf Exact estimates.} Using ``exact estimates" with A* would result in the maximal domain restriction.
For both A* techniques, if $\varphi \equiv V$ and $\Upper \equiv U(\bs)$, the algorithm would only \AC{} the nodes lying on an optimal path.

\section{Domain restriction in a continuous setting}
\label{s:continuous}
The continuous time-optimal isotropic control problem deals with
minimizing the time-to-exit
for a vehicle, whose dynamics is governed by
\begin{equation}\label{dynamics}
\left\{
\begin{array}{rcl}
\dot{\by}(t) & = & f\parens{\by(t)} \ba(t), 
\\
\by(0) & = & \bx \in \Omega \subset \R^n,
\end{array}
\right.
\end{equation}
Here $\bx$ is the starting position, $\ba(t) \in S^{n-1}$ is the control (i.e., the direction of motion) chosen at the time $t$, $\by(t)$ is the vehicle's time-dependent position, and $f$ is the spatially-dependent speed of motion.
We will further assume the existence of two constants $F_1$ and $F_2$ such that
$0 < F_1 \leq f(\bx) \leq F_2$ holds$\Forall \bx \in \bar{\Omega}.$
For every time-dependent control $\ba(\cdot)$ we define the total time to the exit set $\exitset\subseteq\partial\Omega$ as
$T_{\mathbf{x}, \mathbf{a}} = \min\cbraces{t \geq 0 \mid \by(t) \in \exitset}.$
The {\em value function} $u:\Omega \to [0,+\infty)$ is then naturally defined as
\[
u(\bx) \ = \ \inf\limits_{\ba(\cdot)} \cbraces{T_{\mathbf{x}, \mathbf{a}} \
+ \ q\braces{\by\parens{T_{\mathbf{x}, \mathbf{a}}}}},
\]
where $q:\exitset\to [0,+\infty)$ is the exit-time penalty.
Bellman's optimality principle can be used to show that, if $u$ is a smooth function,
it must satisfy a static Hamilton-Jacobi-Bellman PDE
\[
\min\limits_{\ba \in A}\cbraces{\parens{\nabla u(\bx) \cdot \ba}f(\bx) \ + \ 1} \ \ = \ \ 0,
\]
with the natural boundary condition $u=q$ on $\exitset$.
Using the isotropic nature of the dynamics, it is clear that the minimizer (i.e., the optimal initial direction of motion starting from $\bx$) is $\ba_* = - \nabla u(\bx) / \|\nabla u(\bx)\|$ and the equation is equivalent to the Eikonal PDE \eqref{Eikonal intro}.  If the value function $u$ is not smooth, it can still be interpreted as a unique
{\em viscosity solution} of this PDE \cite{Crandall_ViscOriginal}.
\fillmeup
\alexEdit{Solving this PDE to recover the value function is the key idea of the {\em dynamic programming}.}
An analytic solution is usually unavailable, so numerical methods are needed to approximate $u$.
We use a first-order upwind discretization, whose monotonicity and consistency yield convergence to the viscosity solution \cite{BarlesSouganidis}.  To simplify the notation, we describe everything on a cartesian grid in $\R^2$, though higher dimensional generalizations are straightforward and similar discretizations are also available on simplicial meshes (e.g., \cite{KimmelSethian_tri, SethVlad_trimesh}; see also Figure \ref{fig:stencils}).  We will assume
\begin{itemize}[leftmargin=6mm]\itemsep0.1pt
\item $\bar{\Omega} = [0,1]\times [0,1]$ is discretized on a $m\times m$ uniform grid $X$ with spacing $h = 1/(m-1)$.
\item A \emph{gridpiont} or \emph{node} is denoted by $\bx_{ij}$ with corresponding \emph{value} $U_{ij} = U(\bx_{ij}) \approx u(\bx_{ij})$, \emph{speed} $f_{ij} = f(\bx_{ij})$, and \emph{neighbors}
\[
N_{ij} \ = \ N(\bx_{ij}) \ \triangleq \ \cbraces{\bx_{i-1, j}, \, \bx_{i+1,j}, \, \bx_{i,j- 1}, \, \bx_{i,j+1}}.
\]
This notation will be slightly abused (e.g., a gridpoint $\bx_i$ with a corresponding value $U_i$, speed $f_i$, etc...)
whenever we emphasize the ordering of gridpoints rather than their geometric position.
\item We will assume that $\exitset\subseteq\partial\Omega$ is {\em well-discretized} on the grid, and we define the \emph{discretized exit-set} $Q = \exitset \bigcap X.$
In particular, the focus of our computational experiments will be on the case $\exitset = Q = \{\bt\}$ with the exit time-penalty $q(\bt)=0$.
We note that $\bt$ does not have to be on the boundary of the square $\bar{\Omega}$:
if $\bt \in (0,1)^2$, then
$\Omega = (0,1)^2 \backslash \{ \bt \}$, and the border of the square is treated as an essentially outflow boundary.
This corresponds to solving a $\bar{\Omega}-${\em constrained} optimal control problem, with $u$ interpreted as
a constrained viscosity solution \cite{Bardi_ViscBook}.
\end{itemize}
We use the upwind finite differences \cite{RouyTour} to approximate the derivatives of \eqref{Eikonal intro}, resulting in a system of discretized equations. Using the standard four-point nearest-neighbors stencil at each $\bx_{ij} \in X$,
this results in:
\begin{equation}\label{Eikonal approx}
\parens{\max\cbraces{D^{-x}U_{ij}, -D^{+x}U_{ij}, 0}}^2 \ + \ \parens{\max\cbraces{D^{-y}U_{ij}, -D^{+y}U_{ij},0}}^2 \ \ = \ \ \frac{1}{f_{ij}^2},
\end{equation}
\vspace{-0.2cm}
\[
\mbox{where} \quad u_x(x_i,y_j) \approx D^{\pm x}U_{ij} = \frac{U_{i\pm 1,j} - U_{ij}}{\pm h}, \quad \mbox{and} \quad u_y(x_i,y_j) \approx D^{\pm y} U_{ij} = \frac{U_{i,j\pm1} - U_{ij}}{\pm h}.
\]
If all the neighboring values are known, this is really a ``quadratic equation in disguise" for $U_{ij}$.
Letting $U^{}_H = \min\cbraces{U^{}_{i-1,j},U^{}_{i+1,j}}$ and $U^{}_V = \min\cbraces{U^{}_{i,j-1},U^{}_{i,j+1}}$ reduces \eqref{Eikonal approx} to
\begin{equation}\label{Eikonal discrete formula}
\parens{U_{ij} - U^{}_H}^2 \ + \ \parens{U _{ij}- U^{}_V}^2 \ \ = \ \ \frac{h^2}{f_{ij}^2},
\end{equation}
provided the solution 
satisfies $U_{ij} \geq \max\cbraces{U^{}_H,U^{}_V}$; otherwise we perform a \textsl{one-sided update}:
\begin{equation}\label{Eikonal one-sided}
U_{ij} \ \ = \ \ \min\cbraces{U^{}_H,U^{}_V} \ + \ \frac{h}{f_{ij}}.
\end{equation}
The system of discretized equations (\eqref{Eikonal discrete formula} and \eqref{Eikonal one-sided} for all $(i,j)$)
are {\em monotone causal} since $U_{ij}$ needs 
only its {\em smaller} neighboring values to produce an update.
\fillmeup
Sethian's \textbf{Fast Marching Method (FMM)} \cite{Seth_FMM} and another Dijkstra-like algorithm \cite{Tsitsiklis} due to Tsitsiklis take advantage of this monotone causality. FMM can be obtained from Dijkstra's Method 
by changing the lines 3 and 7 to instead use the continuous update procedure (equations \eqref{Eikonal discrete formula} and \eqref{Eikonal one-sided}).
\alexEdit{Similarly to Dijkstra's method, FMM computes the value function on the entire grid in $O(M \log M)$ operations, where $M = m^2$ is the number of gridpoints.  The key question is whether a significant reduction of computational cost is possible
if we are only interested in an optimal trajectory starting from a single (pre-specified) source gridpoint $\bs$.
}%
\begin{remark}{\label{rem:PMP}
\alexEdit{%
Restricting FMM to a smaller (relevant) subset of $\Omega$ via A*-techniques is precisely the focus of this paper.
But a legitimate related question is whether the dynamic programming approach is at all necessary when a single trajectory is all that we desire?}
\zachEdit{In contrast to path planning on graphs, in the continuous control community, optimal trajectories for single source problems are typically recovered via \textbf{Pontryagin Maximum Principle (PMP)} \cite{Pontryagin_original}.  This involves solving a two point boundary value problem for a state-costate system of ODEs, which in our context could be also derived as characteristic ODEs of the Eikonal PDE \eqref{Eikonal intro}.
One advantage of using PMP is that, unlike the dynamic programming, it does not suffer from the \emph{curse of dimensionality}. In higher dimensions, solving a two-point boundary value problem is much more efficient than solving a PDE on the whole domain.  Unfortunately, PMP is harder to apply if the speed function $f$ is not smooth.  Even more unpleasantly, depending on the initial guess used to solve the two-point boundary value problem,
that method often converges to {\em locally} optimal trajectories. In contrast, the dynamic programming always yields a globally optimal trajectory, and our approach can be used to lower its computational cost in higher dimensions.  In fact, we show that both techniques can be used together, with a prior use of PMP improving the efficiency of A*, and A* verifying the global optimality of a PMP-produced trajectory.}
}
\end{remark}

\subsection{Domain restriction without heuristic underestimates.}
\label{ss:apriori_restrict}
\alexEdit{Our algorithmic goal is to restrict FMM to a dynamically defined subset of the grid using underestimates of
the cost-to-go and the previously computed values.  This is the essence of several A*-type techniques compared in sections \ref{ss:dynamic_DR}-\ref{ss:AA-FMM}.  But to motivate the discussion, we start by considering several simpler domain restriction techniques that do not involve the run-time use of underestimates.
}
\fillmeup
First, we note that FMM can be terminated immediately after the gridpoint $\bs$ is \ACC.  In practice, this is unlikely to yield  significant computational savings
unless the set
$$
L \ = \ \{ \bx \in \bar{\Omega} \, \mid \, u(\bx) \leq u(\bs) \}
$$
is much smaller than the entire $\bar{\Omega}$.
(E.g., see the bolded level set $\partial L$ in Figure \ref{fig:ellipse}A.)
\fillmeup
Second, it is possible to use a ``bi-directional FMM" (similar to the bi-directional Dijkstra's \cite{bidir dijkstra}) by
expanding two \ACC $\, $ clouds from the source and the target and stopping the process when they meet.
The first gridpoint accepted in both clouds is guaranteed to lie on an $O(h)$-suboptimal trajectory from $\bs$ to $\bt$.
This approach is potentially much more efficient than the above.
E.g., for a constant speed function $f=1$, it cuts the $n$-dimensional volume of the $\ACC$ set by the factor of $2^{n-1}$; see Figure \ref{fig:BiFMM}.
\figstart
\begin{center}
\begin{tikzpicture}[point/.style={draw,shape=circle,inner sep = 1mm,minimum size = .1cm},scale = .5,->,>=stealth',shorten >=1pt,shorten <=1pt,auto,node distance = 1.5cm,semithick]
	\begin{scope}
	\draw[fill=LightLightBlue] (0,0) circle (5cm);
	
    	\node[point,fill=gray!20] (T) at (0,0) {$\bt$};
    	\node[point,fill=gray!20] (S) at (5,0) {$\bs$};
	
	\draw[orange,->, ultra thick] (S) -- (T);
	\end{scope}
	
	\begin{scope}[shift={(13.5,0)}]
	\draw[fill=LightLightBlue] (0,0) circle (5cm);
	\draw[fill=green!50, fill opacity = .5] (0,0) circle (2.5cm);
	\draw[fill=green!50, fill opacity = .5] (5,0) circle (2.5cm);
	
    	\node[point,fill=gray!20] (T) at (0,0) {$\bt$};
    	\node[point,fill=gray!20] (S) at (5,0) {$\bs$};
    	\node[point,fill=gray!20] (M) at (2.5,0) {$\bx$};
	
	\draw[orange,->, ultra thick] (M) -- (T);
	\draw[orange,->, ultra thick] (M) -- (S);
	\end{scope}
\end{tikzpicture}
\caption{\footnotesize FMM expands computations outwards from $\bt$, shown by the large circle. The two smaller circles each expand from $\bt$ and $\bs$ and represent the computations performed during BiFMM. In this simple situation BiFMM considers 50\% of the domain that FMM considers. To recover the global optimal trajectory from $\bs$ to $\bt$ using BiFMM one must recover the optimal trajectories from $\bx$ to $\bt$ and $\bx$ to $\bs$ and join them together.}
\label{fig:BiFMM}
\end{center}
\end{figure}

Third, a different ``elliptical restriction" approach is also applicable (and can be combined with the above bidirectional technique) whenever an overestimate for the minimal time from $\bs$ to $\bt$ is available.
\begin{lemma}\label{ellipse lemma} Suppose the exit-set is given by a single target point $\bt$, $\, d = \abs{\bs-\bt},$ and $\Upper$ is a known constant such that $\Upper \geq u(\bs)$.  Then the optimal trajectory $\by(\cdot)$ satisfying \eqref{dynamics} from $\by(0)=\bs$ is contained within the prolate spheroid $E(\bs,\bt)$ (an ellipse in 2D) satisfying
\begin{equation}\label{ellipse info}
\mbox{Foci = } \cbraces{\bs,\bt} \qquad \mbox{and} \qquad \left\{
\renewcommand*{\arraystretch}{2}
\begin{array}{r c l c >{\displaystyle}l}
\mbox{Major semi-axis} & = & a & = & \frac{F_2\Upper}{2}, \\
\mbox{Minor semi-axis} & = & b & = & \frac{1}{2}\sqrt{F_2^2\Upper^2 - d^2}.
\end{array}
\renewcommand*{\arraystretch}{1}
\right.
\end{equation}
\end{lemma}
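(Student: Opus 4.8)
The plan is to reduce the claim to the classical focus characterization of a prolate spheroid: a point $\bx$ lies in $E(\bs,\bt)$ exactly when $\abs{\bx-\bs}+\abs{\bx-\bt}\le 2a$, the sum of distances to the two foci being bounded by the major-axis length. Since the stated major semi-axis is $a=F_2\Upper/2$, it therefore suffices to show that every point $\by(t)$ on the optimal trajectory satisfies $\abs{\by(t)-\bs}+\abs{\by(t)-\bt}\le F_2\Upper$. The whole argument then rests on a single kinematic fact: because $\ba(t)\in S^{n-1}$, the dynamics \eqref{dynamics} give $\abs{\dot{\by}(t)}=f(\by(t))\le F_2$, so the trajectory moves with Euclidean speed at most $F_2$.

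First I would let $T$ denote the optimal travel time, so that $\by(0)=\bs$ and $\by(T)=\bt$, and note $T\le u(\bs)\le \Upper$ (with equality in the first inequality when $q(\bt)=0$). For any $t\in[0,T]$ I would bound the straight-line distance by the arc length of the portion already traversed,
\[
\abs{\by(t)-\bs}\;=\;\abs{\textstyle\int_0^t\dot{\by}(\tau)\,d\tau}\;\le\;\int_0^t\abs{\dot{\by}(\tau)}\,d\tau\;\le\;F_2\,t,
\]
and symmetrically, using $\by(T)=\bt$, $\abs{\by(t)-\bt}\le F_2(T-t)$. Adding these gives $\abs{\by(t)-\bs}+\abs{\by(t)-\bt}\le F_2\,T\le F_2\Upper=2a$, which is precisely the spheroid-membership condition, so the entire trajectory lies in $E(\bs,\bt)$. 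Finally I would confirm the minor semi-axis: the foci are separated by $d$, so the focal half-distance is $c=d/2$, and the standard relation $b=\sqrt{a^2-c^2}$ yields $b=\tfrac12\sqrt{F_2^2\Upper^2-d^2}$, matching \eqref{ellipse info}. One should also check nondegeneracy, $F_2\Upper\ge d$: since any trajectory from $\bs$ to $\bt$ has arc length at least $d$ and speed at most $F_2$, we have $u(\bs)\ge d/F_2$, hence $F_2\Upper\ge F_2\,u(\bs)\ge d$ and $b$ is real.

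I do not expect a serious obstacle here; the only delicate point is regularity, namely that an optimal trajectory exists (the infimum defining $u$ is attained) and is rectifiable, so that the arc-length bounds above are legitimate. Under the standing assumption $0<F_1\le f\le F_2$ with $f$ sufficiently regular, standard existence theory for isotropic time-optimal control supplies such a minimizer; alternatively, the identical containment argument applies verbatim to any $\varepsilon$-suboptimal trajectory whose travel time is at most $\Upper$, so the conclusion is robust even without appealing to attainment of the infimum.
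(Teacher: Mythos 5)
Your proposal is correct and follows essentially the same route as the paper: both arguments bound the sum of distances from any trajectory point to the two foci by $F_2$ times the travel time (the paper phrases this via the arc length $d^*$ of the optimal path, you via the time-parametrized integral bounds $\abs{\by(t)-\bs}\le F_2 t$ and $\abs{\by(t)-\bt}\le F_2(T-t)$), then conclude with $F_2 T \le F_2\Upper = 2a$ and the standard focal characterization of the prolate spheroid. Your additional checks (the relation $b=\sqrt{a^2-c^2}$, nondegeneracy $F_2\Upper\ge d$, and the remark that the argument applies to $\varepsilon$-suboptimal trajectories so attainment of the infimum is not essential) are sound refinements of details the paper leaves implicit.
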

\begin{proof}
Let $d^*$ and $T^*$ be the distance and time along the optimal trajectory from $\bs$ to $\bt$. Then
\begin{equation}\label{ellipse bound}
\frac{d^*}{F_2} \ \leq \ T^* \ \leq \ \Upper,
\end{equation}
For any $\bx$ along the optimal trajectory we have
\[
\abs{\bx - \bs} + \abs{\bt-\bx} \ \leq \ d^*
\ \leq \ F_2 \Upper.
\]
This inequality defines
a prolate spheroid in $\R^n$ and \eqref{ellipse info} immediately follows.
\end{proof}

Even if we are interested in an unconstrained problem (find the quickest $(\bs,\bt)$ trajectory in $\R^2$),
finite computer memory forces us to solve a {\em state-constrained} problem instead (find the quickest $(\bs,\bt)$ trajectory contained in $\bar{\Omega}$).
The above Lemma is thus also useful to answer a related question: for which starting points $\bs$ does the $\bar{\Omega}$-constrained
problem have the same value function as the unconstrained?
Clearly, for any point $\bs$ such that $E(\bs,\bt) \subset \bar{\Omega}$, enlarging the domain would not decrease $u(\bs)$.
\fillmeup
{\bf Higher dimensional savings.} Restricting computations to $E(\bs,\bt)$
has an increasing effect in higher dimensions.
The fraction $\mathcal{P}$ of the volume of $E(\bs,\bt)$ to the volume of the smallest bounding rectangular box $B$
is given by $\mathcal{P} = \frac{\pi^{n/2}}{2^n \Gamma(n/2 + 1)}$, which quickly approaches zero as $n$ grows.
(E.g., in $\R^2$ this fraction is $(\pi/4) \approx 78.5\%$, while in $\R^6$ it is already $\approx 8\%.$)
If $\bar{\Omega}=B$, the restriction to $E(\bs,\bt)$ yields the computational savings of $(1-\mathcal{P})$;
the savings are even higher if $\bar{\Omega}$ is any other box-rectangular domain fully containing $E(\bs,\bt)$.
\fillmeup
{\bf Formulas for $\Upper$}
can be naturally obtained by computing (or bounding from above) the time along any feasible path from $\bs$ to $\bt.$
On a convex domain $\Omega$,
the most obvious choice  is $\Upper_1 = d / F_1$ (i.e., follow the straight line from $\bs$ to $\bt$ at the minimum speed $F_1$).
For problems with the unit speed of motion, $f(\bx) = 1 = F_1 = F_2, \, \Upper_1 = d$, and the ellipse collapses to a straight line segment.
\fillmeup
A more accurate overestimate can be obtained by computing the exact time needed to traverse that straight line trajectory:
$$
\Upper_2 \ \ = \ \ \bigintsss_0^{\abs{\bt-\bs}} \frac{dr}{f\parens{\bs + \frac{\bt-\bs}{\abs{\bt-\bs}}r}} \ \ = \ \ \bigintsss_0^1 \frac{\abs{\bt-\bs}}{f\parens{\bs + (\bt-\bs)r}}dr \ \ \leq \ \ \Upper_1.
$$
For non-convex domains, a similar upper bound can be obtained by integrating the slowness $1/f$ along any feasible trajectory (e.g.,
the shortest
$\bar{\Omega}$-constrained path
from $\bs$ to $\bt$).
\fillmeup
Finally, we will also consider the third (``ideal") option, with $\Upper_3 = u(\bs) \leq \Upper_2$.
While practically unattainable, $\Upper_3$ is useful to illustrate the upper bound on efficiency of various domain restriction techniques.  In practice, it can be approximated by using $U(\bs)$ precomputed on a coarser grid or using the output of Pontryagin-Maximum-Principle-based computations (see the example in section \ref{ss:observers}).
In the latter case, the techniques discussed in this paper can be viewed as a method for verifying the global optimality of a known locally-optimal trajectory.
Figure \ref{fig:ellipse}A shows the $(\bs, \bt)$-focused ellipses
for a specific example with a highly oscillatory speed function.
\figstart
\begin{center}
\hspace*{-8mm}
\begin{tabular}{c c}
A & B \\
\iftoggle{usecolor}{%
\includegraphics[scale=0.6]{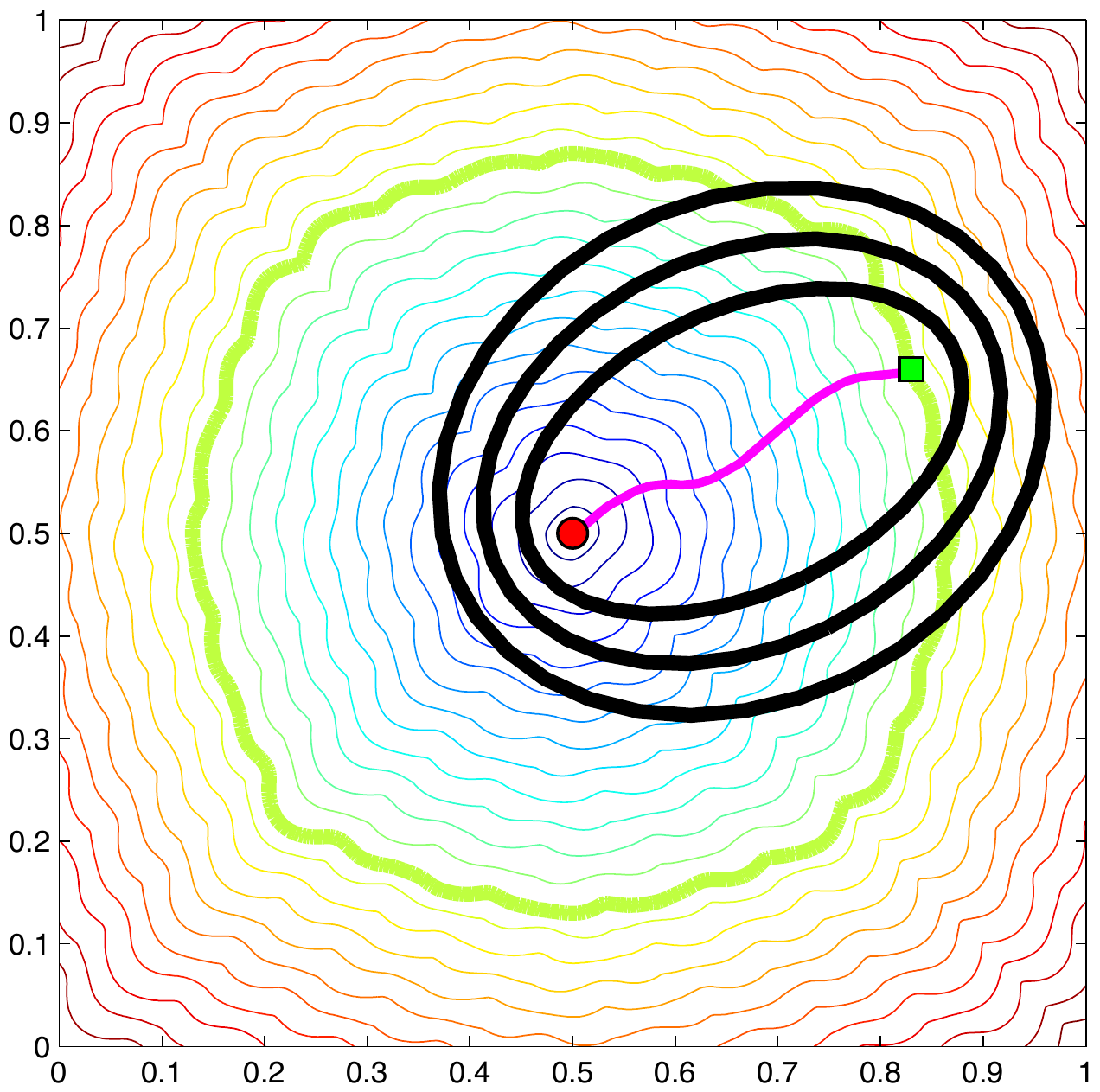} &
\includegraphics[scale=0.6]{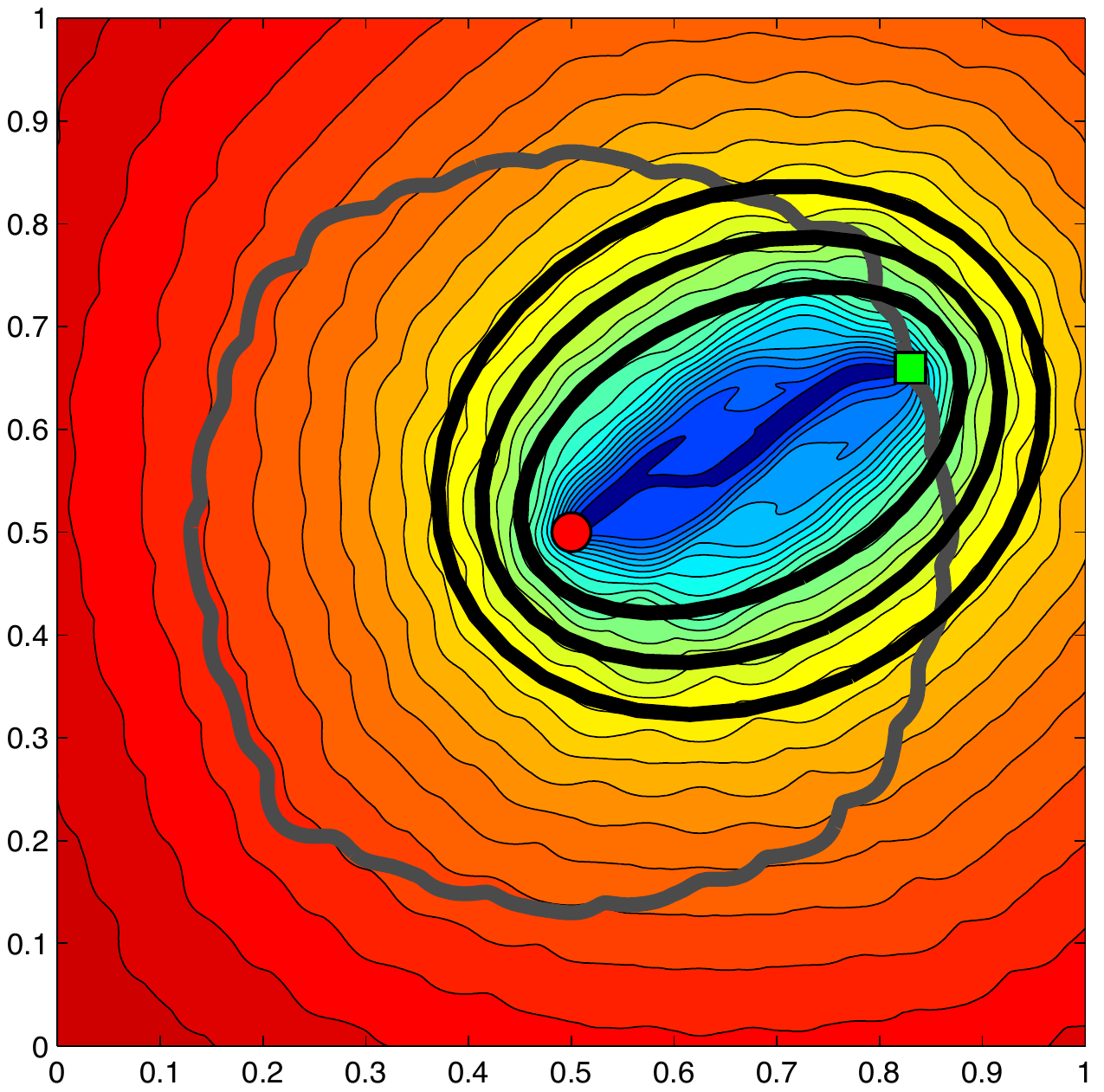}
}{%
\includegraphics[scale=0.6]{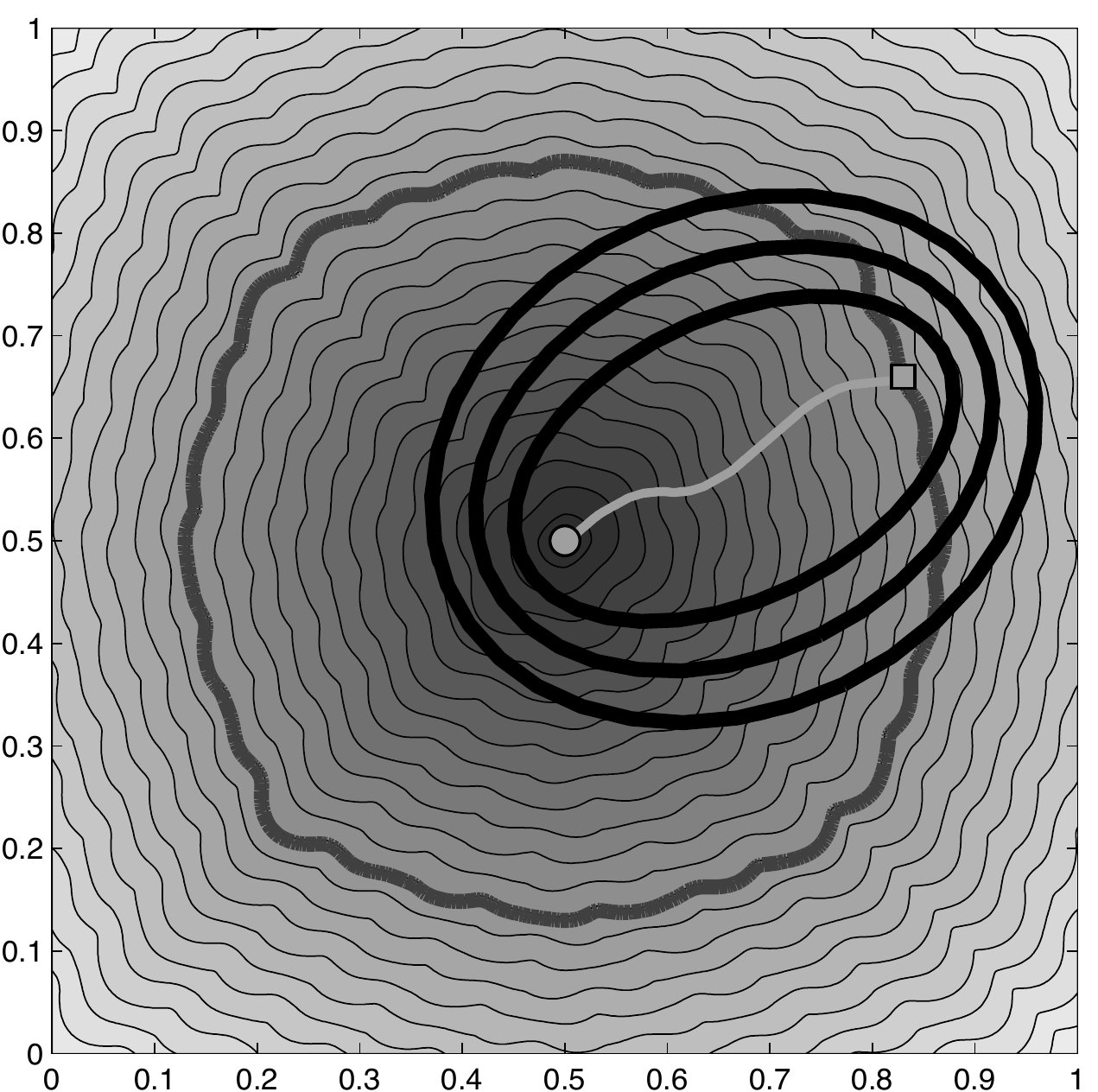} &
\includegraphics[scale=0.6]{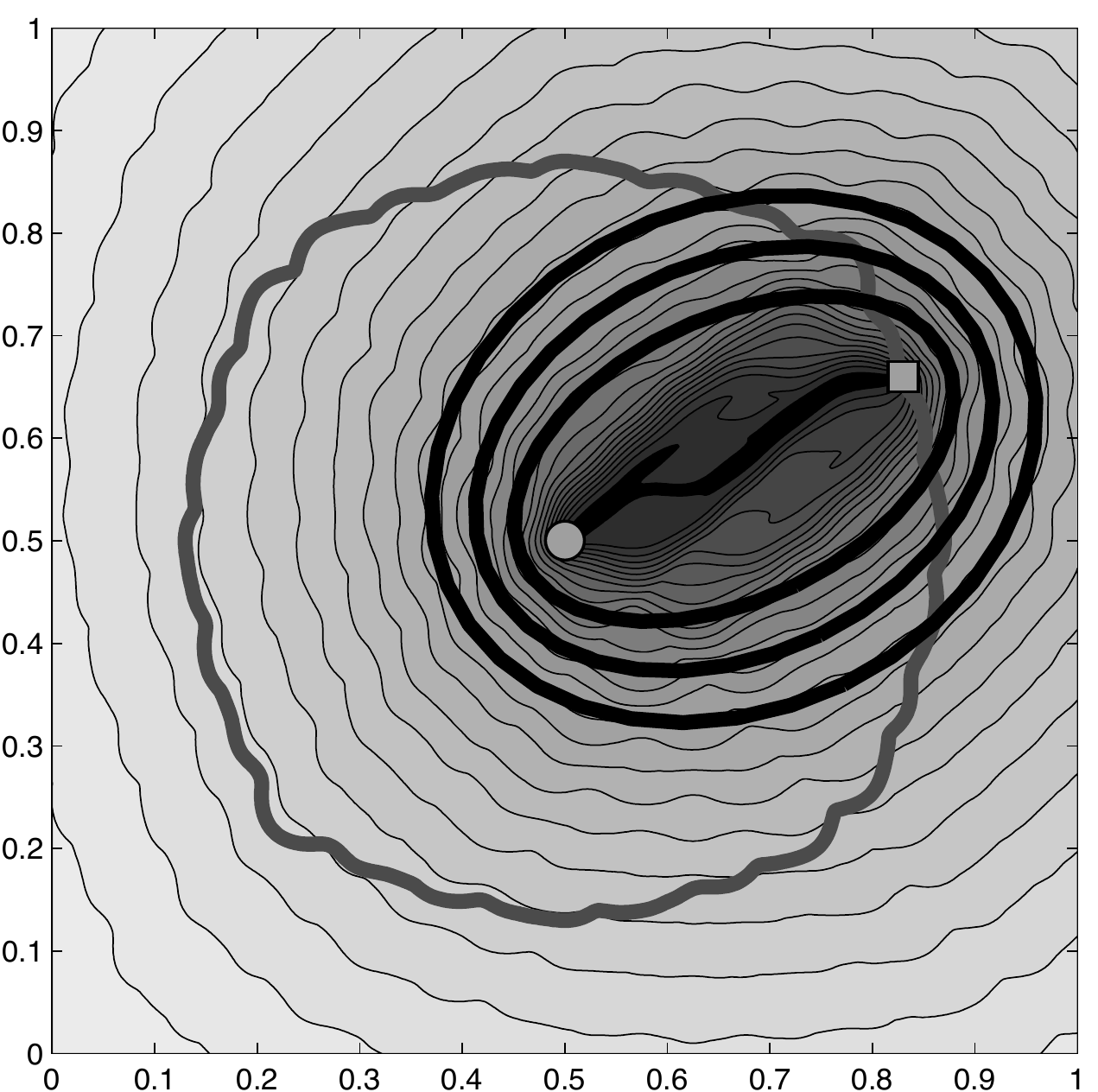}
}
\end{tabular}
\caption{\footnotesize \textbf{A.} Level sets of $U$ computed with a highly oscillatory speed $f(x,y) = 2 + 0.5 \sin(20\pi x) (20\pi y)$. The curve $\partial L$ is indicated by a thicker contour line. Three ellipses corresponding to $\Upper_1, \Upper_2,$ and $\Upper_3$ are shown in black. \textbf{B.} the level sets of
$\log_{10}\left[U(\bx)+V(\bx) - U(\bs) + 0.01\right]$ for the same problem.}
\label{fig:ellipse}
\end{center}
\end{figure}

\subsection{Dynamic domain restriction: underestimates and A*-techniques.}
\label{ss:dynamic_DR}
The previous subsection described a priori domain restriction techniques.  Here, our goal is to further restrict the computations {\em dynamically} by using the solution already computed on parts of $\bar{\Omega}$.
The actual viscosity solution $u(\bs)$ depends only on values along a characteristic (i.e., an optimal $(\bs,\bt)$ trajectory). Ideally, we would like to compute the numerical solution $U$ only for the gridpoints within an immediate neighborhood of that trajectory, potentially yielding a much greater 
speedup than the techniques described above.
\fillmeup
Consider a function $v(\bx)$ specifying the min-time from $\bs$ to $\bx$.  (It is easy to see that $v$ is also a viscosity solution of the Eikonal PDE \eqref{Eikonal intro}, but with the different boundary condition $v(\bs)=0$.)
We note that
$u(\bx)+v(\bx) \geq u(\bs) = v(\bt)$ for all $\bx \in \Omega$, and this becomes an equality if and only if $\bx$ lies on an optimal $(\bs,\bt)$ trajectory.
(See the level sets of $u+v$ in Figure \ref{fig:ellipse}B.)
Since $v$ is generally unknown, any practical restriction of computational domain will have to rely on an ``admissible underestimate heuristic" $\varphi$, satisfying $\varphi(\bx) \leq v(\bx), \Forall \bx \in \bar{\Omega}.$
As we will see, tighter underestimates result in more efficient domain restrictions.
Here we enumerate several natural heuristic underestimates:
\begin{enumerate}[leftmargin=6mm]\itemsep-2pt

\item \emph{Na\"{i}ve heuristic} is obtained by assuming the maximum speed of travel along the straight line:
\begin{equation}\label{naive under heuristic}
\varphi^0(\bx) \ = \ \abs{\bs-\bx} \ / \ F_2.
\end{equation}
Several papers on SA* versions of FMM
\cite{FergusonStentz, Petres_thesis, Yershov_1, Yershov_2} have relied on its scaled-down version
$\varphi_{\lambda}^0 = \lambda \varphi^0(\bx)$ with $\lambda \in [0,1]$.

\item \emph{Coarse grid heuristic} \cite{Peyre_coarse, Peyre_Geodesic} is based on precomputing $V$
on a coarser $(Rm) \times (Rm)$ grid with $R\in (0,1)$.
If we use $V^R$ to denote the interpolation
of that solution on $X$, the heuristic is then defined as
$\varphi^C_{\lambda, R} = \lambda V^R$, where $\lambda \in [0,1]$ is chosen to ensure that the result is a true underestimate.

\item \emph{Landmarking-based heuristic} \cite{Peyre_landmark, Peyre_Geodesic} is a continuous version of the landmarking technique on graphs \cite{Goldberg_Landmarks}. This relies on pre-computing/storing the minimum time from every node to a number of ``landmarks''; the triangle inequality is then used to obtain the lower bound
    $\varphi^L(\bx) \leq v(\bx)$. The high computational cost and memory footprint make this approach useful for repeated queries only.  (I.e., only if the optimal
    trajectory problem has to be solved for many different $(\bs,\bt)$ pairs.)

\item \emph{Higher-speed heuristic}
can be obtained by starting with a special speed-overestimate $f_0(\bx) \geq f(\bx)$,
such that the corresponding value function $v_0(\bx) \leq v(\bx)$ is known analytically, and then setting
$\varphi(\bx) = v_0(\bx)$. (Note that \eqref{naive under heuristic} can be also derived this way by taking
$f_0(\bx) \equiv F_2$.)
If the $(\bs,\bt)$ path-planning has to be performed for {\em many different} speed functions $f$, the above approach
can be useful even if $v_0$ has to be approximated numerically.  One such example is included in section \ref{ss:observers}.

\item \emph{Scaled ``Oracle'' heuristic} \cite{Peyre_Geodesic} is
defined as $\bar{\varphi}^{}_{\lambda}(\bx) = \lambda v(\bx)$ with $\lambda \in [0,1]$.
This is clearly not a practical underestimate, but a theoretical device useful in studying
the accuracy/efficiency tradeoffs of various domain restriction techniques.
Since $v$ is generally unavailable, our benchmarking relies on a numerical approximation; i.e.,
$\bar{\varphi}^{}_{\lambda}(\bx) = \lambda V(\bx)$,  where $V$ is (pre-)computed on the same grid $X$.

\end{enumerate}

\zachEdit{The first of these (the Na\"{i}ve heuristic) is a conservative underestimate that is cheaply available for all problems -- including the situations with discontinuous speed functions and/or non-convex domains.  The other underestimates are more expensive to produce, but usually result in a more significant domain restriction.  Thus, their use is particularly justified when the same speed function is used repeatedly to solve numerous (single source / single target) problems.}
\fillmeup
\alexEdit{We emphasize that this paper is in a sense ``underestimate-neutral.''
A good underestimate is obviously important, but
our focus is on how it should be used rather than on how to build it.
}
\fillmeup
{\bf Continuous A* Techniques.}
Both SA* and AA* algorithms on graphs may be easily adapted to the continuous setting using any of the above heuristics.
Just like on graphs, SA*-FMM
increases the ``processing-priority" of nodes with low $\varphi$ values,
whereas our AA*-FMM avoids considering nodes guaranteed not to be a part of any $(\bs,\bt)$-optimal trajectory. Each of these methods successfully restricts the computations, but with different trade-offs between the execution time, memory footprint, amount of restriction, and computational error.

\subsection{Prior work on SA*-FMM.}
From the implementation standpoint, SA*-FMM is fairly straightforward.  It requires
modifying a single line \ref{DijkstrasAlg:accept node} of FMM (see Dijkstra's algorithm): \AC{} the node with minimal $U + \varphi$. (Since $u+v$ is minimal along the $(\bs,\bt)$-optimal trajectory,
$U(\bx) + \varphi(\bx)$ is used to indicate how close $\bx$ is to that trajectory.)  However, the analysis of this method's
output is more subtle.
\fillmeup
On graphs, the consistency of the heuristic underestimate (i.e., the condition \eqref{monotone condition}) guarantees that all SA*-accepted nodes receive the same values as would have been produced by Dijkstra's.
In contrast, SA*-FMM exhibits a performance tradeoff based on whether $\varphi$ satisfies a more restrictive and stencil-dependent consistency condition (defined below).
If $\varphi$ is inconsistent, some of the gridpoints may be \ACC{} prematurely, resulting in additional numerical errors.
On the other hand, if $\varphi$ is consistent, the efficiency of the domain restriction is significantly decreased,
and this restricted domain does not shrink to zero volume as $h \to 0.$
\fillmeup
The presence of additional errors might seem counterintuitive.  After all, if an $(\bx_i,\bt)$-optimal trajectory passes through some $\bx_j$, then,
for $\varphi$ defined by formula \eqref{naive under heuristic},
$$
u(\bx_i) \; = \; (\text{Time from $\bx_i$ to $\bx_j$}) \, + \, u(\bx_j)
\; \geq \; \frac{\abs{\bx_j-\bx_i}}{F_2}  \, + \, u(\bx_j)
\; \geq \; \varphi(\bx_j)-\varphi(\bx_i)  \, + \, u(\bx_j),
$$
guaranteeing that $u(\bx_i) + \varphi(\bx_i) \geq u(\bx_j) + \varphi(\bx_j)$.
Turning to numerical solutions,
we would hope for the same argument to work for $U_i$ and $U_j$, and
indeed it does  if $U_i$ is computed by a one-sided update formula \eqref{Eikonal one-sided}.
But for a first-order upwind discretization in $\R^2$,
a generic gridpoint $\bx_i$ depends on $2$ other gridpoints that straddle $\bx_i$'s characteristic.
To produce the same numerical values under SA*-FMM and FMM, we would need to know that
$U_i + \varphi(\bx_i) \geq U_j + \varphi(\bx_j)$ whenever
$\bx_i$ directly depends on $\bx_j$.
\fillmeup
Suppose there exists a constant $\lambda > 0$ such that
\begin{equation}
\label{eq:dependency_slack}
U_i \text{ directly depends on } U_j
\qquad \Longrightarrow \qquad
U_i \; > \; U_j \, + \, \lambda |\bx_i - \bx_j|,  \qquad \qquad \Forall i,j.
\end{equation}
The proper ordering is then guaranteed provided the underestimate $\varphi$ satisfies the
consistency condition
\begin{equation}
\label{eq:contin_consistent}
|\varphi(\bx_i) - \varphi(\bx_j)| \ \leq \ \lambda |\bx_i - \bx_j|, \qquad \qquad \Forall i,j,
\end{equation}
which is easy to ensure by using the underestimate
\[
\varphi^0_{\lambda}(\bx) \ = \ \lambda \varphi^0(\bx).
\]
Unfortunately, the condition \eqref{eq:dependency_slack} is stencil-dependent and
in this section we explore its implications both on grids and triangular meshes; see Figure \ref{fig:stencils}.
\figstart
\begin{center}
\begin{tikzpicture}[inner sep=0pt,thick, dot/.style={fill=black,circle,minimum size=6pt},scale=1.1]

\def\Xshift{3.8};
\def\Ashift{0.4};
\def\Px{1.5};
\def\Py{-0.5};
\def\SQRTTWO{1.414213562373095};
\def\SQRTTHREE{1.732050807568877};
\def\Athick{semithick};
\def\Acolor{blue};
\def\Lthick{ultra thick};
\def\Lcolor{red};
\def\Dthick{very thick};
\def\Dcolor{black!50};

\begin{scope}[shift={(0,0)}]
\node [] at (0, 1.4) {A};
\draw[-, \Athick, \Acolor] (\Ashift, 0) [out=-90, in=0] to (0, -\Ashift);
\draw[dashed, \Dcolor, \Dthick] (1,0) to (0,-1);
\draw[->,\Lthick, \Lcolor, line width= 0.75mm] (0,0) -- (\Px, \Py);
\draw [-,thick,black] (-1,0) -- (1,0);
\draw [-,thick,black] (0,-1) -- (0,1);
\node[dot,black] at (-1,0) {};
\node[dot,black] (N1) at (1,0) {};
\node[dot,black] (N2) at (0,-1) {};
\node[dot,black] at (0,1) {};
\node[dot,black] (O) at (0,0) {};
\node [above=2mm, left=1.35mm] at (O) {$\bx_i$};
\node [above=1.5mm, right=1mm] at (N1) {$\bx_j$};
\node [below=1mm, left=1mm] at (N2) {$\bx_k$};
\node [blue, rotate=20] at (0.7, 0.3) {\footnotesize$\theta = 90^{\circ}$};
\end{scope}

\begin{scope}[shift={(\Xshift,0)}]
\node [] at (0, 1.4) {B};
\draw[-, \Athick, \Acolor] (\Ashift, 0) [out=-90, in=45] to (\Ashift*\SQRTTWO/2, -\Ashift*\SQRTTWO/2);
\draw[dashed, \Dcolor, \Dthick] (1,0) to (1,-1);
\draw[->,\Lthick, \Lcolor, line width= 0.75mm] (0,0) -- (\Px, \Py);
\draw [-,thick,black] (-1,0) -- (1,0);
\draw [-,thick,black] (0,-1) -- (0,1);
\draw [-,thick,black] (-1,-1) -- (1,1);
\draw [-,thick,black] (-1,1) -- (1,-1);
\node[dot,black] at (-1,0) {};
\node[dot,black] (N1) at (1,0) {};
\node[dot,black] at (0,-1) {};
\node[dot,black] at (0,1) {};
\node[dot,black] at (-1,-1) {};
\node[dot,black] at (-1,1) {};
\node[dot,black] (N2) at (1,-1) {};
\node[dot,black] at (1,1) {};
\node[dot,black] (O) at (0,0) {};
\node [blue, rotate=20] at (0.7, 0.3) {\footnotesize$\theta = 45^{\circ}$};
\node [above=1.5mm, left=2.5mm] at (O) {$\bx_i$};
\node [above=1.5mm, right=1mm] at (N1) {$\bx_j$};
\node [below=1mm, left=1mm] at (N2) {$\bx_k$};
\end{scope}

\begin{scope}[shift={(2*\Xshift,0)}]
\node [] at (0, 1.4) {C};
\draw[-, \Athick, \Acolor] (\Ashift, 0) [out=-90, in=30] to (\Ashift*0.5, -\Ashift*\SQRTTHREE/2);
\draw[->,\Lthick, \Lcolor, line width= 0.75mm] (0,0) -- (\Px, \Py);
\draw[dashed, \Dcolor, \Dthick] (1.2,0) to (1.2*0.500000,-1.2*\SQRTTHREE/2);
\begin{scope}[scale=1.2]
\node[dot,black] (O) at (0,0) {};
\draw[-,thick,black] (0,0) -- (1.000000,0.000000);
\node[dot,black] (N1) at (1.000000,0.000000) {};
\draw[-,thick,black] (0,0) -- (0.500000,\SQRTTHREE/2);
\node[dot,black] at (0.500000,\SQRTTHREE/2) {};
\draw[-,thick,black] (0,0) -- (-0.500000,\SQRTTHREE/2);
\node[dot,black] at (-0.500000,\SQRTTHREE/2) {};
\draw[-,thick,black] (0,0) -- (-1.000000,-0.000000);
\node[dot,black] at (-1.000000,-0.000000) {};
\draw[-,thick,black] (0,0) -- (-0.500000,-\SQRTTHREE/2);
\node[dot,black] at (-0.500000,-\SQRTTHREE/2) {};
\draw[-,thick,black] (0,0) -- (0.500000,-\SQRTTHREE/2);
\node[dot,black] (N2) at (0.500000,-\SQRTTHREE/2) {};
\node [above=1.5mm, left=1.5mm] at (O) {$\bx_i$};
\node [above=1.5mm, right=1mm] at (N1) {$\bx_j$};
\node [below=1mm, left=1mm] at (N2) {$\bx_k$};
\node [blue, rotate=20] at (0.6, 0.25) {\footnotesize$\theta = 60^{\circ}$};
\end{scope}
\end{scope}

\begin{scope}[shift={(3*\Xshift, 0)}]
\node [] at (0, 1.4) {D};
\draw[-, \Athick, \Acolor] (\Ashift*0.923076923076923, \Ashift*0.384615384615385) [out=-67.380135, in=27.45707] to (\Ashift*0.461083967620704, -\Ashift*0.887356509416114);
\draw[->,\Lthick, \Lcolor, line width= 0.75mm] (0,0) -- (\Px, \Py);
\draw[dashed, \Dcolor, \Dthick] (1.2*0.6,1.2*0.25) to (1.2*0.4500000,-1.2*\SQRTTHREE/2);
\begin{scope}[scale=1.2]
\node[dot,black] (O) at (0,0) {};
\draw[-,thick,black] (0,0) -- (0.600000,0.25000); 
\node[dot,black] (N1) at (0.600000,0.250000) {};
\draw[-,thick,black] (0,0) -- (-0.1500000,0.45);
\node[dot,black] at (-0.150000,0.45) {};
\draw[-,thick,black] (0,0) -- (-0.850000,0.100000);
\node[dot,black] at (-0.850000,0.100000) {};
\draw[-,thick,black] (0,0) -- (-0.200000,-0.4);
\node[dot,black] at (-0.20000,-0.4) {};
\draw[-,thick,black] (0,0) -- (0.4500000,-\SQRTTHREE/2);
\node[dot,black] (N2) at (0.4500000,-\SQRTTHREE/2) {};
\node [above=2mm, left=1mm] at (O) {$\bx_i$};
\node [above=1.5mm, right=1mm] at (N1) {$\bx_j$};
\node [below=1mm, left=1mm] at (N2) {$\bx_k$};
\node [blue, rotate=23] at (0.45, 0.55) {\footnotesize$\theta \approx 85^{\circ}$};
\end{scope}
\end{scope}
\end{tikzpicture}
\end{center}
\caption{\footnotesize Four computational stencils for a node $\bx_{i}$: four-point and eight-point stencils
on a Cartesian grid (A and B), a six-point stencil on a regular triangular mesh (C),
and a five-point stencil on a unstructured triangular mesh (D).}
\label{fig:stencils}
\end{figure}
Suppose that $\bx_i$'s characteristic is straddled by $\bx_j$ and $\bx_k$,
where $\theta$ is the angle $\angle \bx_j \bx_i \bx_k$ and $\gamma$ is the angle between
the characteristic and $\bx_i \bx_j$.
Since for the Eikonal equation the characteristics coincide with gradient lines
and our numerical approximation is piecewise linear,
it is easy to show that\footnote{
We note that this observation was previously used in \cite{Vlad_MSSP}
to find the conditions for applicability of Dial-like algorithms.}
$$
(U_i-U_j) \ = \ \cos\gamma \, |\bx_i - \bx_j| \, / \, f(\bx_i) \ \geq \ \cos(\theta) h  \, / \, f(\bx_i).$$
The latter lower bound is actually sharp when the characteristic is parallel to $\bx_i \bx_k$
and $h = |\bx_i - \bx_j|$.
This means that
\begin{itemize}[leftmargin=6mm]\itemsep-5pt
\item $\varphi^0_0\equiv0$ is the only consistent underestimate for
stencil 
\ref{fig:stencils}A
(i.e., $\lambda=\frac{1}{F_2} \cos\frac{\pi}{2}=0$).  Thus, SA*-FMM will usually result in additional errors.
\item
for stencil
\ref{fig:stencils}B,
$\varphi^0_{\lambda}$ becomes consistent for
 $\lambda \leq \frac{1}{F_2 \sqrt{2}} = \frac{\cos\frac{\pi}{4}}{F_2}$.
 \item for a local stencil used on a general triangular mesh (e.g., Figure \ref{fig:stencils}D),
 if $\bar{\theta} < \frac{\pi}{2}$ is an upper bound on
 angles $\theta$ present in the mesh, then $\varphi^0_{\lambda}$ becomes consistent for
 $\lambda \leq \cos(\bar{\theta}) / F_2$.
 \end{itemize}
Interestingly, the importance of consistency conditions for SA*-FMM was only recently recognized in \cite{Yershov_1, Yershov_2},
while all the prior versions treated this in an ad-hoc fashion.  To summarize:
\begin{itemize}[leftmargin=6mm]\itemsep-1pt
\item $\mbox{[2005]}$ Ferguson \& Stentz \cite{FergusonStentz} adapt D* algorithms to continuous optimal trajectory problems discretized on stencil \ref{fig:stencils}B.
    They also introduce an SA*-type technique within D* to further improve the performance.
    The method relies on $\varphi_{\lambda}^0$ to ensure the right order of gridpoint processing, but the choice of $\lambda$ is never explained explicitly.
\item $\mbox{[2005, 2006, 2008]}$ Peyr\'{e} \& Cohen \cite{Peyre_coarse, Peyre_landmark, Peyre_Geodesic} adapt SA* for FMM on stencil \ref{fig:stencils}A using underestimates $\varphi^C_{\lambda, R}$ and $\varphi^L$.  The authors acknowledge that their version of SA*-FMM produces additional errors
and experimentally study the dependence of these errors on
the tightness of underestimates.
However, they do not analyze the behavior of errors under grid refinement.
\item $\mbox{[2007]}$ P\^{e}tr\`{e}s \cite{Petres_thesis} defines an SA*-FMM variant
on a stencil \ref{fig:stencils}A
with $\varphi^0_{\lambda}$. A brief description of a bi-directional version of SA*-FMM is also included.
P\^{e}tr\`{e}s acknowledges that, for large $\lambda$, the additional (SA*-induced) errors can be larger than discretization errors, but does not analyze how that ratio changes under grid refinement.
\item $\mbox{[2011, 2012]}$ Yershov \& LaValle \cite{Yershov_1, Yershov_2} use FMM with acute triangular meshes  as in \cite{SethVlad_trimesh} in $\R^2$, $\R^3$, and on two dimensional manifolds. Their problems of interest use $f \equiv 1$ on a domain with obstacles.
The authors use SA*-FMM with $\varphi^0_{\lambda}$ and prove that $\lambda = \cos(\bar{\theta})$ 
guarantees absence of additional errors.
The authors state that in their experiments SA*-FMM processed only 50\% of the gridpoints processed by FMM.
\end{itemize}

\begin{remark}
\label{rem:store_phi}
Every implementation of SA*-FMM also involves an ``efficiency versus memory footprint'' tradeoff.
Since the binary heap of \CON{} nodes is sorted based on $U+\varphi$, every heap-maintenance operation
relies on availability of $\varphi(\bx)$ for many nodes on the heap.  This happens whenever a \FAR{} node becomes \CON,
or a \CON{} node receives a smaller value or becomes \ACC.  If $\varphi$ is re-computed each time it is needed (e.g., by \eqref{naive under heuristic}), this introduces a noticeable overhead to each heap operation.  An alternative (to cache $\varphi$ the first time it is computed for each \CON{} node) is certainly more efficient, but significantly increases the memory footprint, particularly on larger grids and in higher-dimensional problems.  In Section \ref{s:examples}
we include the performance data for both of these approaches.
\end{remark}

\subsection{Accuracy or efficiency?}
\label{ss:accuracy_efficiency}
The errors introduced by any A*-type restriction techniques
are not very surprising
once we recall that the numerical viscosity of the discretization results in a large
domain of computational dependency for $U(\bs).$  To formalize this argument,
we will consider a {\em dependency digraph} $G$ built on the nodes of $X$.
For $\bx_i$ and $\bx_j \in N_i$, $G$ includes an arc $(\bx_i, \bx_j)$ if $\bx_i$ {\em directly depends}
on $\bx_j$; i.e., if $U_j$ is needed to compute $U_i$.
We will say that $\bx_i$ {\em depends} on $\bx_j$ if there exists a path in $G$ from
$\bx_i$ to $\bx_j$.
Due to the monotone causality of the upwinding discretization, this dependence implies $U_i > U_j$;
thus, $G$ is acyclic and every path on it leads to $\bt$.
We will also use $G(\bs)$ to denote the subset of $G$ reachable from $\bs$.
\fillmeup
Consider {\em any} domain restriction technique that results in accepting only nodes from some $\hat{X} \subset X$
and produces some numerical approximation of the value function $U^*(\bx)$ for each $\bx \in \hat{X}$.
If $G(\bs) \not \subset \hat{X}$, we cannot expect $U^*(\bs)$ to be the same as $U(\bs)$
produced by FMM on the full $X$.
In other words, if we insist on avoiding {\em any} additional (restriction-induced) errors,
this typically results in severe constraints on the efficiency of the domain restriction.
\vspace*{-3mm}
\input{./tikz/dependency_graph.tex}
To illustrate this point, we consider a very simple problem with $\bt$ and $\bs$ in opposite corners of
$\bar{\Omega}$; see Figure \ref{fig:dgraph}A.  With $f \equiv 1$ the optimal trajectory from every starting position
$\bx$ is just a straight line to $\bt$.  But it is easy to see that $G(\bs)$ includes all nodes in $X$; thus, {\bf any} restriction
will result in $U^*(\bs) > U(\bs)$.  We emphasize that this phenomenon has nothing to do with the
non-existence of consistent $\varphi$ for the 4-point stencil discretization on a cartesian grid.
Figure \ref{fig:dgraph}C shows an equivalent example on a regular triangular mesh.
As explained in \cite{Yershov_1, Yershov_2}, taking
$\lambda = 1/2$ will ensure that $\varphi^0_{\lambda}$ is consistent for this problem and stencil.  As a result,
SA*-FMM will produce $U^*(\bs) = U(\bs)$, but at the cost of accepting exactly the same set of nodes\footnote{
\zachEdit{
The computational savings of $50\%$ were reported in \cite{Yershov_1, Yershov_2} for $f\equiv1$ on the domain with obstacles.  Based on the above discussion, such savings are in fact highly dependent on the size of $G(\bs)$ relative to the total number of meshpoints.  This percentage is, in turn, defined by the type of the mesh and the positions of $\bs$ and $\bt$ relative to the obstacles.
}
}
as FMM
(i.e., $\hat{X}=X$).
\fillmeup
For these reasons, we believe that asking for $U^*(\bs) = U(\bs)$ on every fixed grid is unrealistic
and makes the domain restriction much less efficient.  A more attractive strategy is to
ensure that $|U^*(\bs)-U(\bs)|$ is small relative to discretization errors and $U^*(\bs) \to u(\bs)$
as $h \to 0.$    This can be ensured provided $\hat{X}$ covers a neighborhood of
the $(\bs,\bt)$-optimal trajectory {\bf and}  $U^* = \hat{U}$, the solution that FMM would have produced on $\hat{X}$.
This is precisely what AA*-FMM does when used with an inconsistent $\varphi$;
on the other hand, the different order of acceptance under SA*-FMM typically results in $U^* \neq \hat{U}$
and a lack of convergence (or a very slow convergence  -- see Section \ref{ss:example_constant}) under grid refinement.

\subsection{
\zachEdit{The new method:}
AA*-FMM}
\label{ss:AA-FMM}

The AA* technique is also quite easy to use in the continuous setting as a modification of the standard FMM.
Our current implementation is based on the upwind discretization \eqref{Eikonal approx} on a standard 4-point stencil,
but the required FMM-changes would be the same for any other monotone-causal stencil (either on a grid or on a simplicial mesh).
Similarly to a version of AA* for graphs:
\begin{itemize}[leftmargin=6mm]\itemsep-2pt
\item we rely on an overestimate $\Upper$ of the time along the $(\bs,\bt)$-optimal trajectory (see section \ref{ss:apriori_restrict});
\item
\zachEdit{the \CON{} nodes are still sorted by $U$-values, and thus}
the underestimate $\varphi$ does not have to satisfy any consistency conditions;
\item we only mark a node $\bx$ as \CON{} if it satisfies the \textsl{``A* condition''}
\begin{equation}\label{cont A* condition}
U(\bx) \ + \ \varphi(\bx) \ \ \leq \ \ \Upper.
\end{equation}
\end{itemize}
This simple criterion allows for AA* to be adapted to {\em both} label-setting and label-correcting methods.
If $\varphi$ satisfies the consistency condition \eqref{eq:contin_consistent}, the values produced by AA*-FMM are also the same as those resulting from FMM, but on a smaller (\ACC) subset of the grid $\hat{X}$.  However, AA*-FMM can be also used even if $\varphi$ does not satisfy \eqref{eq:contin_consistent}, which results in additional errors but does not prevent the convergence to viscosity solution of the PDE under grid refinement.
\fillmeup
To illustrate the efficiency of the AA*-type domain restrictions, we consider the boundaries of 3 sets:
\begin{equation}\label{C sets}
\begin{array}{rcl}
C_1 & = & \cbraces{\bx \, \mid \, \abs{\bx - \bs} + \abs{\bx - \bt} \leq F_2 \Upper}, \\
C_2 & = & \cbraces{\bx \, \mid \, u(\bx) + \varphi(\bx) \leq \Upper} \quad \subseteq \ C_1, \\
C_3 & = & \cbraces{\bx \, \mid \, u(\bx) + v(\bx) \leq \Upper} \quad \subseteq \ C_2.
\end{array}
\end{equation}
All three are shown in Figure \ref{fig:UplusV} for the example introduced in section \ref{ss:apriori_restrict}.
Both $u$ and $v$ are numerically approximated by FMM on the entire domain $\bar{\Omega}$.
The boundaries $\partial C_i$ are shown by bold lines for $\Upper_1, \Upper_2,$ and $\Upper_3$.
The set $C_1$ corresponds to the ellipse defined for each specific $\Upper$.
The set $C_2 \cap L$ is roughly the set accepted by AA*-FMM with the specified $\Upper$ {\bf and}
the underestimate $\varphi^0$.
The set $C_3 \cap L$ is the minimum part of the domain that AA*-FMM would have to accept
with that $\Upper$ even if we were to use the perfect $\varphi = \bar{\varphi}_1.$
If $\Upper=\Upper_3$, then $C_3$ collapses to the optimal trajectory.\fillmeup
\figstart
\begin{center}
\begin{adjustwidth}{-1.55cm}{}
\setlength{\tabcolsep}{1pt}
\begin{tabular}{c c c}
A. {\em Na\"{i}ve $\Upper_1 = \abs{\bs-\bt}/F_1$} &
B. {\em Integral-based $\Upper_2$} &
C. {\em Exact $\Upper_3 = U(\bs)$} \\
\iftoggle{usecolor}{%
\includegraphics[scale=\UplusVFigScale]{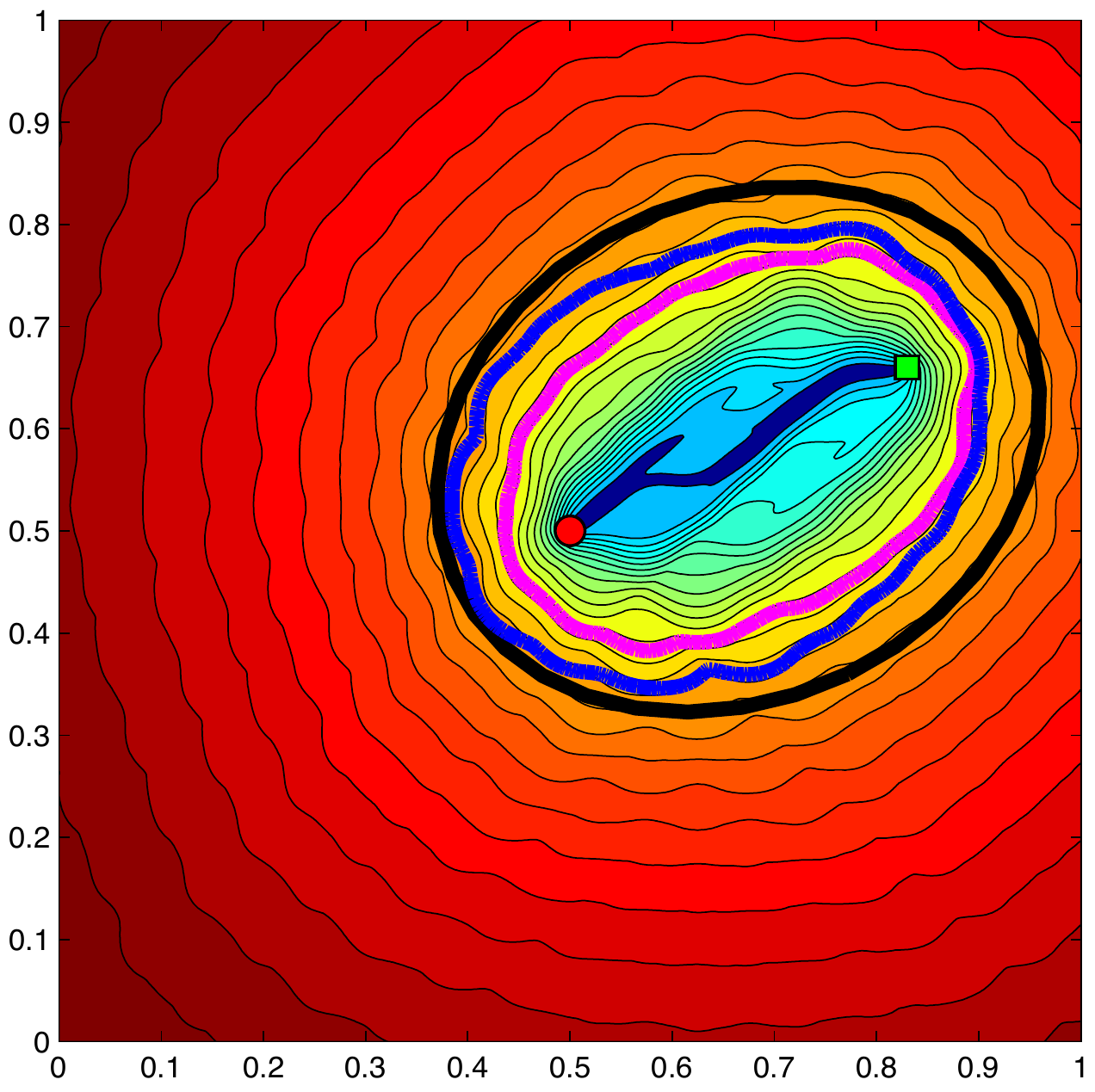} &
\includegraphics[scale=\UplusVFigScale]{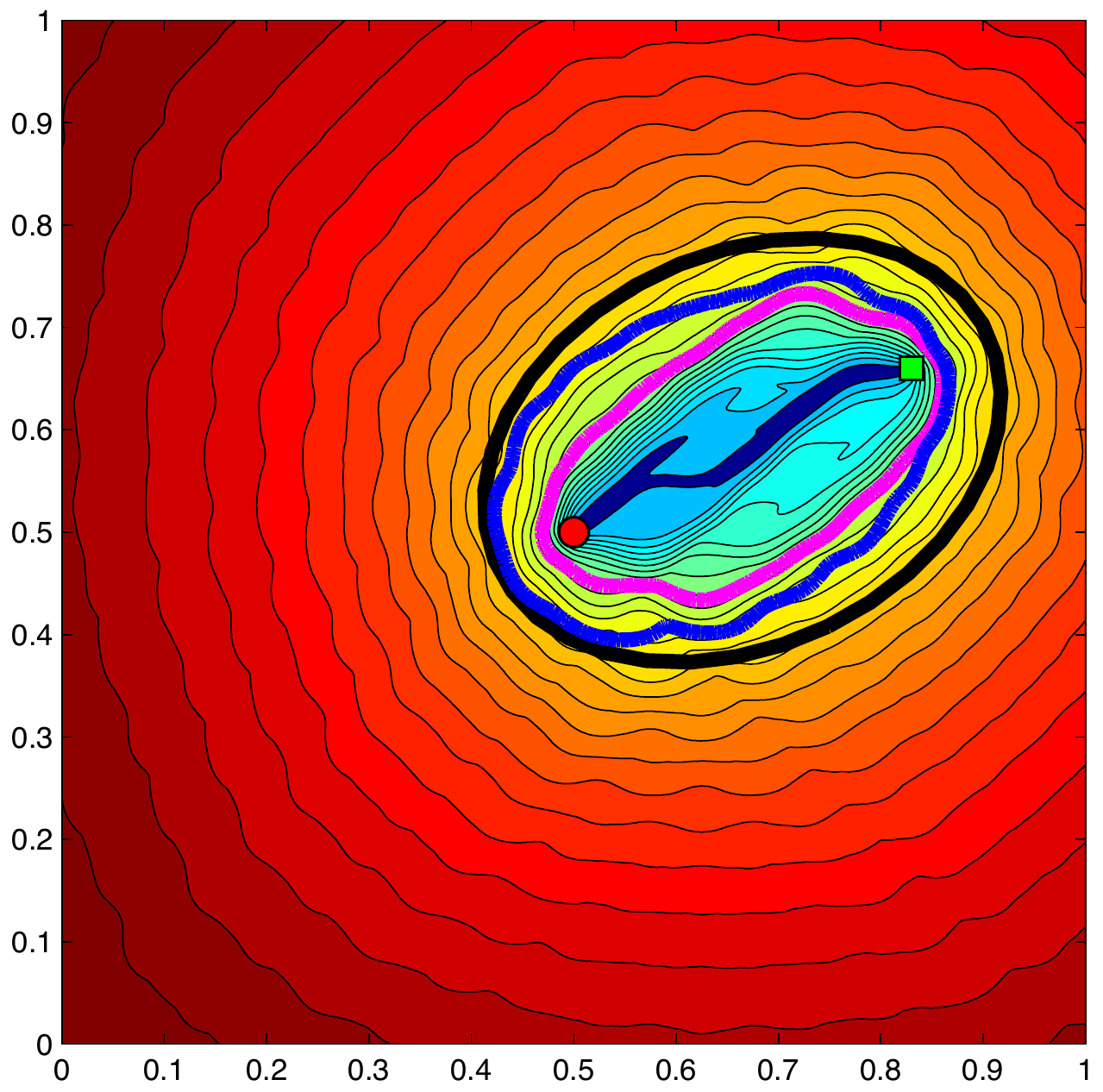} &
\includegraphics[scale=\UplusVFigScale]{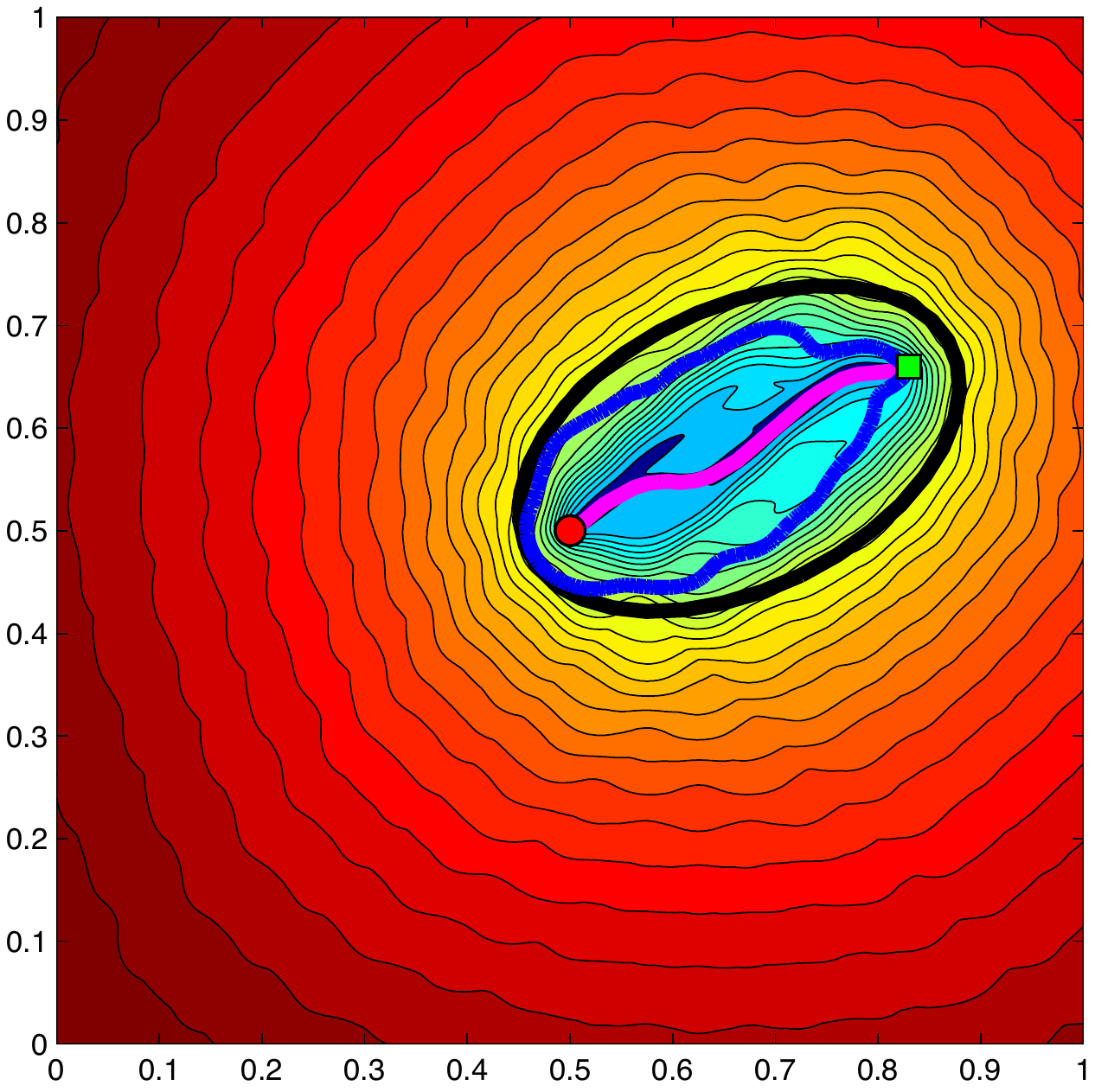}
}{%
\includegraphics[scale=\UplusVFigScale]{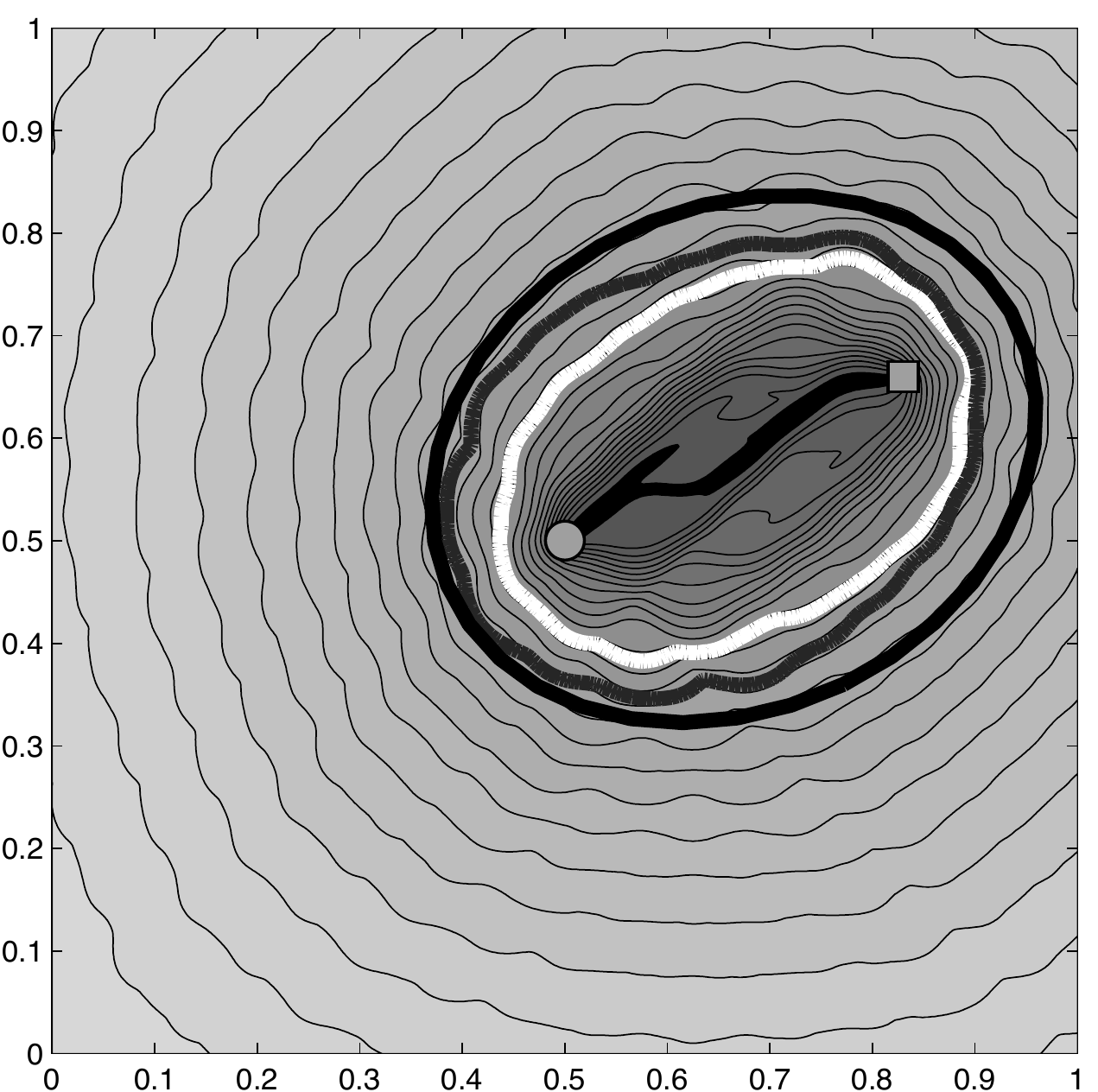} &
\includegraphics[scale=\UplusVFigScale]{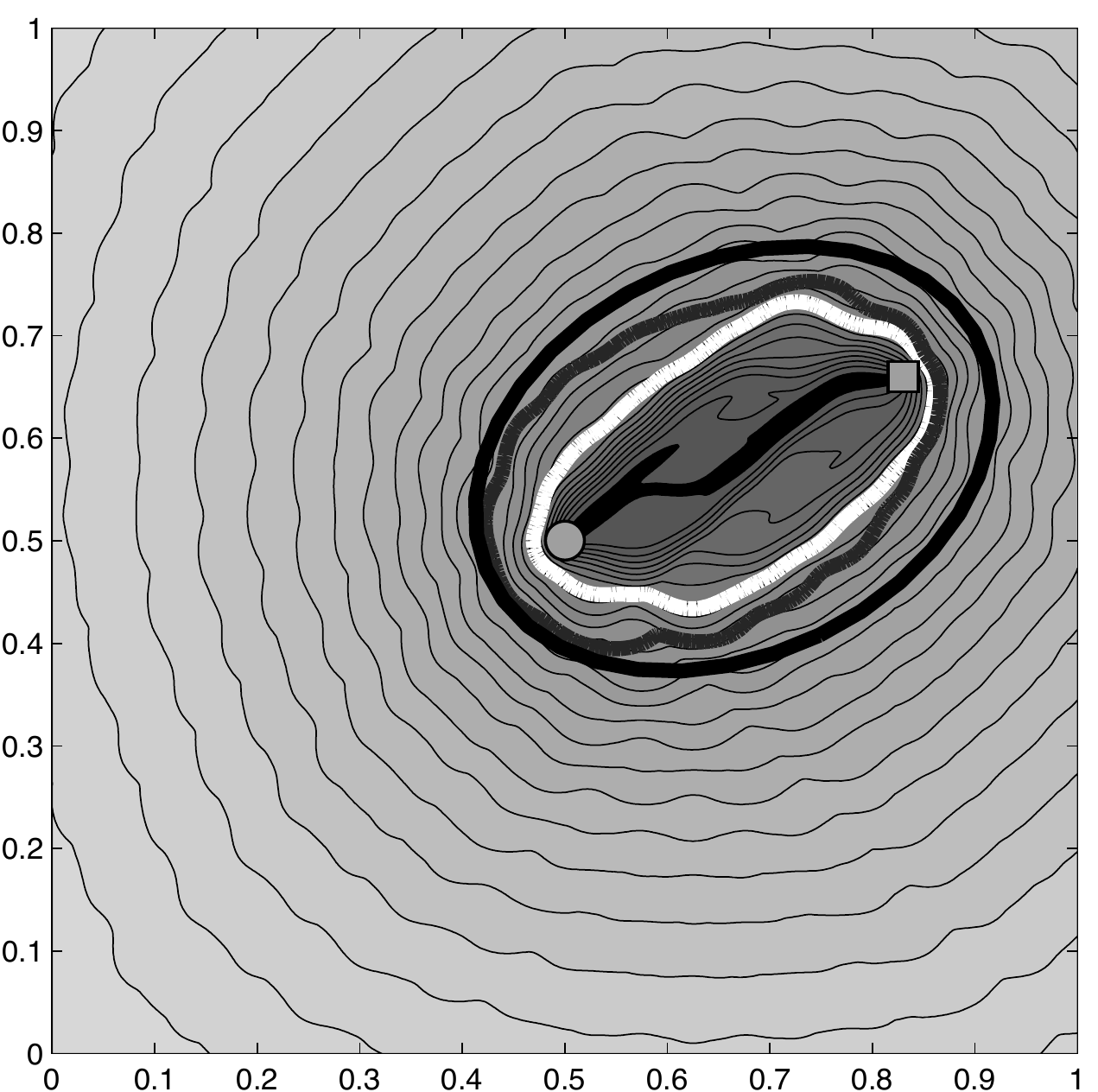} &
\includegraphics[scale=\UplusVFigScale]{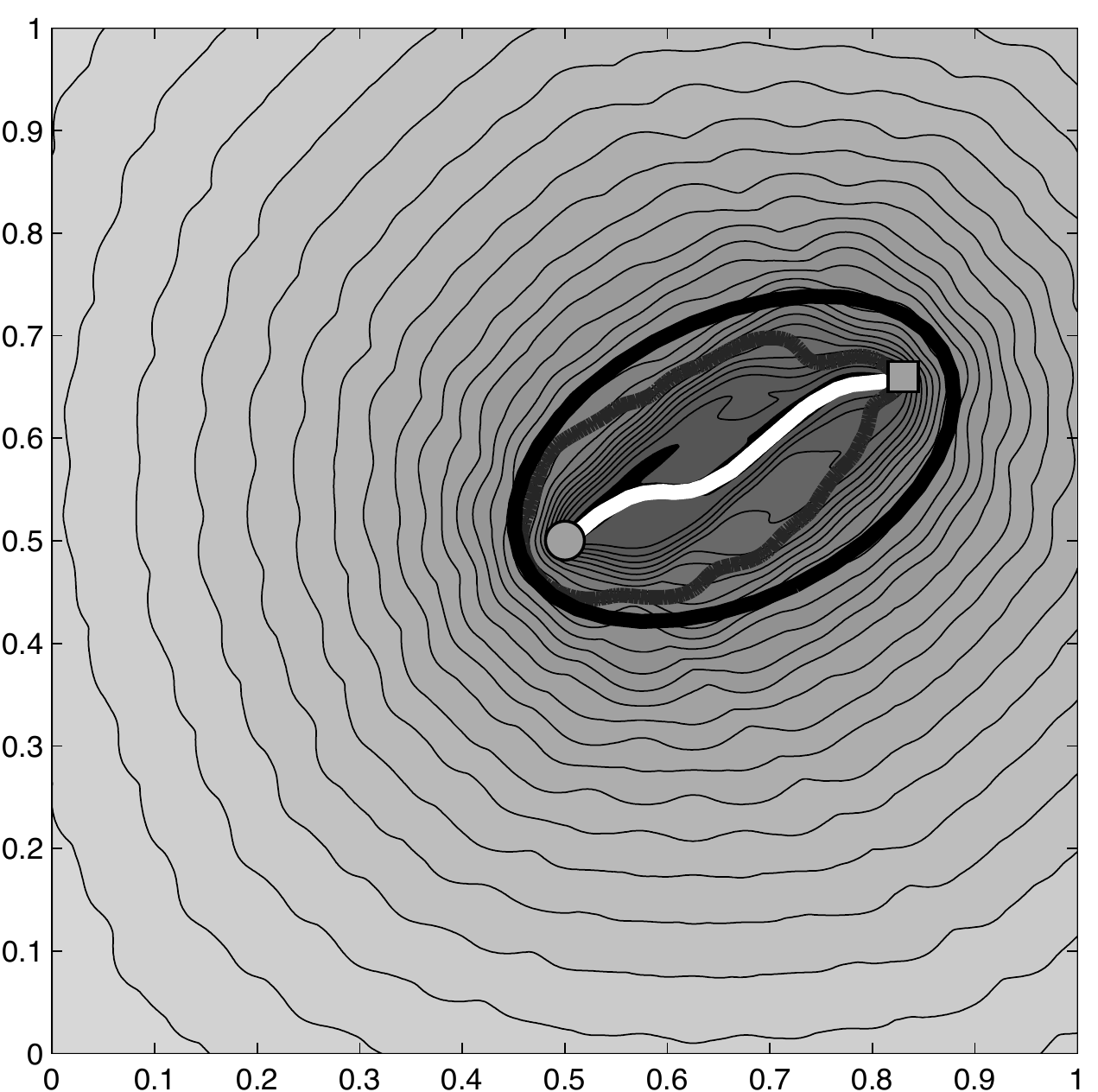}
}
\end{tabular}
\end{adjustwidth}
\caption{\footnotesize Level sets for $u+v$ computed by FMM on a $401^2$ grid. In each subfigure,
the bold lines show the boundaries of $C_1$, $C_2$, and $C_3$ (from out-to-in) for the specified $\Upper$.
}
\label{fig:UplusV}
\end{center}
\end{figure}
This Figure also clearly demonstrates the importance of an accurate $\Upper$ for the efficiency of the domain restriction in AA*-FMM. If the initial $\Upper$ is not particularly tight, the performance can be further improved by decreasing $\Upper$ {\em dynamically} in a Branch\&Bound fashion.  This approach relies on availability of a \textsl{``heuristic overestimate''}
$\psi(\bx) \geq v(\bx), \Forall \bx \in \bar{\Omega}.$  For a convex domain, our implementation uses an obvious (and cheaply computed) overestimate
\begin{equation}\label{overestimate function}
\psi(\bx) \ = \ \abs{\bx-\bs} \, / \, F_1,
\end{equation}
which is consistent with our definition of $\Upper_1$ in \S\ref{ss:apriori_restrict}.
Each time a gridpoint $\bx$ is \ACC{}, this AA*BB-FMM algorithm attempts to decrease $\Upper$ as follows:
\begin{equation}\label{cont_update_upper}
\Upper \ \ \gets \ \ \min\cbraces{\Upper, \ U(\bx) + \psi(\bx)}.
\end{equation}
A better $\psi$ can be obtained by numerically integrating the slowness along any feasible $(\bs,\bt)$ trajectory,
or even using PMP-based techniques.
Performing such computations for every \ACC{} gridpoint would be clearly prohibitive, but using it every so often
(in addition to the systematic use of formula \eqref{overestimate function}) could be a useful technique to investigate in the future.

\begin{remark}
\label{rem:terminate_before_S}
On graphs, using the exact ``underestimate'' $\varphi = V$ simply resulted in accepting only those nodes that lie on the optimal path. In the continuous case, the optimal trajectory does not pass through every node it directly depends on. Even for a node $\bx$ immediately next to the optimal $(\bs,\bt)$ trajectory, if the underestimate $\varphi$ is very accurate, this may cause the A* condition \eqref{cont A* condition} to fail (resulting in $\bx$ never becoming \CON). This situation rarely arises in practice -- e.g., with $\varphi=\varphi^0$ this can happen only if $\Upper$ is exact and the speed $f(\bx)=F_2$ on some neighborhood of $\bs$.\\
We have used two different approaches to address this issue:
\begin{itemize}[leftmargin=6mm]\itemsep-3pt
\item Introduce a numerical tolerance factor; i.e., use $ (1+ \epsilon_{tol}h^{\mu}) \Upper$ instead of $\Upper$. Our analysis of restriction-caused errors in Section \ref{s:it_works} applies as long as $\epsilon_{tol} > 0$ and $\mu \in [0, 1/2)$.
All the numerical tests in Section \ref{s:examples} rely on this approach and confirm the convergence
even with $\mu = 1/2$.
\item Alternatively, if $\bs$ has not been accepted by the end of AA*-FMM, one can simply take $U(\bs) = \Upper$. Since $\Upper$ was obtained as a cost of some known $(\bs,\bt)$ trajectory, that trajectory is then declared optimal (at least for the current grid resolution).
\end{itemize}
\end{remark}


\section{Numerical results}
\label{s:examples}

All algorithms were implemented in C++  and compiled with {\tt g++} version 4.2 on a Macbook Pro 
(4 GB RAM and an Intel Core i7 processor -- four 2 GHz cores). To make the benchmarking results as compiler/platform-independent as possible, we have turned off all
compiler optimizations (option \texttt{-O0}).
For all of the 2D and 3D examples, $\bar{\Omega} = [0,1]^n$ is discretized by a uniform cartesian grid with $m^n$ gridpoints.
To test the numerical approximation errors in distance computations (Section \ref{ss:example_constant}), we have used an analytical solution $u(\bx) = \abs{\bx - \bt}$. In all other cases, the `ground truth' $u$ was computed numerically by FMM on the full domain using the `highly' refined grid:
\begin{center}
\tabcolsep=0.52cm
\begin{tabular}{| c | c | c |}
\hline
\bf Dimension &
\bf Ground truth &
\bf Resolutions considered \\
\hline
$ n = 2 $ &
$m = 6401$ &
$m = 101, \ 201, \ 401, \ 801, \ 1601 \mbox{ and } 3201$ \\
\hline
$ n = 3 $ &
$m = 401$ &
$m = 26, \ 51, \ 101, \ 201 \ \mbox{ (and 401 when $f\equiv 1$)}$ \\
\hline
\end{tabular}
\end{center}
\textbf{Accuracy metrics.} Since we are interested in single-source / single-target problems, all accuracy metrics are based on comparing various numerical approximations and the true solution at a single point $\bs$.
As before, we use $U$ to denote the solution produced by FMM on the entire $X$ while $U^*$ denotes the solutions produced by the respective
A*-modifications of FMM.  We base our comparison on the following ``relative errors'' for each example:
\[
\begin{array}{r c l c l}
\discErr & = &
\mbox{relative discretization error (DE) at $\bs$ using FMM} & = & \abs{U(\bs) - u(\bs)} \ / \ u(\bs) \\
\astarErr & = &
\mbox{relative error at $\bs$ when using A*} & = & \abs{U^*(\bs) - u(\bs)} \ / \ u(\bs) \\
\astarErrOnly & = &
\mbox{relative error at $\bs$ explicitly due to A*} & = & \braces{U^*(\bs) - U(\bs)} \ / \ U(\bs) \ \geq \ 0.
\end{array}
\]
Since the upwind discretization is convergent, $U \to u$ and thus $\discErr \to 0$ as $h\to 0$.
Correspondingly, a successful domain restriction should have $\astarErr \to 0$ (and thus $\astarErrOnly \to 0$) as $h\to 0$.
\fillmeup
To measure the efficiency of the domain restriction, we also define
\[
\mathcal{P} \ = \ \mbox{fraction of domain computed} \ = \ (\mbox{\# of gridpoints \ACC{} or \CON}) \ / \ m^n.
\]
{\bf Underestimate functions.} In all examples except for section \ref{ss:observers}, we rely on  na\"{i}ve and scaled-oracle heuristics (i.e., $\varphi^0_{\lambda}$ and $\bar{\varphi}_{\lambda}$).
We consider this sufficient since the accuracy of the AA* approach is really underestimate-neutral (though the efficiency is clearly dependent on both $\varphi$ and $\Upper$).  We expect that the results based on any other heuristics (including those in \cite{Peyre_coarse, Peyre_landmark, Peyre_Geodesic}) will be qualitatively similar.

\subsection{Constant speed $f\equiv 1$ in 2D and 3D}
\label{ss:example_constant}
In the constant speed case, all characteristics are straight lines and the the na\"{i}ve heuristic coincides with the actual time-to-go (i.e., $\varphi^0=v$).  In this subsection we use the underestimate $\varphi=\varphi^0_{\lambda}$ and
place $\bs$ and $\bt$ at opposite corners of $\bar{\Omega}$.  Our goal is to test the effect of
$\lambda \in [0,1]$ on the accuracy and efficiency  for different grid resolutions $h=1/(m-1)$.
In testing AA*-FMM, we use $\Upper=(1+ \epsilon_{tol}h^{\mu})|\bs-\bt|$, where
$\mu = 1/2$ with $\epsilon_{tol} = 1/4$  in 2D and $\epsilon_{tol} = 1/3$ in 3D.
This ensures that AA*-FMM does not terminate before $\bs$ is \ACC \ and also results in the set $C_3=C_2$ shrinking to a straight line as $h\to0$.
\fillmeup
Figure \ref{fig:constCont} shows the level sets of $U^*$ computed by SA*-FMM and AA*-FMM on a 2D grid with
$m=351$ and $\lambda \in \{0.25, \, 0.5, \, 0.75, 1 \}$.  The non-smoothness of the level-sets produced by SA*-FMM
is due to the additional errors introduced by that method.  For $\lambda = 1$ these errors also result in
a larger $\mathcal{P}$ -- despite the fact that our AA*-FMM has a built-in ``restriction slackness'' (since $\Upper > u(\bs)$).
Figure \ref{fig:constConv} shows $\log_{10}(\astarErrOnly)$ as $m$ and $\lambda$ vary. For $\lambda \geq 0.55$, the errors produced by SA*-FMM are not only relatively large, but also do not decrease much under grid refinement. In contrast, the errors in AA*-FMM decrease quite rapidly even though the set $C_3$ is also shrinking as $h\to0$; see also the convergence analysis in Section \ref{s:it_works}.
\fillmeup
\alexEdit{Since in this example $G(\bs)=X$, additional errors should result from {\em any} domain restriction.  However, the finite-precision of the floating point arithmetic results in ``zero domain restriction errors'' (white spaces in Figure \ref{fig:constConv}) for AA*-FMM even for many test runs where $G(\bs)$ is partly truncated.  E.g., see the case $(\lambda=0.75, \, m=351)$ in Figures \ref{fig:constCont} and \ref{fig:constConv}.}
\fillmeup
Figures \ref{fig:constNaive} and \ref{fig:const3dNaive} show the full accuracy/efficiency data holding $\lambda = 1$ and varying $m$.
\iftoggle{usecolor}{%
\figstart
\begin{center}
\textbf{Contours of $u$ produced by A*-FMM using $\varphi_{\lambda}^0$}
\begin{adjustwidth}{-0.9cm}{}
\tabcolsep=1pt
\begin{tabular}{c c c c c}
&
\small \em $\mathit{\lambda = 0.25}$ &
\small \em $\mathit{\lambda = 0.50}$ &
\small \em $\mathit{\lambda = 0.75}$ &
\small \em $\mathit{\lambda = 1.00}$ \\
\begin{sideways}\textbf{\Large\hspace{1.5cm}SA*}\end{sideways}&
\includegraphics[scale=0.3]{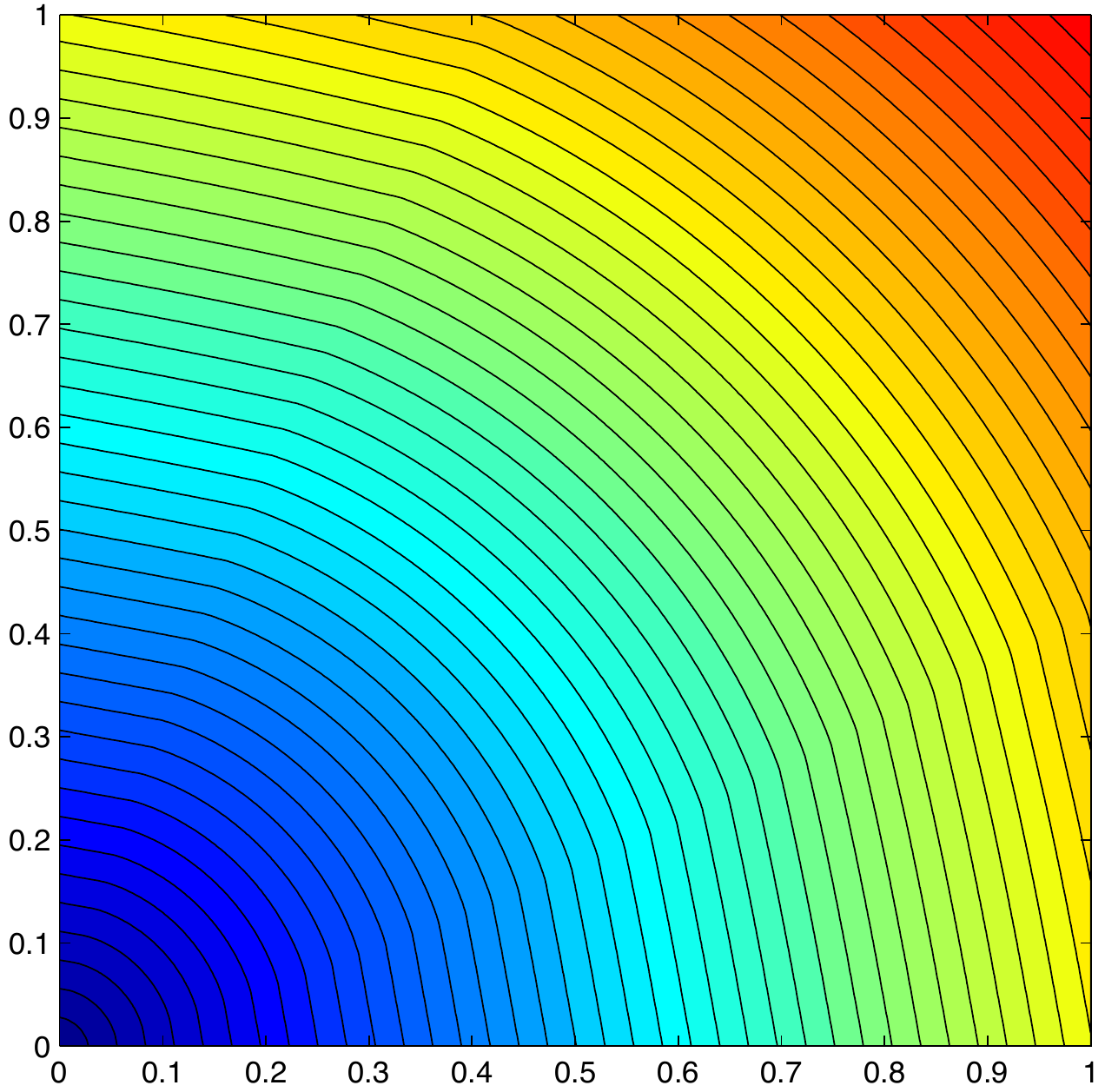} &
\includegraphics[scale=0.3]{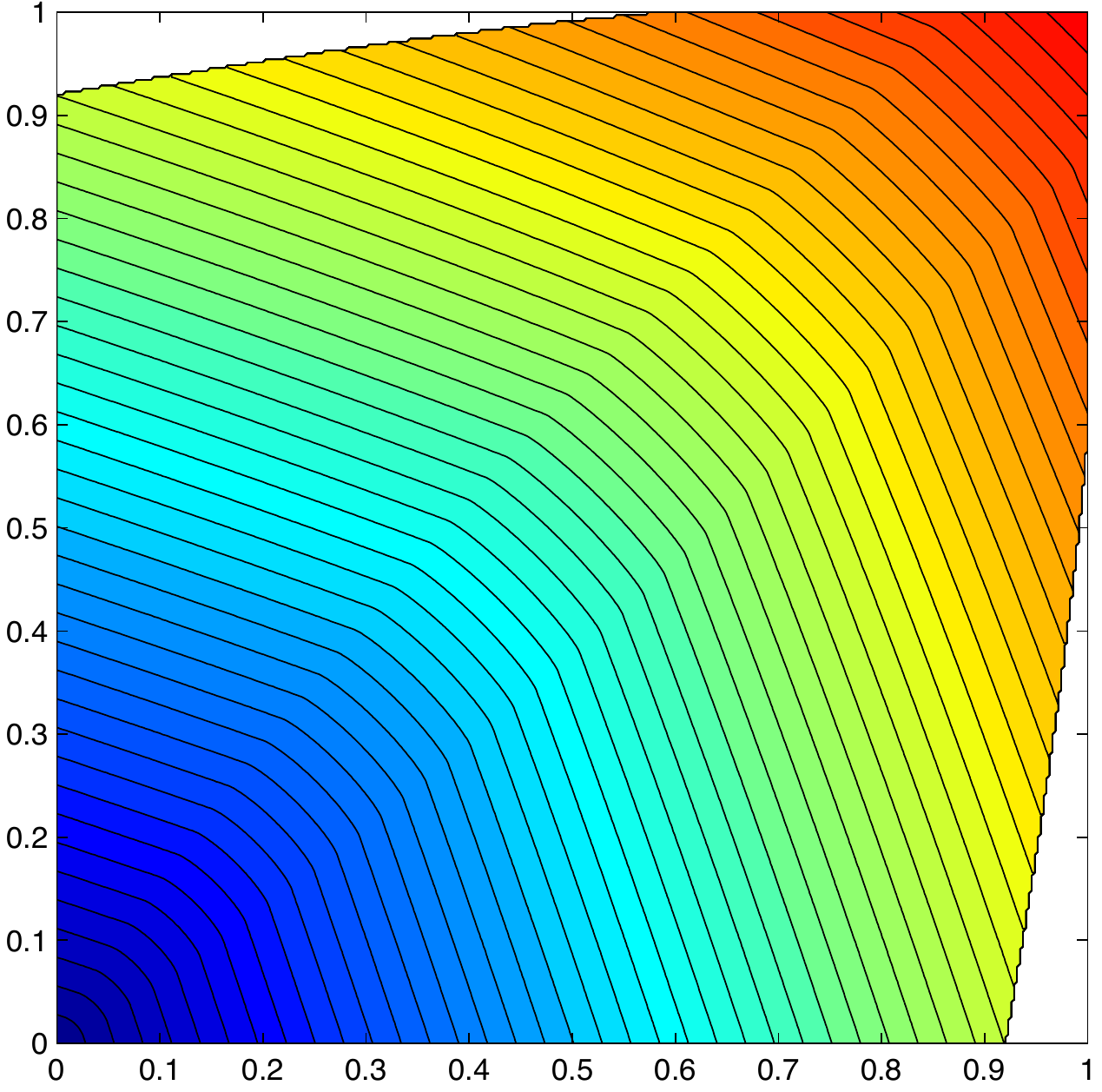} &
\includegraphics[scale=0.3]{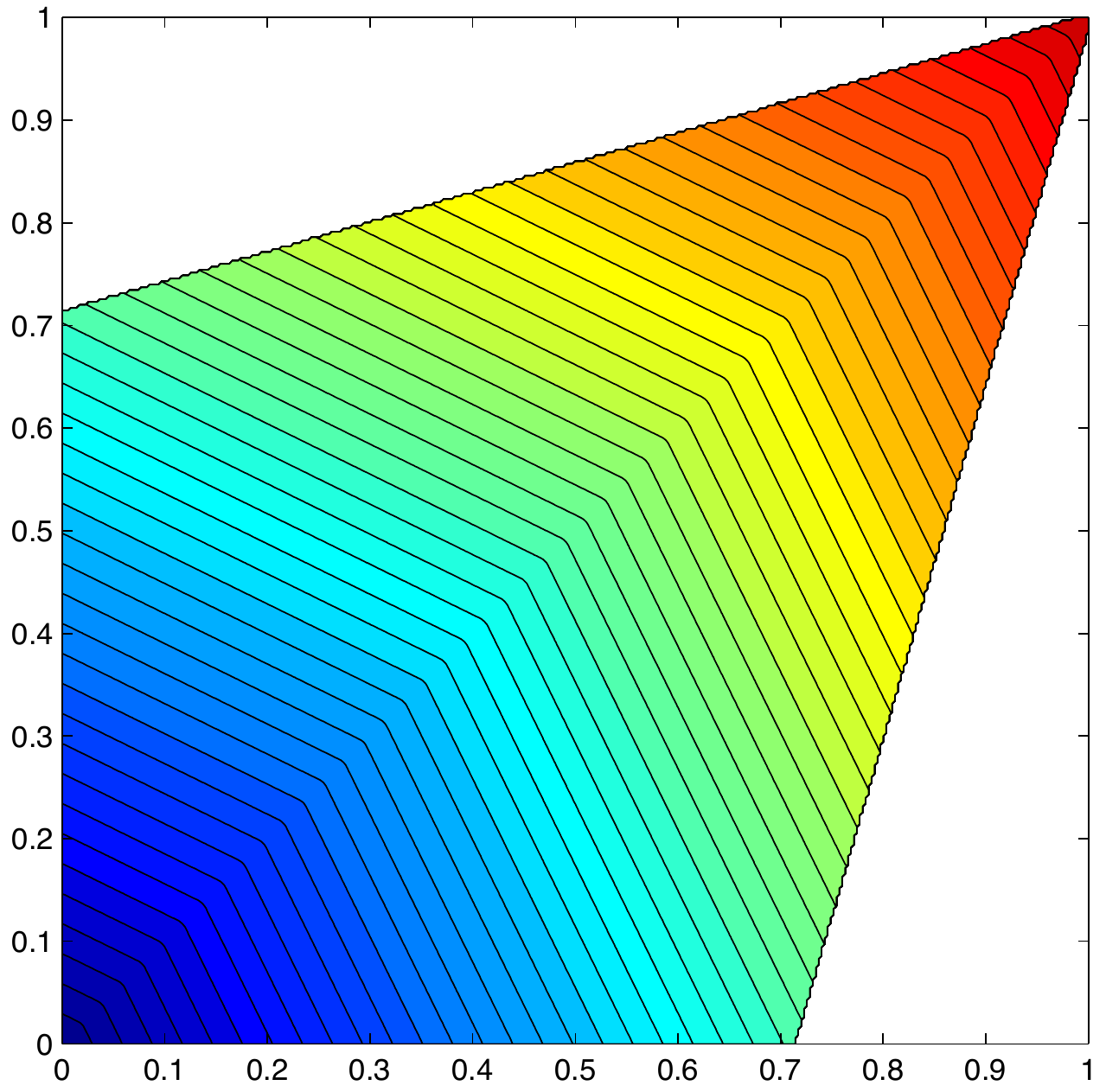} &
\includegraphics[scale=0.3]{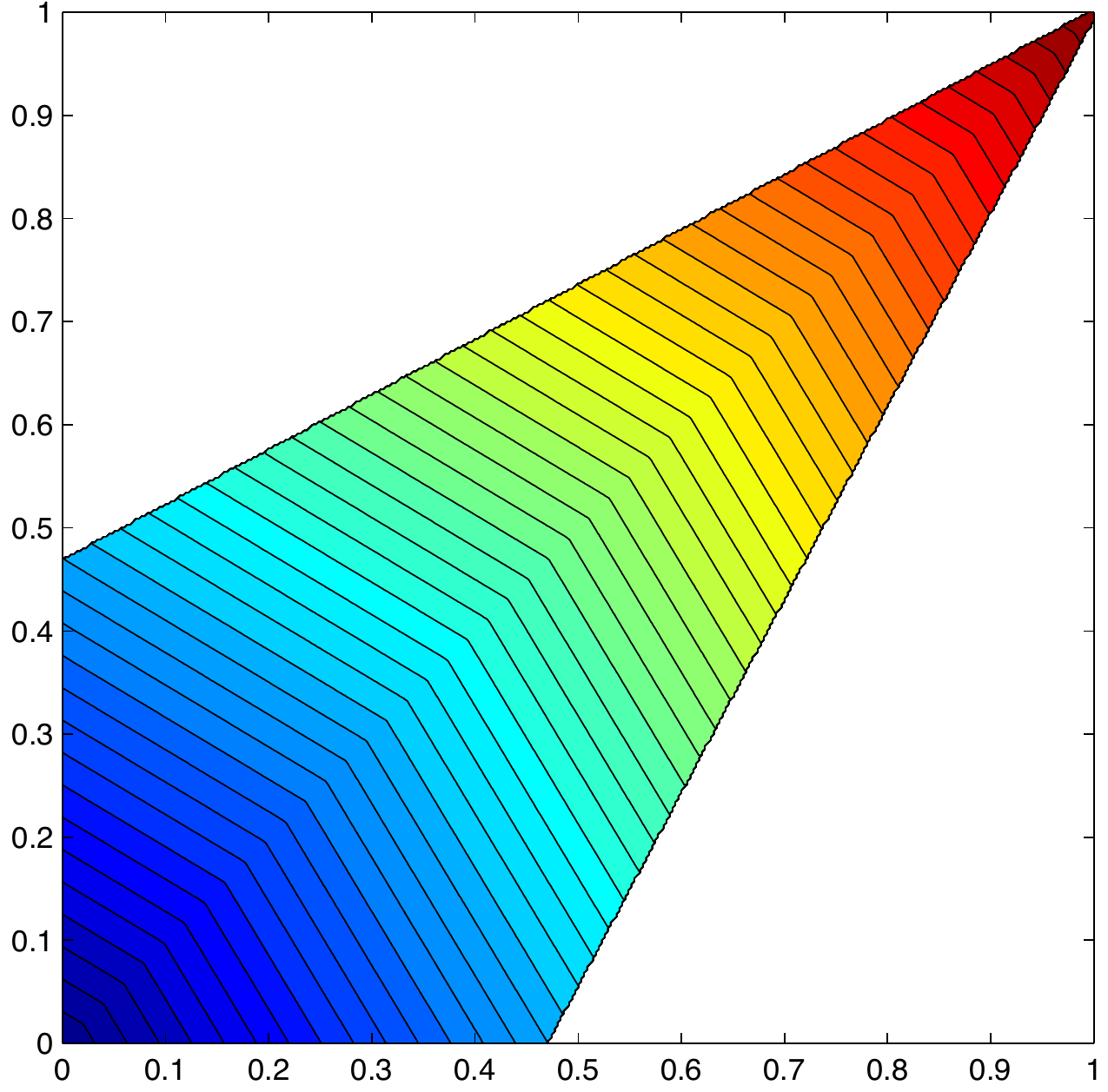} \vspace{-0.2cm} \\
&
\footnotesize $\mathcal{P} = 1$, $\mathcal{E}^*_N \approx 9\times 10^{-7} $ &
\footnotesize $\mathcal{P} = 0.96$, $\mathcal{E}^*_N \approx 2 \times 10^{-4}$ &
\footnotesize $\mathcal{P} = 0.72$, $\mathcal{E}^*_N = 0.051$ &
\footnotesize $\mathcal{P} = 0.47$, $\mathcal{E}^*_N = 0.127$ \\
&
\multicolumn{4}{c}{\includegraphics[scale=0.5]{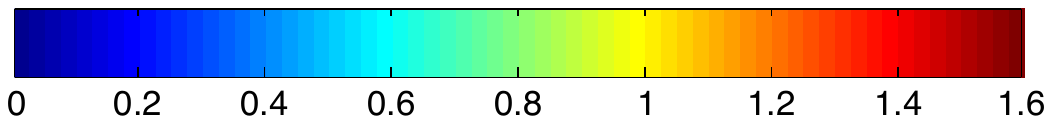}} \\
\begin{sideways}\textbf{\Large\hspace{1.5cm}AA*}\end{sideways}&
\includegraphics[scale=0.3]{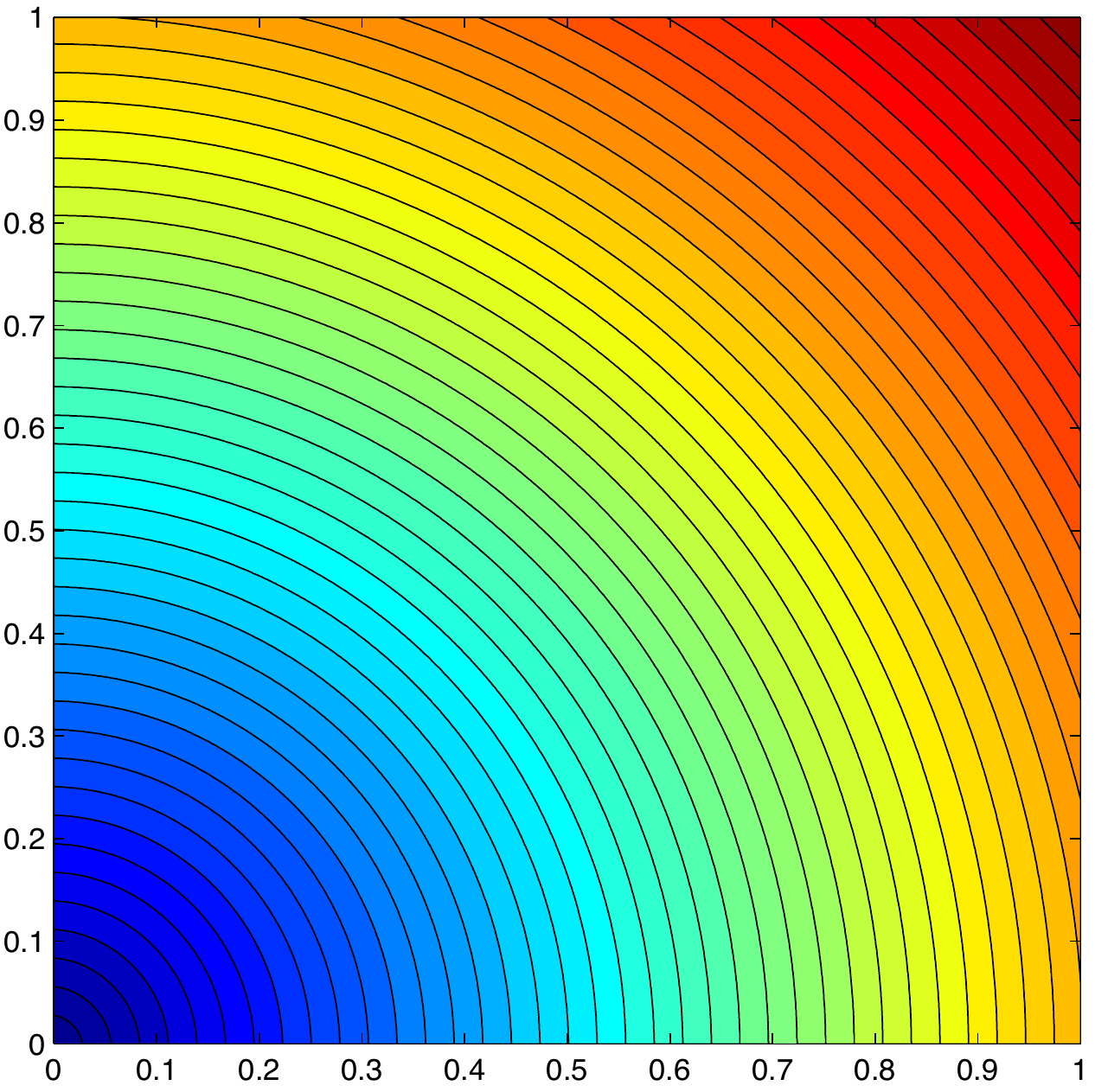} &
\includegraphics[scale=0.3]{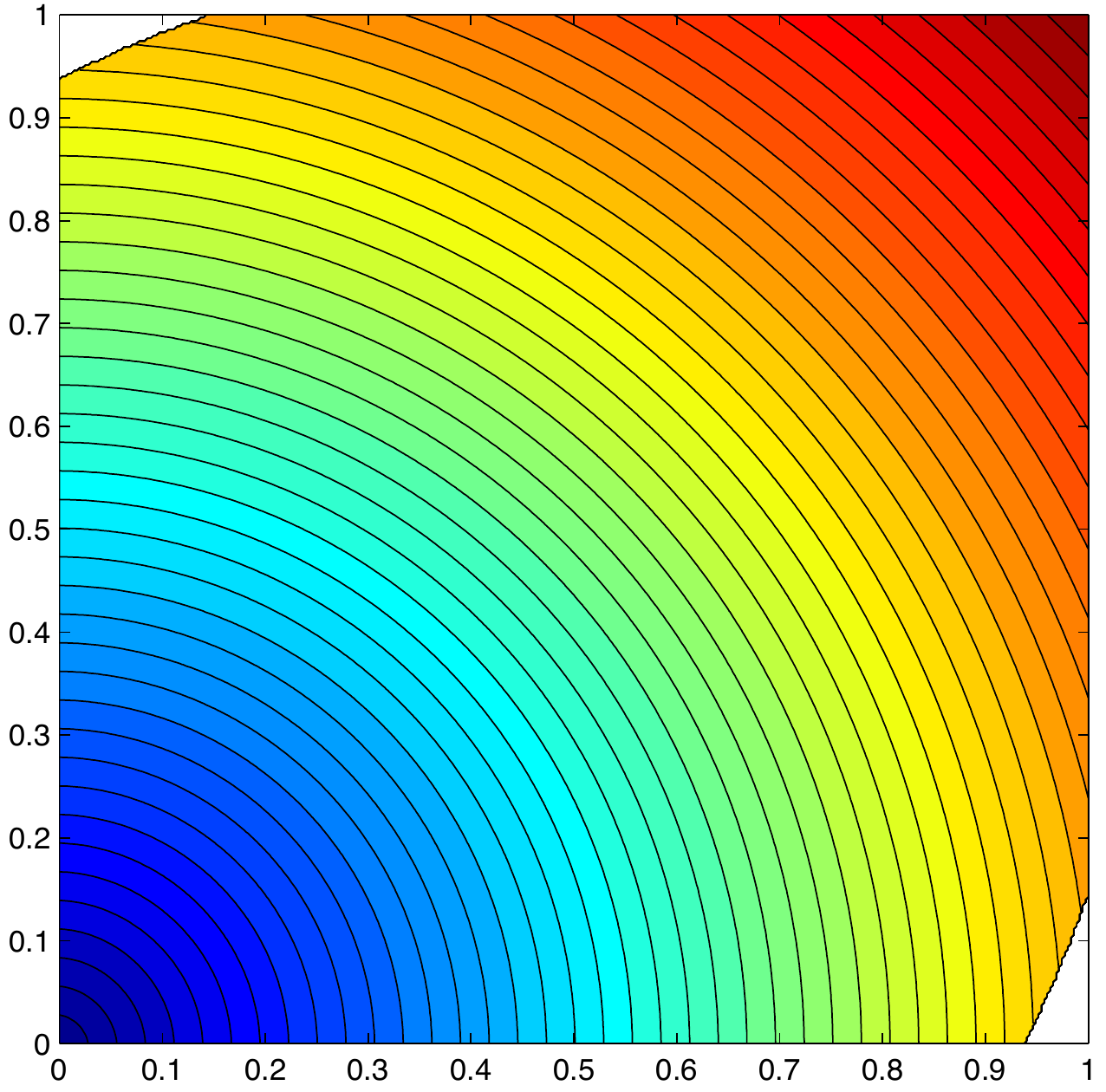} &
\includegraphics[scale=0.3]{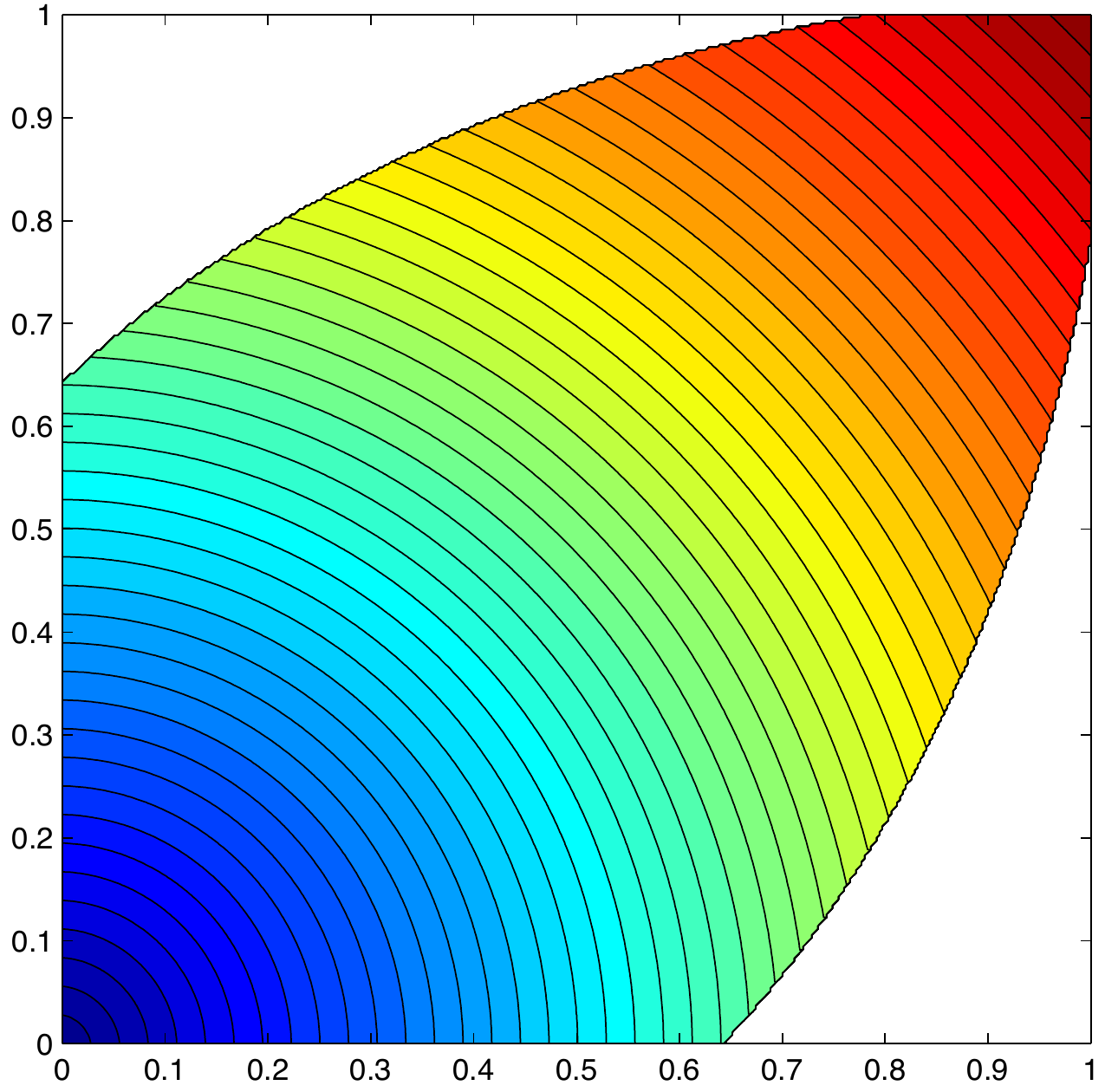} &
\includegraphics[scale=0.3]{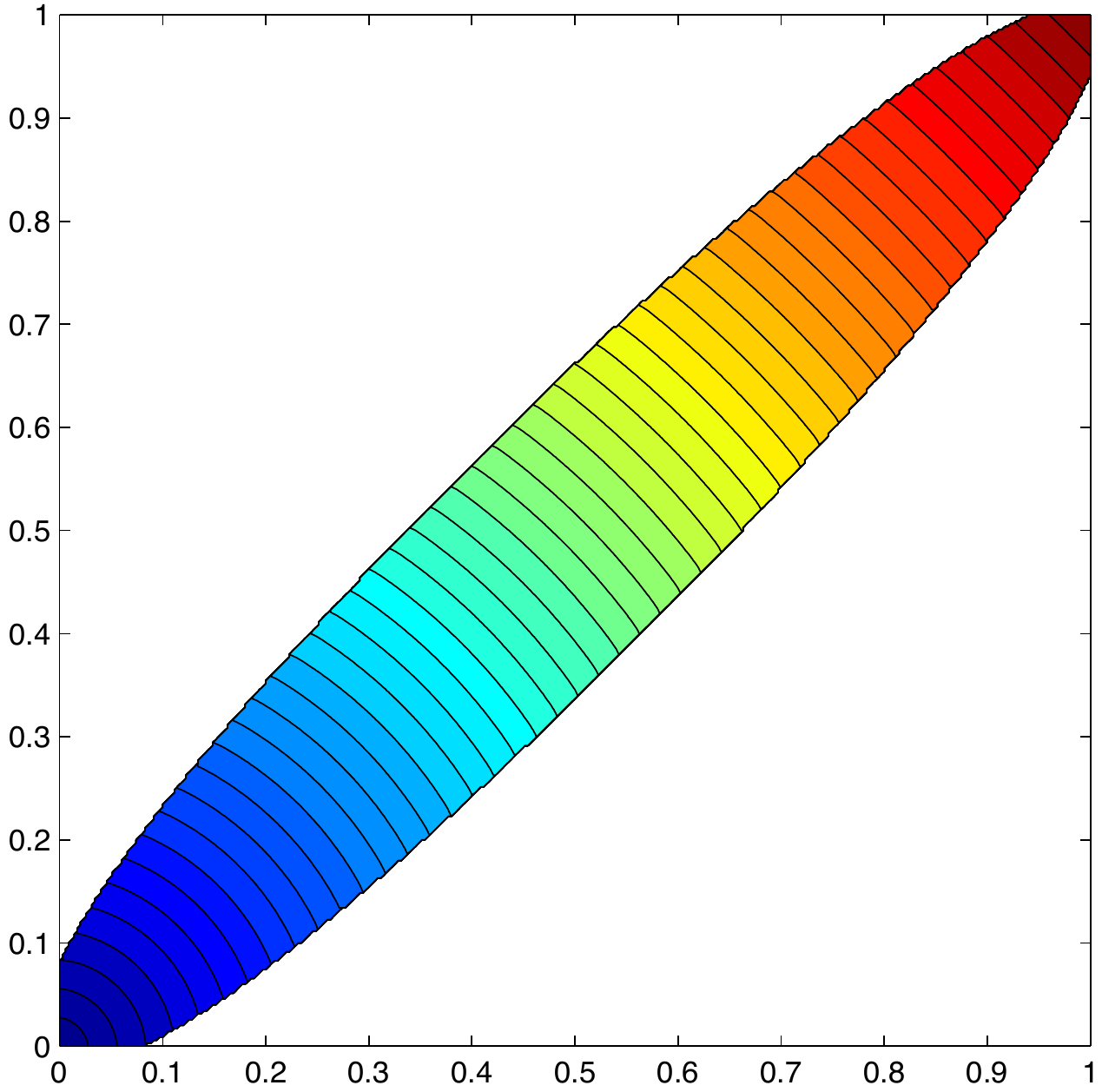} \vspace{-0.2cm} \\
&
\footnotesize $\mathcal{P} =1$, $\mathcal{E}^*_N = 0$ &
\footnotesize $\mathcal{P} =0.99$, $\mathcal{E}^*_N = 0$ &
\footnotesize $\mathcal{P} =0.79$, $\mathcal{E}^*_N = 0$ &
\footnotesize $\mathcal{P} =0.26$, $\mathcal{E}^*_N 4 \times \approx 10^{-7}$ \\
&
\multicolumn{4}{c}{\includegraphics[scale=0.5]{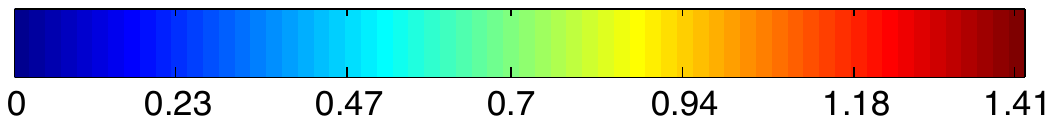}}
\end{tabular}
\end{adjustwidth}
\caption{\footnotesize The top row was produced with SA*-FMM and the error can be seen in two ways: (1) the deformation of the level sets and (2) the value at the source is $\approx 1.61$. The bottom row shows the results of AA*-FMM.
We hold $m = 351$ while $\lambda$ values increase from left to right.}
\label{fig:constCont}
\end{center}
\end{figure}

\figstart
\begin{center}
{\bf 2D constant speed: Error $ \ = \ {\log_{10}}(\astarErrOnly)$.} \\
\begin{tabular}{c c}
\em SA* & \em AA* \\
\includegraphics[scale=.5]{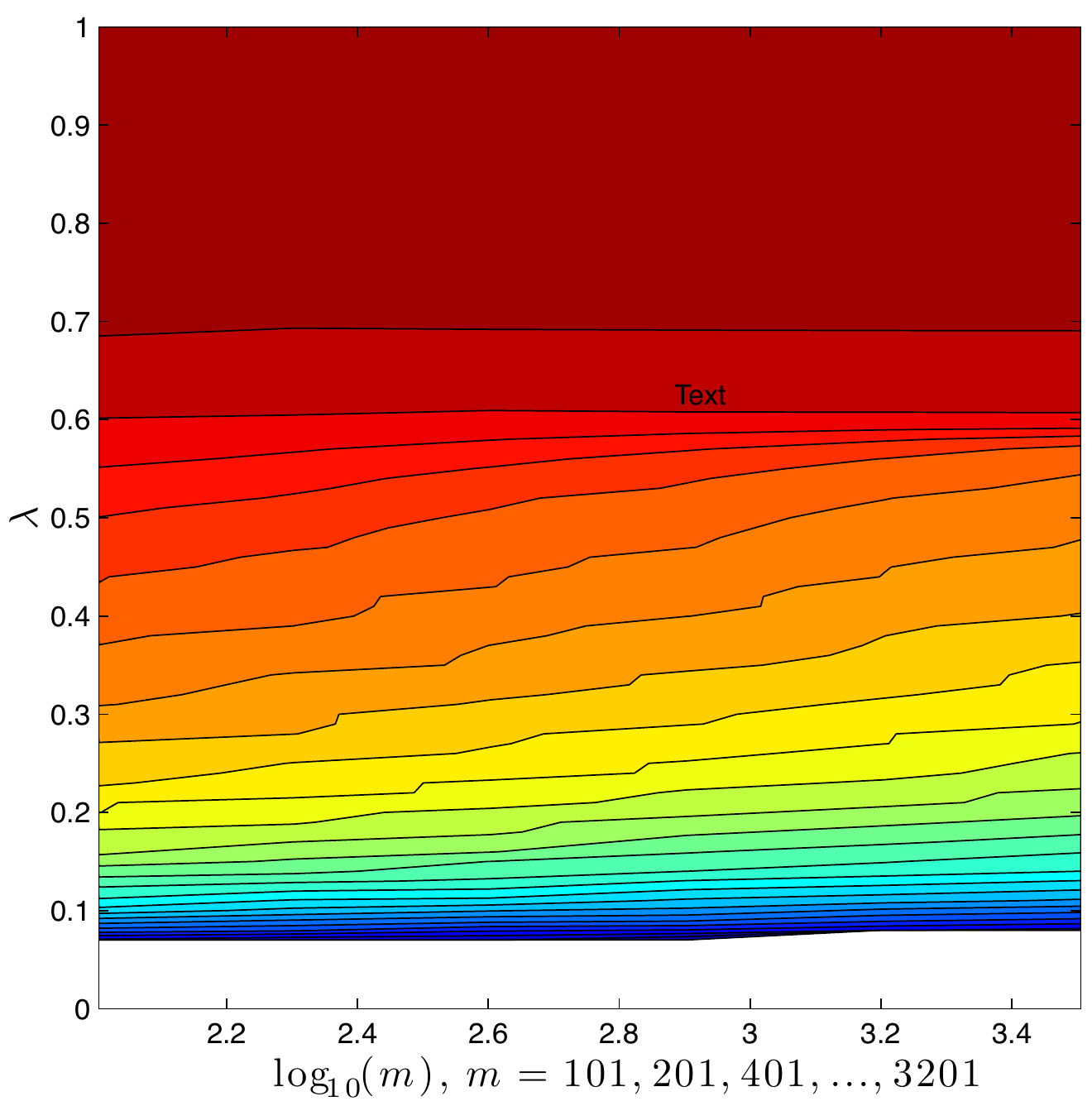} &
\includegraphics[scale=.5]{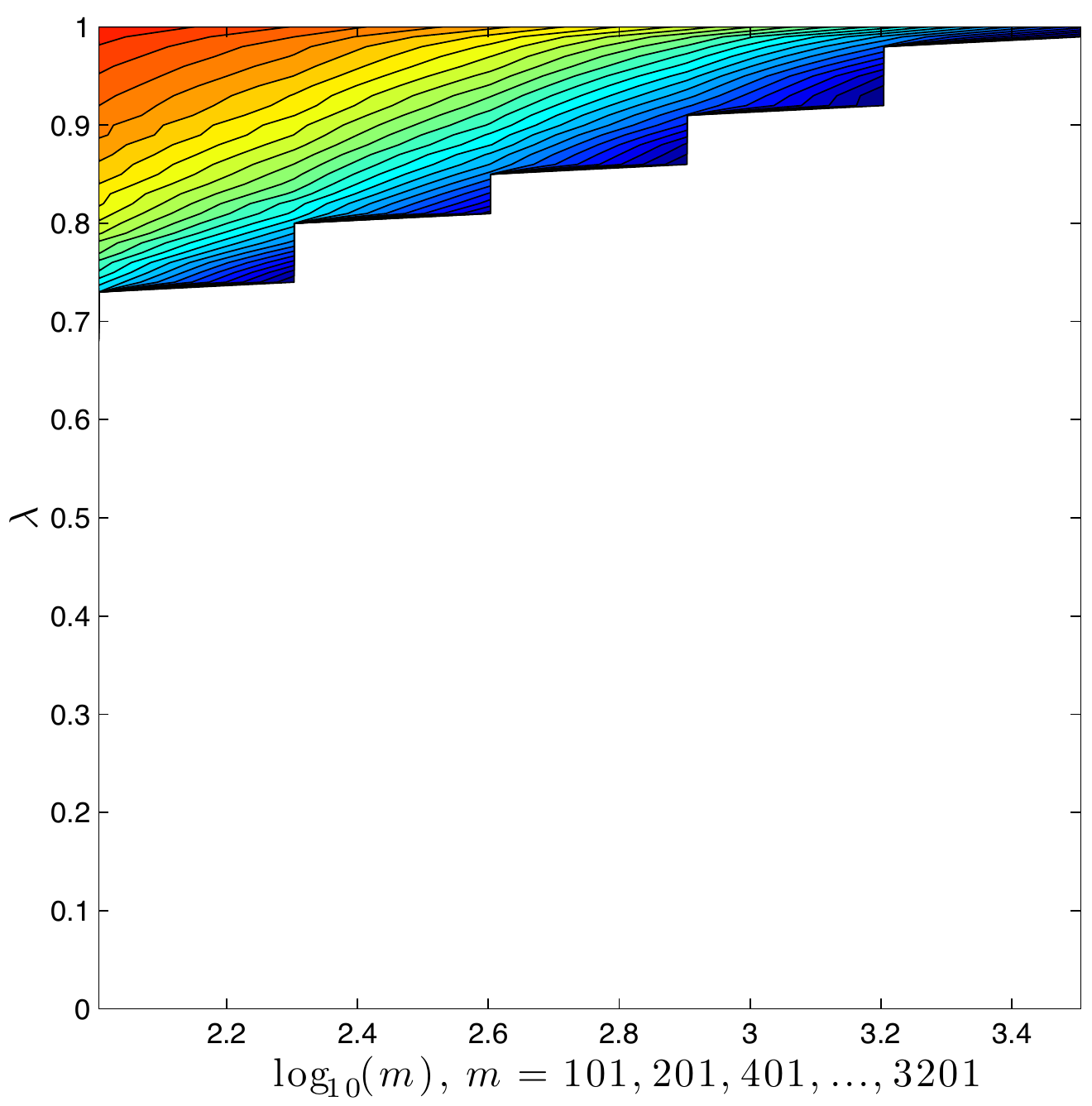} \\
\multicolumn{2}{c}{\includegraphics[scale=.7]{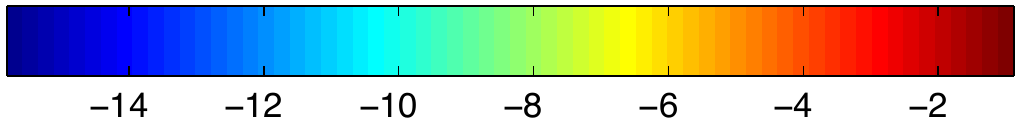}}
\end{tabular}
\vspace{-.35cm}
\caption{\footnotesize SA* versus AA* comparison based on $\astarErrOnly$ errors. The horizontal axis
shows the grid resolution $m$, and the vertical axis corresponds to the heuristic strength $\lambda$.
White corresponds to errors smaller than the machine $\varepsilon$.
}
\label{fig:constConv}
\end{center}
\end{figure}

}{%
\figstart
\begin{center}
\textbf{Contours of $u$ produced by A*-FMM using $\varphi_{\lambda}^0$}
\begin{adjustwidth}{-0.9cm}{}
\tabcolsep=1pt
\begin{tabular}{c c c c c}
&
\small \em $\mathit{\lambda = 0.25}$ &
\small \em $\mathit{\lambda = 0.50}$ &
\small \em $\mathit{\lambda = 0.75}$ &
\small \em $\mathit{\lambda = 1.00}$ \\
\begin{sideways}\textbf{\Large\hspace{1.5cm}SA*}\end{sideways}&
\includegraphics[scale=0.3]{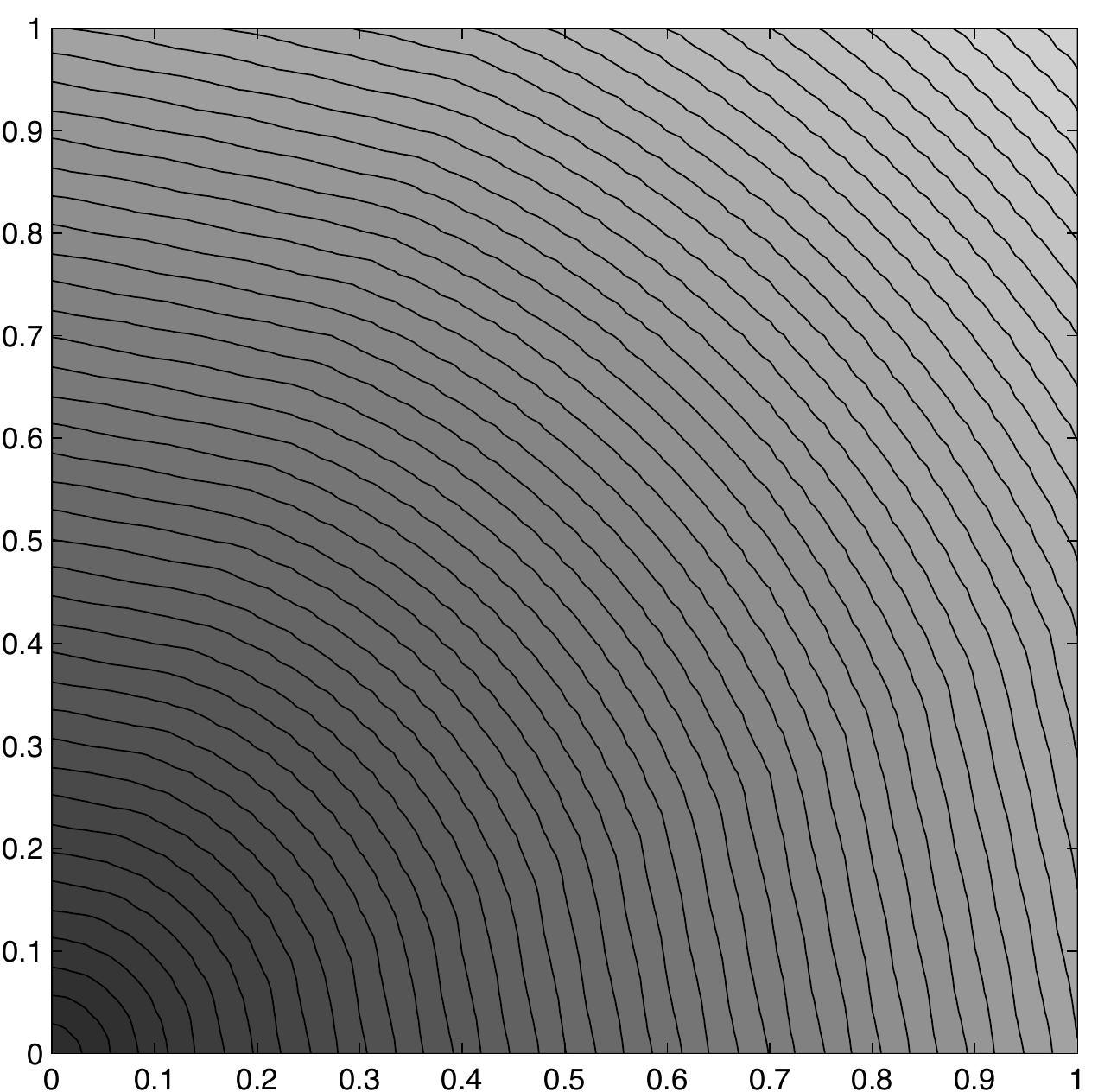} &
\includegraphics[scale=0.3]{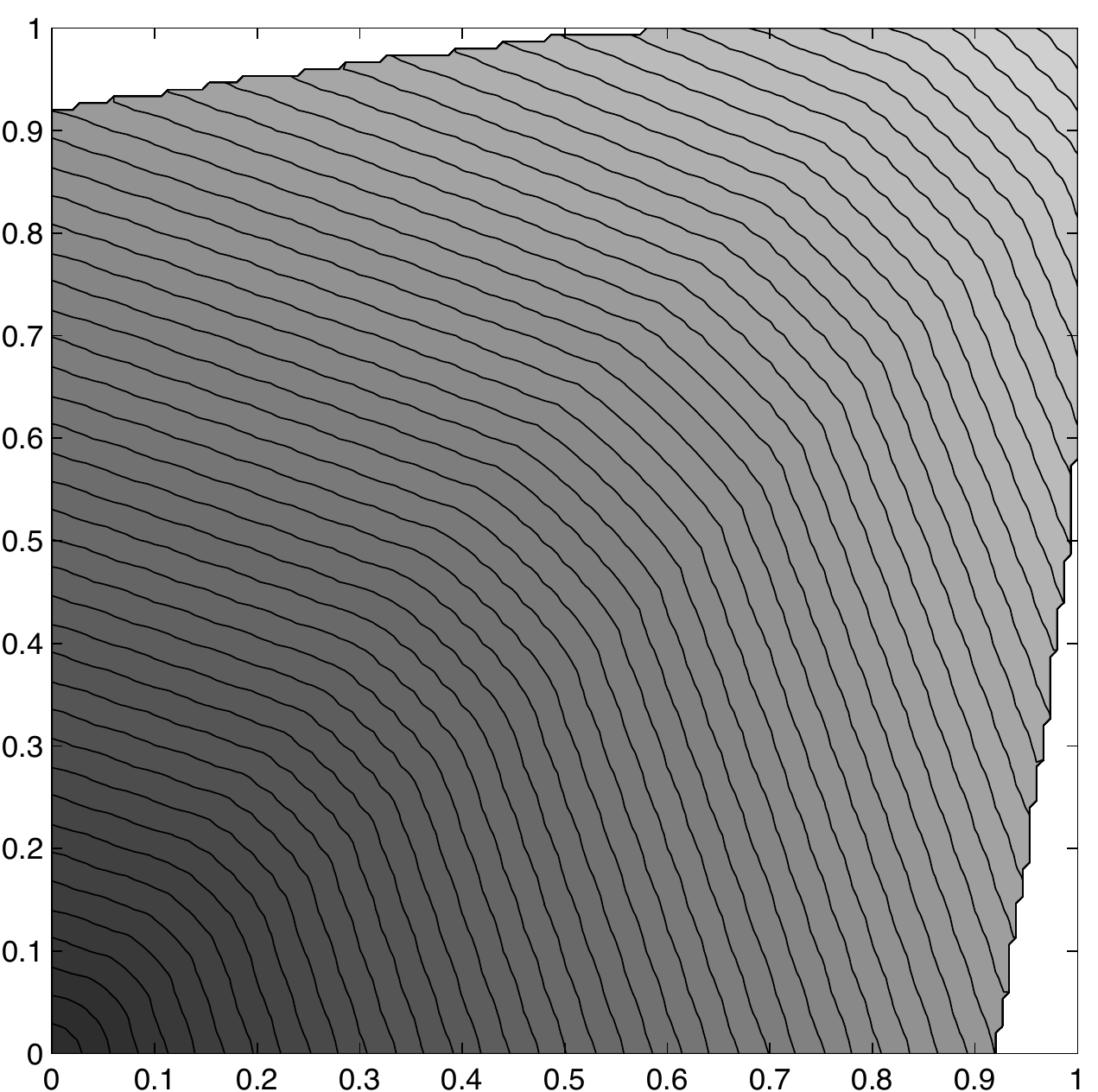} &
\includegraphics[scale=0.3]{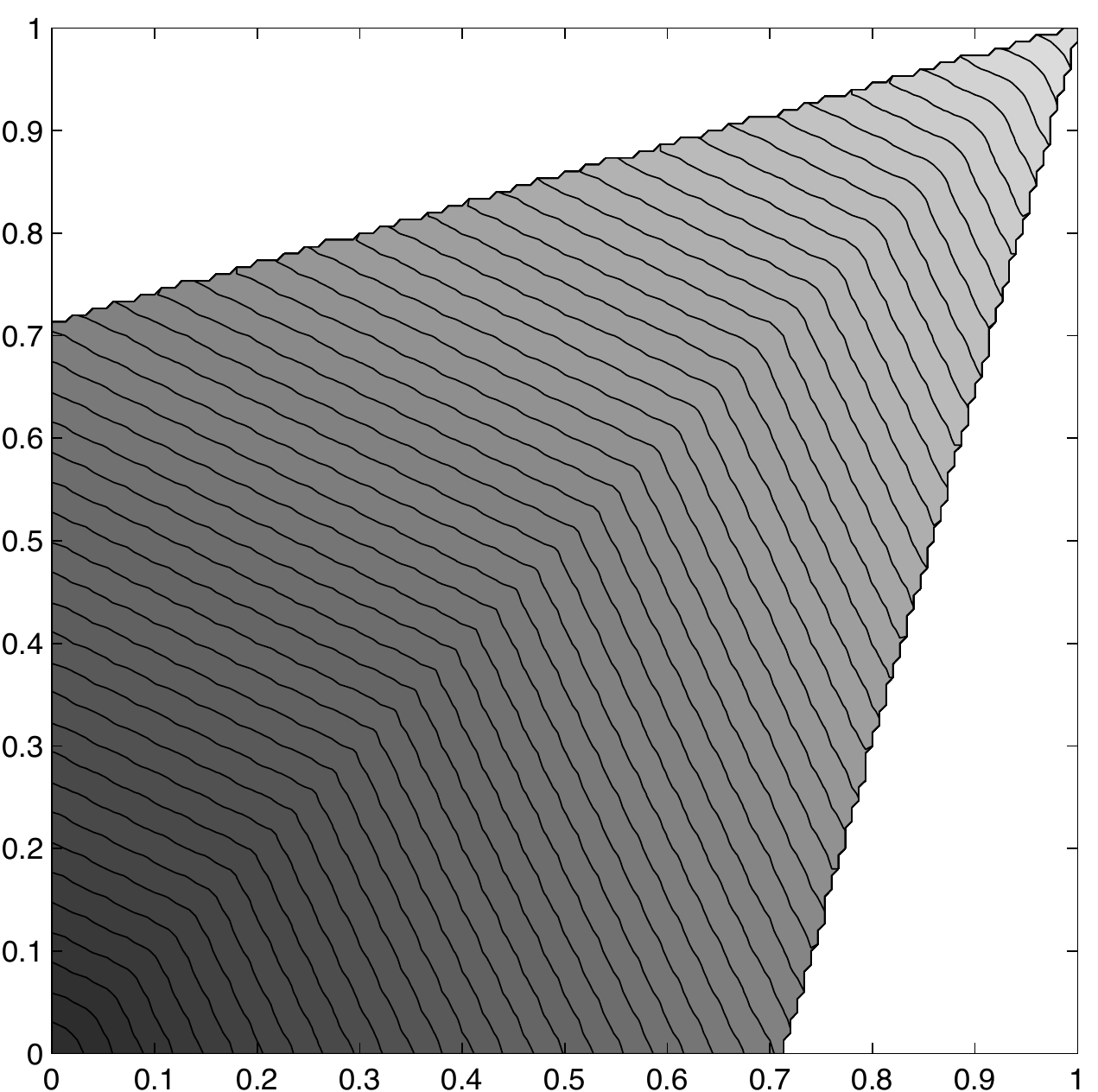} &
\includegraphics[scale=0.3]{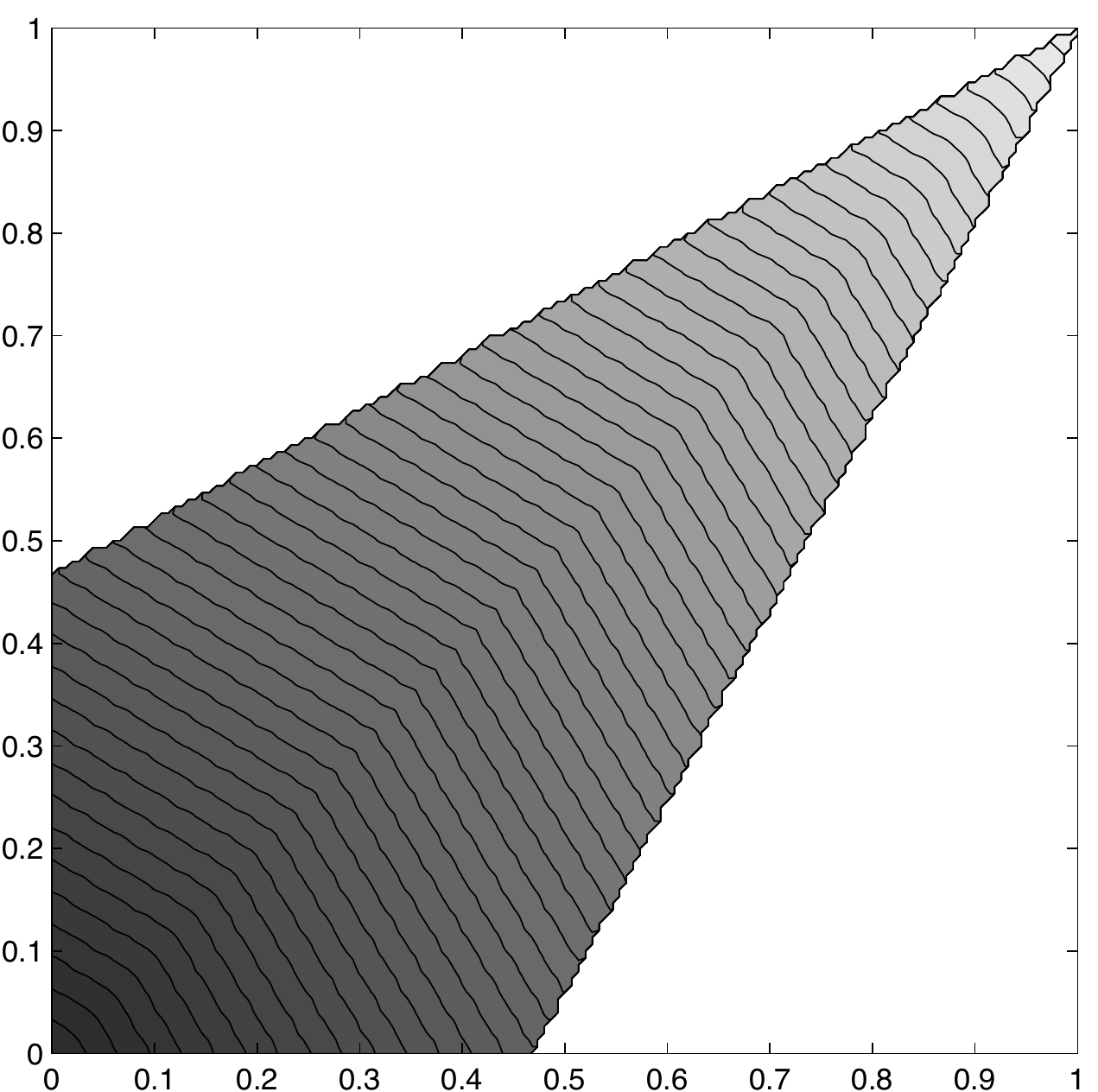} \vspace{-0.2cm} \\
&
\footnotesize $\mathcal{P} = 1$, $\mathcal{E}^*_N \approx 9\times 10^{-7} $ &
\footnotesize $\mathcal{P} = 0.96$, $\mathcal{E}^*_N \approx 2 \times 10^{-4}$ &
\footnotesize $\mathcal{P} = 0.72$, $\mathcal{E}^*_N = 0.051$ &
\footnotesize $\mathcal{P} = 0.47$, $\mathcal{E}^*_N = 0.127$ \\
&
\multicolumn{4}{c}{\includegraphics[scale=0.5]{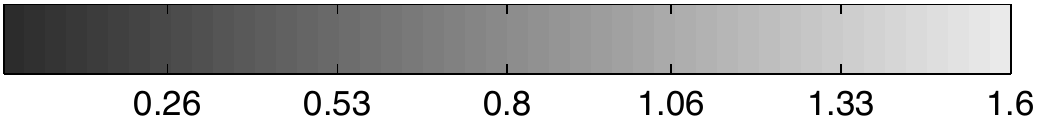}} \\
\begin{sideways}\textbf{\Large\hspace{1.5cm}AA*}\end{sideways}&
\includegraphics[scale=0.3]{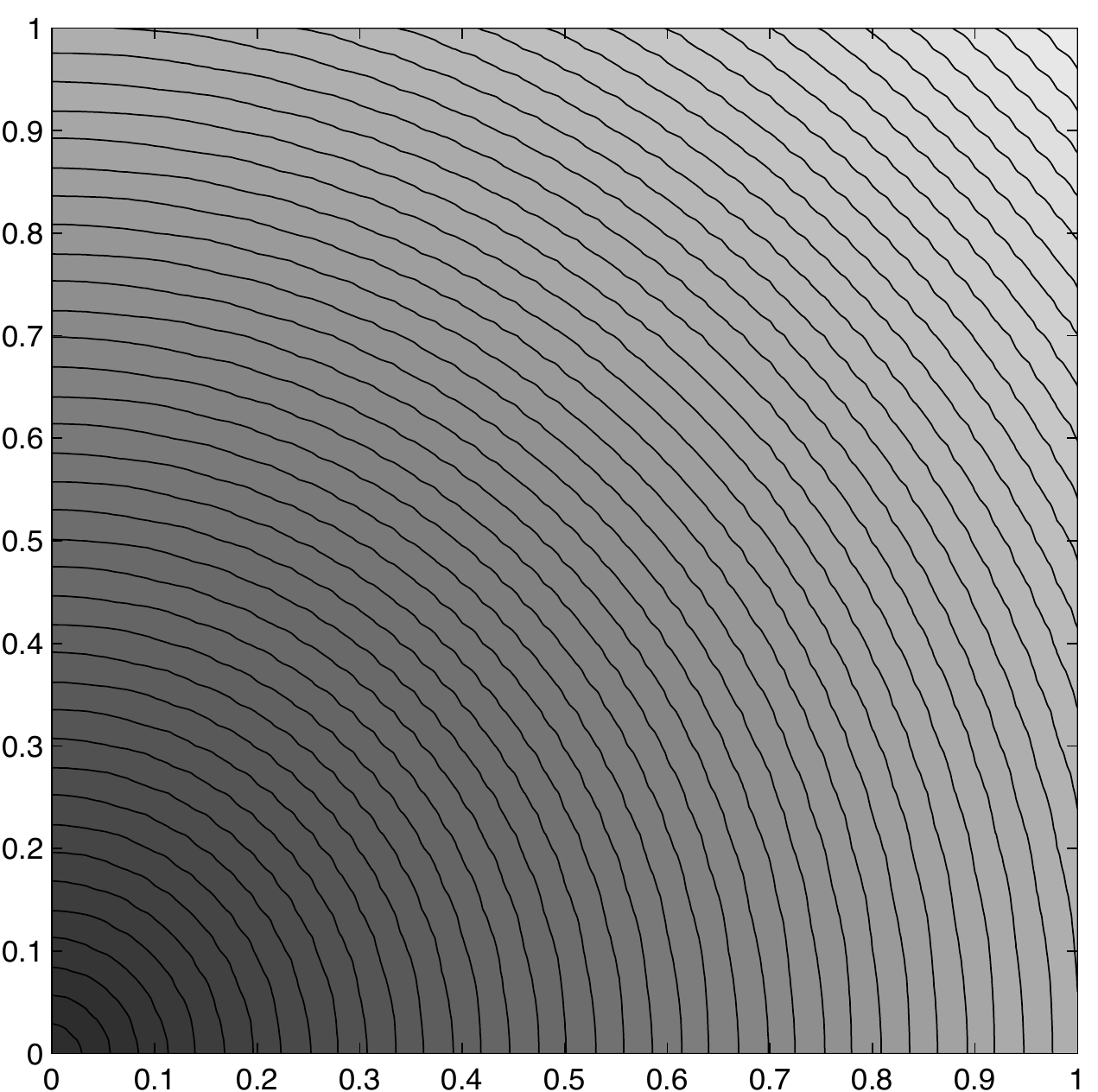} &
\includegraphics[scale=0.3]{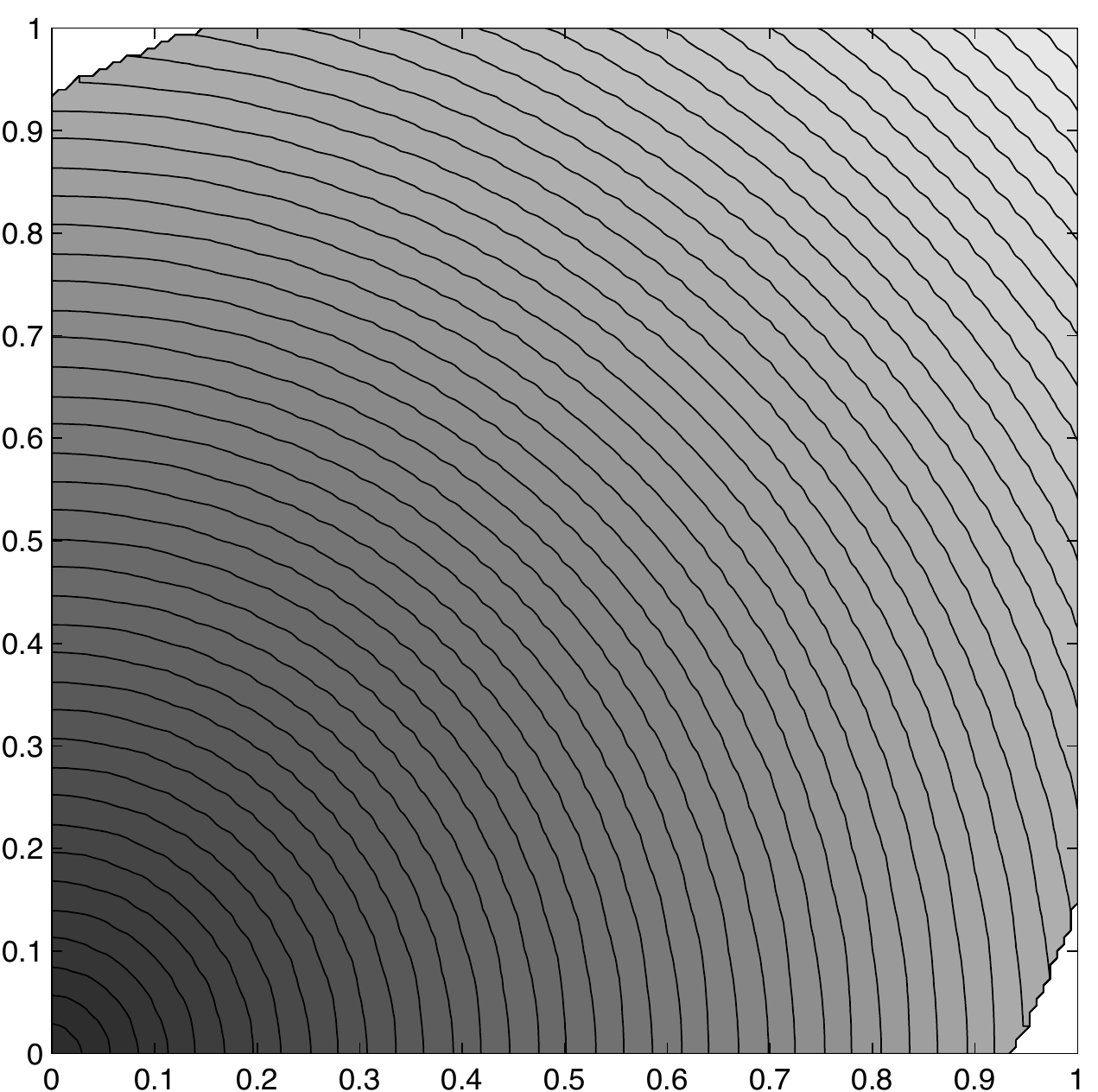} &
\includegraphics[scale=0.3]{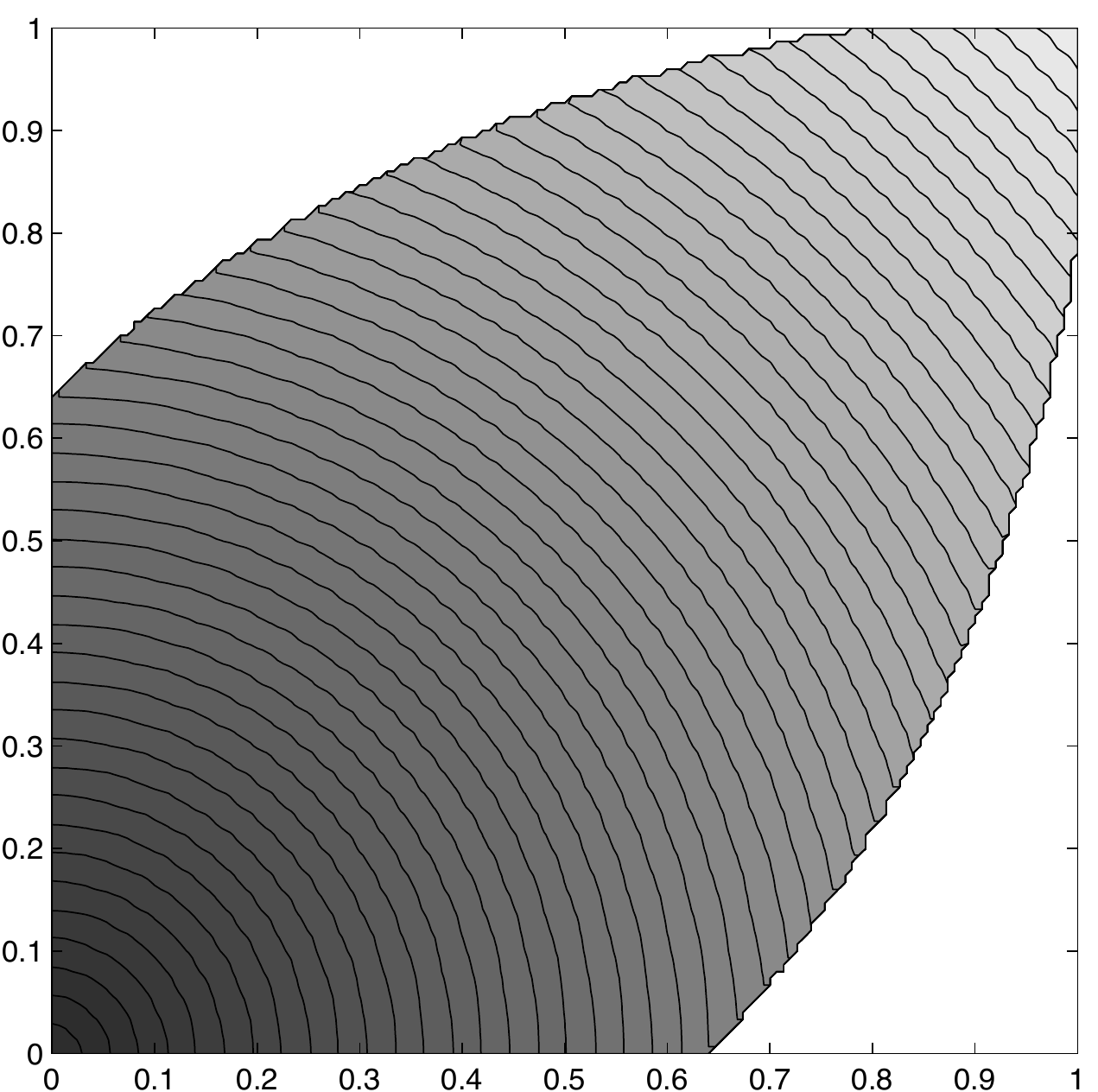} &
\includegraphics[scale=0.3]{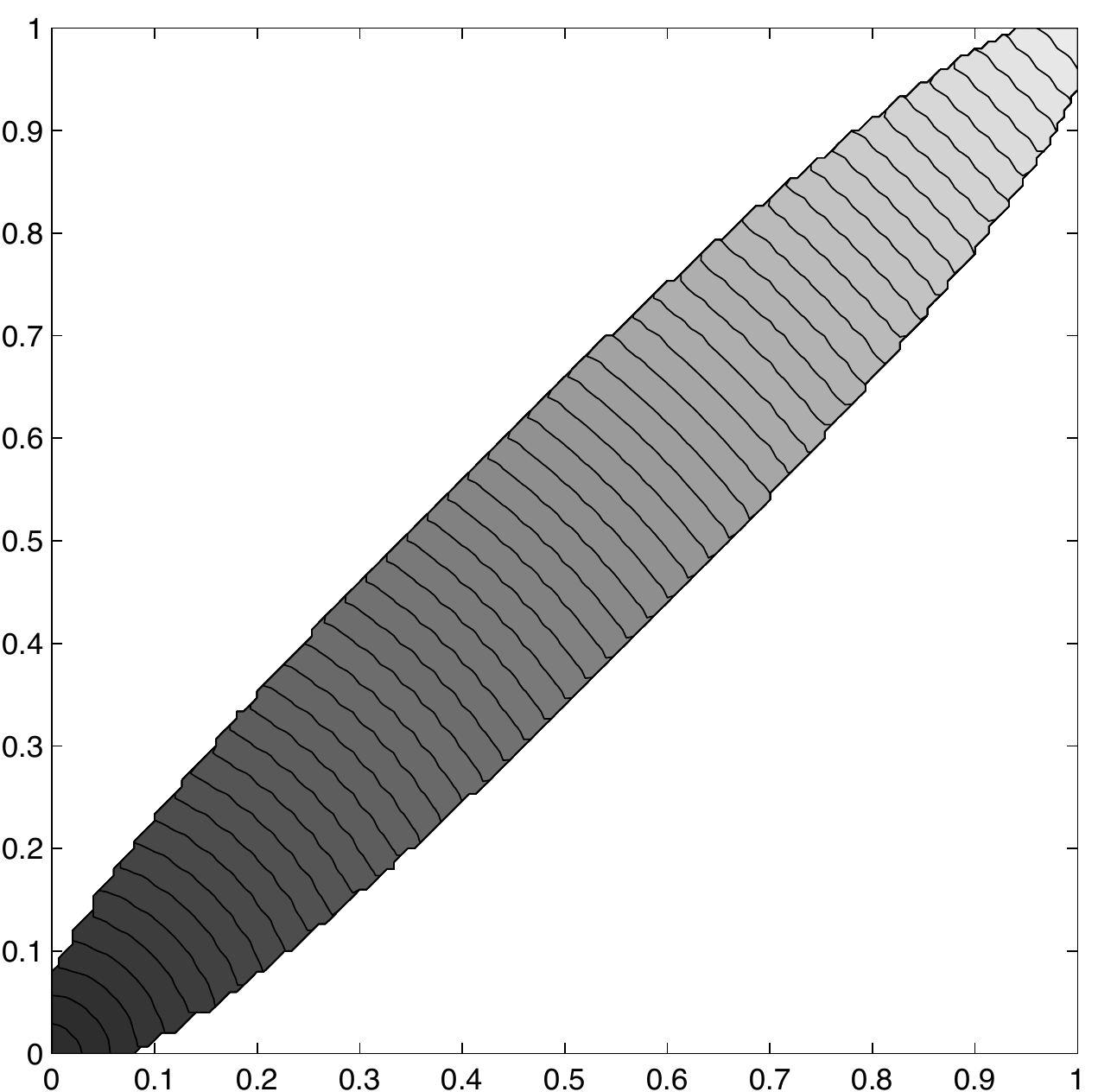} \vspace{-0.2cm} \\
&
\footnotesize $\mathcal{P} =1$, $\mathcal{E}^*_N = 0$ &
\footnotesize $\mathcal{P} =0.99$, $\mathcal{E}^*_N = 0$ &
\footnotesize $\mathcal{P} =0.79$, $\mathcal{E}^*_N = 0$ &
\footnotesize $\mathcal{P} =0.26$, $\mathcal{E}^*_N 4 \times \approx 10^{-7}$ \\
&
\multicolumn{4}{c}{\includegraphics[scale=0.5]{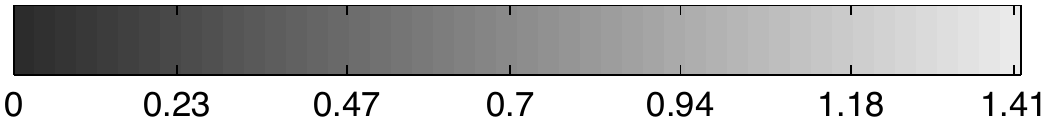}}
\end{tabular}
\end{adjustwidth}
\caption{\footnotesize The top row was produced with SA*-FMM and the error can be seen in two ways: (1) the deformation of the level sets and (2) the value at the source is $\approx 1.61$. The bottom row shows the results of AA*-FMM.
We hold $m = 351$ while $\lambda$ values increase from left to right.}
\label{fig:constCont}
\end{center}
\end{figure}

\figstart
\begin{center}
{\bf 2D constant speed: Error $ \ = \ {\log_{10}}(\astarErrOnly)$.} \\
\begin{tabular}{c c}
\em SA* & \em AA* \\
\includegraphics[scale=.5]{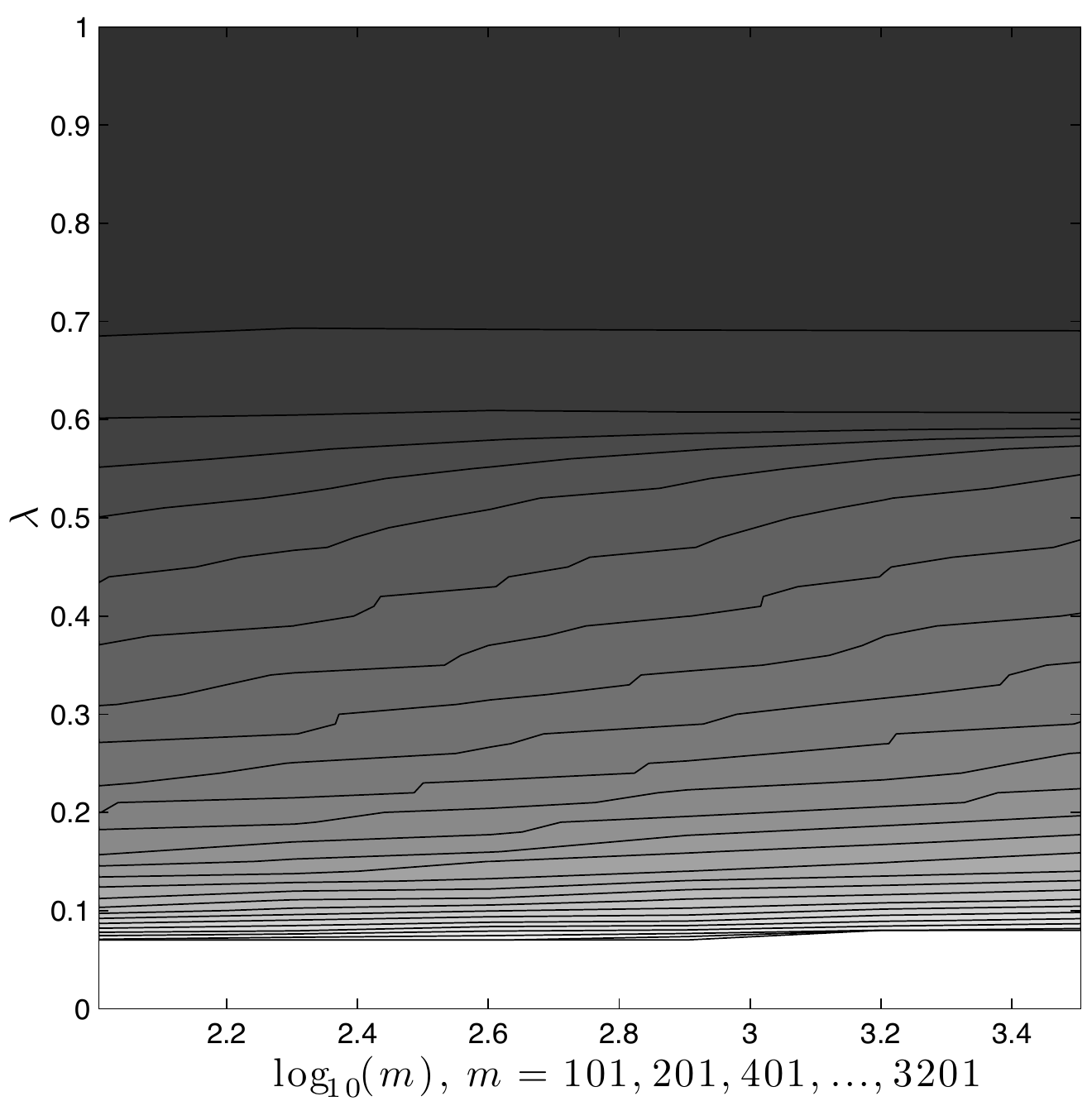} &
\includegraphics[scale=.5]{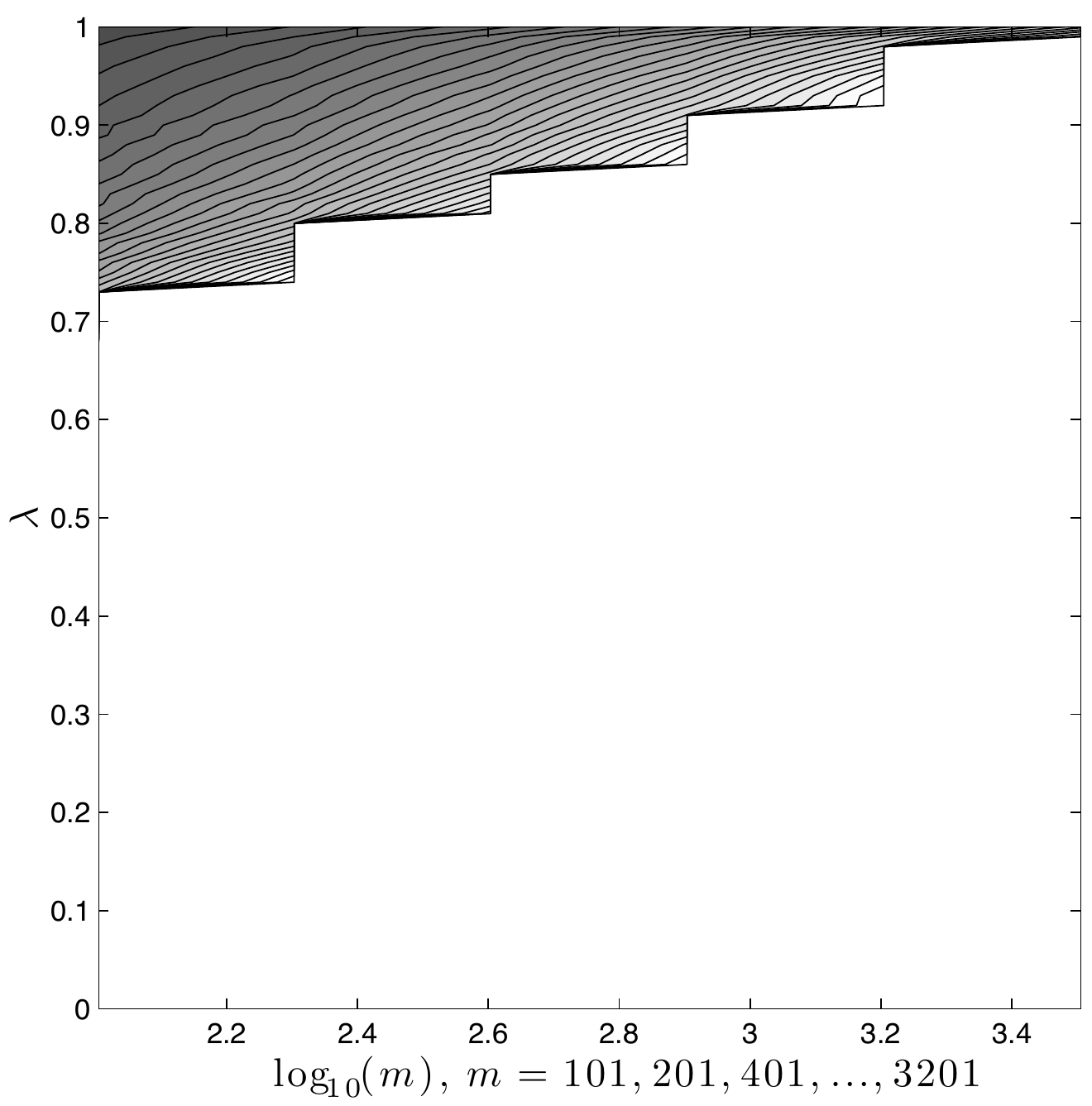} \\
\multicolumn{2}{c}{\includegraphics[scale=.7]{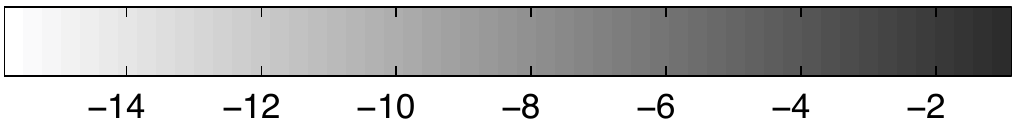}}
\end{tabular}
\vspace{-.35cm}
\caption{\footnotesize SA* versus AA* comparison based on $\astarErrOnly$ errors. The horizontal axis
shows the grid resolution $m$, and the vertical axis corresponds to the heuristic strength $\lambda$.
White corresponds to errors smaller than the machine $\varepsilon$.
}
\label{fig:constConv}
\end{center}
\end{figure}

}
\figstart
\begin{center}
{\bf 2D constant speed: Statistics.}
\fillmeup
\begin{adjustwidth}{-1.5cm}{}
\tabcolsep=0.05cm
\begin{tabular}{c c c}
A. {\em Time (sec)} &
B. {\em Fraction $\mathcal{P}$} &
C. {\em Error $\log_{10}(\astarErr)$} \\
\includegraphics[scale=\statsScale]{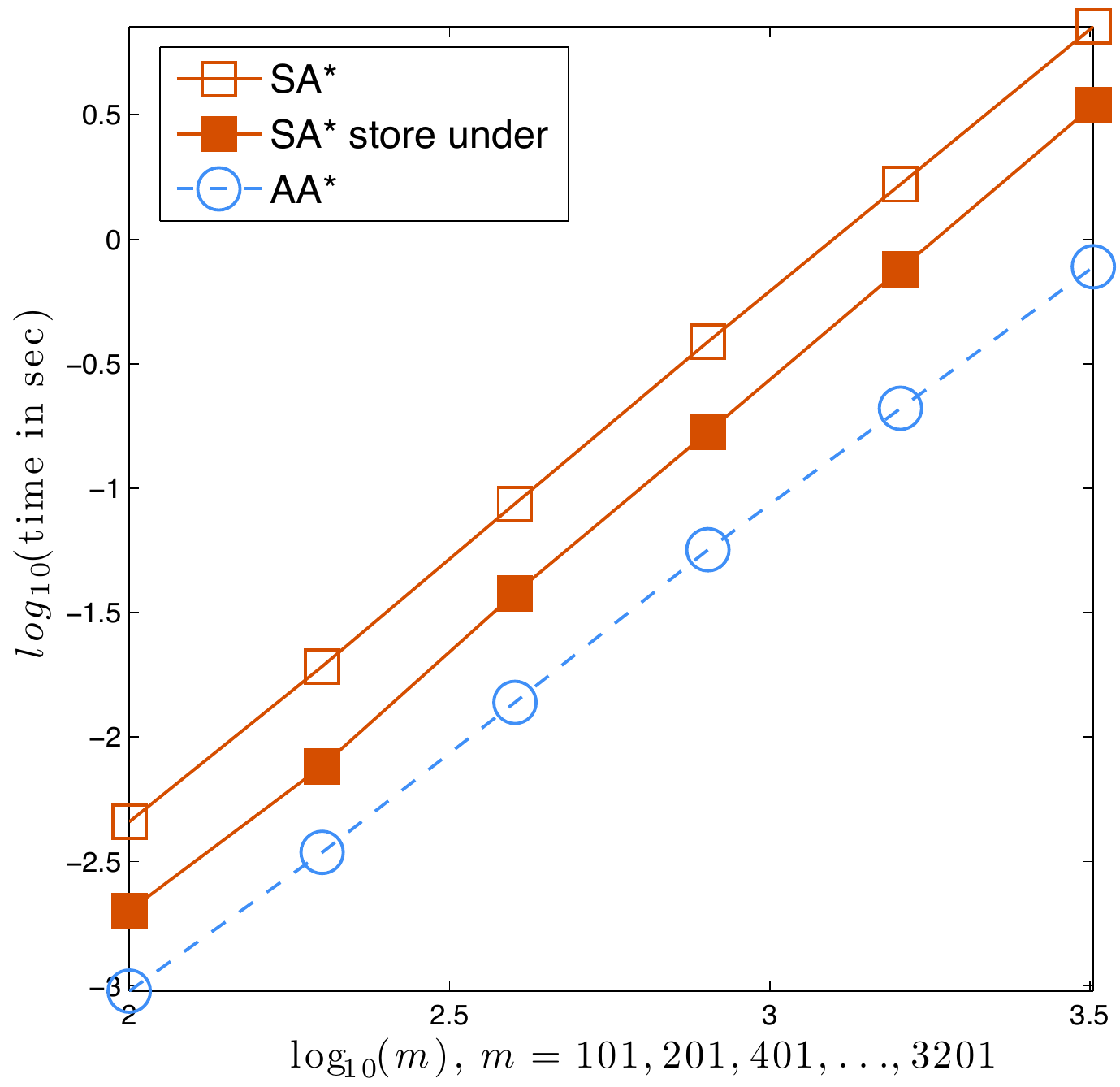} &
\includegraphics[scale=\statsScale]{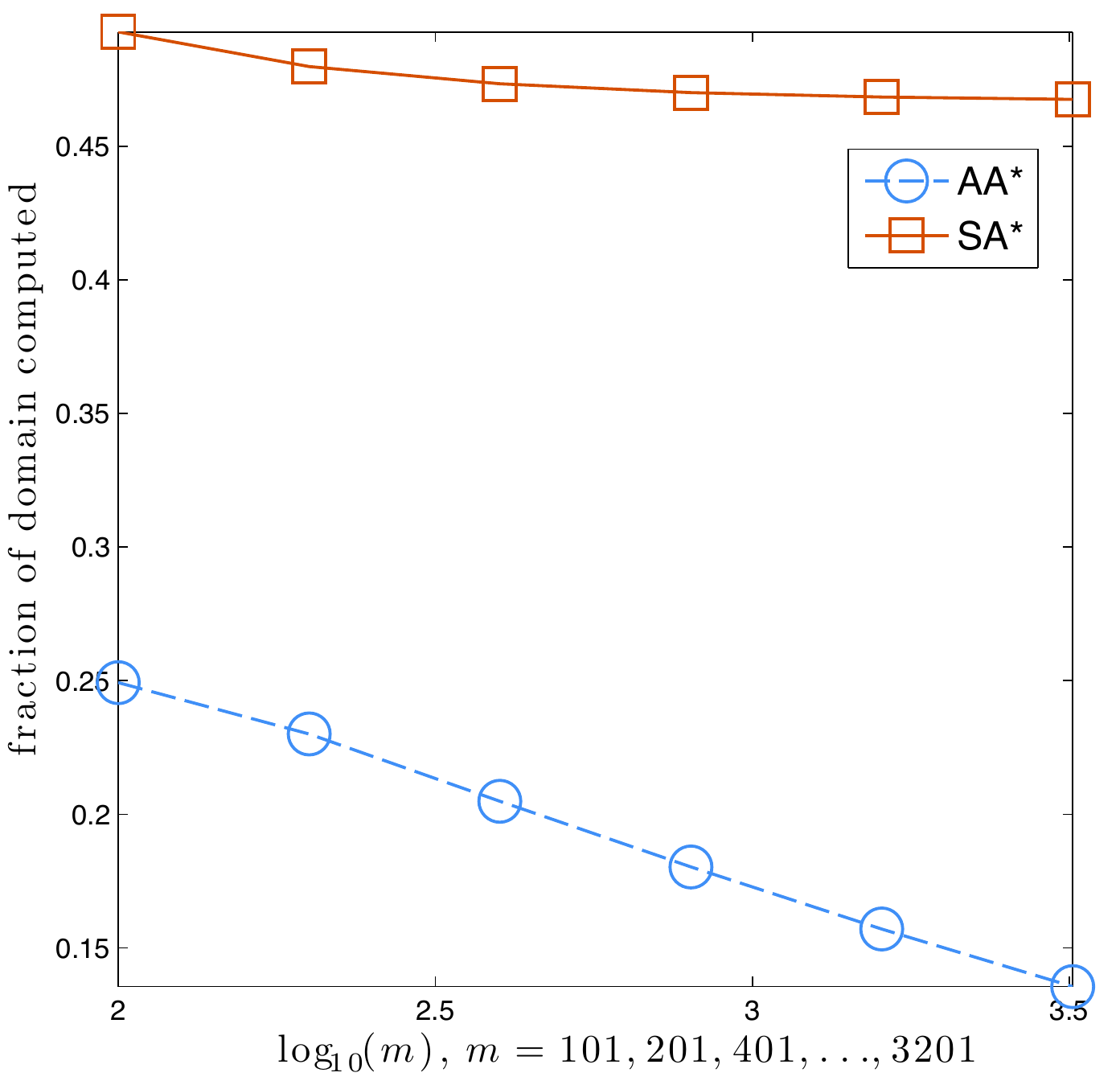} &
\includegraphics[scale=\statsScale]{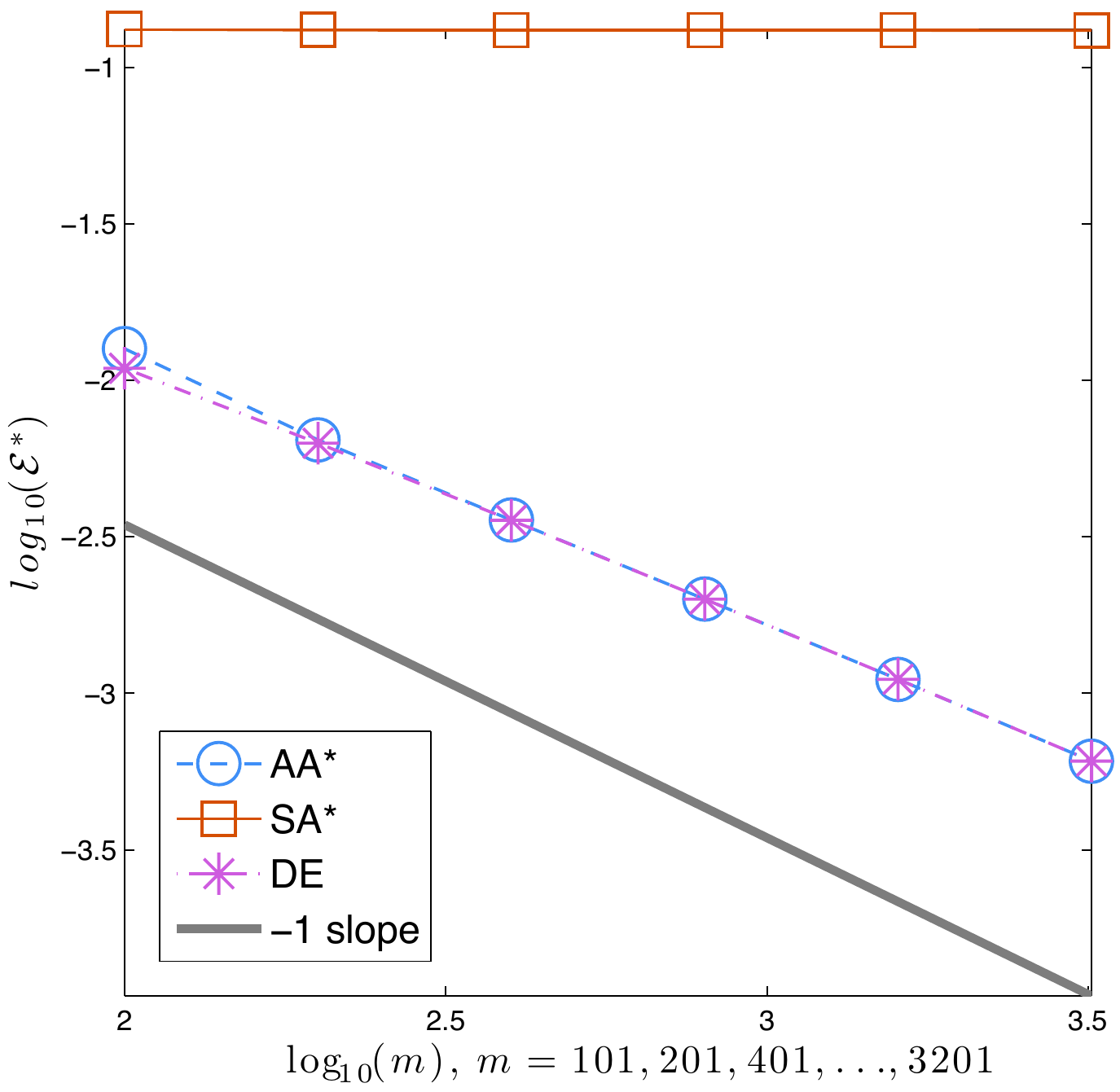}
\end{tabular}
\end{adjustwidth}
\vspace{-.25cm}
\caption{\footnotesize The CPU-time, the fraction $\mathcal{P}$ of the domain computed, and the error $\astarErr$ for both SA* and AA* using a constant speed function in 2D. The solid 
square markers in the time plot indicate the time for a version of SA*-FMM that stores each $\varphi(\bx)$ after it is first computed. The underestimate function used is $\varphi^0$ and
The benchmarking is performed for $\lambda = 1$ (i.e., corresponding to the very top slice in Figure \ref{fig:constConv}).}
\label{fig:constNaive}
\end{center}
\end{figure}

\figstart
\begin{center}
\tabcolsep=0.05cm
{\bf 3D constant speed: Statistics.}
\begin{adjustwidth}{-1.5cm}{}
\begin{tabular}{c c c}
A. {\em Time (sec)} &
B. {\em Fraction $\mathcal{P}$} &
C. {\em Error $\log_{10}(\astarErr)$} \\
\includegraphics[scale=\statsScale]{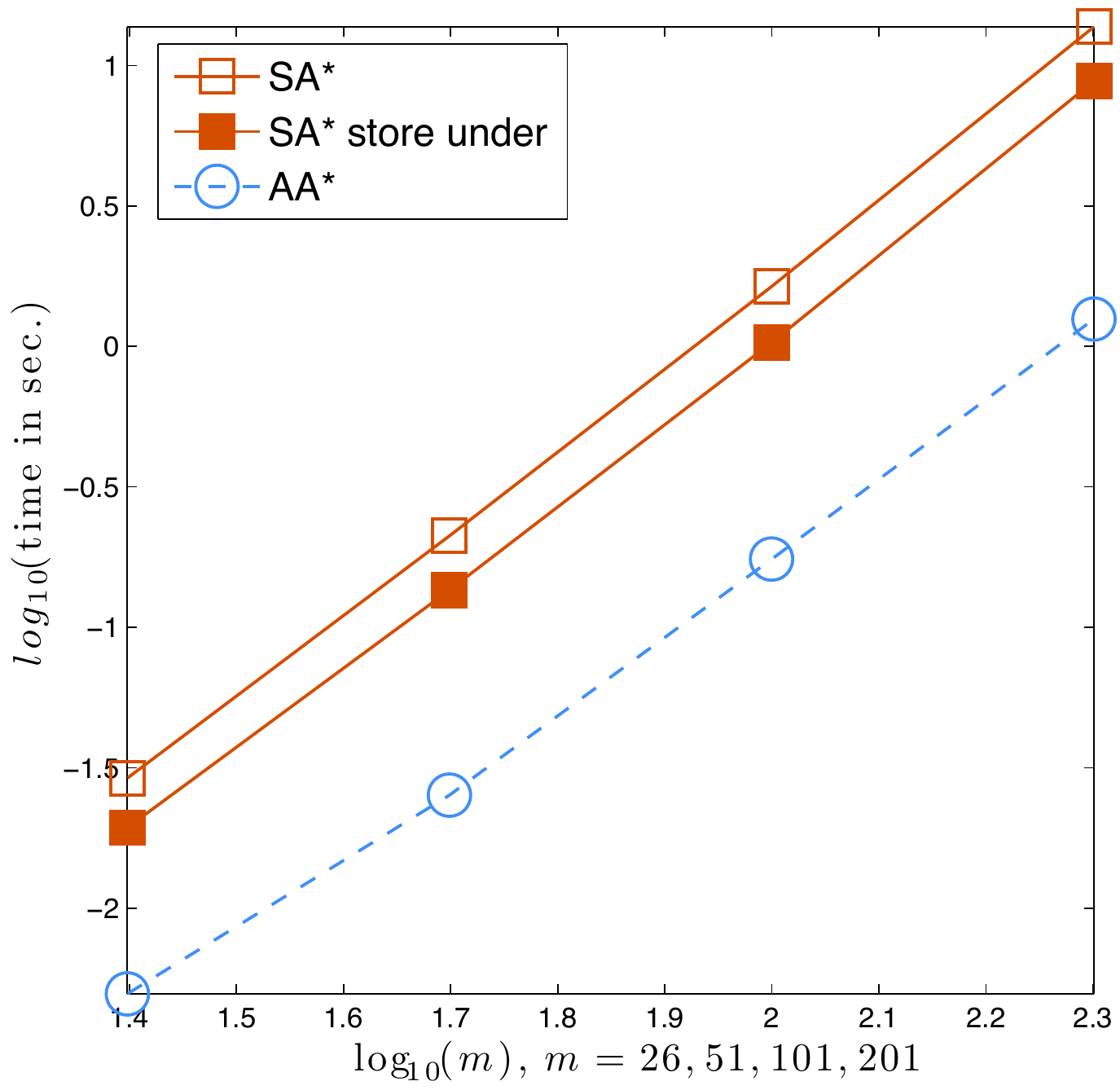} &
\includegraphics[scale=\statsScale]{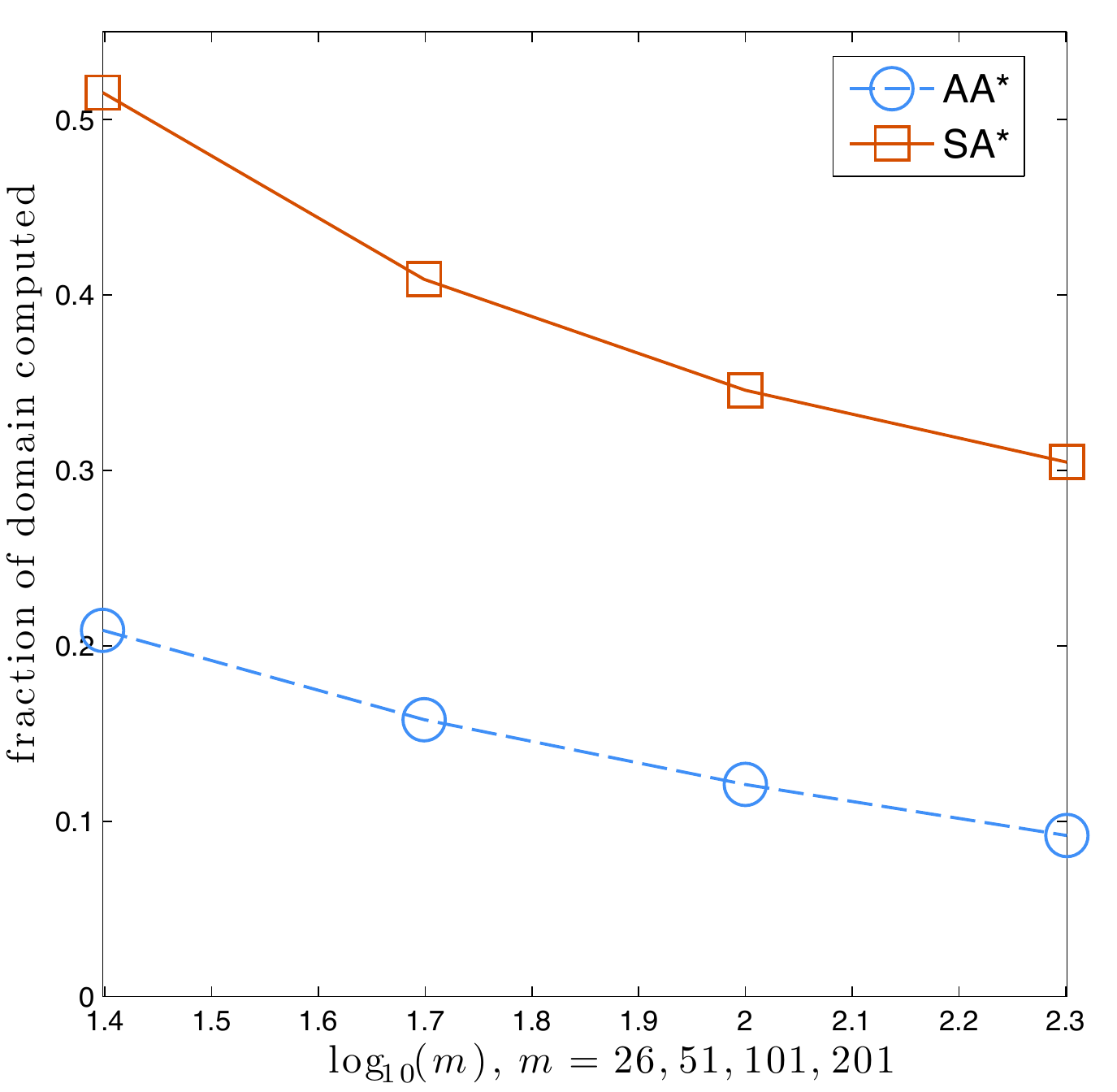} &
\includegraphics[scale=\statsScale]{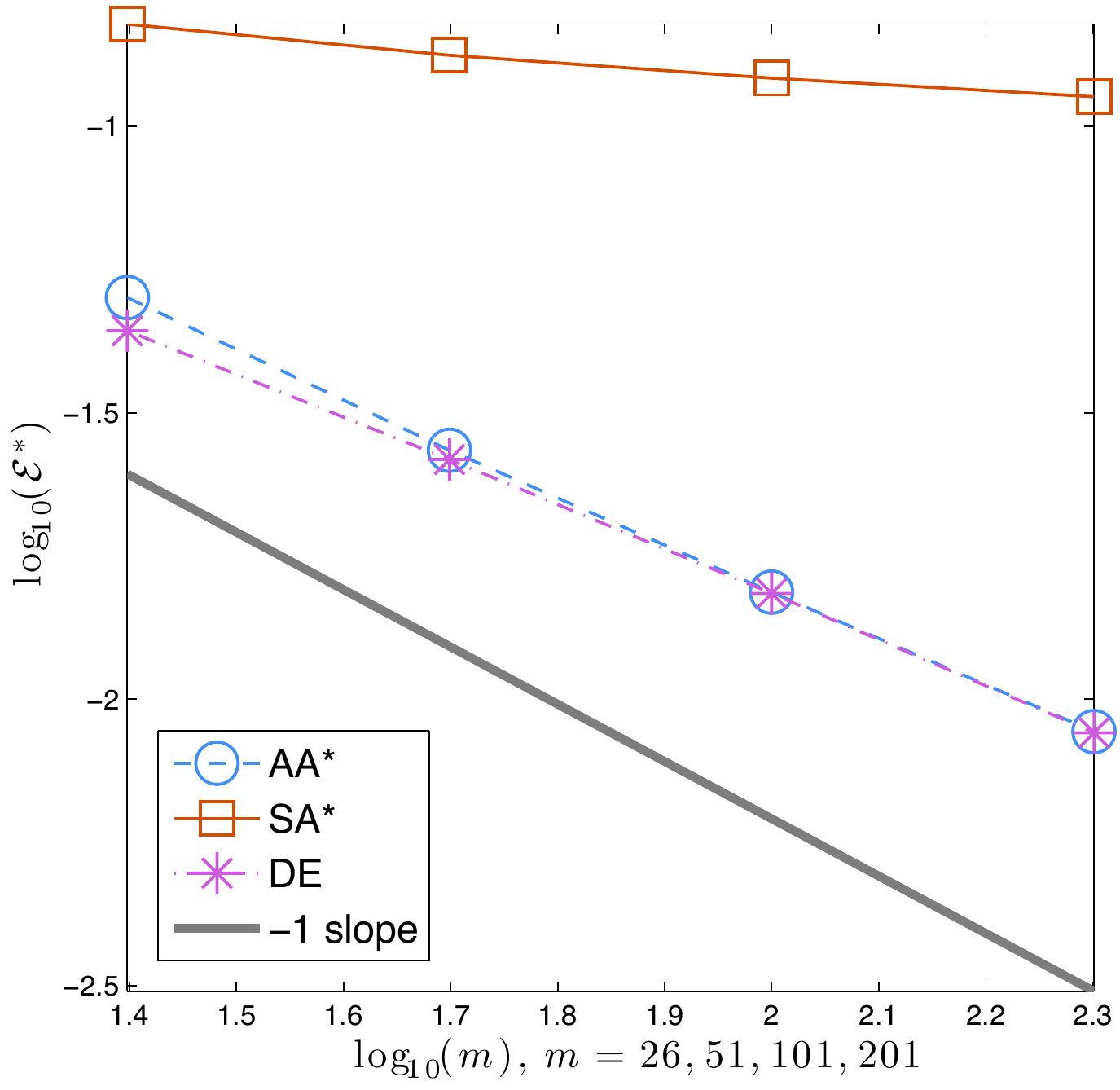}
\end{tabular}
\end{adjustwidth}
\vspace{-.25cm}
\caption{\footnotesize The same data as in Figure \ref{fig:constNaive},  but for 3D computations.
}
\label{fig:const3dNaive}
\end{center}
\end{figure}

\subsection{Oscillatory speed function in 2D and 3D}
\label{ss:sin_speed}
For the next 2D example, we set $\bs = (0.95, 0.7)$ and $\bt = (0.5, 0.5)$
and consider a highly oscillatory speed
\begin{equation}
f(x,y) \hspace{0.435cm} = \ 1 \ + \ 0.5\sin(20\pi x) \sin(20\pi y) \label{2D sin},
\end{equation}
resulting in frequent directional changes along most optimal paths.
We start by focusing on a scaled oracle heuristic
$\varphi = \bar{\varphi}_{\lambda}$ with AA*-FMM also relying on $\Upper = (1+ \epsilon_{tol}h^{\mu})v(\bs)$.
Figure \ref{fig:sinOracle} shows the level sets of numerical solutions obtained with $m=401$.
We note that the SA*-errors result in a significant distortion of the optimal trajectory (see the switch between $\lambda = 0.3$ and $\lambda = 0.7$).

\iftoggle{usecolor}{%

\figstart
\begin{center}
\begin{adjustwidth}{-1.25cm}{}
\tabcolsep=0.05cm
\begin{tabular}{c c c c c c c}
&
\multicolumn{6}{c}{\bf 2D sinusoid speed: Solutions with A* using $\bar{\varphi}_{\lambda}$} \\
&
\small {\em$\mathit{\lambda = 0.00}$} &
\small {\em$\mathit{\lambda = 0.10}$} &
\small {\em$\mathit{\lambda = 0.30}$} &
\small {\em$\mathit{\lambda = 0.70}$} &
\small {\em$\mathit{\lambda = 0.90}$} &
\small {\em$\mathit{\lambda = 1.00}$} \\
\begin{sideways}\textbf{\Large\hspace{1cm}SA*}\end{sideways}&
\includegraphics[scale=\sinPicScale]{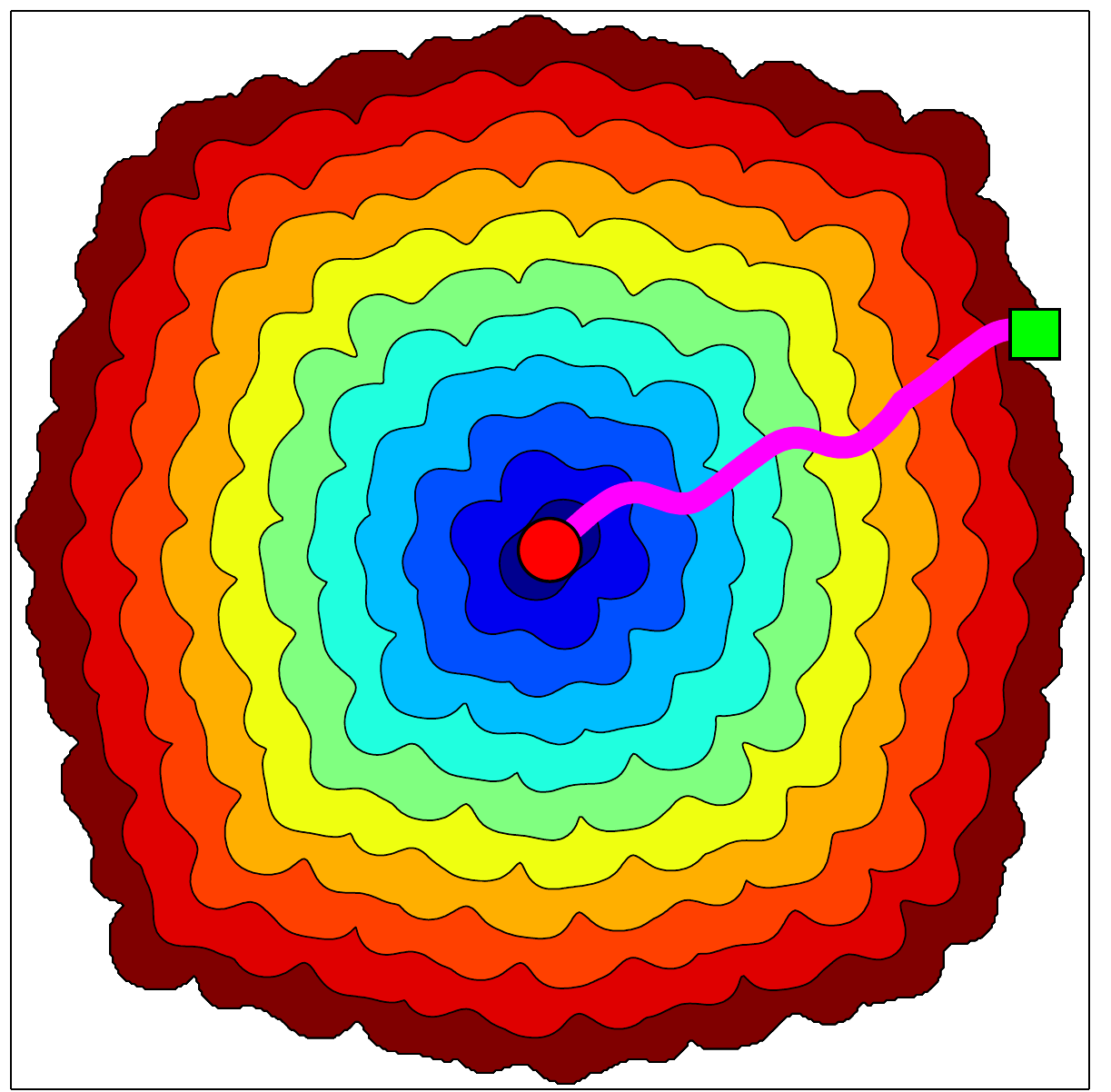} &
\includegraphics[scale=\sinPicScale]{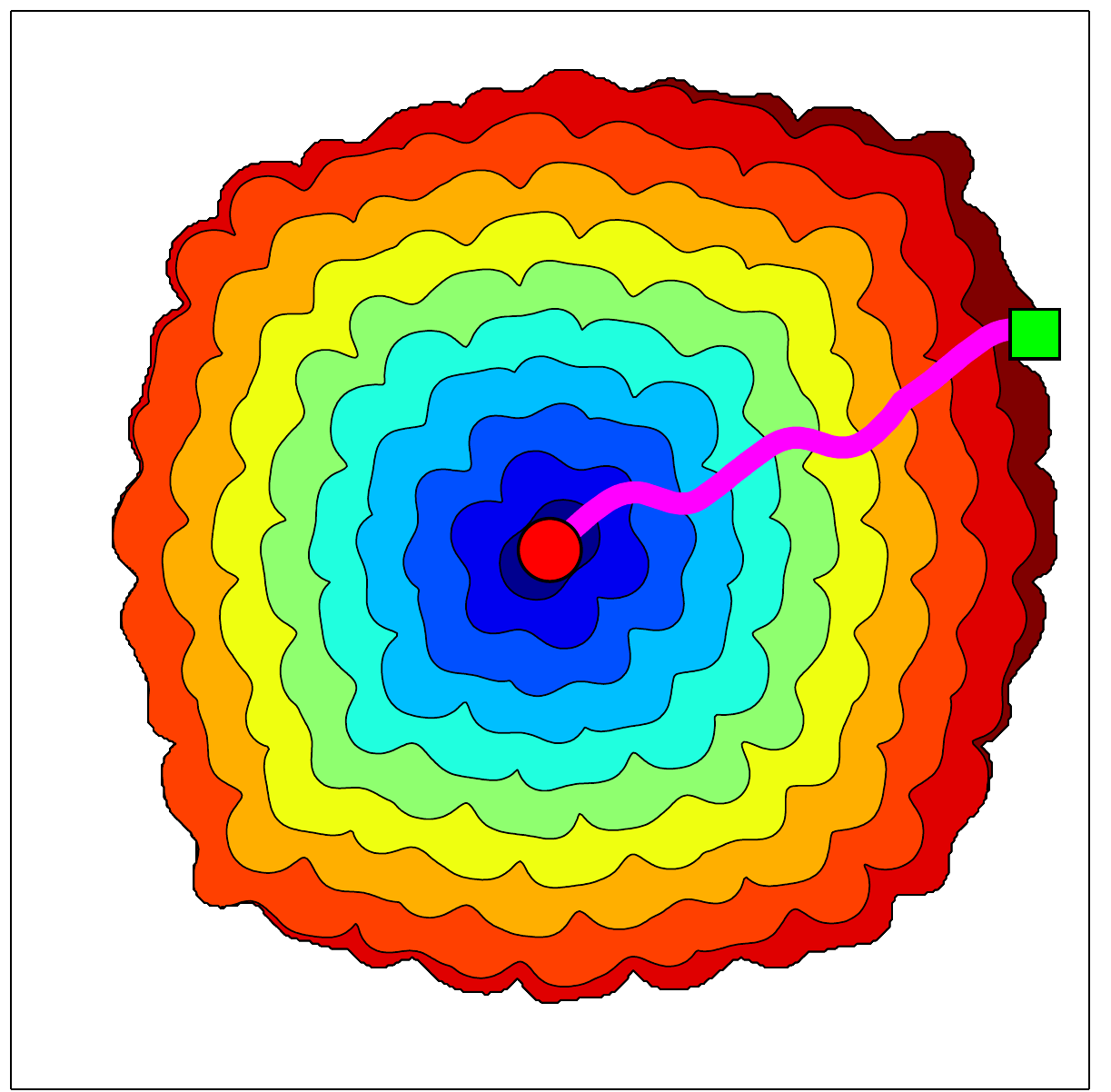} &
\includegraphics[scale=\sinPicScale]{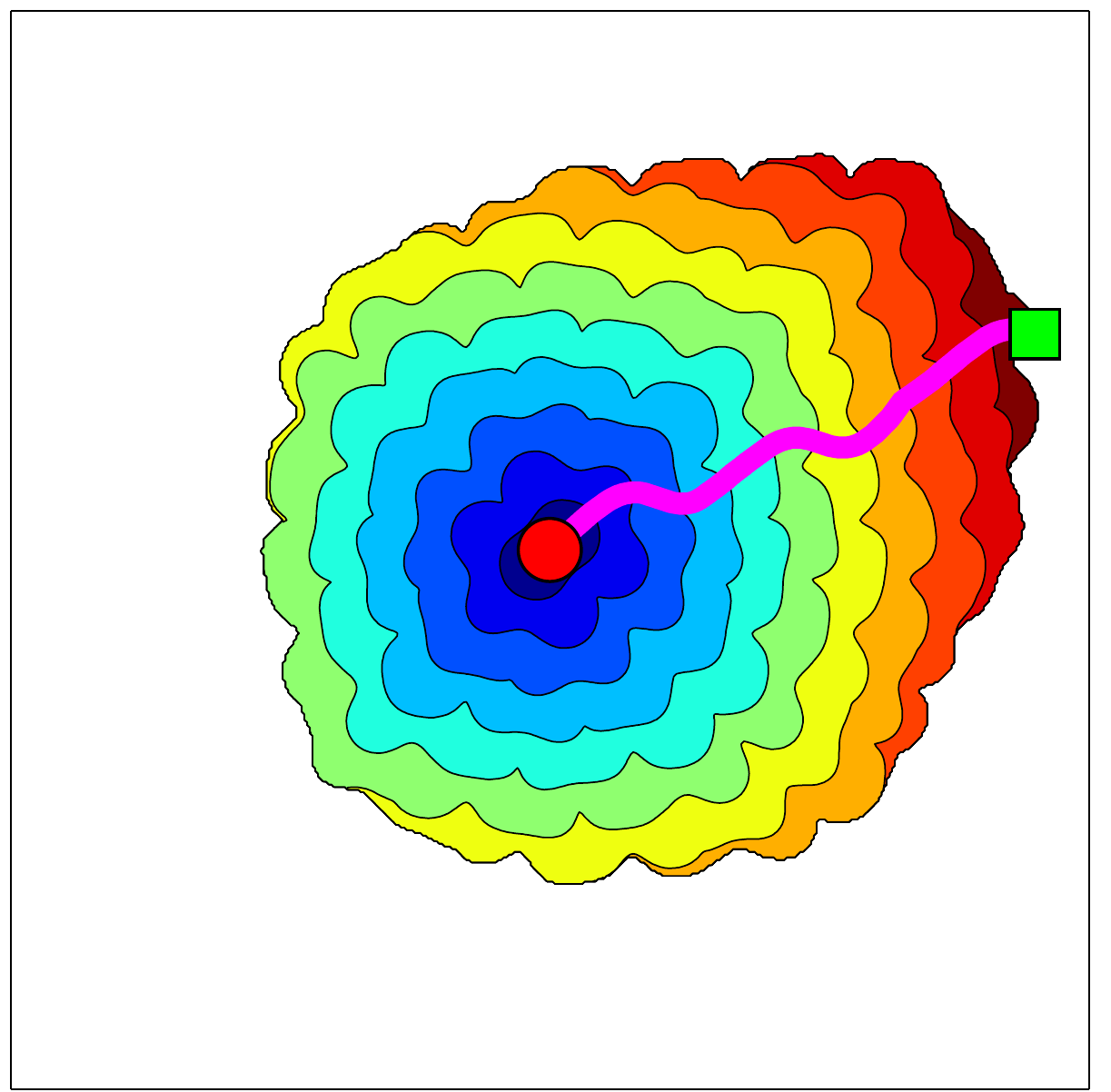} &
\includegraphics[scale=\sinPicScale]{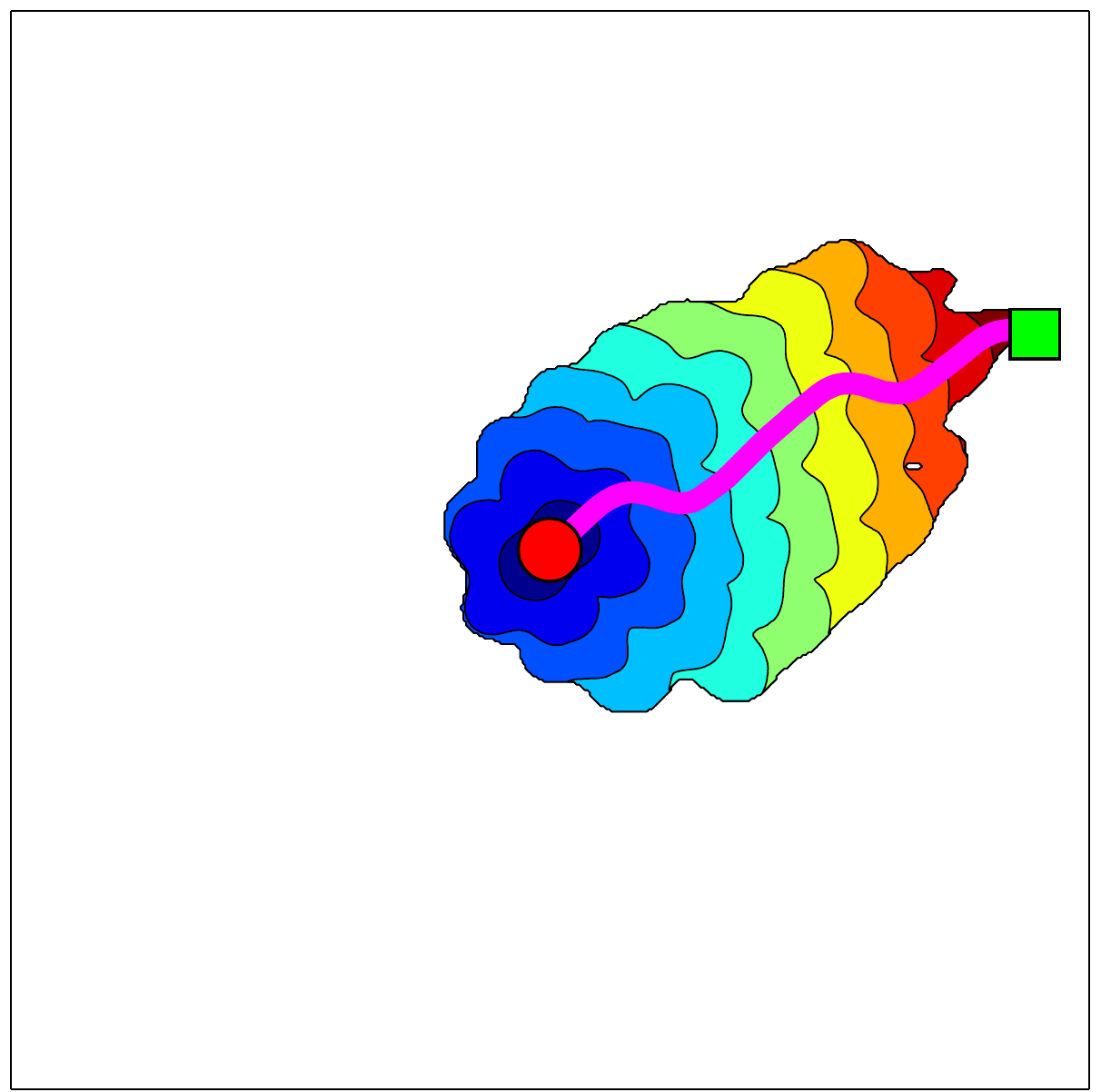} &
\includegraphics[scale=\sinPicScale]{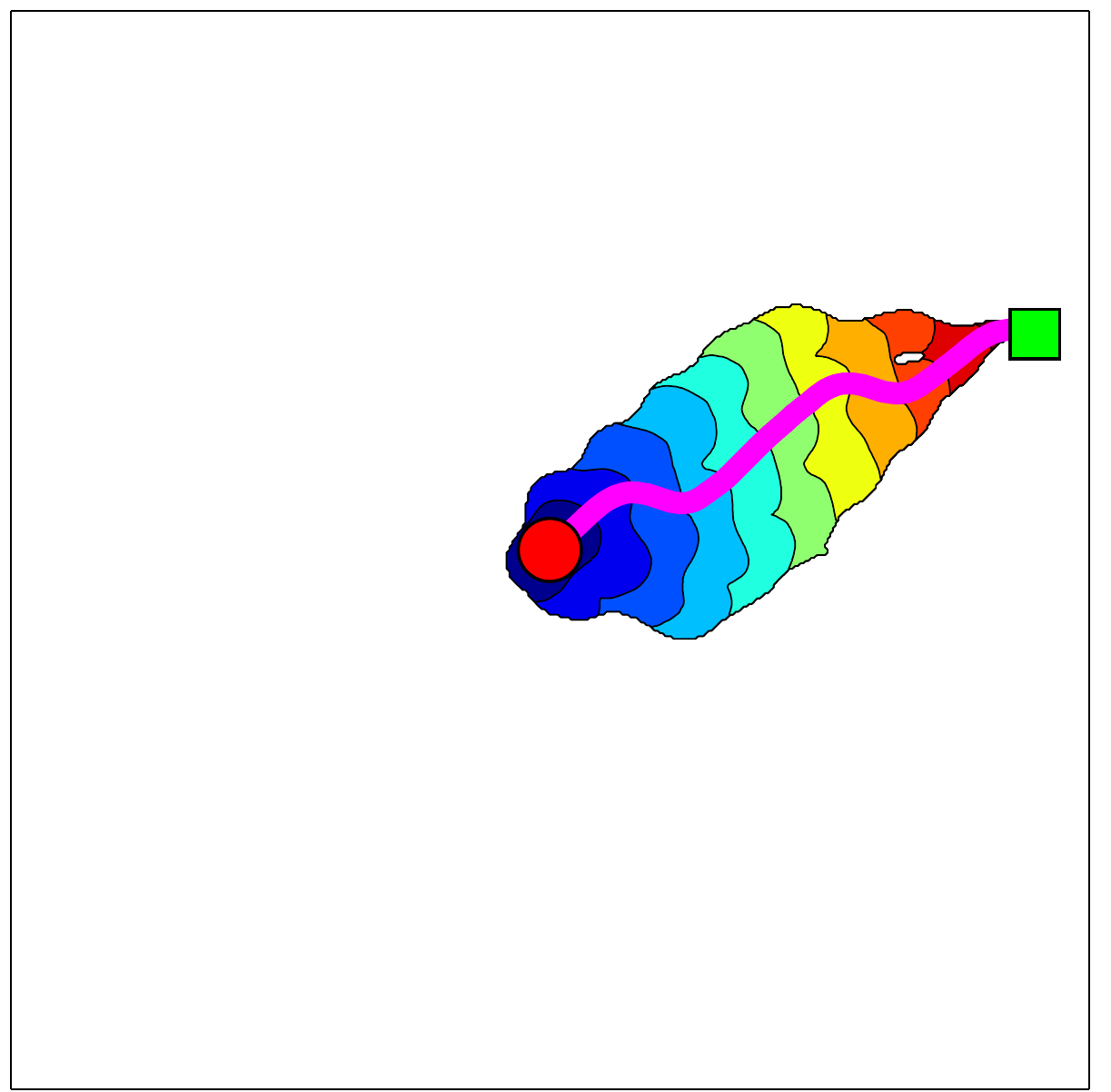} &
\includegraphics[scale=\sinPicScale]{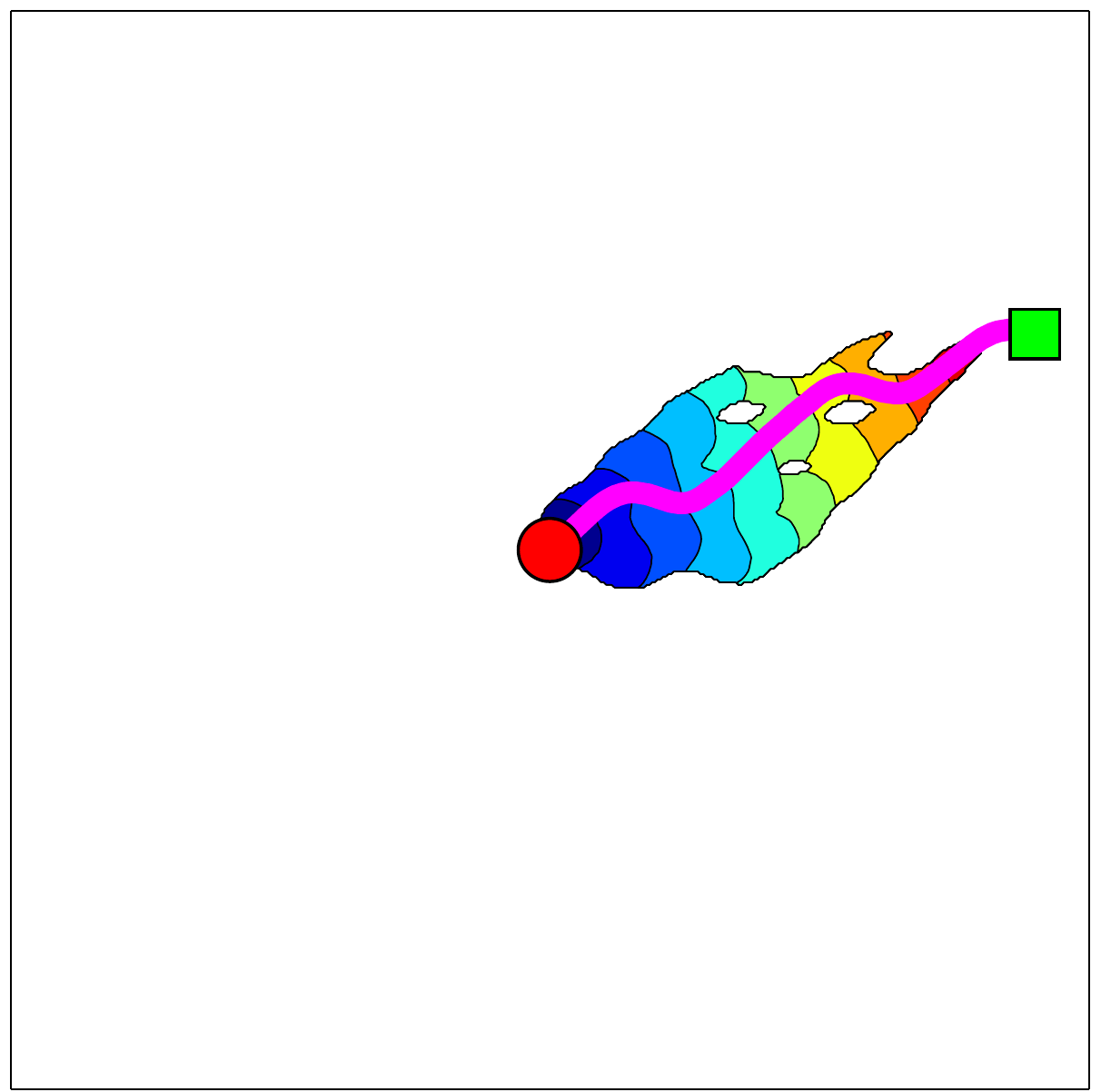} \\ [-1ex]
&
\footnotesize$\mathcal{P}=0.787$, $\astarErrOnly=0$ &
\footnotesize$\mathcal{P}=0.614$, $\astarErrOnly=10^{-6}$ &
\footnotesize$\mathcal{P}=0.385$, $\astarErrOnly=10^{-4}$ &
\footnotesize$\mathcal{P}=0.148$, $\astarErrOnly\approx 0.015$ &
\footnotesize$\mathcal{P}=0.079$, $\astarErrOnly=0.043$ &
\footnotesize$\mathcal{P}=0.050$, $\astarErrOnly=0.060$
\vspace{.1cm}
 \\

\begin{sideways}\textbf{\Large\hspace{1cm}AA*}\end{sideways}&
\includegraphics[scale=\sinPicScale]{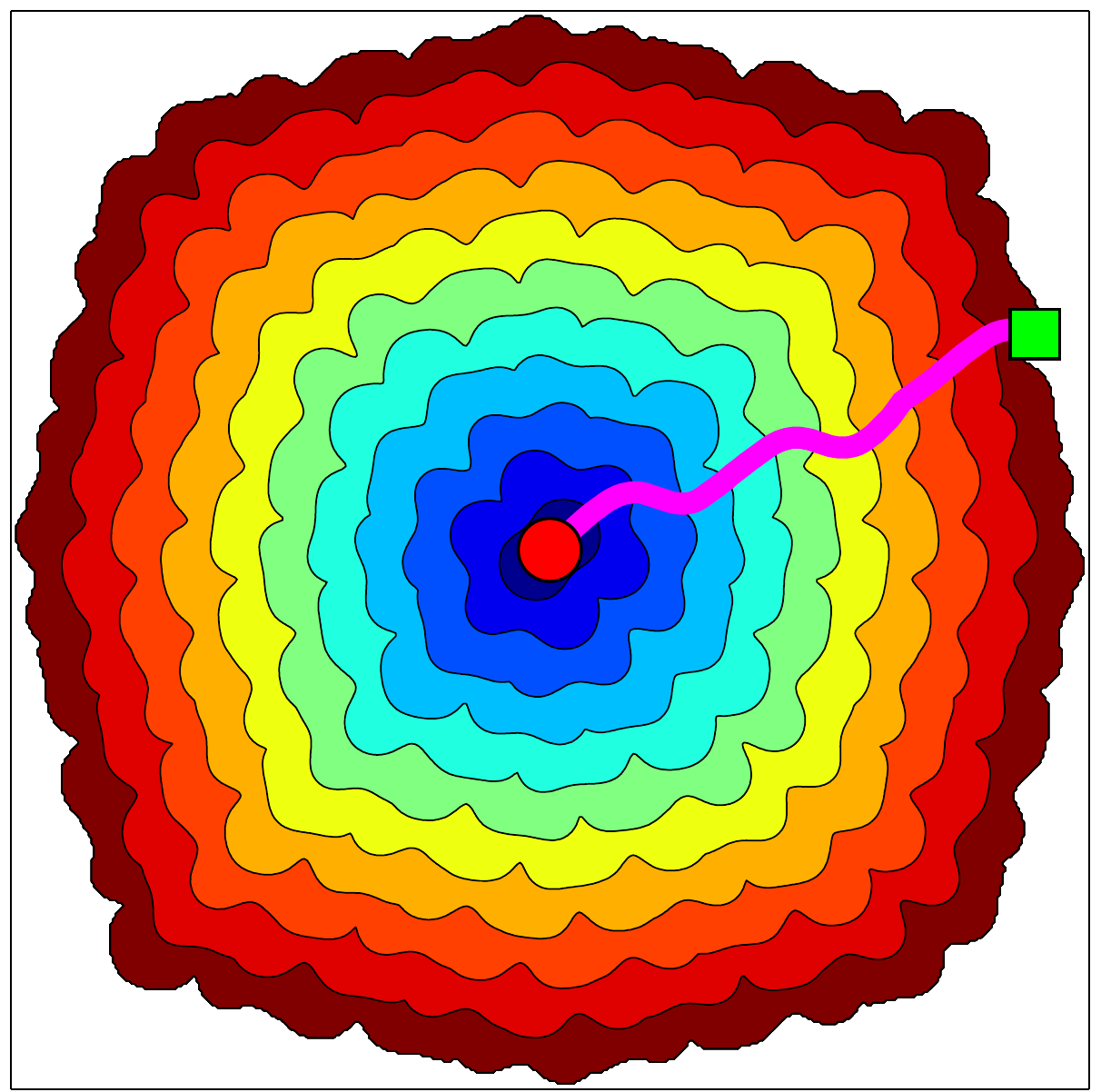} &
\includegraphics[scale=\sinPicScale]{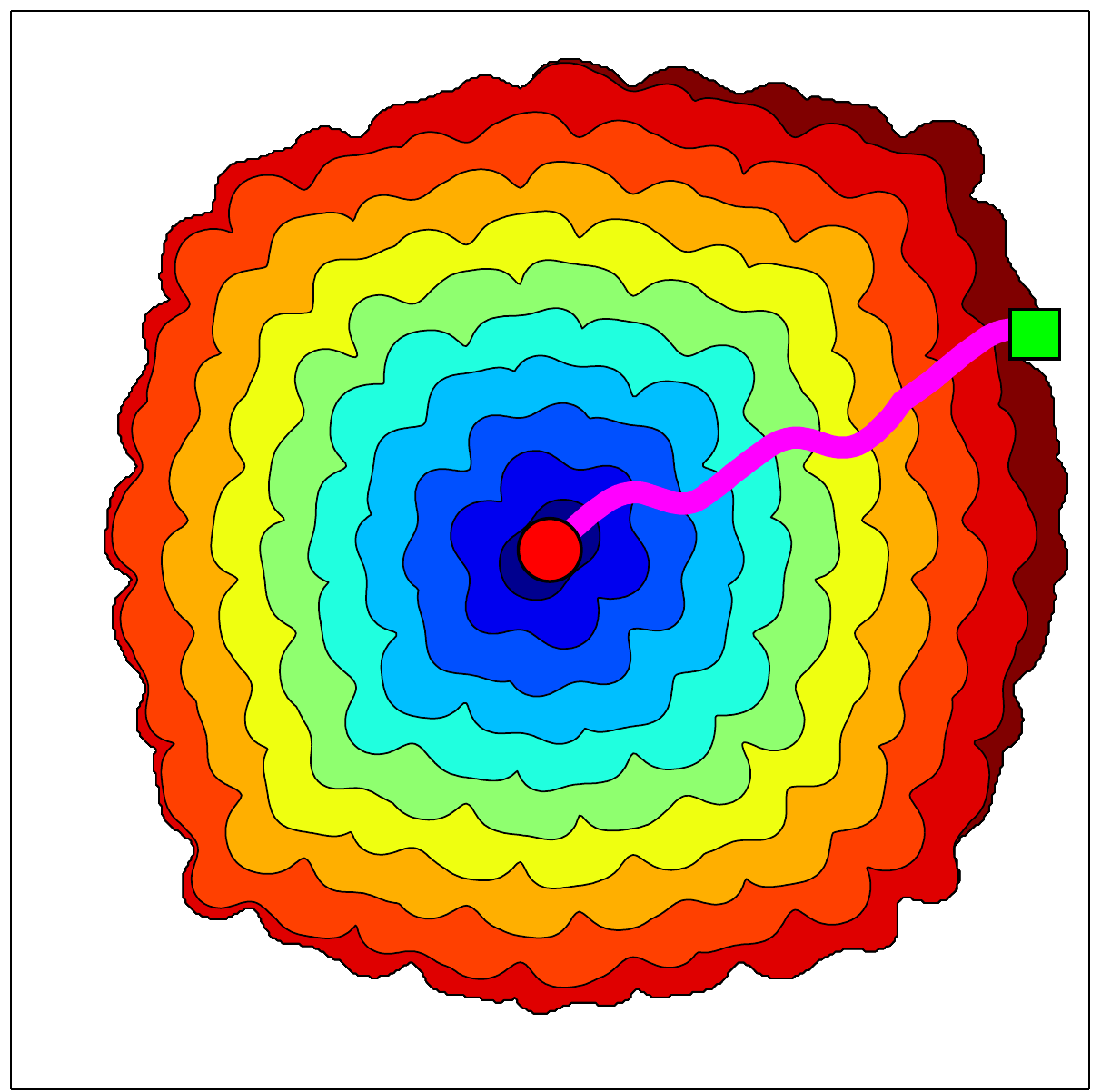} &
\includegraphics[scale=\sinPicScale]{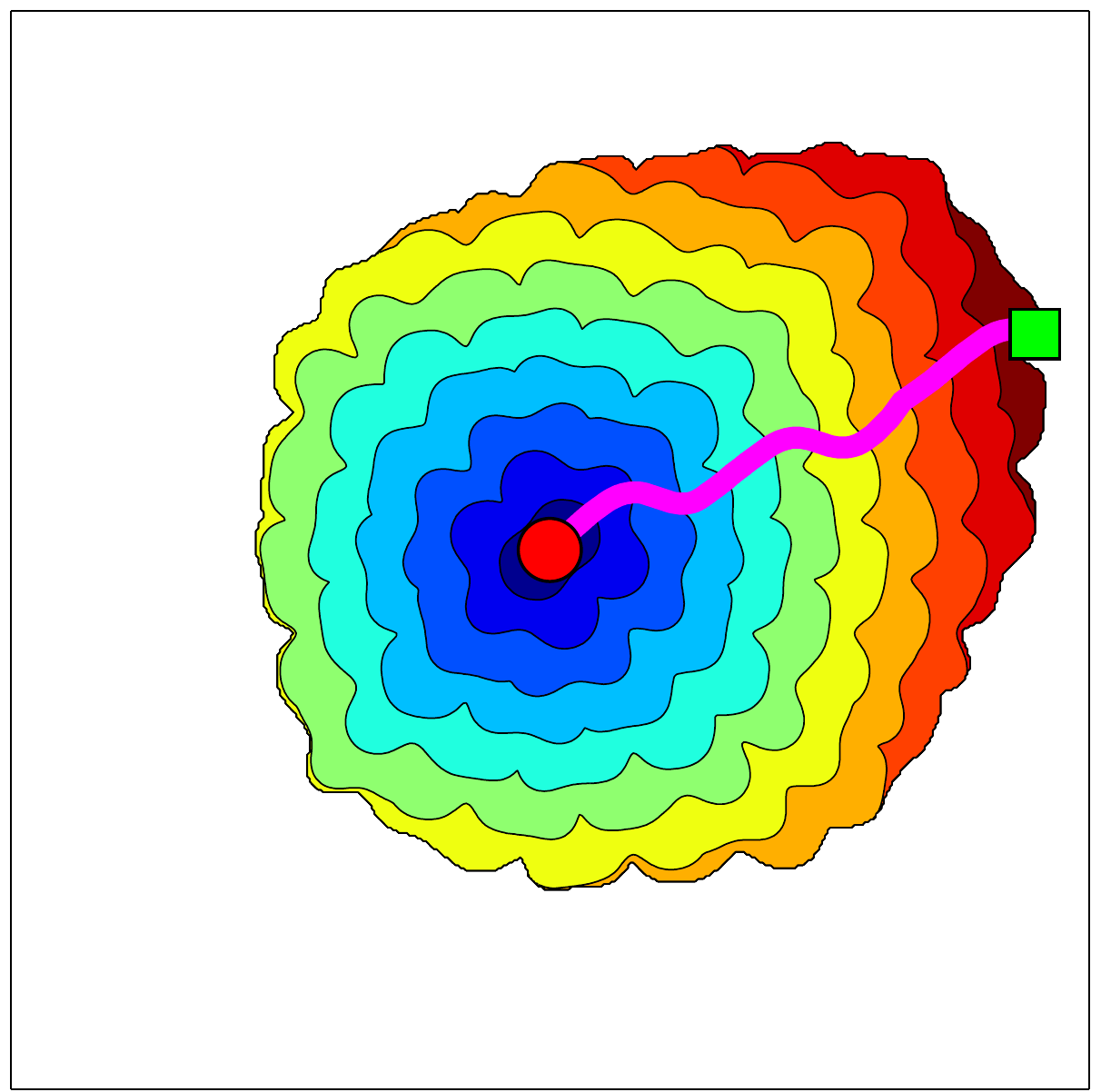} &
\includegraphics[scale=\sinPicScale]{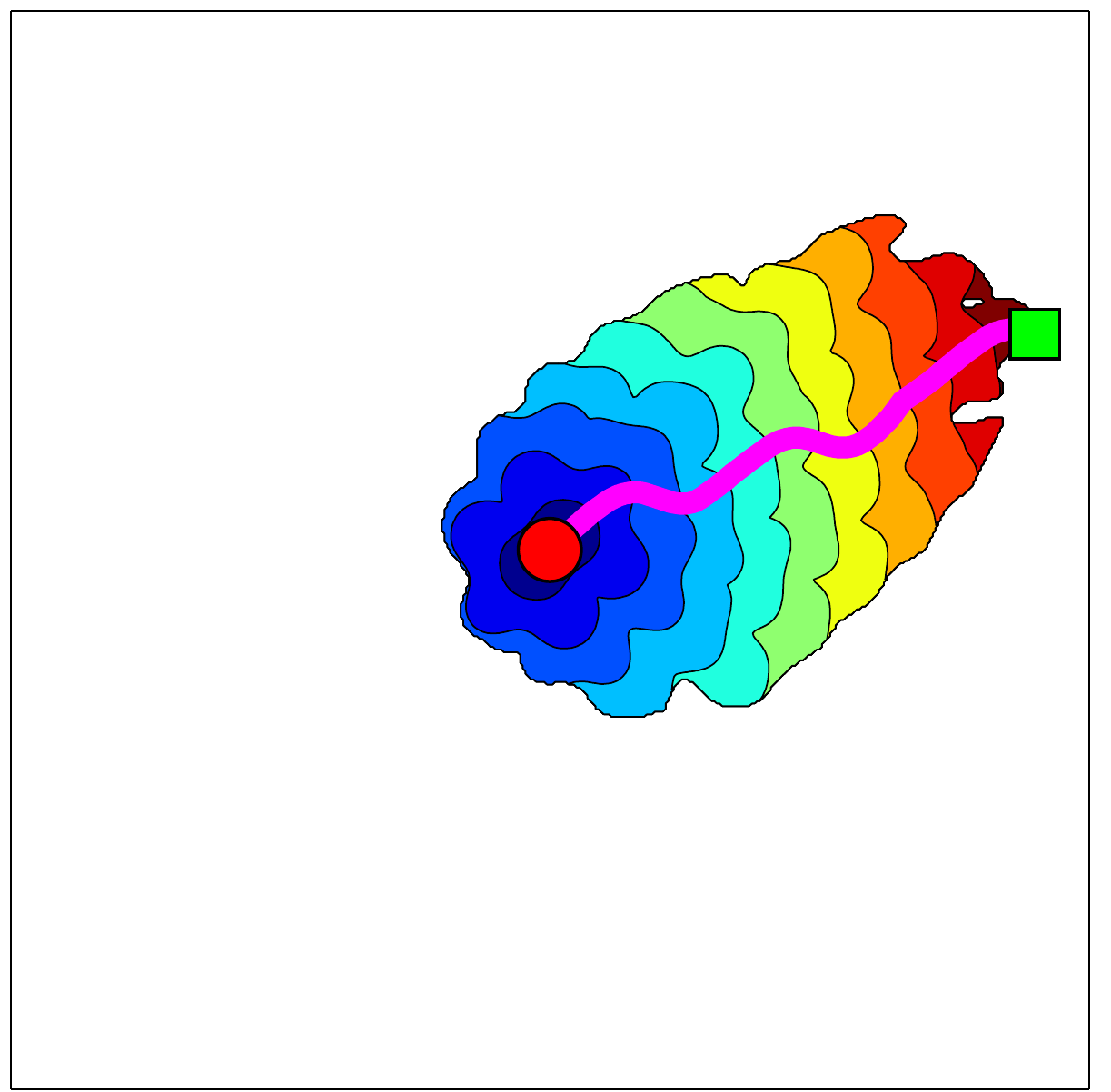} &
\includegraphics[scale=\sinPicScale]{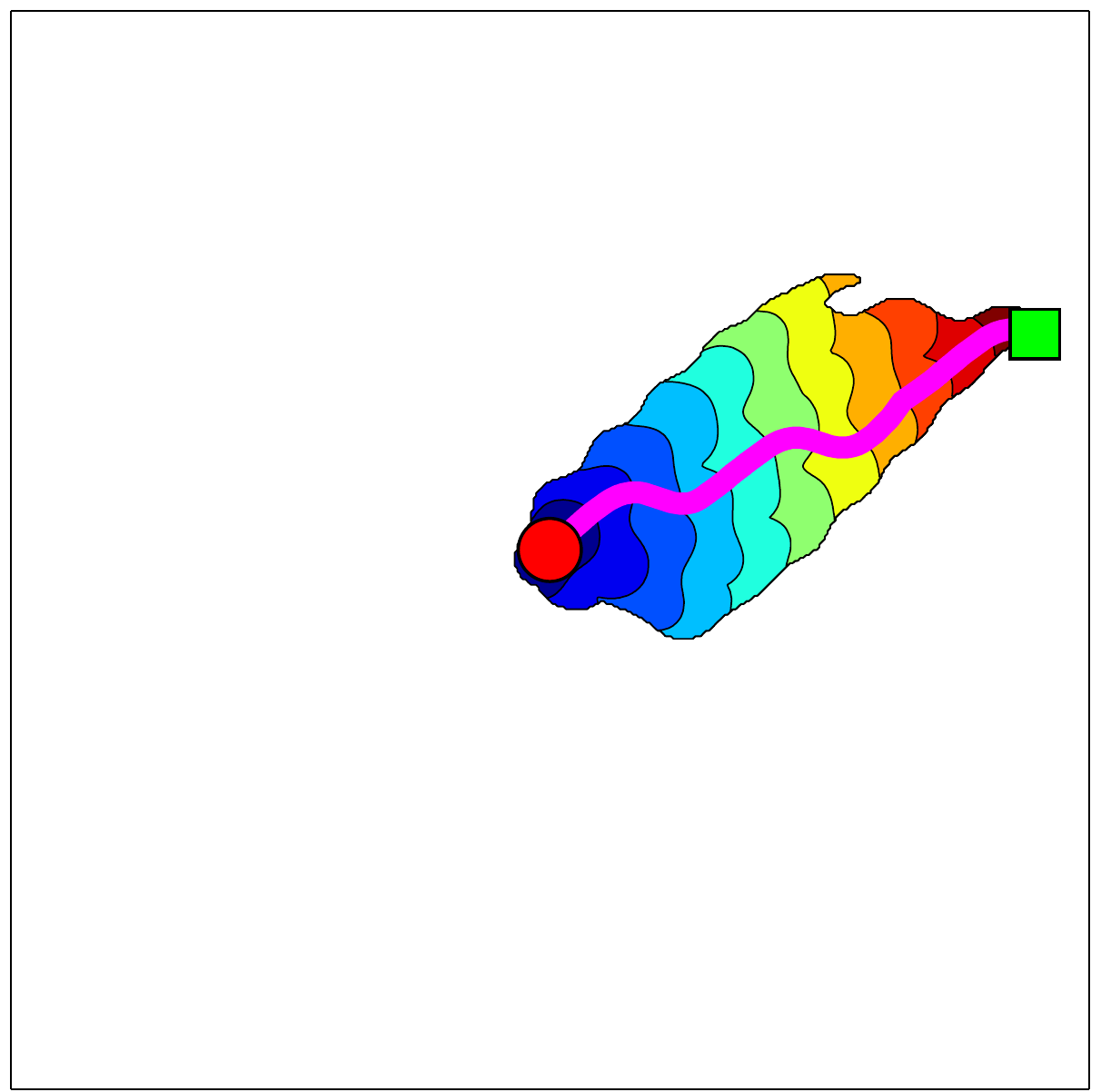} &
\includegraphics[scale=\sinPicScale]{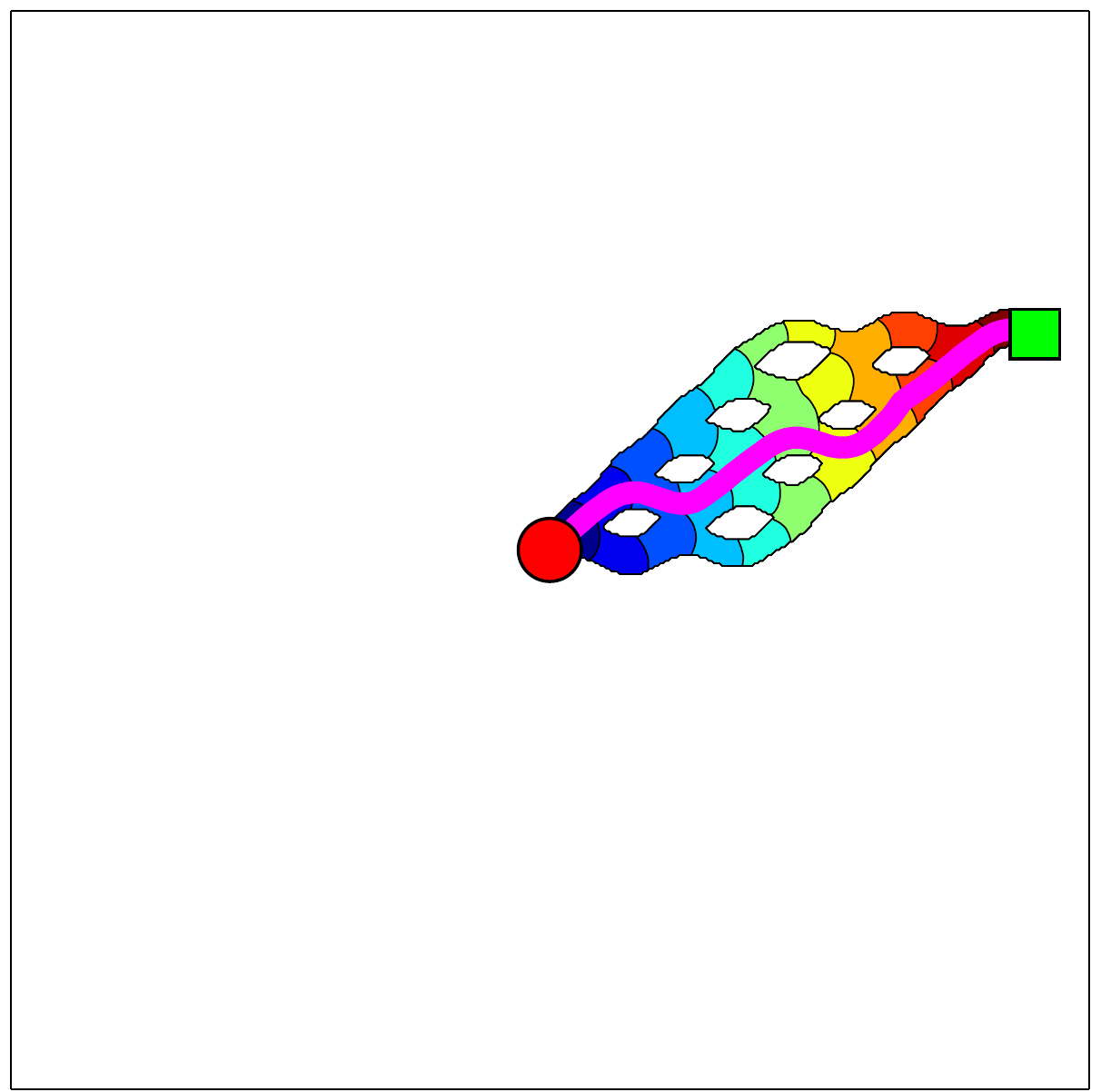} 
 \\ [-1ex]
&
\footnotesize $\mathcal{P}=0.787$, $\astarErrOnly=0$ &
\footnotesize $\mathcal{P}=0.639$, $\astarErrOnly=0$ &
\footnotesize $\mathcal{P}=0.404$, $\astarErrOnly=0$ &
\footnotesize $\mathcal{P}=0.160$, $\astarErrOnly\approx 10^{-16}$ &
\footnotesize $\mathcal{P}=0.080$, $\astarErrOnly\approx10^{-10}$ &
\footnotesize $\mathcal{P}=0.048$, $\astarErrOnly\approx 10^{-5}$
\vspace{0.15cm} \\
&
\multicolumn{6}{c}{\includegraphics[scale=0.7]{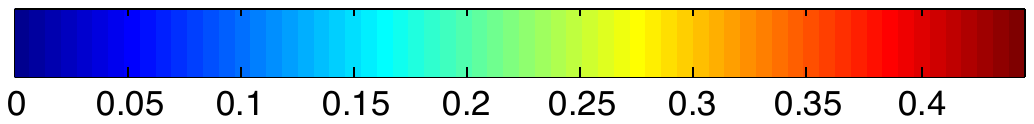}}
\end{tabular}
\end{adjustwidth}
\vspace{-.35cm}
\caption{\footnotesize Numerical results of FMM combined with SA* and AA*, showing the fraction of domain computed $\mathcal{P}$ and the relative error $\astarErrOnly$. Note the change in the ``optimal'' trajectory for SA* between $\lambda=0.3$ and $\lambda=0.70$. The solutions were produced using $m = 401$.}
\label{fig:sinOracle}
\end{center}
\end{figure}

}{%

\figstart
\begin{center}
\begin{adjustwidth}{-1.25cm}{}
\tabcolsep=0.05cm
\begin{tabular}{c c c c c c c}
&
\multicolumn{6}{c}{\bf 2D sinusoid speed: Solutions with A* using $\bar{\varphi}_{\lambda}$} \\
&
\small {\em$\mathit{\lambda = 0.00}$} &
\small {\em$\mathit{\lambda = 0.10}$} &
\small {\em$\mathit{\lambda = 0.30}$} &
\small {\em$\mathit{\lambda = 0.70}$} &
\small {\em$\mathit{\lambda = 0.90}$} &
\small {\em$\mathit{\lambda = 1.00}$} \\
\begin{sideways}\textbf{\Large\hspace{1cm}SA*}\end{sideways}&
\includegraphics[scale=\sinPicScale]{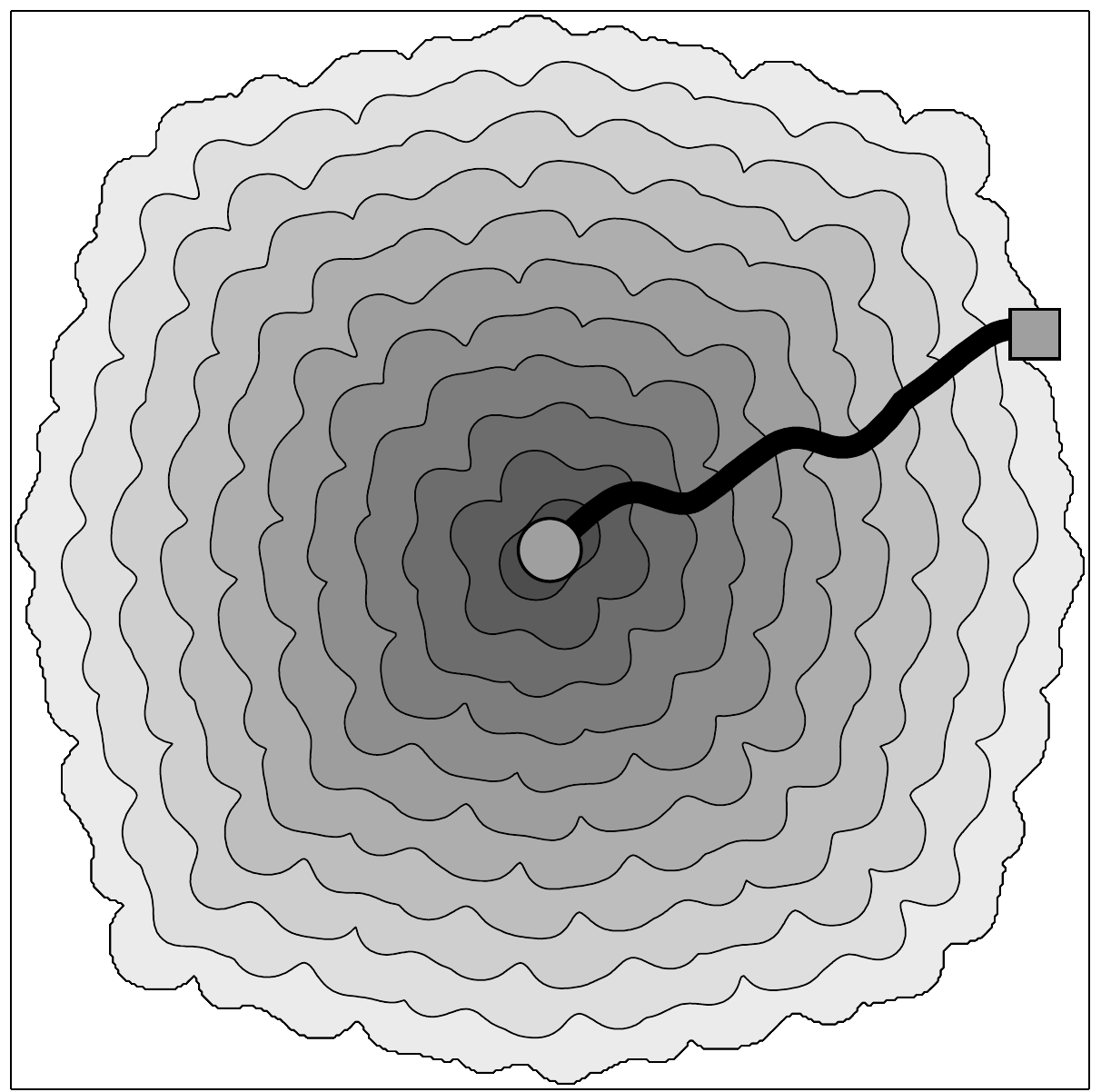} &
\includegraphics[scale=\sinPicScale]{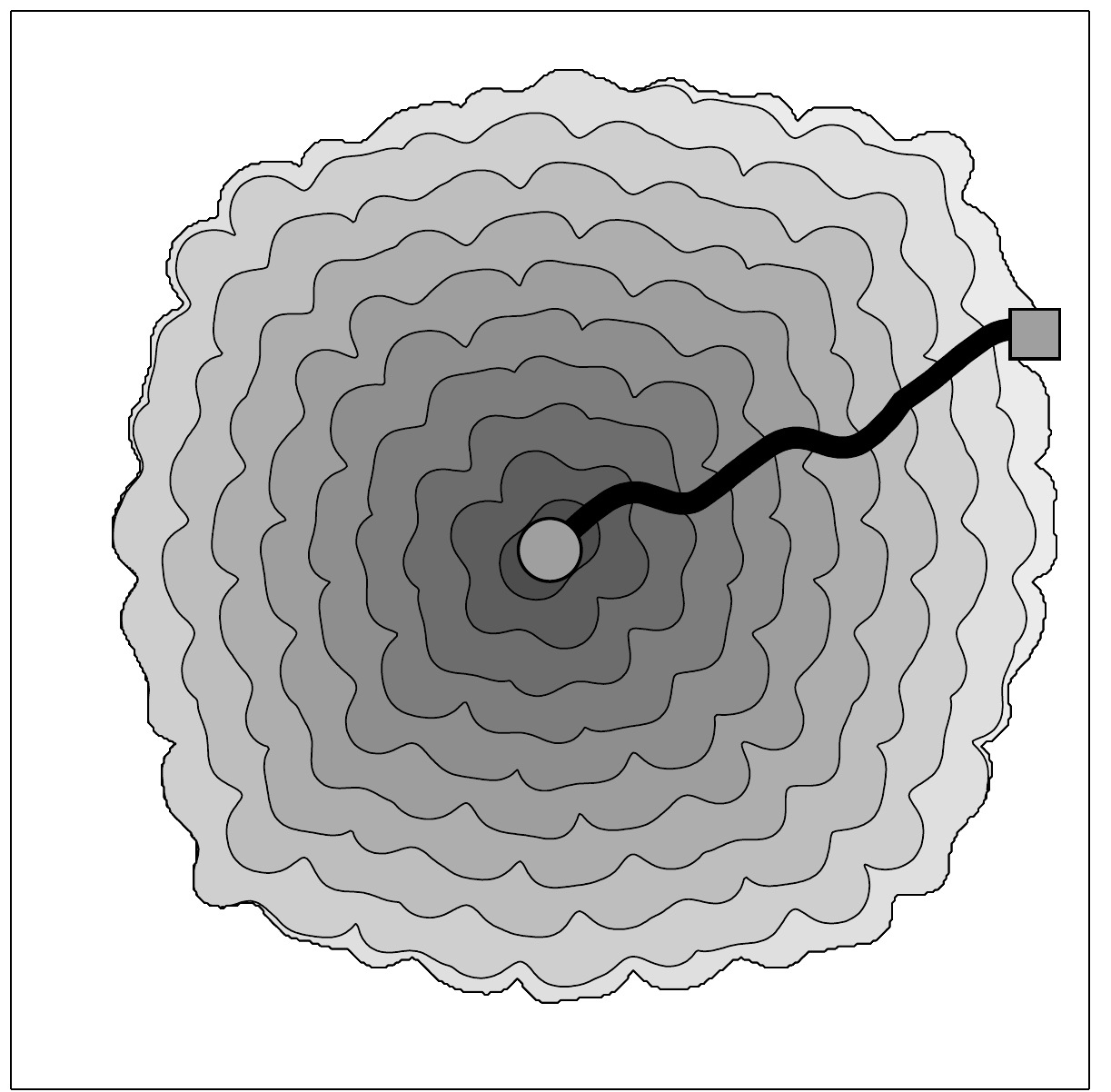} &
\includegraphics[scale=\sinPicScale]{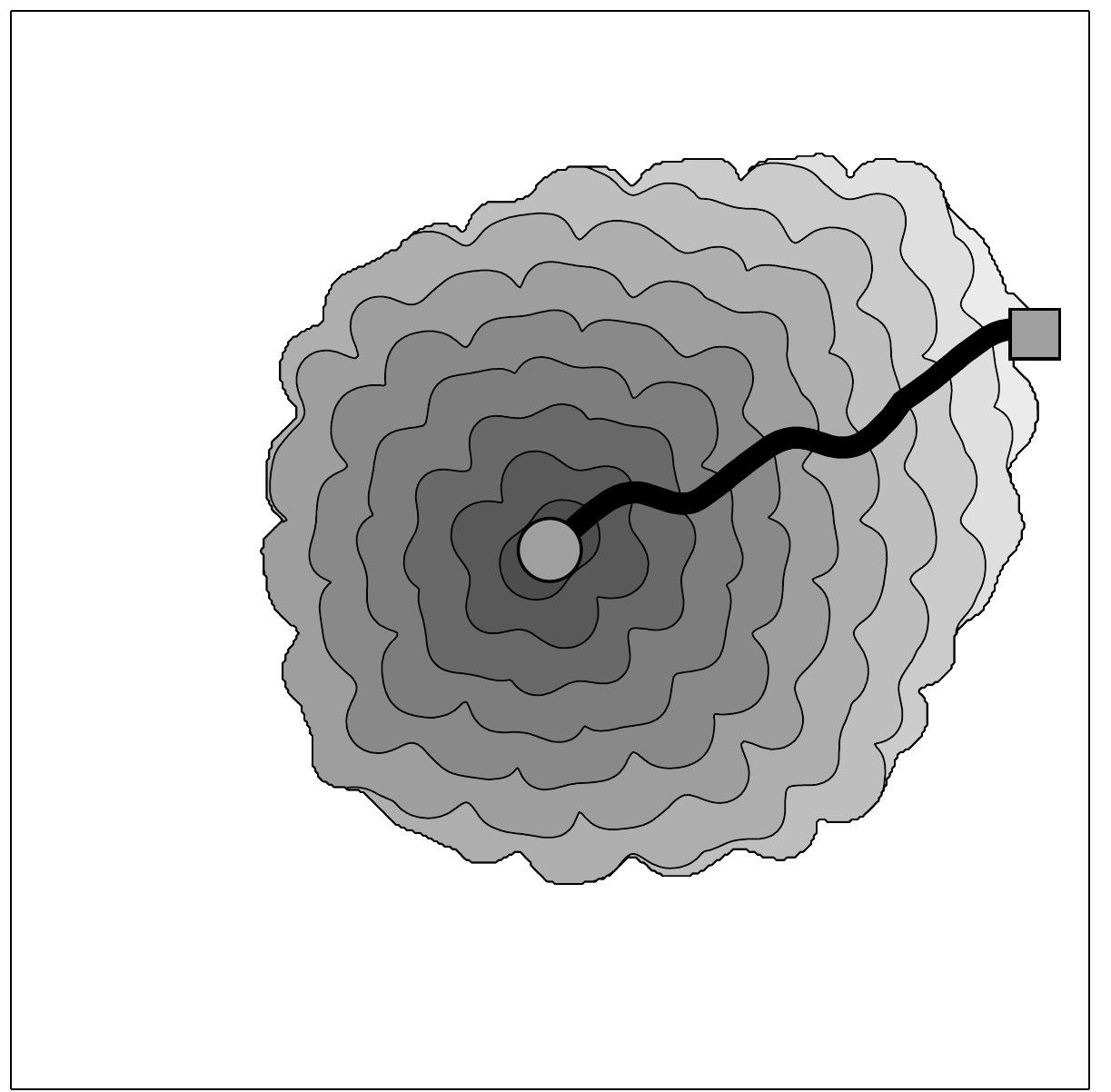} &
\includegraphics[scale=\sinPicScale]{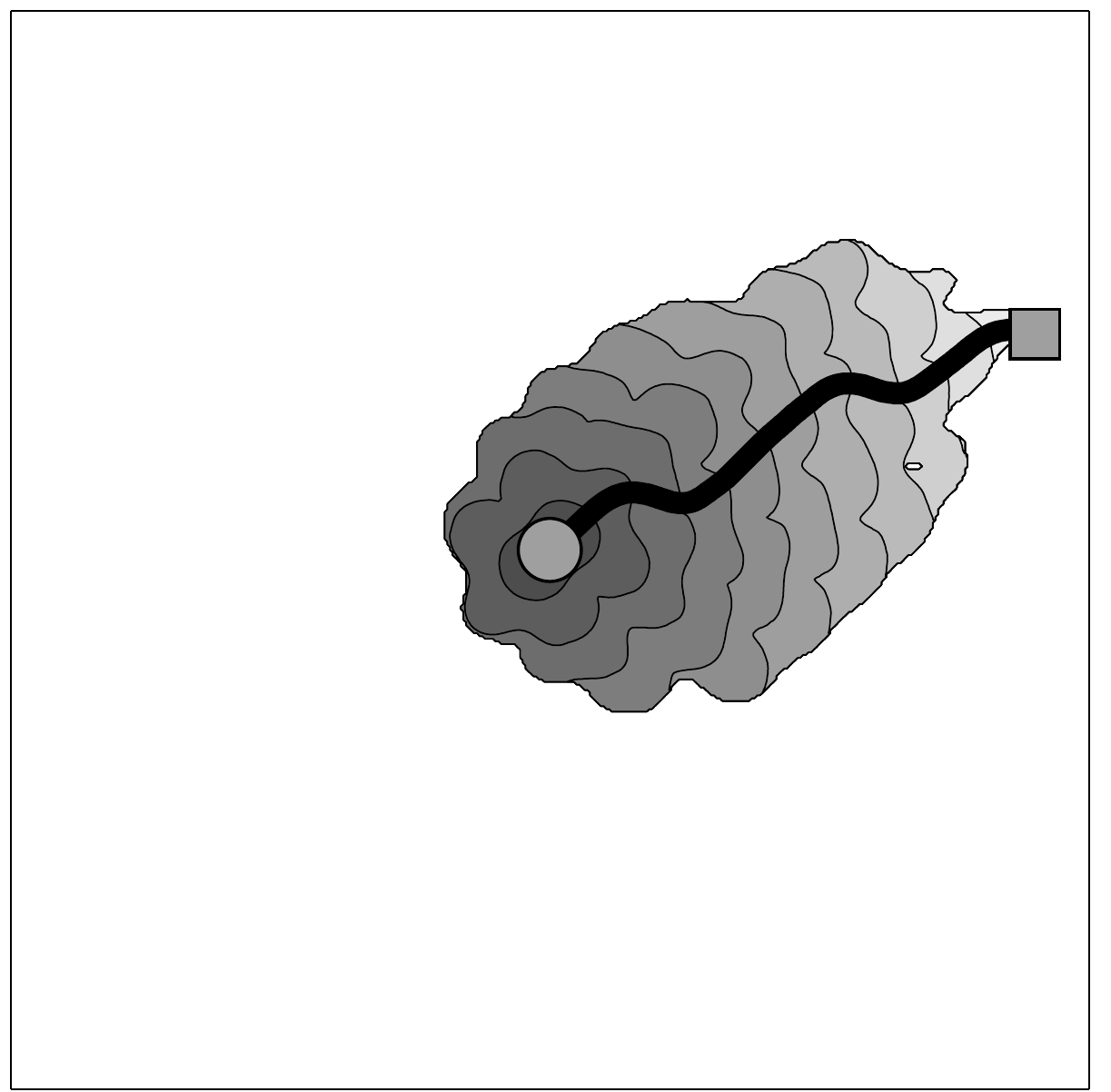} &
\includegraphics[scale=\sinPicScale]{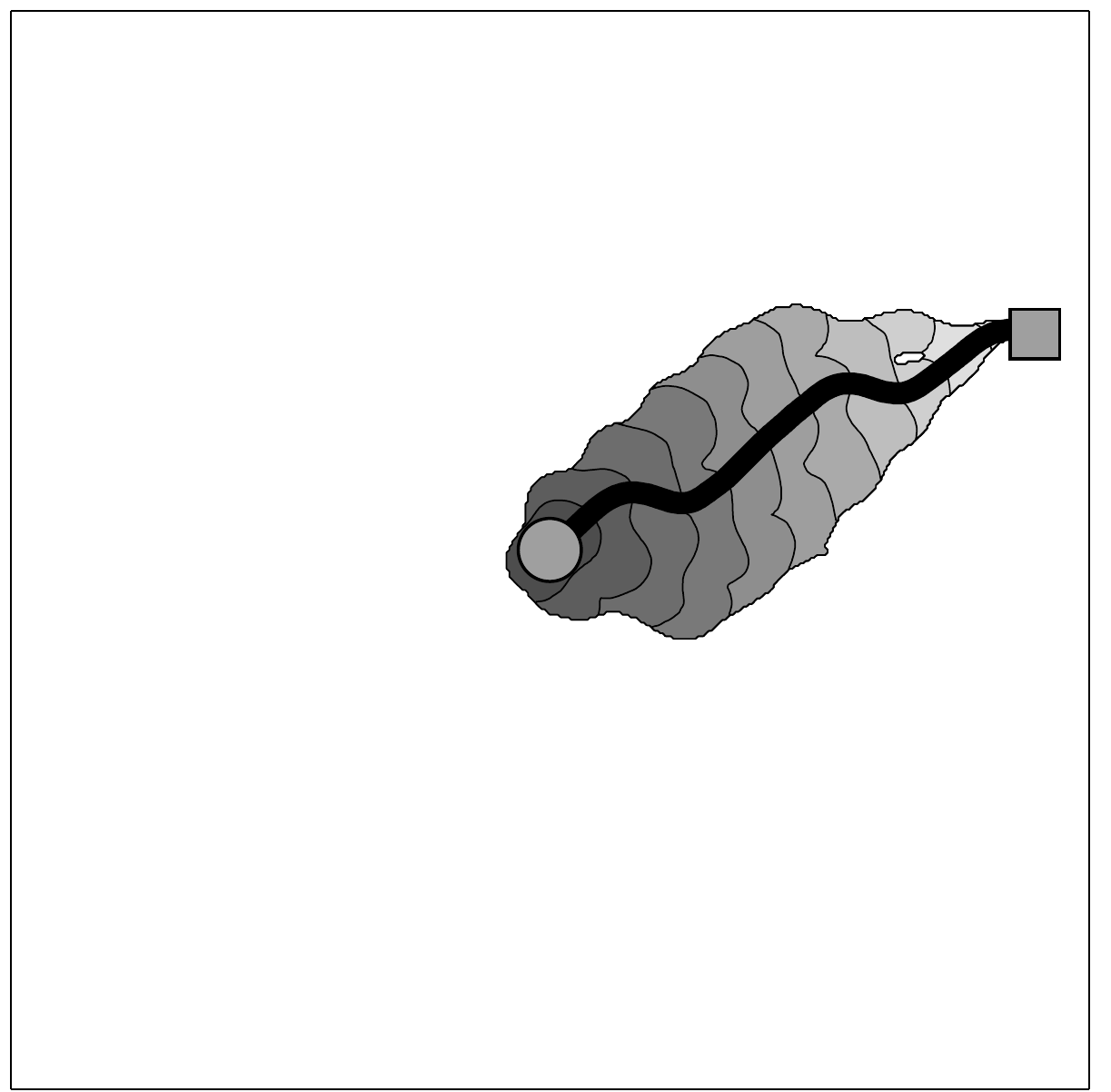} &
\includegraphics[scale=\sinPicScale]{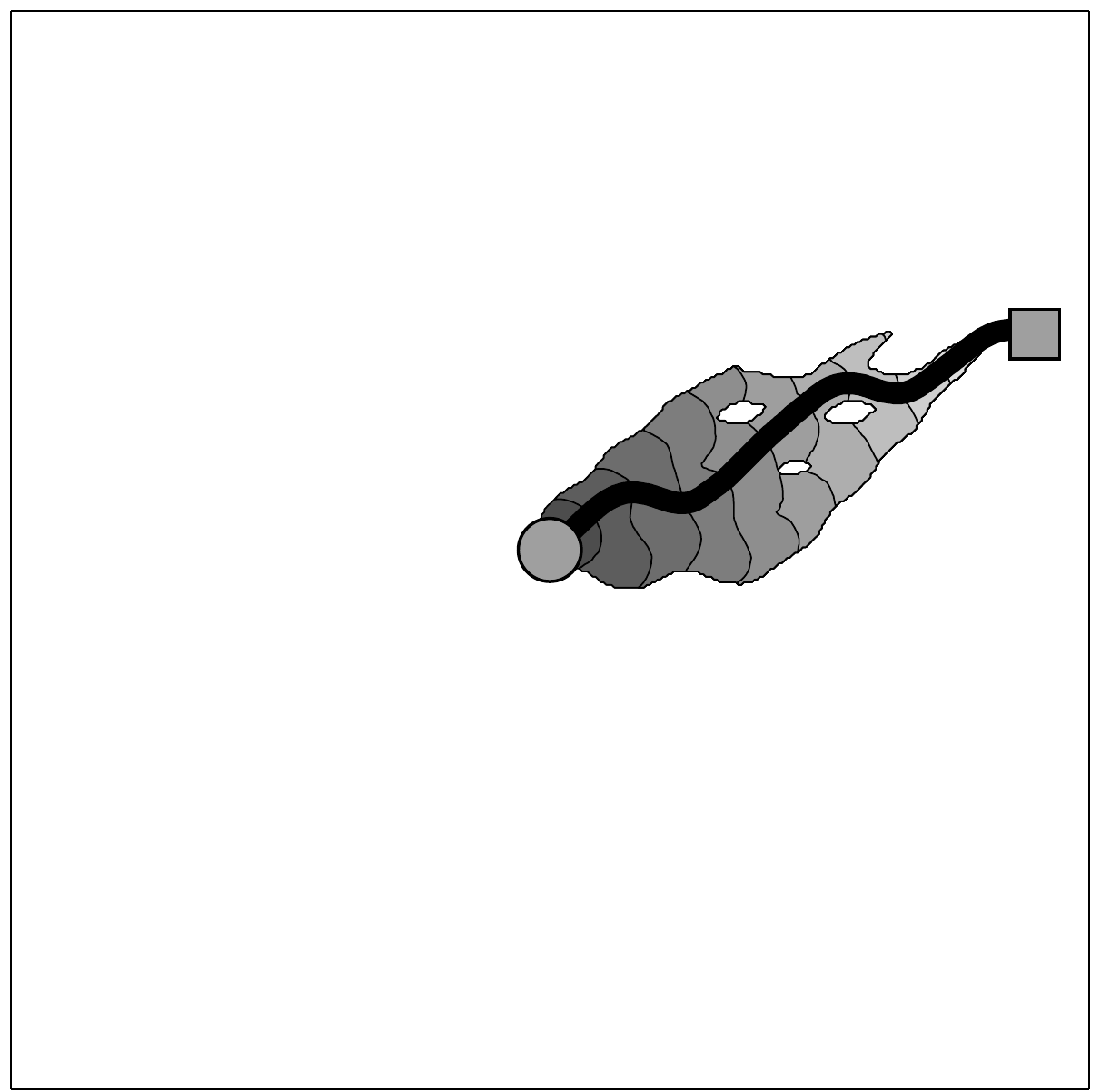} \\ [-1ex]
&
\footnotesize$\mathcal{P}=0.787$, $\astarErrOnly=0$ &
\footnotesize$\mathcal{P}=0.614$, $\astarErrOnly=10^{-6}$ &
\footnotesize$\mathcal{P}=0.385$, $\astarErrOnly=10^{-4}$ &
\footnotesize$\mathcal{P}=0.148$, $\astarErrOnly\approx 0.015$ &
\footnotesize$\mathcal{P}=0.079$, $\astarErrOnly=0.043$ &
\footnotesize$\mathcal{P}=0.050$, $\astarErrOnly=0.060$
\vspace{.1cm}
 \\

\begin{sideways}\textbf{\Large\hspace{1cm}AA*}\end{sideways}&
\includegraphics[scale=\sinPicScale]{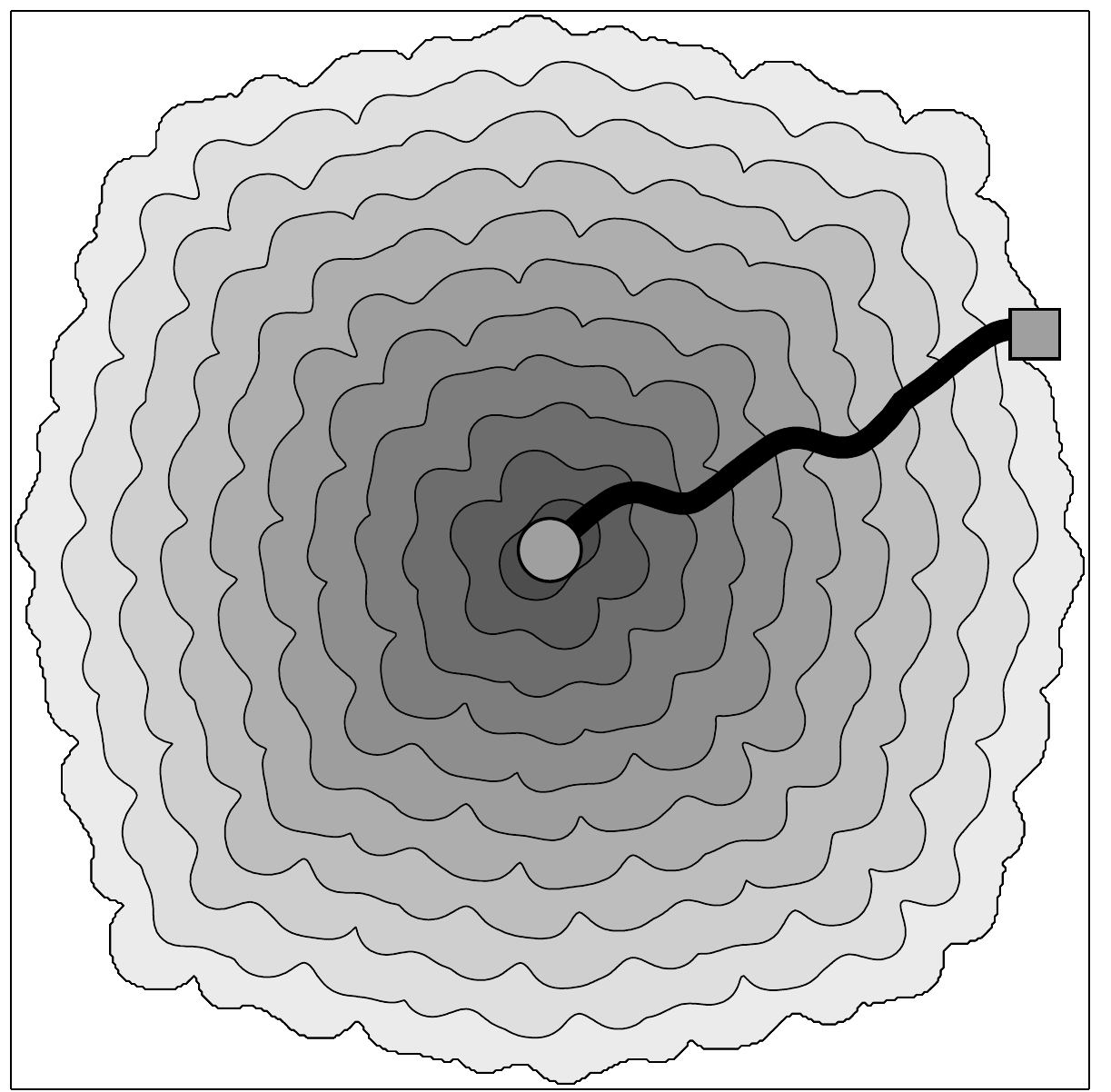} &
\includegraphics[scale=\sinPicScale]{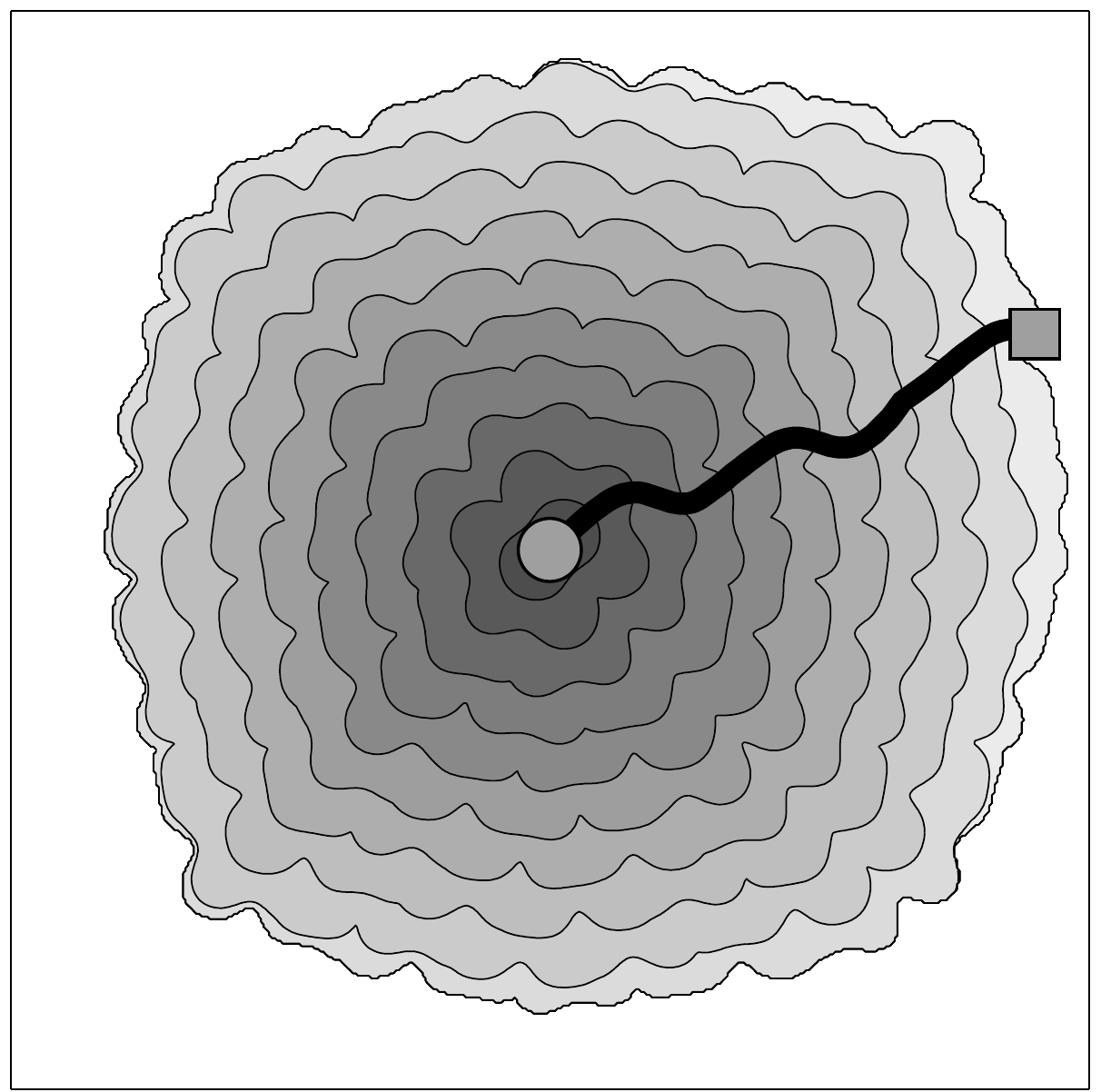} &
\includegraphics[scale=\sinPicScale]{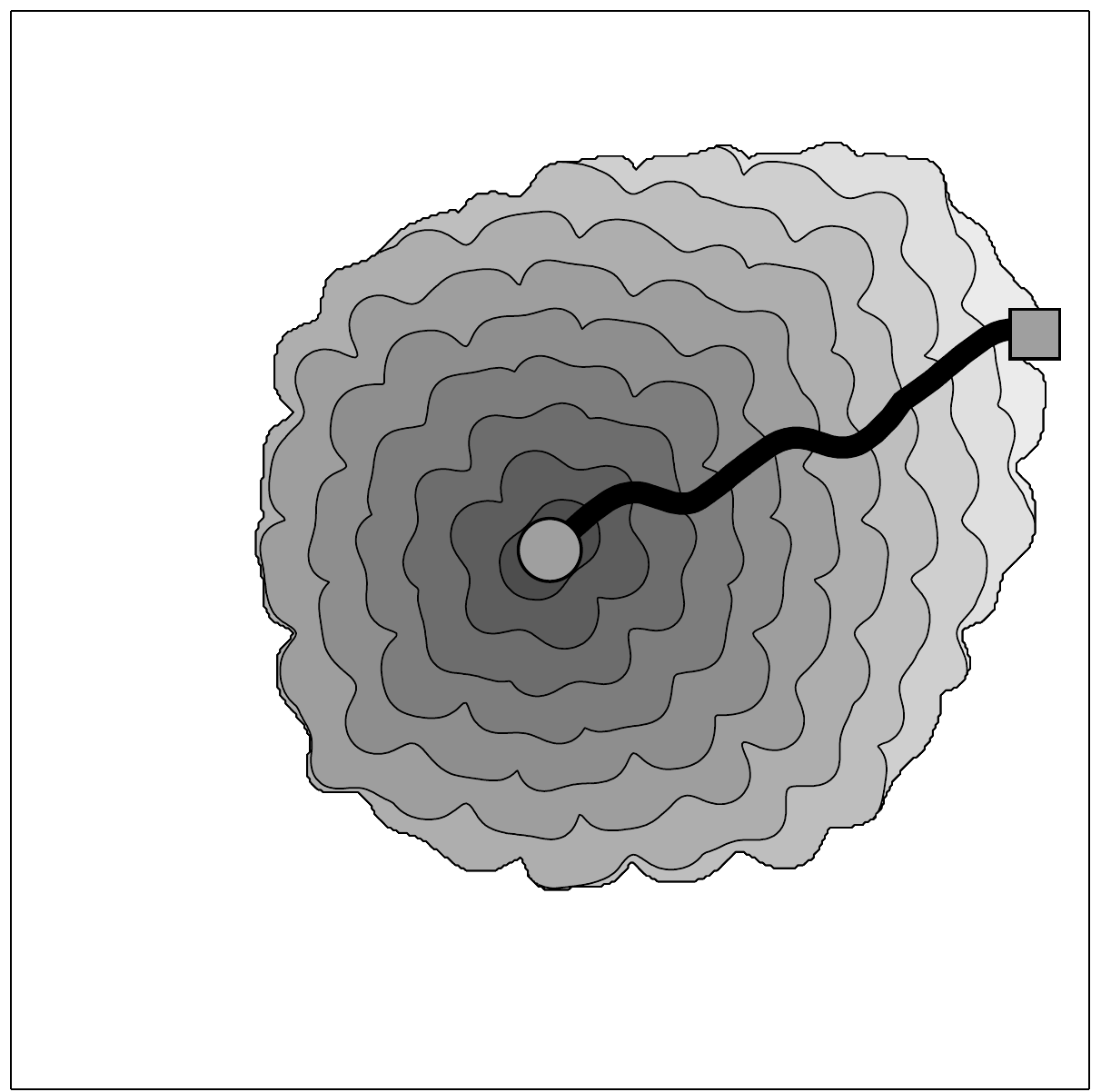} &
\includegraphics[scale=\sinPicScale]{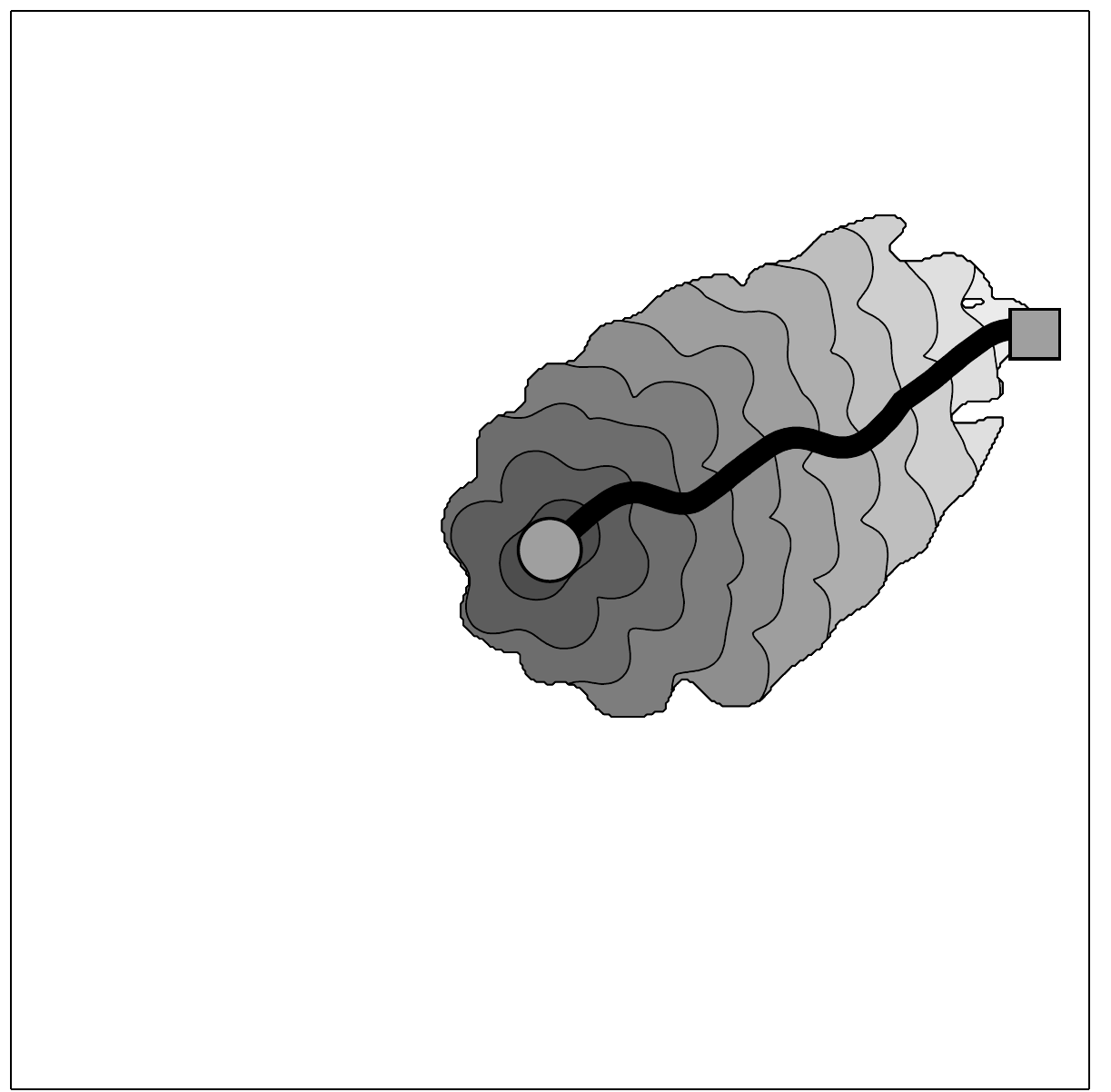} &
\includegraphics[scale=\sinPicScale]{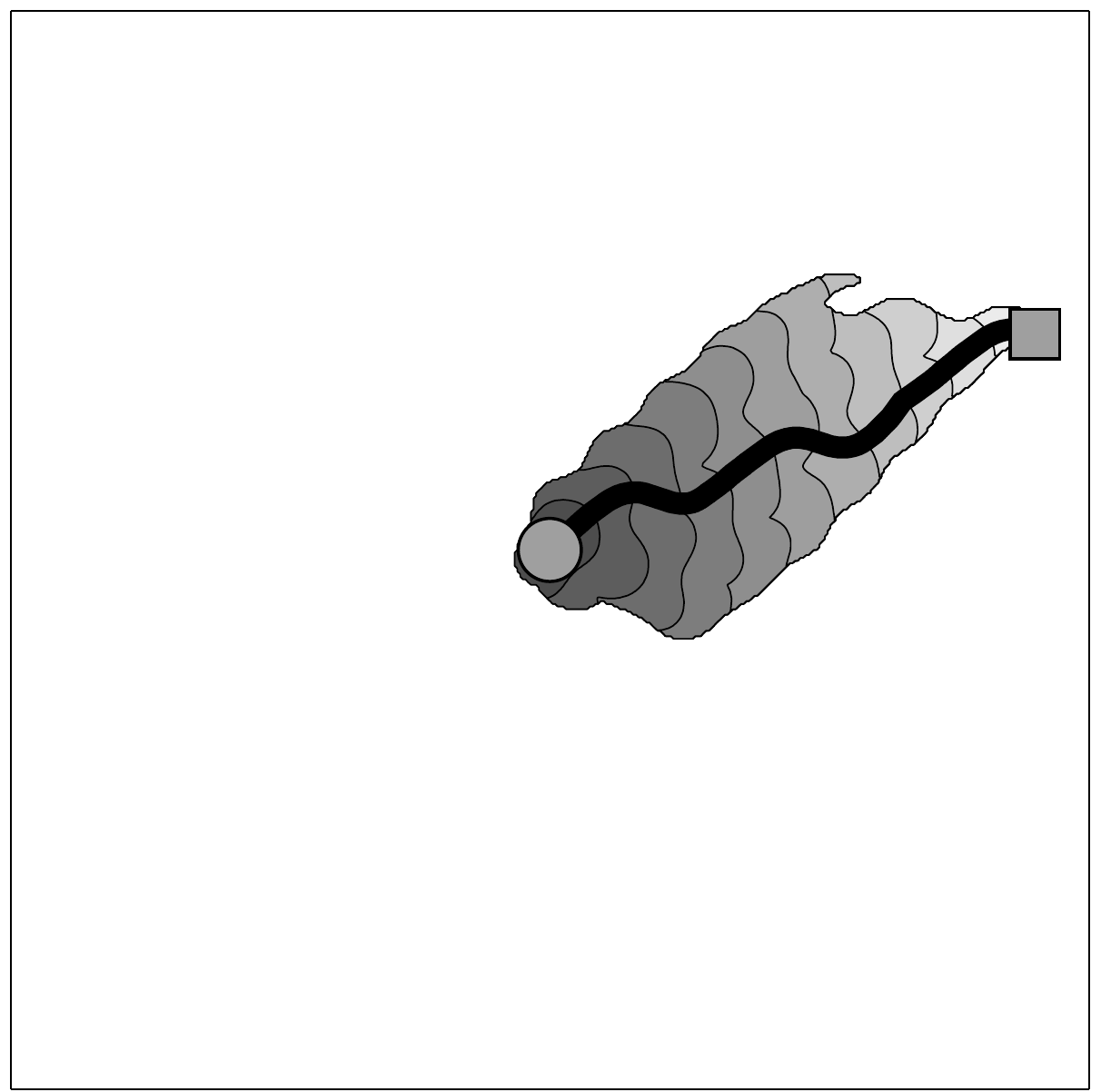} &
\includegraphics[scale=\sinPicScale]{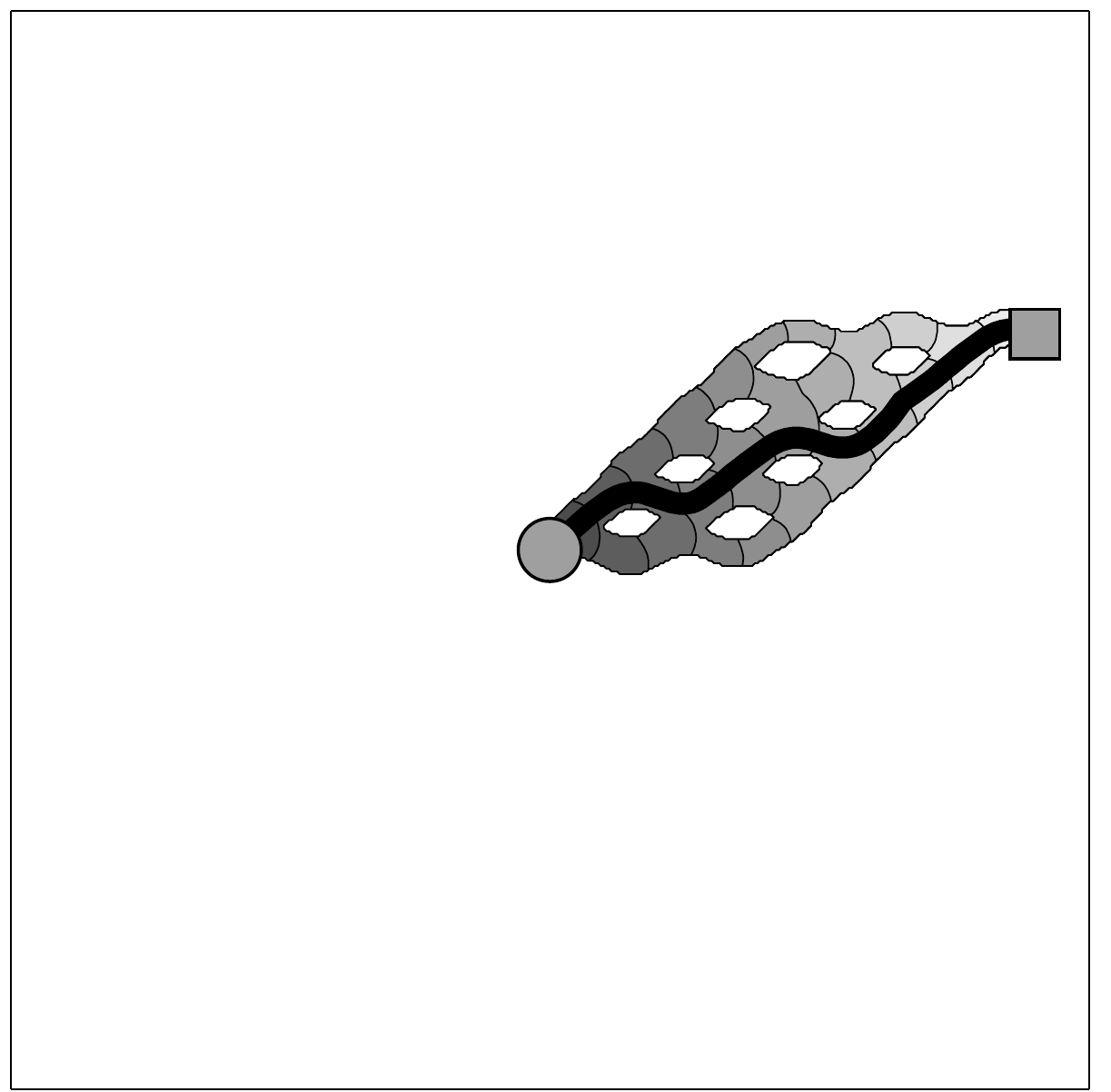} 
 \\ [-1ex]
&
\footnotesize $\mathcal{P}=0.787$, $\astarErrOnly=0$ &
\footnotesize $\mathcal{P}=0.639$, $\astarErrOnly=0$ &
\footnotesize $\mathcal{P}=0.404$, $\astarErrOnly=0$ &
\footnotesize $\mathcal{P}=0.160$, $\astarErrOnly\approx 10^{-16}$ &
\footnotesize $\mathcal{P}=0.080$, $\astarErrOnly\approx10^{-10}$ &
\footnotesize $\mathcal{P}=0.048$, $\astarErrOnly\approx 10^{-5}$
\vspace{0.15cm} \\
&
\multicolumn{6}{c}{\includegraphics[scale=0.7]{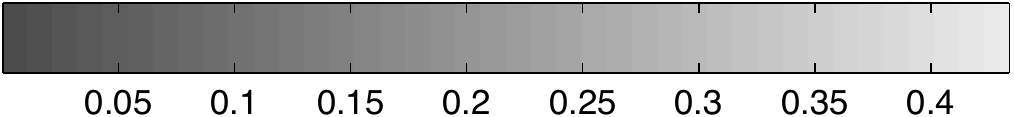}}
\end{tabular}
\end{adjustwidth}
\vspace{-.35cm}
\caption{\footnotesize Numerical results of FMM combined with SA* and AA*, showing the fraction of domain computed $\mathcal{P}$ and the relative error $\astarErrOnly$. Note the change in the ``optimal'' trajectory for SA* between $\lambda=0.3$ and $\lambda=0.70$. The solutions were produced using $m = 401$.}
\label{fig:sinOracle}
\end{center}
\end{figure}

}

\vspace{-0.2cm}
Figure \ref{fig:sinConv} compares the accuracy of these techniques for different $(m, \lambda)$ pairs.
Qualitatively the picture is largely the same as in Figure \ref{fig:constConv}, but with two non-trivial differences.
First, the `white block' in the lower-left corner of the SA* plot indicates the lack of additional errors with $m = 101$ and $\lambda \leq 0.15$.  Based on our computational experiments, this is an {\em extremely} rare situation --
the only example we could find, where the entire $G(\bs)$ is processed by SA*-FMM in the correct order despite the
fact that the heuristic $\varphi$ is inconsistent.  Second, we observe that the AA*-FMM-generated errors are not always monotone decreasing in $m$. E.g., the errors are present for $(m=401, \, \lambda = 0.75)$, but not for $(m=201, \, \lambda = 0.75)$, where the entire $G(\bs)$ is accepted.

\vspace{-0.1cm}
\iftoggle{usecolor}{%

\figstart
\begin{center}
{\bf 2D sinusoid speed: Error $ \ = \ {\log_{10}}(\astarErrOnly)$.} \\
\begin{tabular}{c c}
\em SA* & \em AA* \\
\includegraphics[scale=.5]{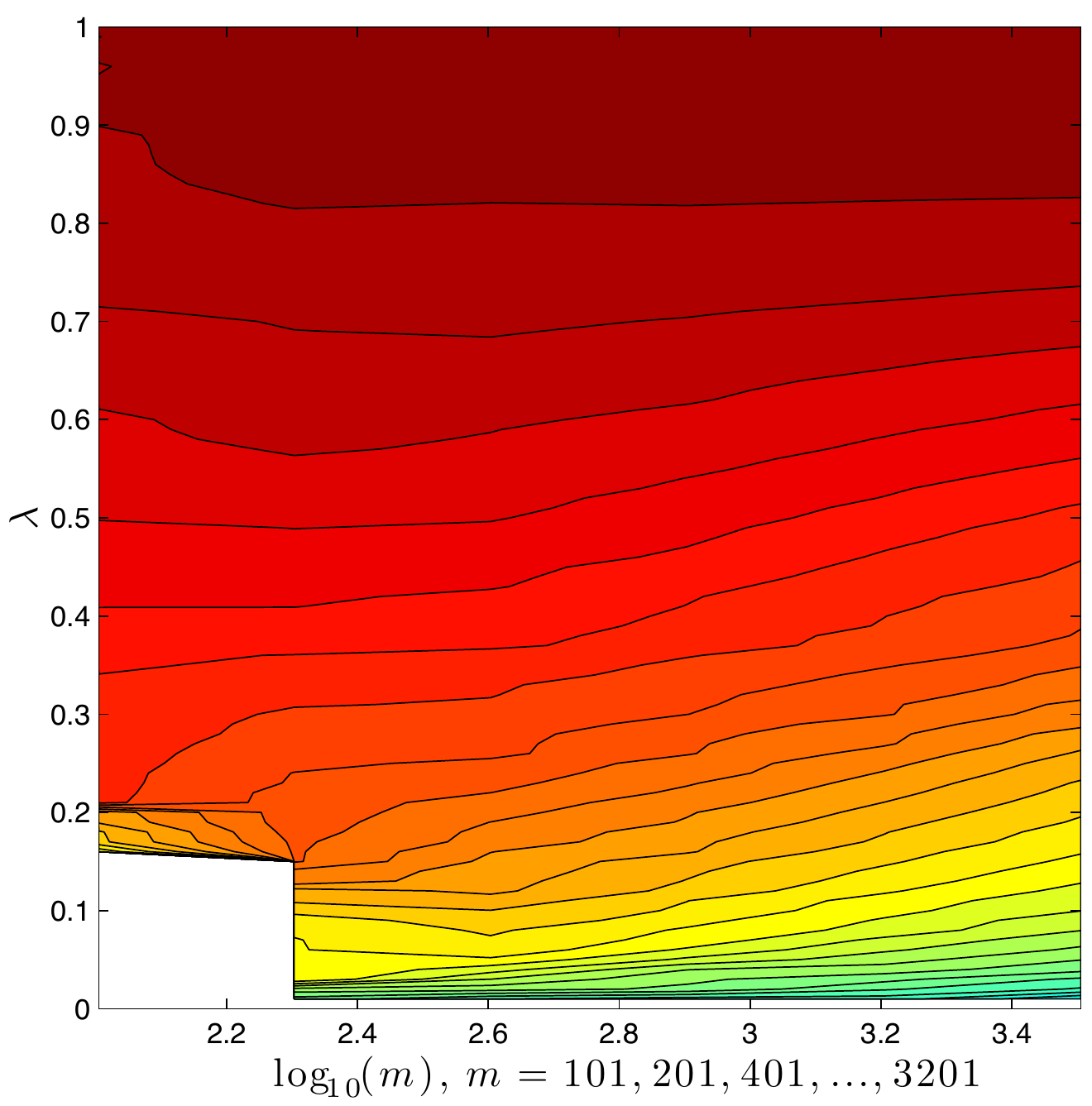} &
\includegraphics[scale=.5]{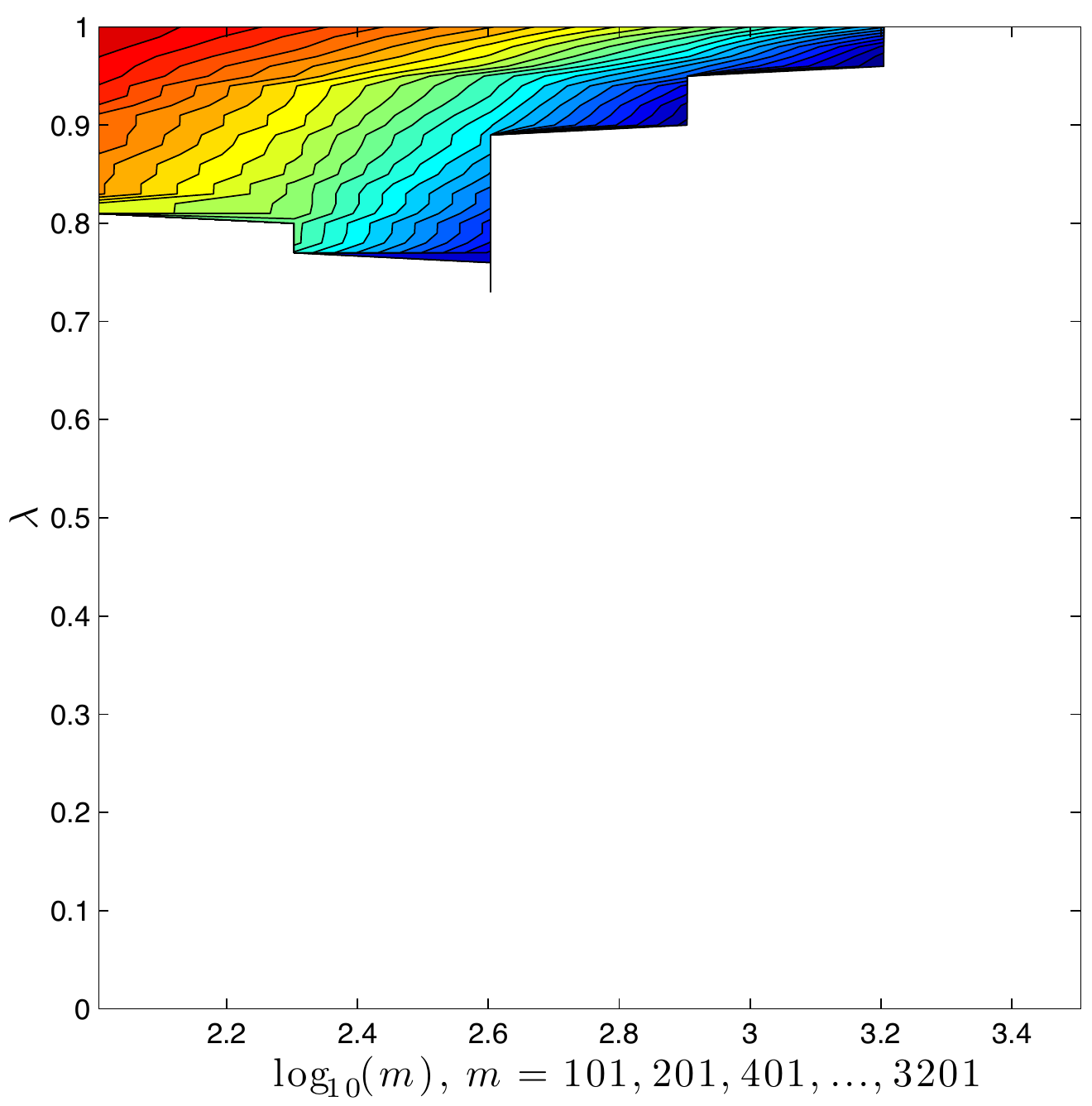} \\
\multicolumn{2}{c}{\includegraphics[scale=.7]{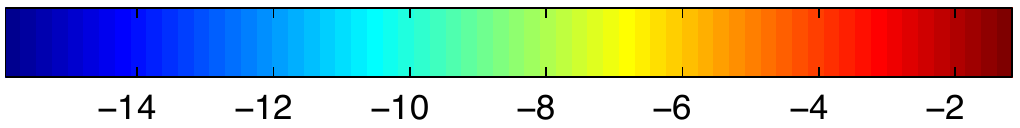}}
\end{tabular}
\vspace{-.35cm}
\caption{\footnotesize This plot shows the same results as Figure \ref{fig:constConv} except with the sinusoid speed \eqref{2D sin}.
}
\label{fig:sinConv}
\end{center}
\end{figure} 

}{%

\figstart
\begin{center}
{\bf 2D sinusoid speed: Error $ \ = \ {\log_{10}}(\astarErrOnly)$.} \\
\begin{tabular}{c c}
\em SA* & \em AA* \\
\includegraphics[scale=.5]{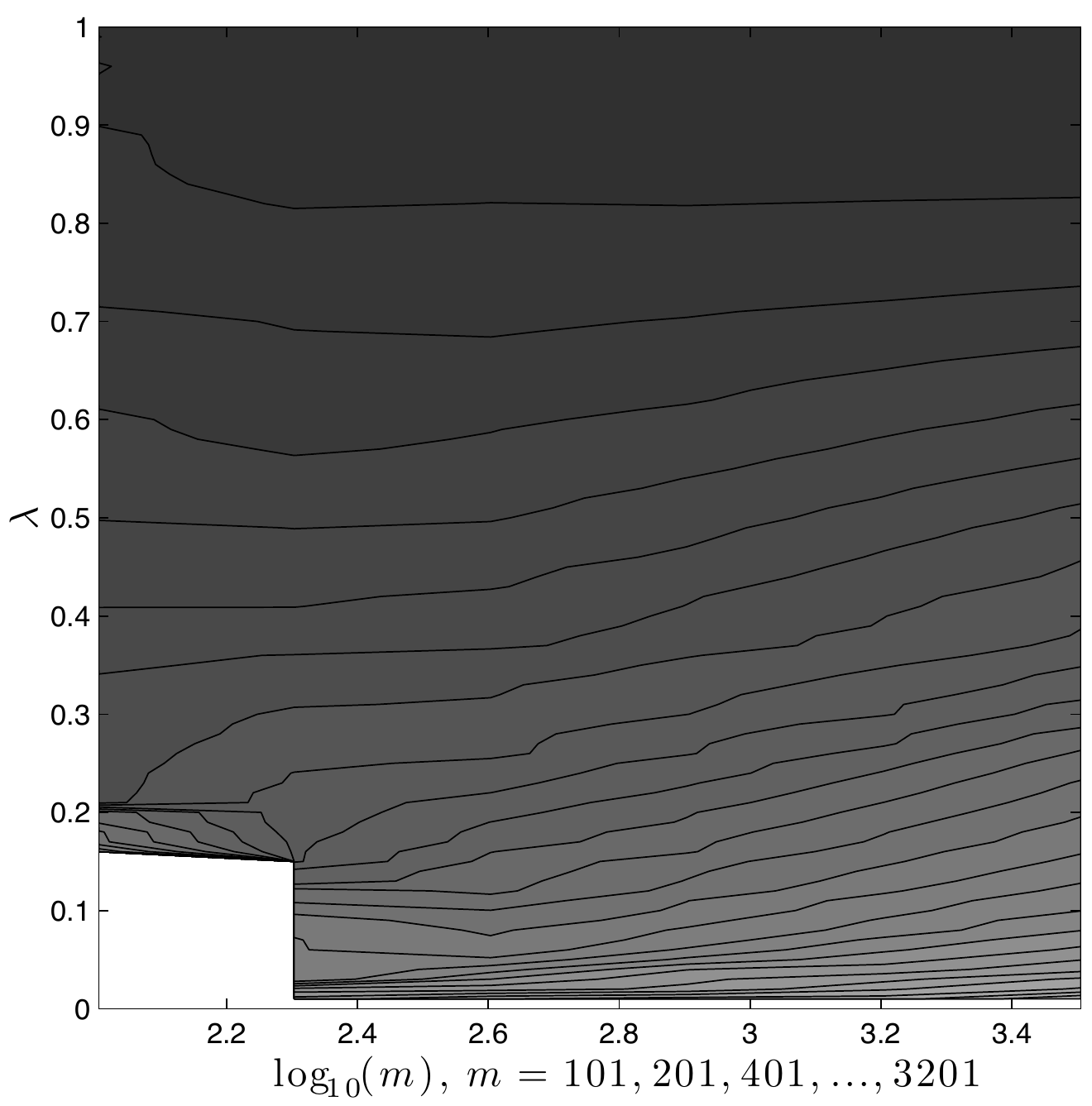} &
\includegraphics[scale=.5]{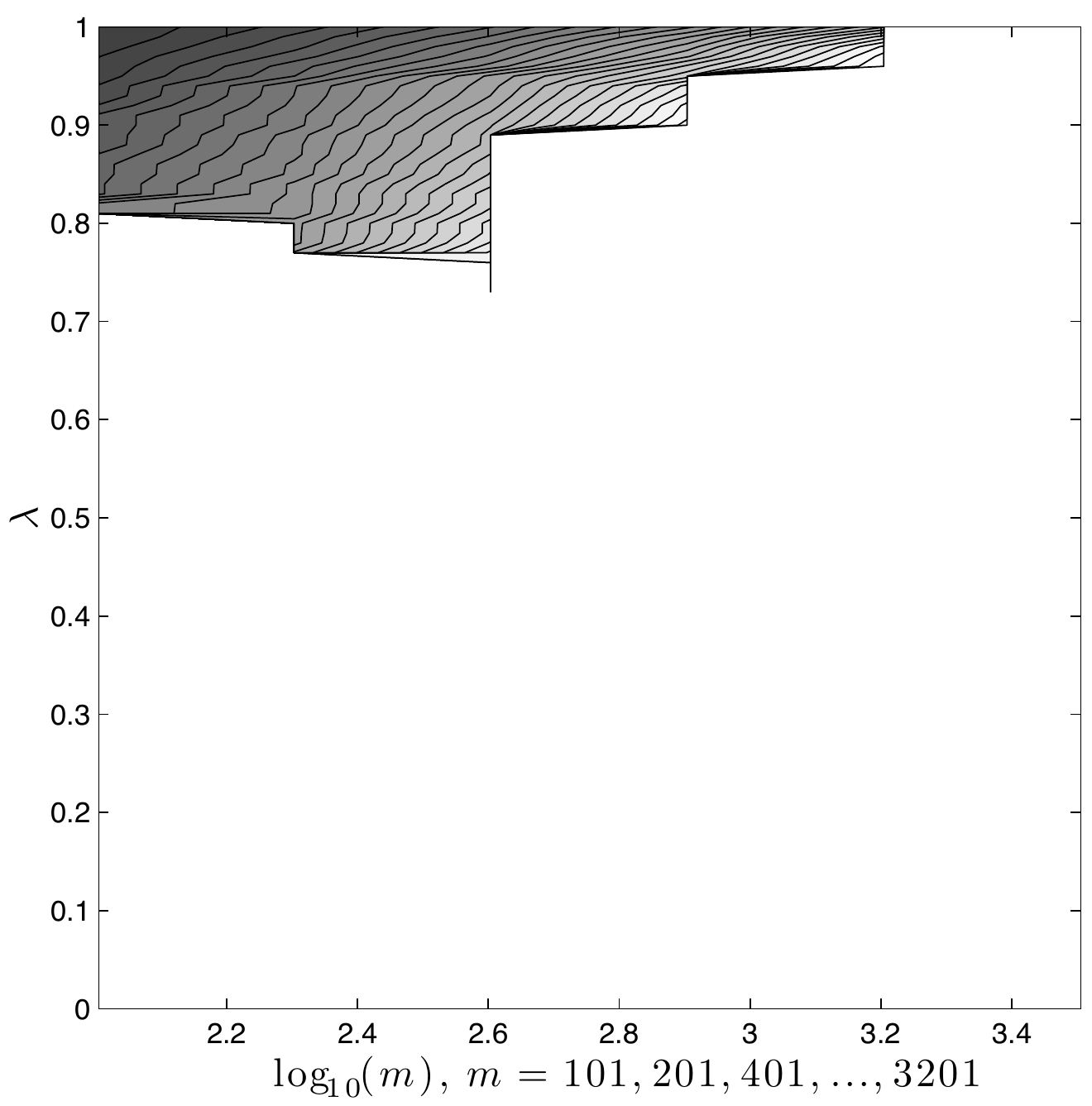} \\
\multicolumn{2}{c}{\includegraphics[scale=.7]{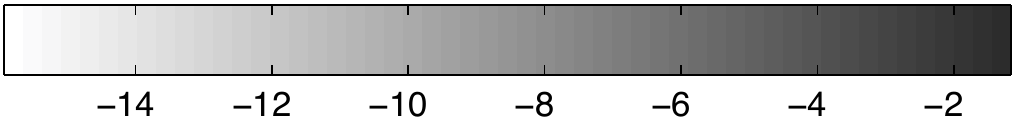}}
\end{tabular}
\vspace{-.35cm}
\caption{\footnotesize This plot shows the same results as Figure \ref{fig:constConv} except with the sinusoid speed \eqref{2D sin}.
}
\label{fig:sinConv}
\end{center}
\end{figure} 

}

Since the oracle heuristic is generally unavailable, we now consider the accuracy/efficiency tradeoffs using the na\"{i}ve heuristic $\varphi = \varphi^0$ and a realistically obtainable (but conservative) overestimate $\Upper = \Upper_2$.
Figure \ref{fig:sinNaive} shows that AA*-FMM yields comparable efficiency (despite accepting a larger part of the domain) while also ensuring $U^*(\bs)=U(\bs)$ since the entire $G(\bs)$ is accepted.
\begin{figure}[H]
\begin{center}
\tabcolsep=0.05cm
{\bf 2D sinusoid speed: Statistics.}
\begin{adjustwidth}{-1.5cm}{}
\begin{tabular}{c c c}
A. {\em Time (sec)} &
B. {\em Fraction $\mathcal{P}$} &
C. {\em Error $\log_{10}(\astarErr)$} \\
\includegraphics[scale=\statsScale]{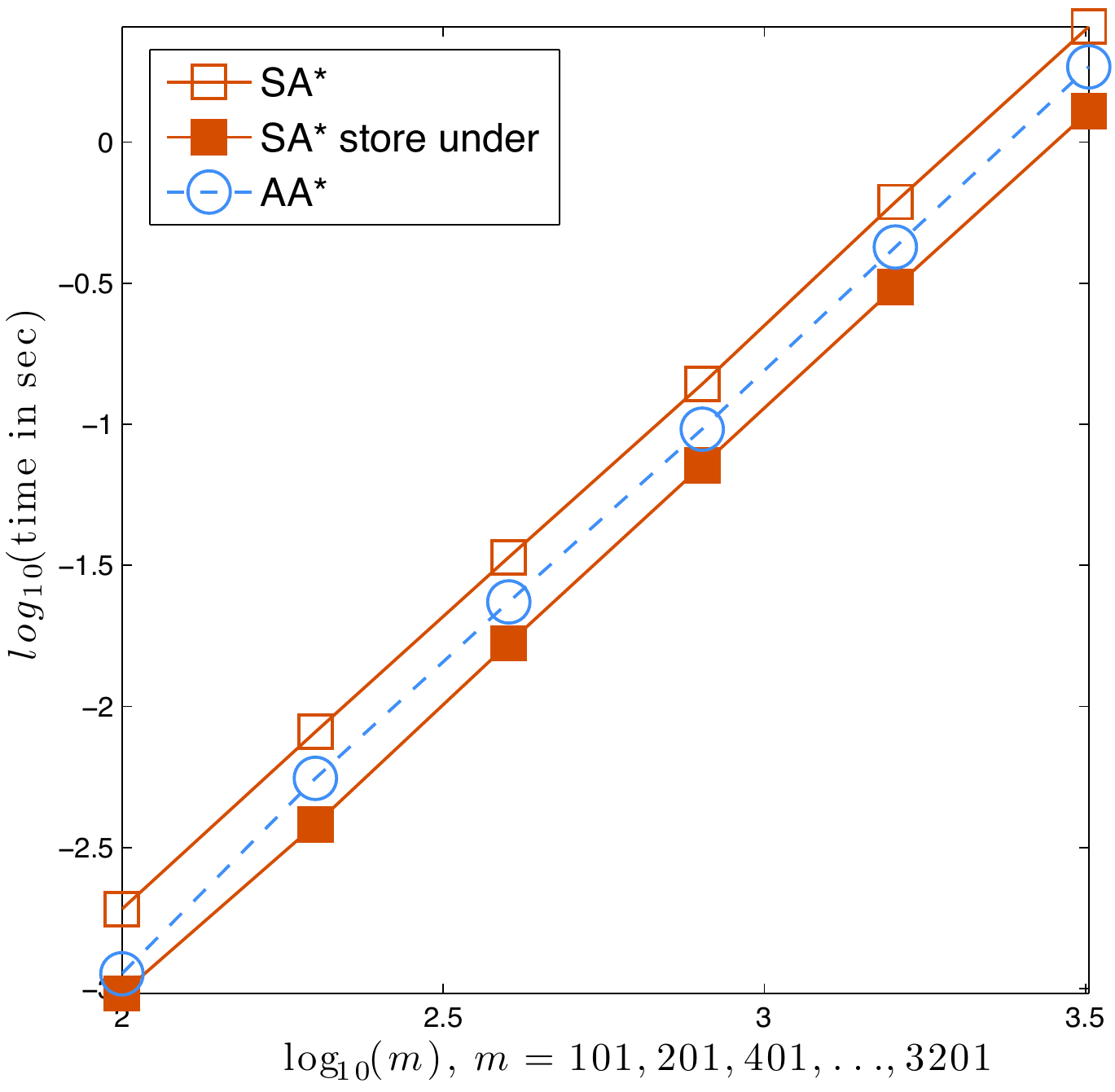} &
\includegraphics[scale=\statsScale]{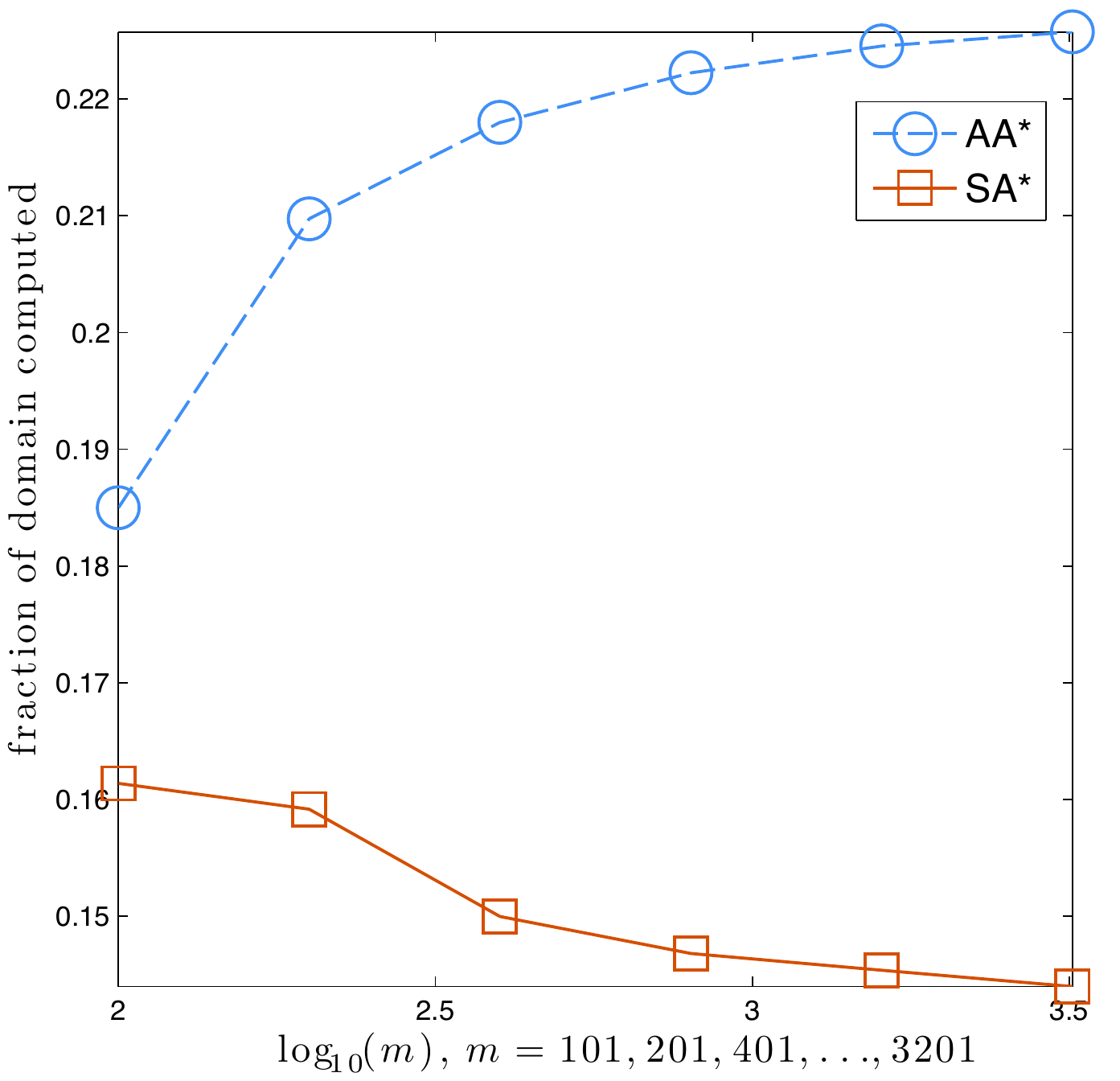} &
\includegraphics[scale=\statsScale]{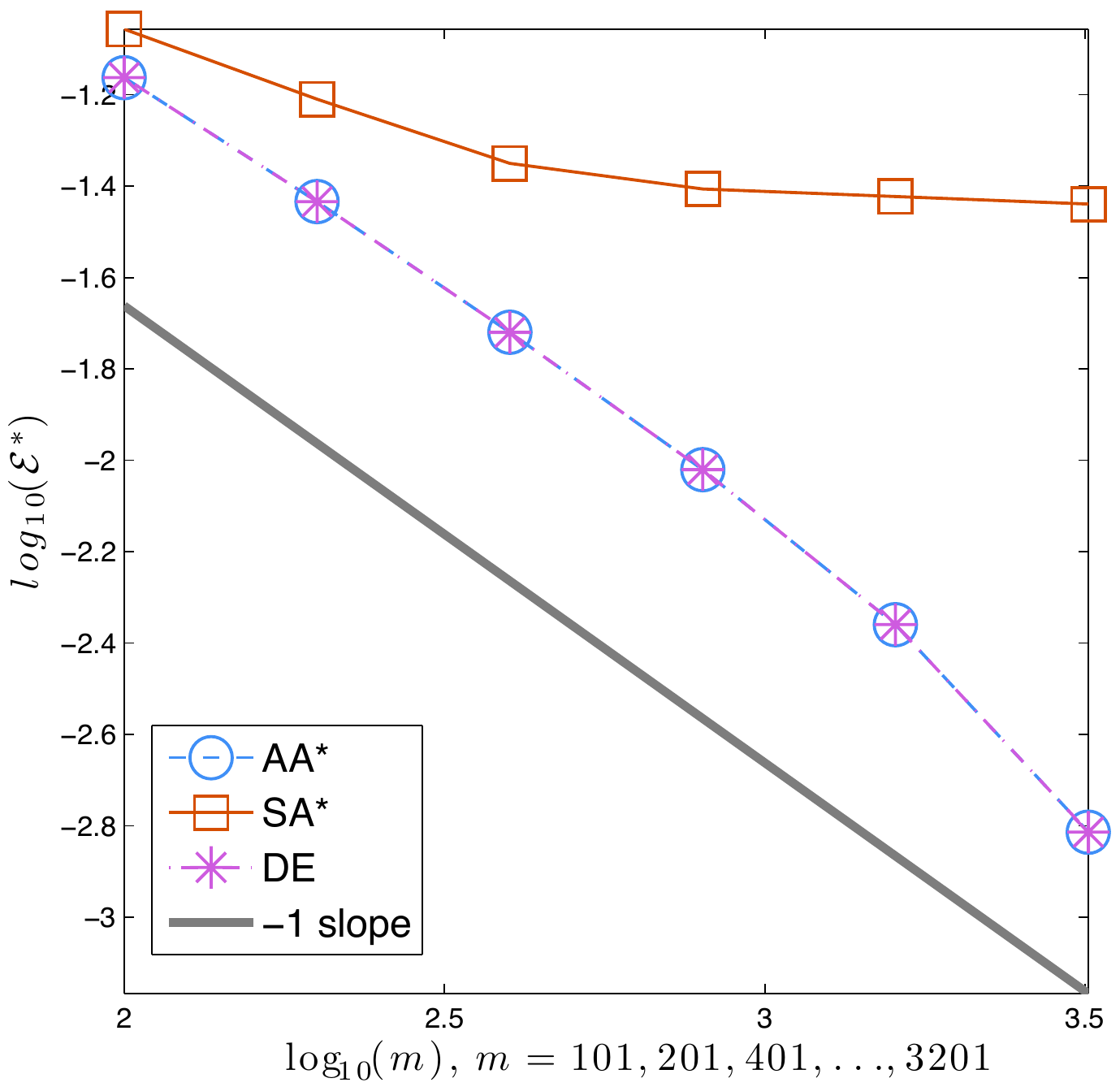}
\end{tabular}
\end{adjustwidth}
\caption{\footnotesize These results show the time (in seconds), fraction domain calculated, and the error $\astarErr$ for both SA* and AA* using a highly oscillatory sinusoid function in 2D. The na\"{i}ve heuristic was used, and for AA* $\Upper = \Upper_2$.}
\label{fig:sinNaive}
\end{center}
\end{figure} 

We also consider similar oscillatory examples in 3D with
\begin{equation}
f(x,y,z) \ = \ 1 \ + \ A \sin(10\pi x) \sin(10 \pi y) \sin (10\pi z) \label{3D sin},
\end{equation}
for two amplitudes  $A=0.1$ and $A=0.35$.
The source/target locations are $\bs = (0.72, 0.6, 0.8)$ and $\bt = (0.32, 0.4, 0.36)$.
Figure \ref{fig:sin3dNaive} shows the accuracy/efficiency data based on
realistic $\varphi = \varphi^0$ and $\Upper = \Upper_2$.  The errors due to AA* are negligible
compared to discretization errors, while the errors due to SA* are again quite noticeable
and decrease much slower as $h \to 0$.

\figstart
\begin{center}
\tabcolsep=0.05cm
{\bf 3D sinusoid speed: Statistics.}
\begin{adjustwidth}{-1.75cm}{}
\begin{tabular}{r c c c}
&
A.  {\em Time (sec)} &
B. {\em Fraction $\mathcal{P}$} &
C. {\em Error $\log_{10}(\astarErr)$} \\
\begin{sideways}\hspace{2.6cm}\large$\bm{A=0.1}$\end{sideways}&
\includegraphics[scale=\statsScale]{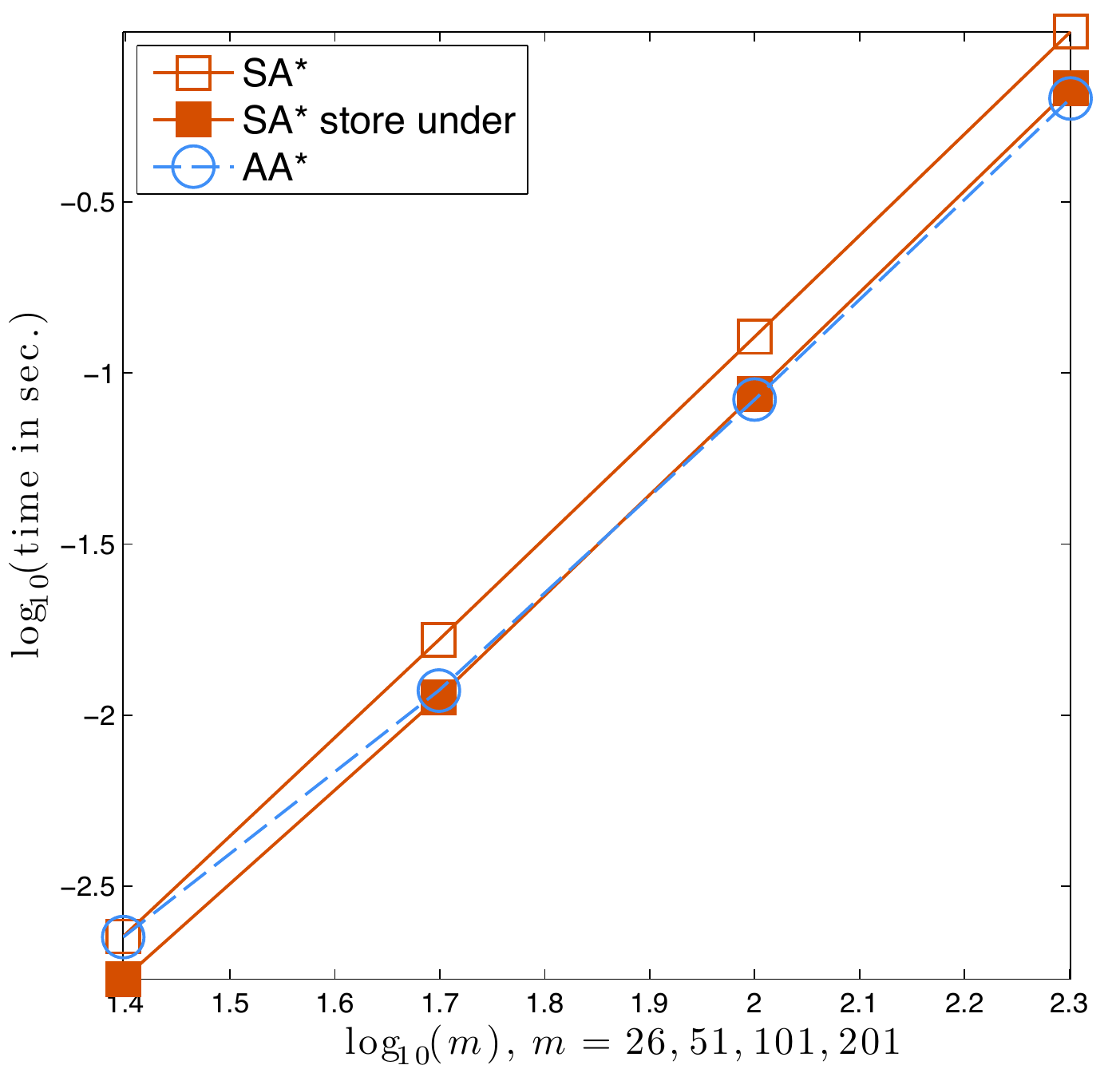} &
\includegraphics[scale=\statsScale]{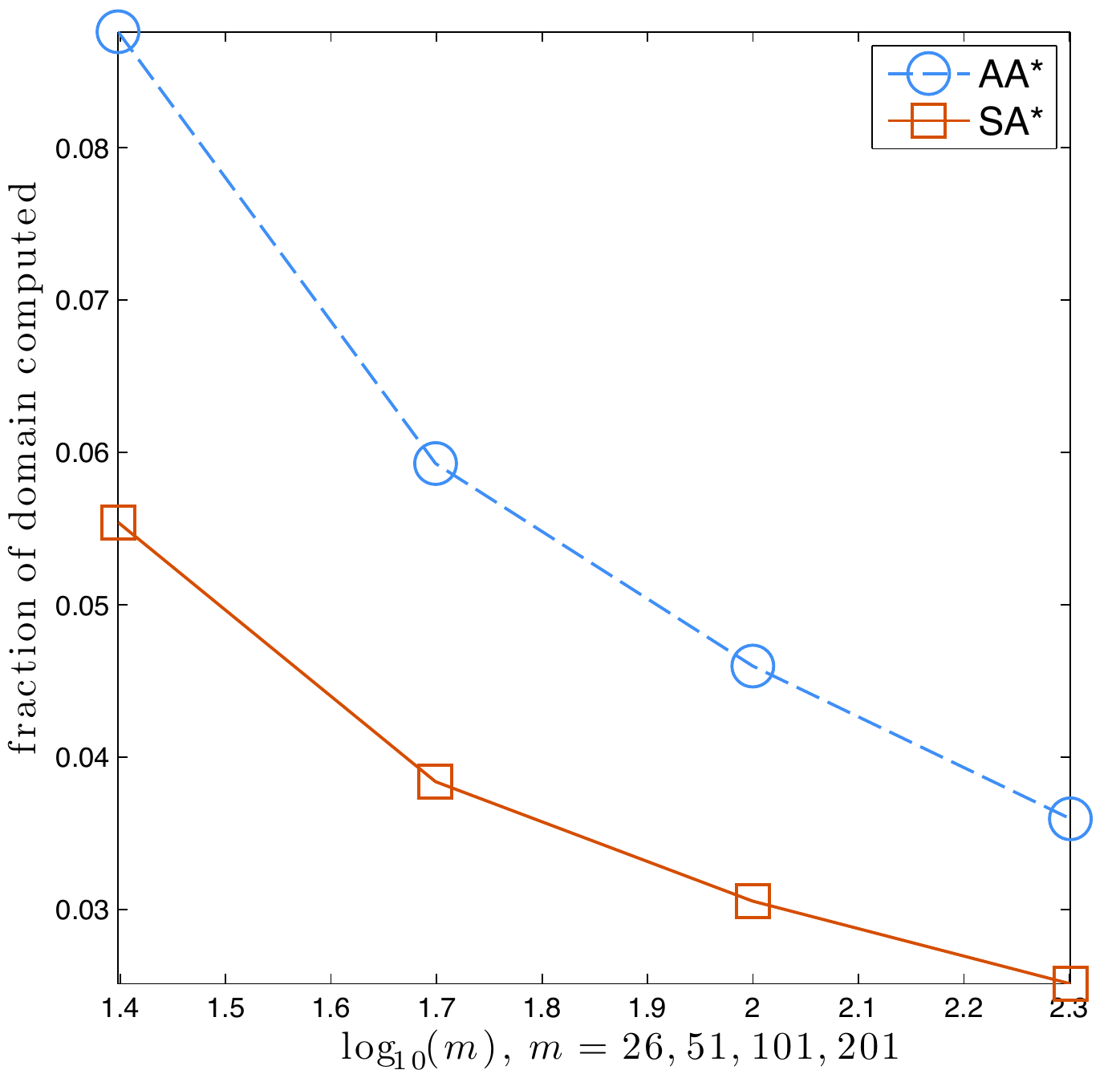} &
\includegraphics[scale=\statsScale]{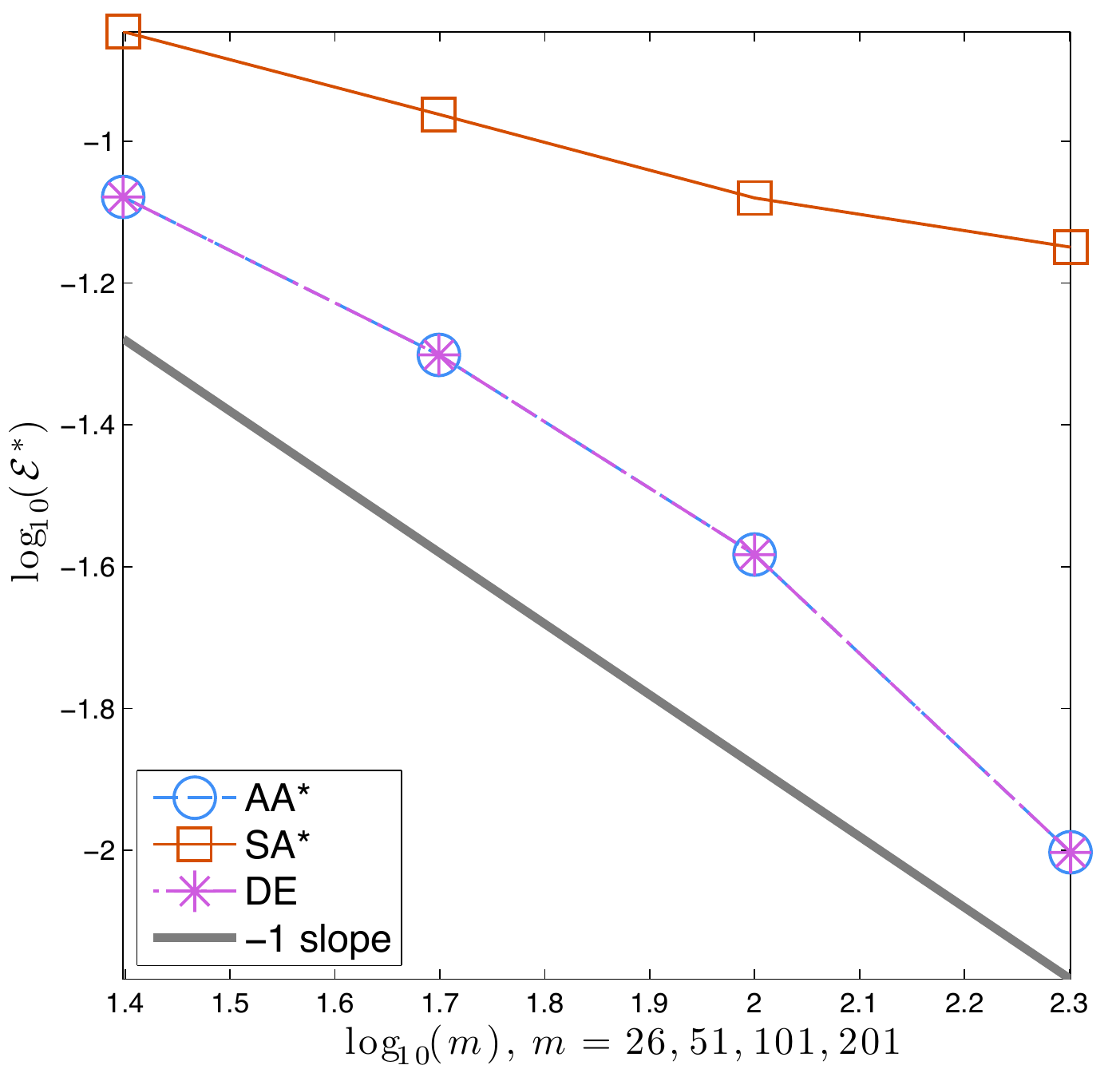} \vspace{-0.2cm} \\
\begin{sideways}\hspace{2.6cm}\large$\bm{A=0.35}$\end{sideways}&
\includegraphics[scale=\statsScale]{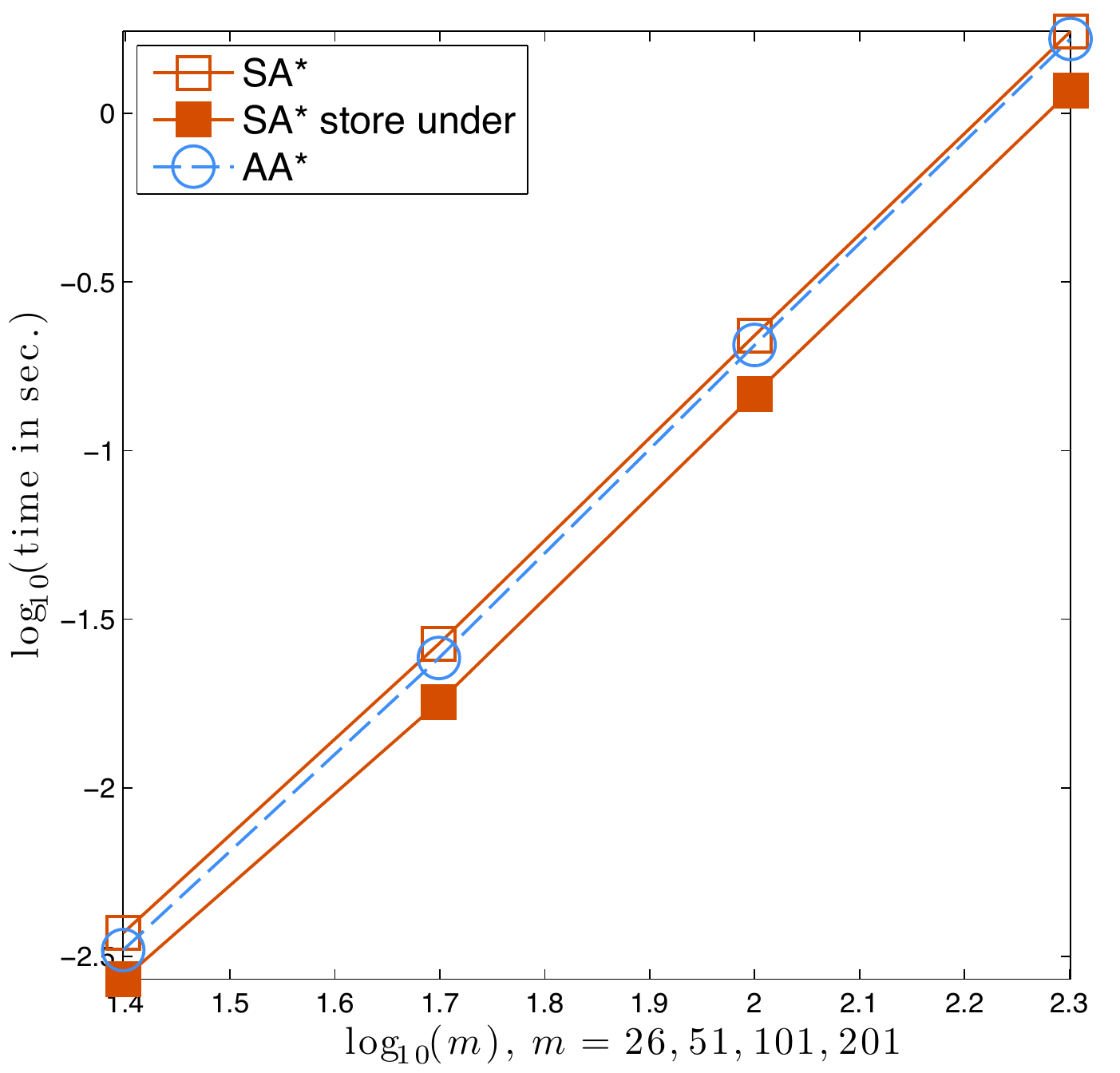} &
\includegraphics[scale=\statsScale]{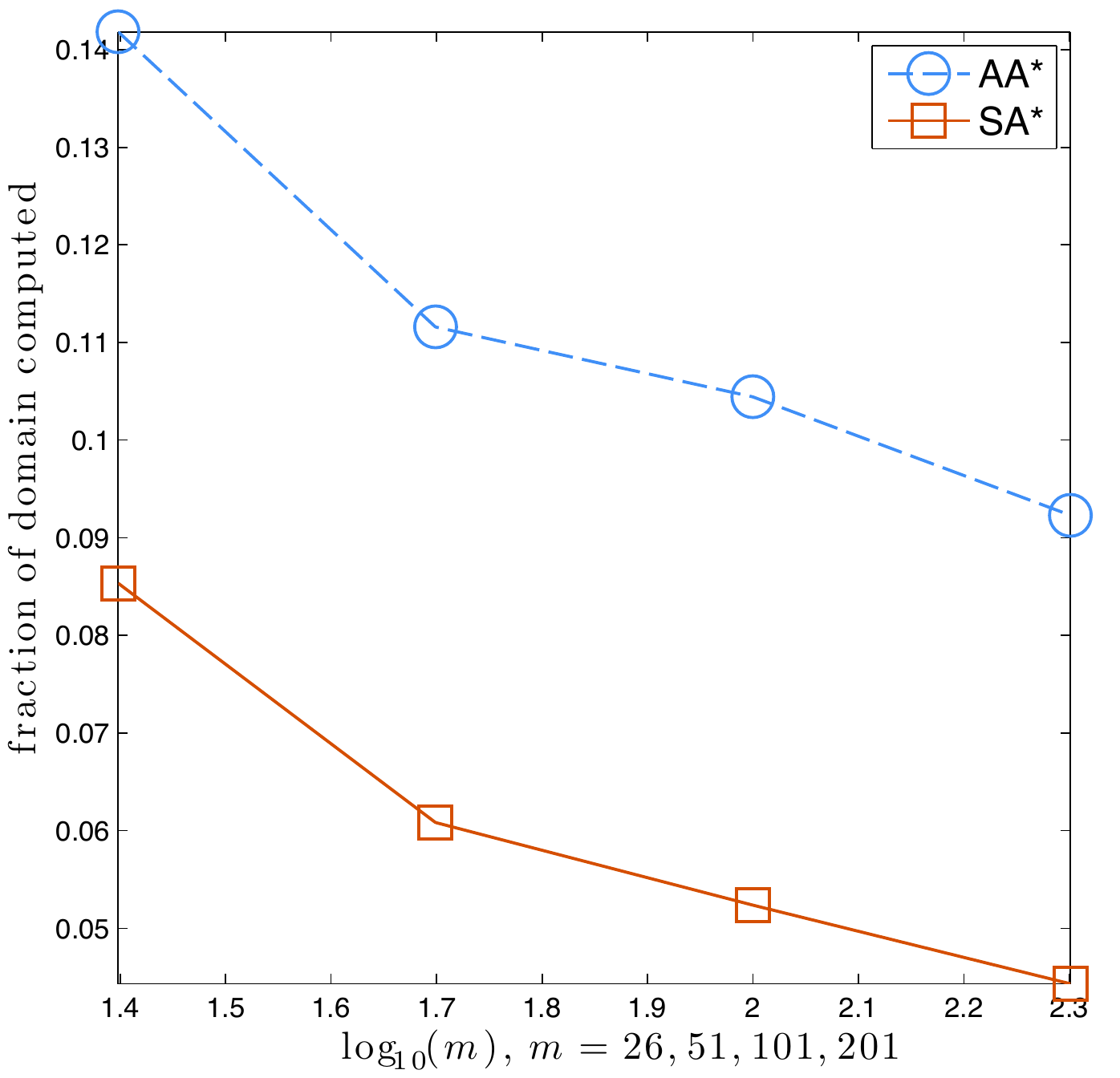} &
\includegraphics[scale=\statsScale]{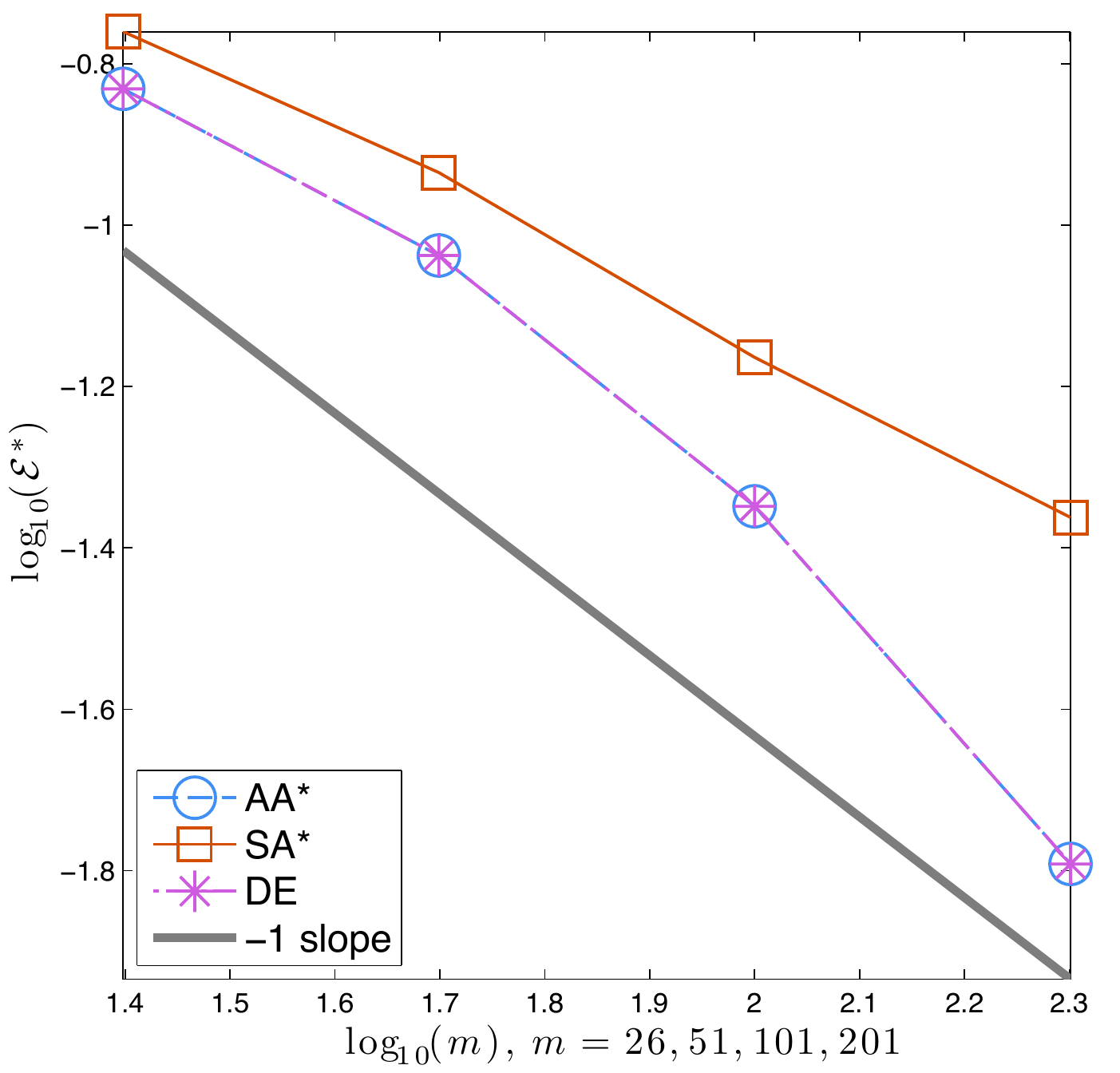} \vspace{-0.2cm}
\end{tabular}
\end{adjustwidth}
\caption{\footnotesize These results again show the average time (in seconds) of 10 trial runs, fraction domain calculated, and the error $\astarErr$ for both SA* and AA* using \eqref{3D sin}. The na\"{i}ve heuristic was used, and for AA* $\Upper = \Upper_2$. The top row corresponds to $A = 0.1$ and the bottom row shows the results when $A = 0.35$. When $A = 0.1$ the result might seem counterintuitive: AA* takes less CPU time even though it processes more of the domain. Careful profiling shows that for SA* the three-neighbor update fails more frequently and causes the algorithm to perform more two-sided updates. This makes an average node update in SA* more computationally expensive; hence the slower time.  
}
\label{fig:sin3dNaive}
\end{center}
\end{figure}



\subsection{Satellite image}
\label{ss:peyre_example}
The following path-planning example is borrowed from \cite{Peyre_coarse, Peyre_landmark, Peyre_Geodesic}.
\zachEdit{
The grayscale intensities of a satellite photograph (Figure \ref{fig:sat1}A) are imported into the range $[0,755]$ using \textsc{Matlab}'s \texttt{imread()} routine. For a given gridpoint $\bx$ assume that it falls into a pixel with grayscale value $i(\bx) \in [0,755]$. This then defines the speed $f:\bar{\Omega} \to [0.001, 1.001]$ via rescaling:
\[
f(\bx) \ \ = \ \ 0.001 \ + \ {i(\bx)} / {755}
\]
This is the same intensity/speed mapping used in \cite{Peyre_Geodesic}, but our experimental setup is slightly different:
}%
\begin{itemize}[leftmargin=6mm]\itemsep-2pt
\item Unlike Peyr\'{e} et al., we omit the pre-smoothing of the original $744 \times 744$ image and simply downsample it to $350 \times 350$.
\item Peyr\'{e} et al. use $\varphi=\varphi^C_{\lambda,R}$; they fix $\lambda = \frac{1}{2}$ and vary $R$.  Instead, we first use $\varphi=\varphi^0$ (Figure \ref{fig:sat1}B) and then switch to $\varphi=\bar{\varphi}_{\lambda}$ (Figure \ref{fig:sat_change}).  Unlike with $\varphi^C_{\lambda,R}$, the use of $\bar{\varphi}_{\lambda}$ directly illustrates the performance of A* as the quality of $\varphi$ improves.
\item We use slightly different source and target locations ($\bs = (337h, 161h)$ and $\bt = (16h, 188h)$; see Figure \ref{fig:sat1}C).  Our $\bs$ falls on the opposite side of a shockline compared to $\bs$ used in \cite{Peyre_Geodesic}.
\end{itemize}
Figure \ref{fig:sat1}B shows the level sets of the solution on the full domain, with $\partial L$ and $\partial C_2$ (using $\varphi^0$ and $\Psi_3$) shown in bold. (The set \ACC{} by SA* is approximately the same as AA*.)

\figstart
\begin{center}
\begin{adjustwidth}{-1.4cm}{}
\tabcolsep=0.01cm
\begin{tabular}{c c c}
A. {\em Original image} &
B. {\em PDE solution} &
C. {\em Zoom on PDE solution} \\
\includegraphics[scale=0.4]{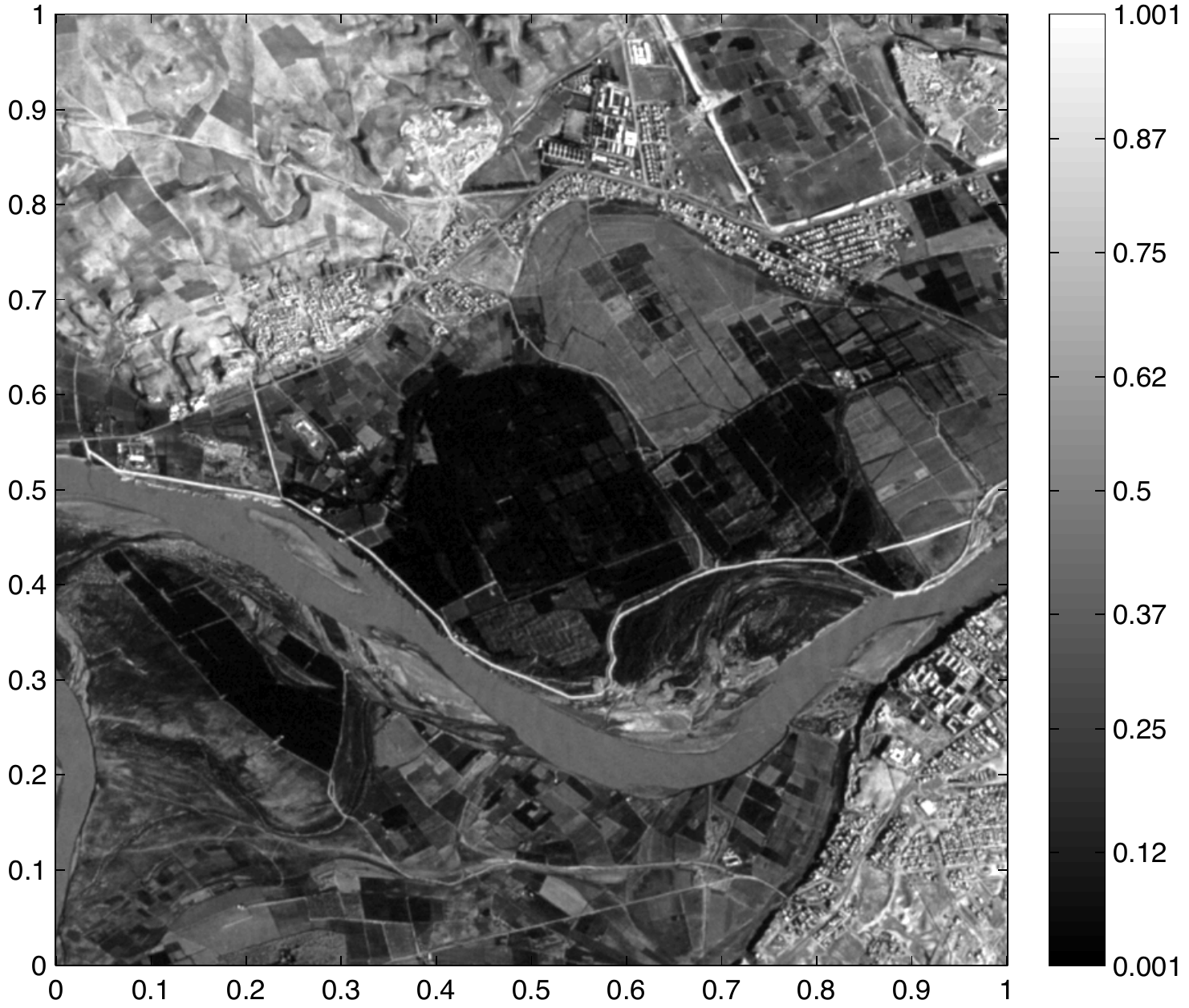} &
\iftoggle{usecolor}{%
\includegraphics[scale=0.4]{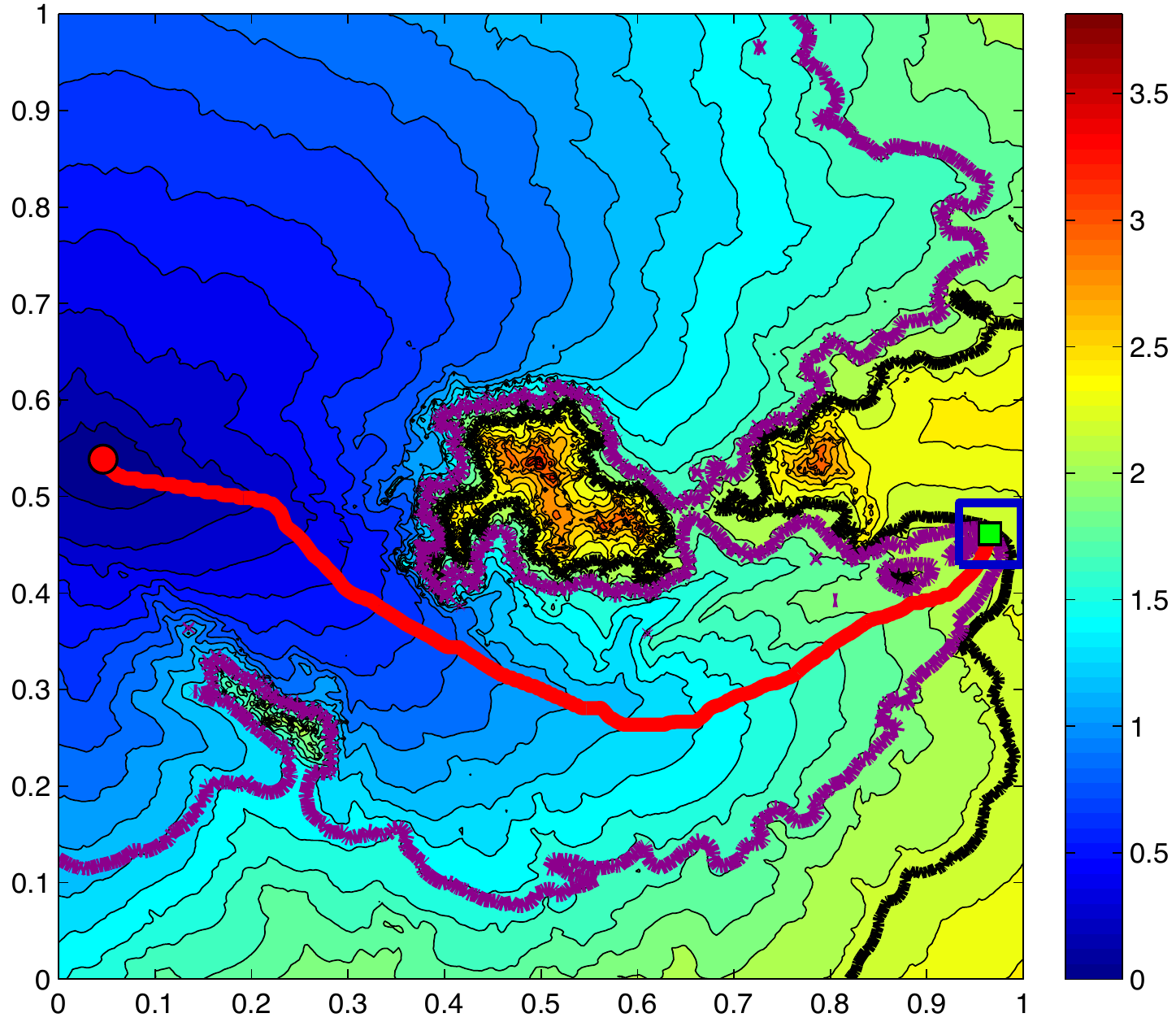} &
}{%
\includegraphics[scale=0.4]{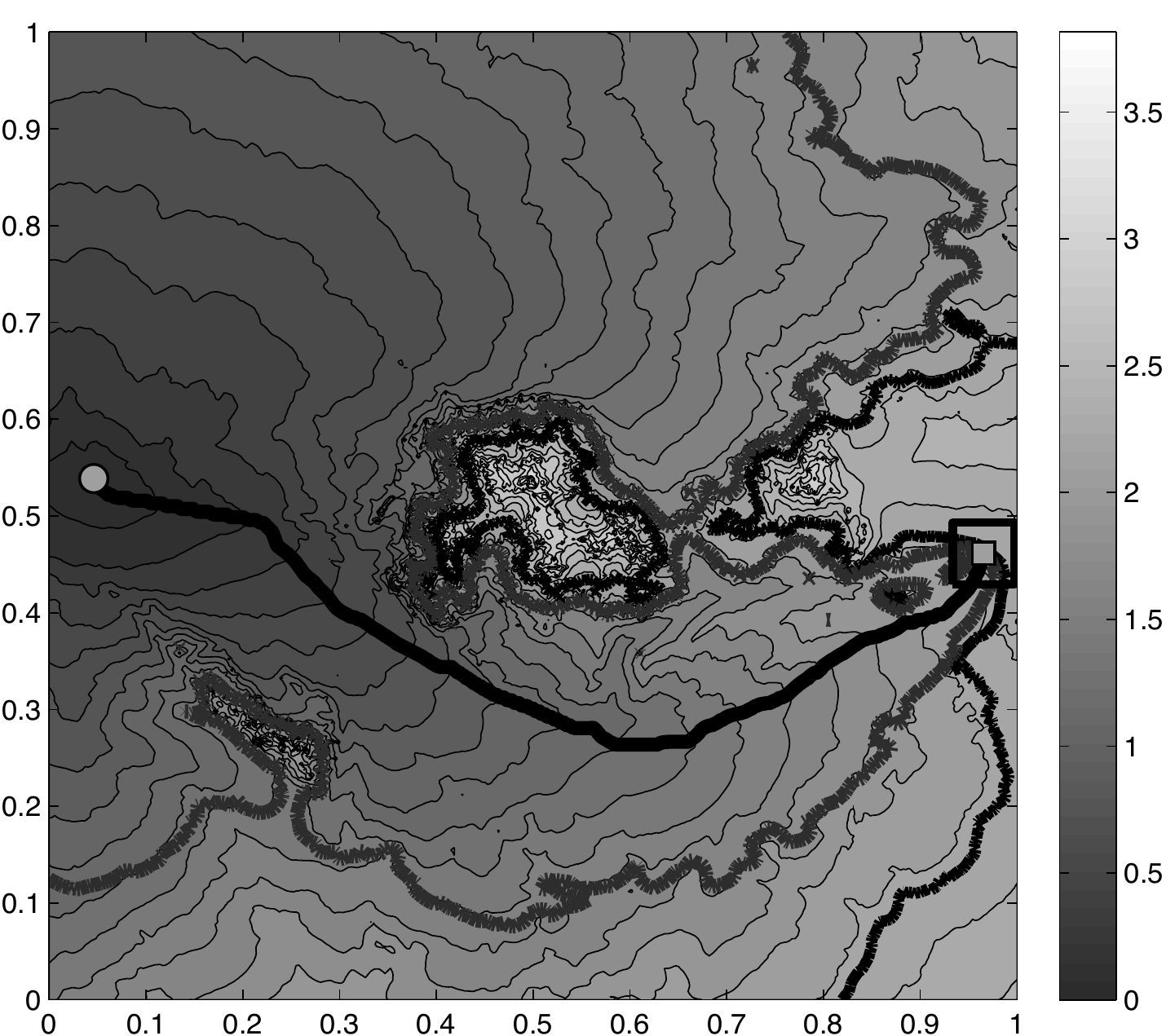} &
}
\includegraphics[scale=0.4]{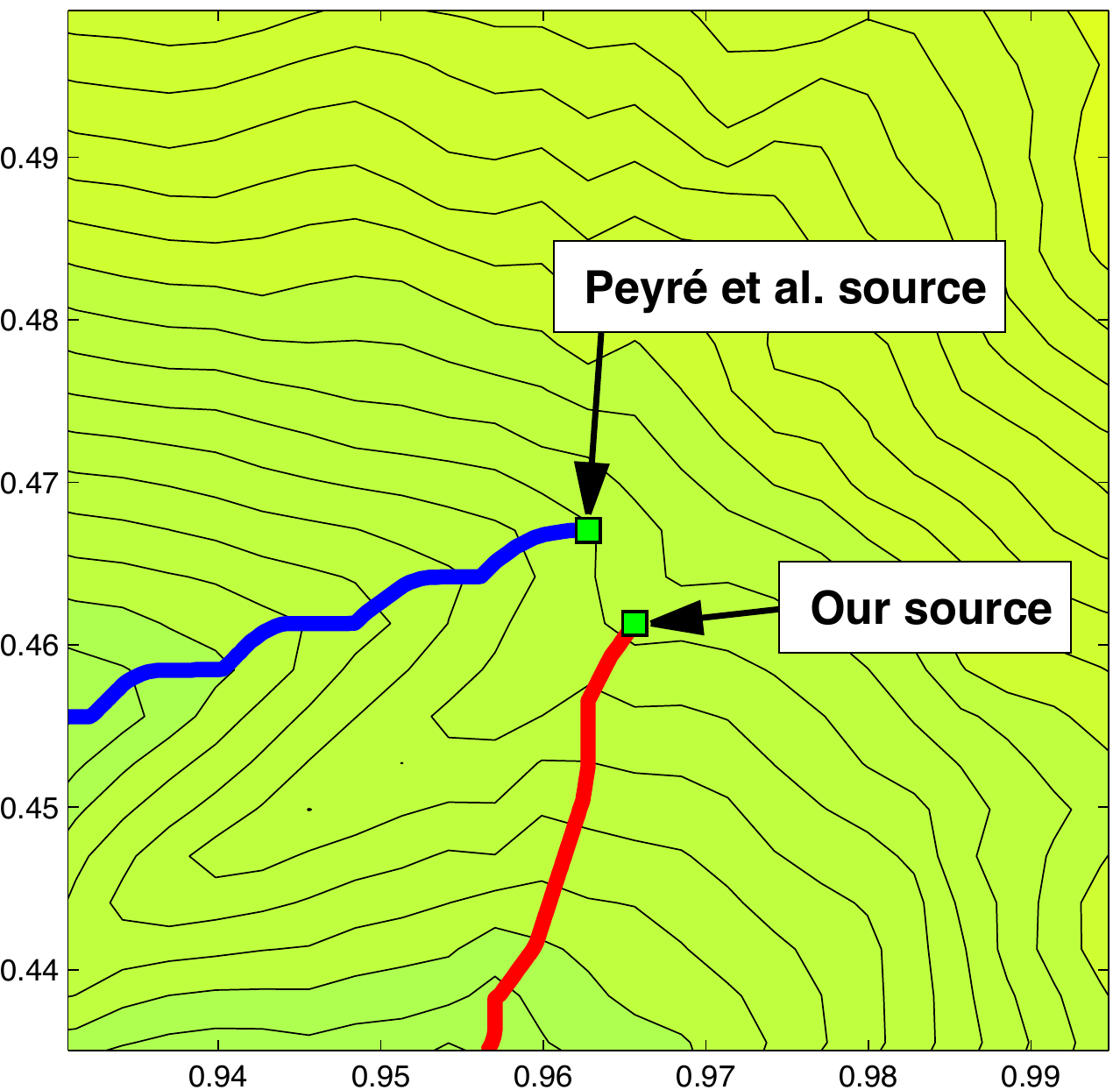} 
\end{tabular}
\end{adjustwidth}
\caption{\footnotesize A. The original satellite image 
mapped to a speed $f \in [0.001, 1.001]$. B. The solution to the PDE on a $350 \times 350$ grid with $\partial L$ and $\partial C_2$ (using $\varphi^0$ and $\Psi_3$) drawn in bold. 
C. The upper marker is approximately the same $\bs$ as in \cite{Peyre_Geodesic}; the lower marker is the same $\bs$ used in B and Figure \ref{fig:sat_change}A.}
\label{fig:sat1}
\end{center}
\end{figure}

\alexEdit{This example illustrates the use of A*-techniques with a discontinuous speed function.
The rather limited computational savings in \ref{fig:sat1}B are clearly caused by the use of an ``overly optimistic''
$\varphi^0$.  However, the lack of accuracy of this naive underestimate is not caused by any discontinuities in $f$ -- instead it is simply a result of a large $F_2/F_1$, with $f$ values much closer to $F_1$ on most of the domain.}
\fillmeup
We now switch to ``oracle tests'' with $\varphi=\bar{\varphi}_{\lambda}$ and AA*-FMM relying on $\Psi = \parens{1 + \sqrt{h} / 8} V(\bt)$.  Using this heuristic, the domain restriction becomes much more effective for both SA* and AA*.
But since $\bs$ is close to a shockline, additional errors due to SA*-FMM are sufficiently large to change the optimal trajectory in several ways (see Figure \ref{fig:sat_change}A). In contrast, the errors from AA*-FMM are much smaller and the optimal trajectory remains the same for all $\lambda$.

\figstart
\begin{center}
\begin{adjustwidth}{-1.75cm}{}
\tabcolsep=0.0001pt
\begin{tabular}{c c c}
A. {\em Different trajectories} &
B. {\em Time} &
C. {\em Error $\astarErrOnly$} \\
\iftoggle{usecolor}{%
\includegraphics[scale=0.6]{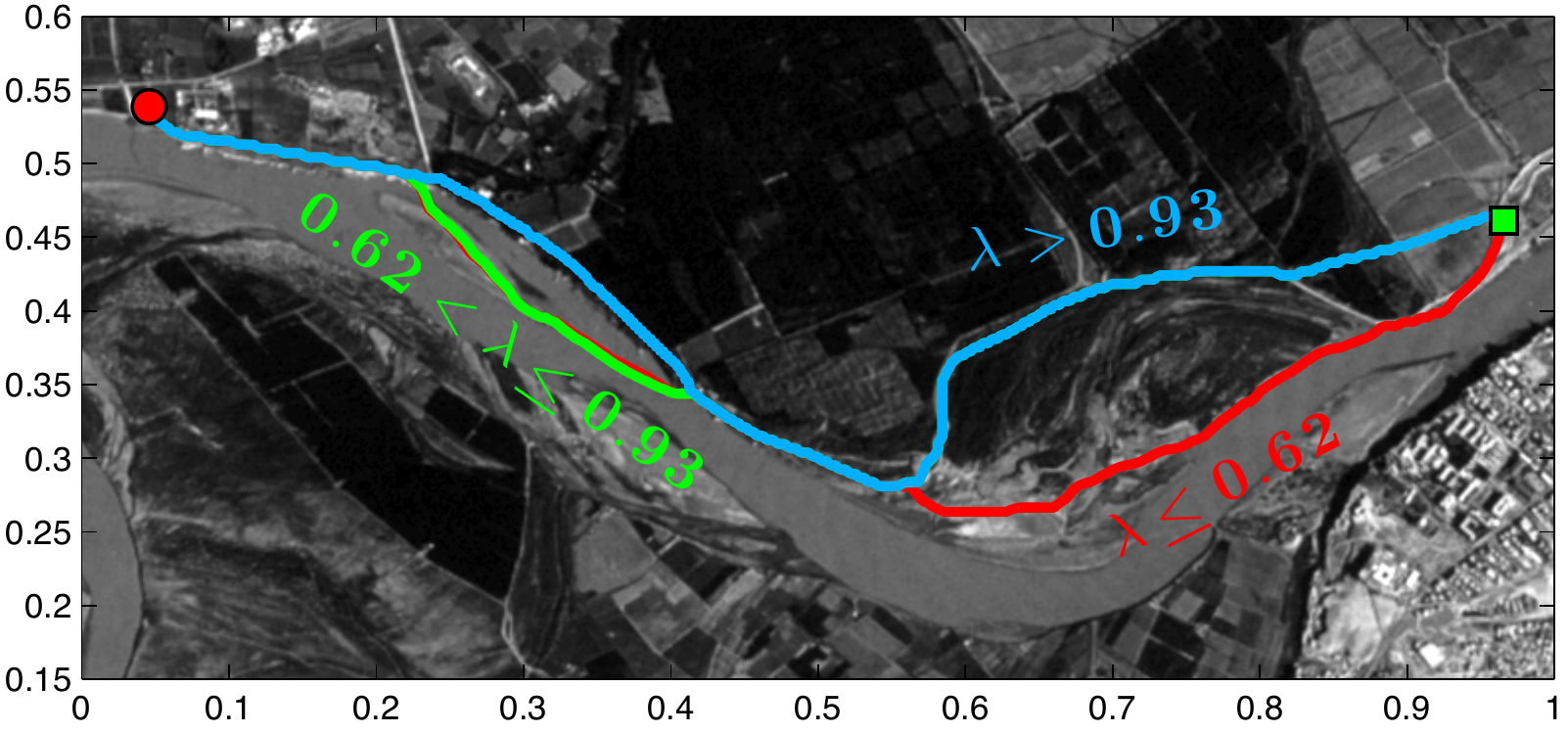} &
}{%
\includegraphics[scale=0.6]{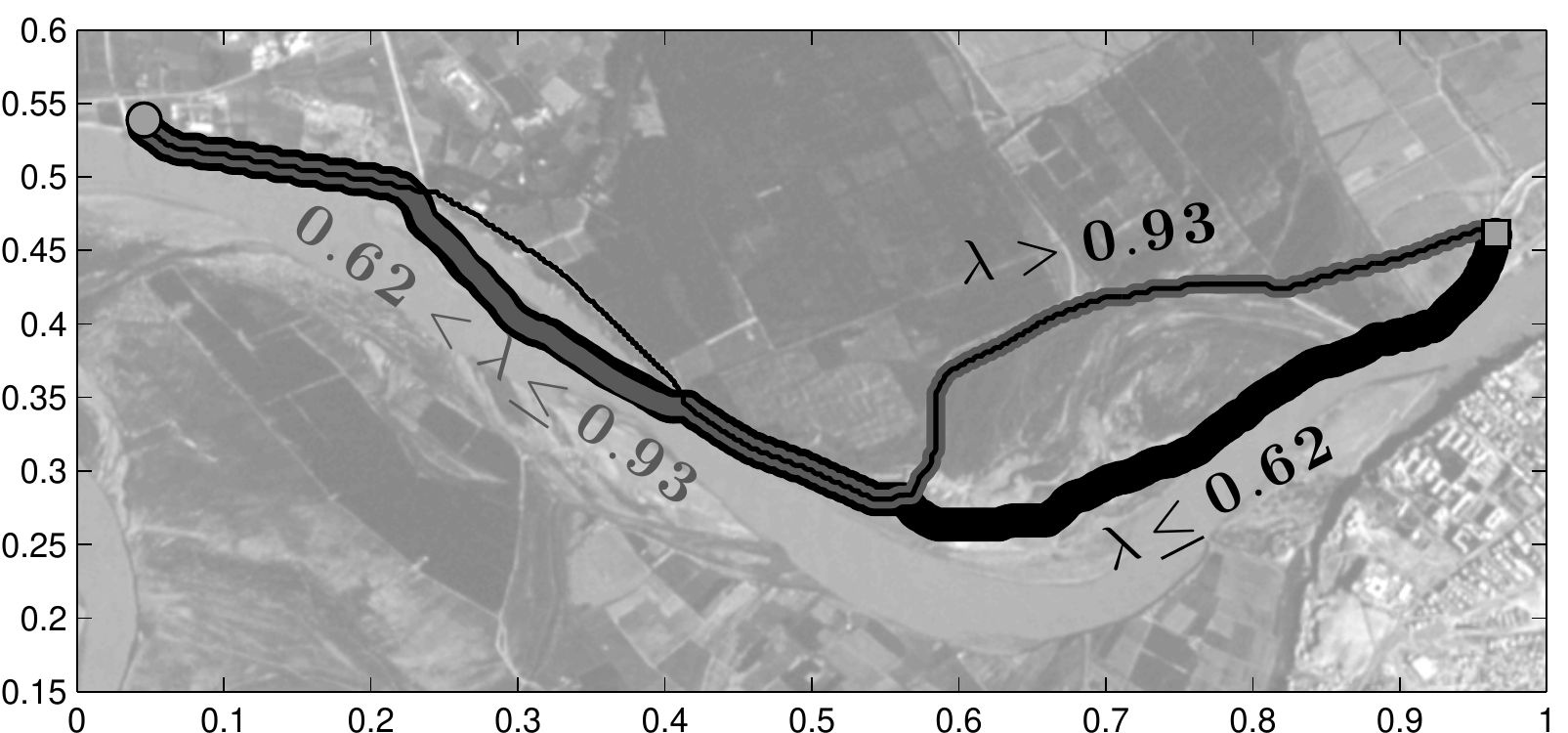} &
}
\includegraphics[scale=0.34]{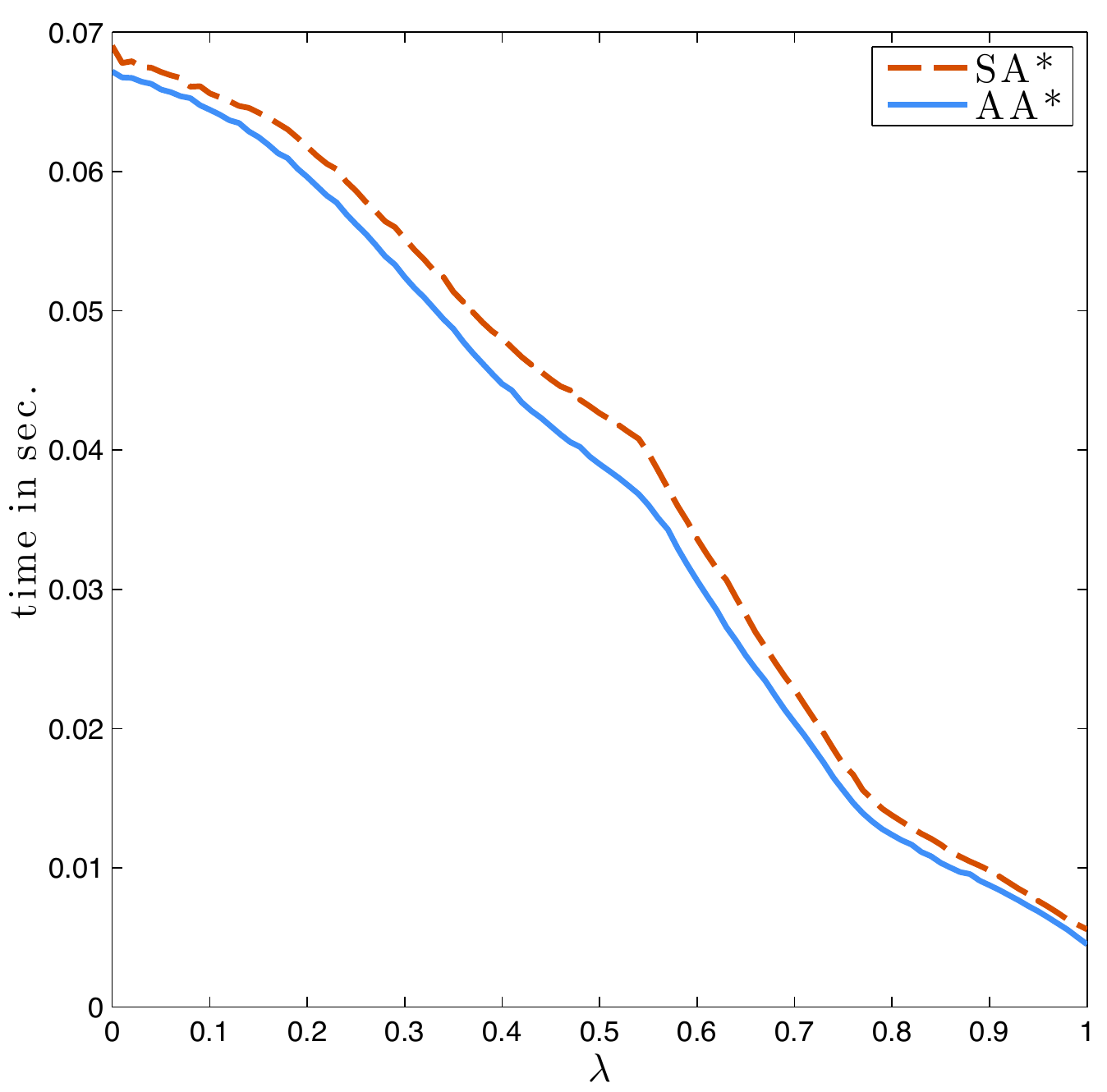} &
\includegraphics[scale=0.34]{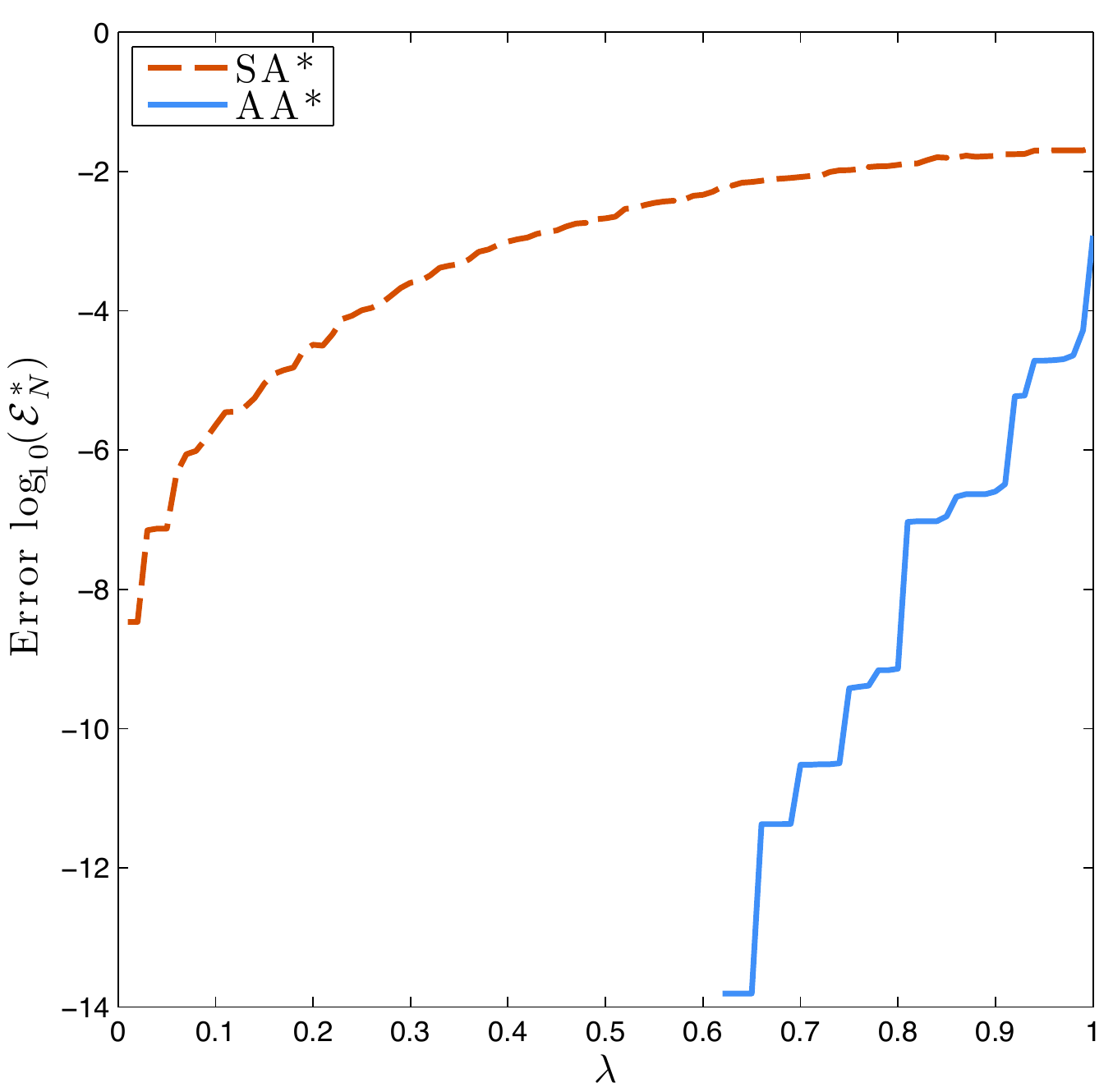}
\end{tabular}
\end{adjustwidth}
\caption{\footnotesize
\iftoggle{usecolor}{%
A. The $\lambda$-dependent ``optimal'' trajectories recovered by SA*-FMM (red, green, and blue curves).  AA*-FMM always recovers the truly optimal (red) trajectory.
When the trajectories overlap, the red red curve lies under the green, and the green curve lies under the blue.
B\&C. The time (in seconds) and $\astarErrOnly$ produced by SA* and AA* as $\lambda$ changes in $[0,1]$.
}{%
A. The $\lambda$-dependent ``optimal'' trajectories recovered by SA*-FMM: (1) the $\lambda \leq 0.62$ trajectory, (2) the $\lambda \in (0.62, 0.93]$ trajectory, and (3) the $\lambda > 0.93$ trajectory.  AA*-FMM always recovers the truly optimal trajectory (curve 1).
When the trajectories overlap, curve 1 lies under curve 2, which lies under curve 3.
B\&C. The time (in seconds) and $\astarErrOnly$ produced by SA* and AA* as $\lambda$ changes in $[0,1]$.
}%
}
\label{fig:sat_change}
\end{center}
\end{figure}

\subsection{Replanning in a dynamic environment}
\label{ss:observers}
Our final example illustrates several important points:
\begin{enumerate}[leftmargin=6mm]\itemsep-2pt
\item The use of special/custom underestimate $\varphi$ based on a related control problem.
\item The optimal trajectory from a related control problem is valuable as an initial guess for the Pontryagin Maximum Principle (PMP).
\item The PMP-computed trajectory  is not necessarily globally optimal, but can be used to produce an accurate $\Upper$.
\end{enumerate}
Here we will use a slightly more general setup where the task is to minimize the total \emph{cost} (instead of considering only the \emph{time}) to reach $\bt$. Given a running cost function $K:\Omega \to (0, +\infty)$ integrated along the trajectory and a speed $f_0$, the value function $u$ now satisfies a different Eikonal PDE given by
\begin{equation}\label{cost eikonal}
\abs{\nabla u (\bx)} f_0(\bx) \ = \ K(\bx).
\end{equation}
Our specific problem is to find the ``safest'' trajectory in an adversarial environment,
with $K$ higher on the parts of the domain more closely monitored by the adversary.
\fillmeup
If we assume no prior information on enemy locations and monitoring patterns, it is natural to select
$K \equiv 1$, which implies that the quickest trajectory is in fact the safest.
Consider the domain $\bar{\Omega} = [-0.05,0.85] \times [0, 0.9]$ with the speed and running-cost defined by
\[
f_0(x, y) \ \ = \ \ 1 \ + \ 0.99 \sin (4\pi x)\sin(4\pi y) \quad \mbox{ and } \quad K_0 \equiv 1.
\]
The solution $u$ to this \textsl{``no enemy observers''} problem is shown in Figure \ref{fig:priorFig}C.
Figure \ref{fig:priorFig}A shows the contours of $f_0$ with two locally optimal trajectories.
The `upper' solid trajectory is globally optimal and found by tracing the gradient of $u$; the `lower' locally optimal trajectory is computed using PMP.
\fillmeup
Our perception of the trajectory safety will change once we discover specific locations of enemy observers.
For example, if we know that there are two observers located at $\bx_1=(0.50, 0.77)$ and $\bx_2=(0.33, 0.45)$,
we might encode this new information in the cost function:
\[
K(\bx) \ \ = \ \ 1 \ + \ 2 \exp\parens{\frac{\abs{\bx - \bx_1}^2}{0.01}} \ + \ 8 \exp\parens{\frac{\abs{\bx - \bx_2}^2}{0.002}}.
\]
The solution to \eqref{cost eikonal} with speed $f_0$ and the above cost $K$ can be shown to satisfy \eqref{Eikonal intro} with $f = f_0 / K$.
The contours of this new modified speed $f$ can be seen in Figure \ref{fig:priorFig}B with two ``locally safest'' trajectories that can be viewed as perturbations of the locally time-optimal paths from Figure \ref{fig:priorFig}A.
Note that, because of the higher cost
around $\bx_1$, the `upper' locally optimal trajectory is no longer globally optimal.
\fillmeup
The full solution to the time-optimal problem becomes useful if we want to introduce A* techniques for all \textsl{``multiple enemy observers''} problems. Let $V_0(\bx)$ be the minimum time to reach $\bs$ using the speed $f_0$.  Suppose that $V_0$ is pre-computed by FMM and stored for the entire $X$.  Returning to the problem with known observers, we may take $\varphi = V_0$ since $f_0 \leq f \implies V_0 \leq V$. The overestimate $\Psi$ can be obtained
by integrating
$K/f_0$ along any feasible trajectory. If we use the globally time-optimal trajectory (the solid black curve in Figure \ref{fig:priorFig}A), this yields a good $\Psi_A \approx 0.6752$.  An even better overestimate $\Psi_B \approx 0.6447$ is obtained if we use the globally time-optimal trajectory as the initial guess for PMP, and then integrate $K/f_0$ along
the resulting ``locally safest'' trajectory (the `upper' black curve in Figure \ref{fig:priorFig}B).
\fillmeup
Figure \ref{fig:priorFig}D shows the solution level sets for the \textsl{``multiple enemy observers''} problem
together with boundaries of several computational sets.
The bold black curve is $\partial L$, showing the part of $\bar{\Omega}$ \ACC{} by FMM.  
The next (inward) bold curve is $\partial C_2$ with $\Psi = \Psi_B$ -- the boundary of a subset \ACC{} by AA*-FMM. 
The final bold curve is $\partial C_2$ with $\Psi = U(\bs)$ -- this approximates the boundary of a subset \ACC{} by SA*-FMM.  
(AA*-FMM would also restrict to the latter set, but only if we were lucky enough to start with $\Psi$ corresponding
to the `lower' 
curve in Figure \ref{fig:priorFig}B).
\fillmeup
Even though AA*-FMM computes the solution on a larger part of the domain, its computational efficiency is still comparable and the accuracy is superior to SA*-FMM. For example, with $m=201$ (using $100$ trial runs averaged for the time) we have
\begin{center}
\tabcolsep=0.52cm
\begin{tabular}{| c | c | c | c |}
\hline
\textbf{Method} &
\textbf{Time (seconds)} &
\textbf{Ratio $\mathbf{\mathcal{P}}$} &
\textbf{Error $\mathbf{\mathcal{E}_N^*}$} \\
\hline
\it FMM &
0.02665 &
0.82 &
0
 \\
\hline
\it SA*-FMM &
0.004950 &
0.130 &
0.02550 \\
\hline
\it AA*-FMM, $\mathit{\Psi = \Psi_B}$ &
0.0101 &
0.29 &
$1.77 \times 10^{-11}$ \\
\hline
\it AA*-FMM, $\mathit{\Psi = U(\bs)}$ &
0.00526 &
0.14 &
0.00150 \\
\hline
\end{tabular}
\end{center}

\iftoggle{usecolor}{%

\figstart
\begin{adjustwidth}{-1.5cm}{}
\begin{center}
\tabcolsep=2pt
\begin{tabular}{c c c c}
A. {\em Original speed $f_0$} &
B. {\em Modified speed $f = f_0 / C$} &
C. {\em Original solution $U_0$} &
D. {\em Modified solution $U$} \\
\includegraphics[scale=0.35]{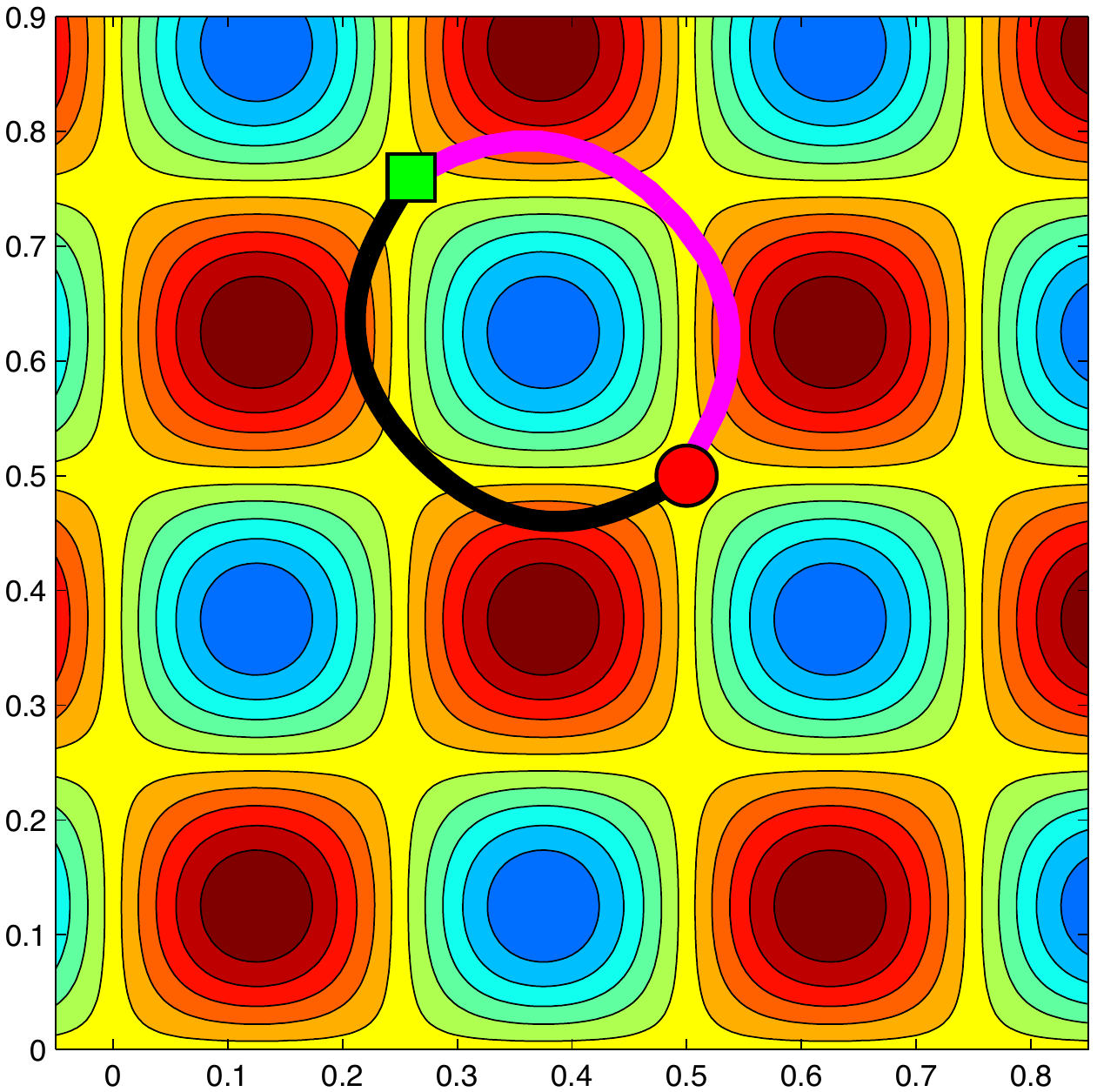} &
\includegraphics[scale=0.35]{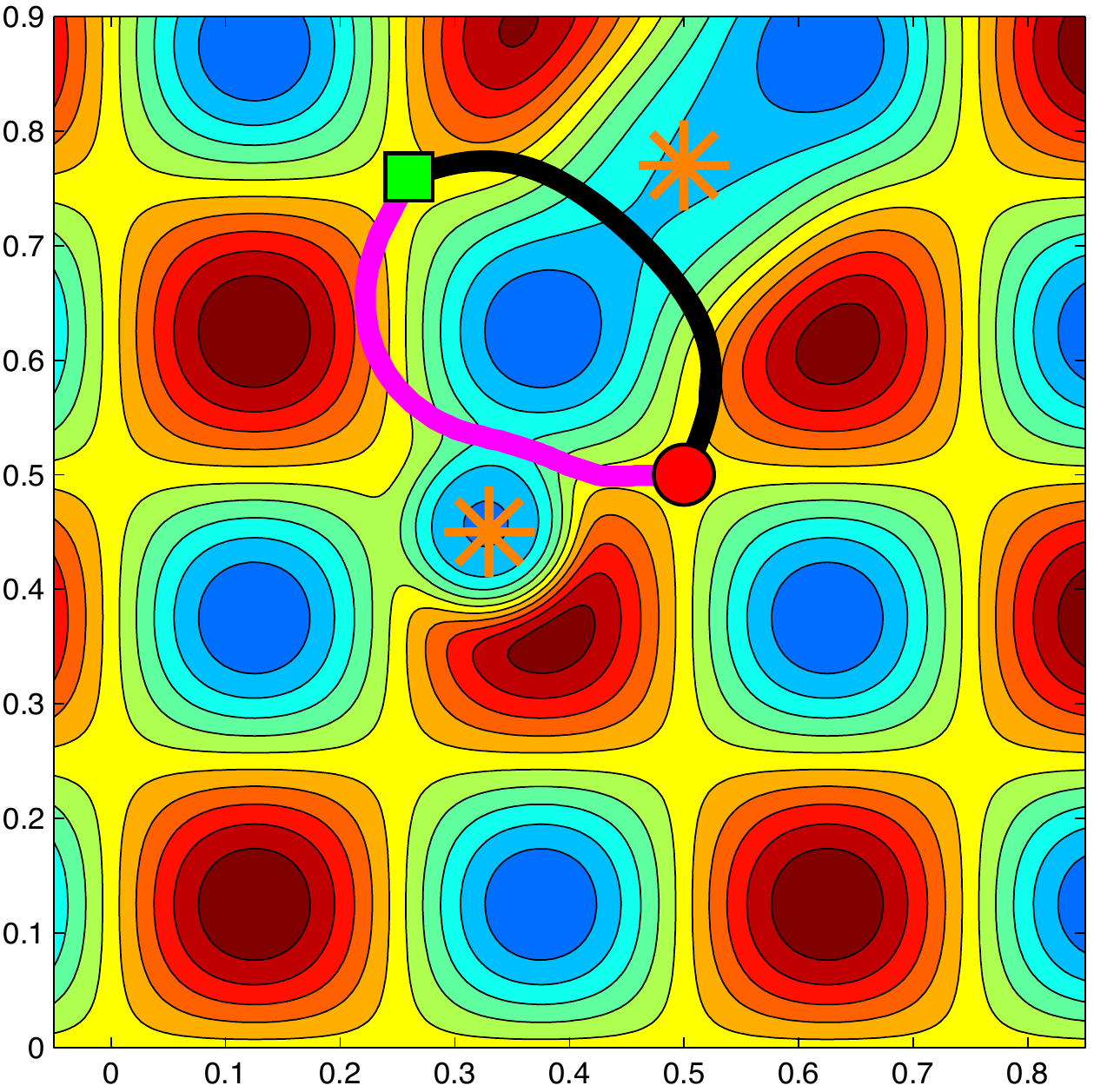} &
\includegraphics[scale=0.35]{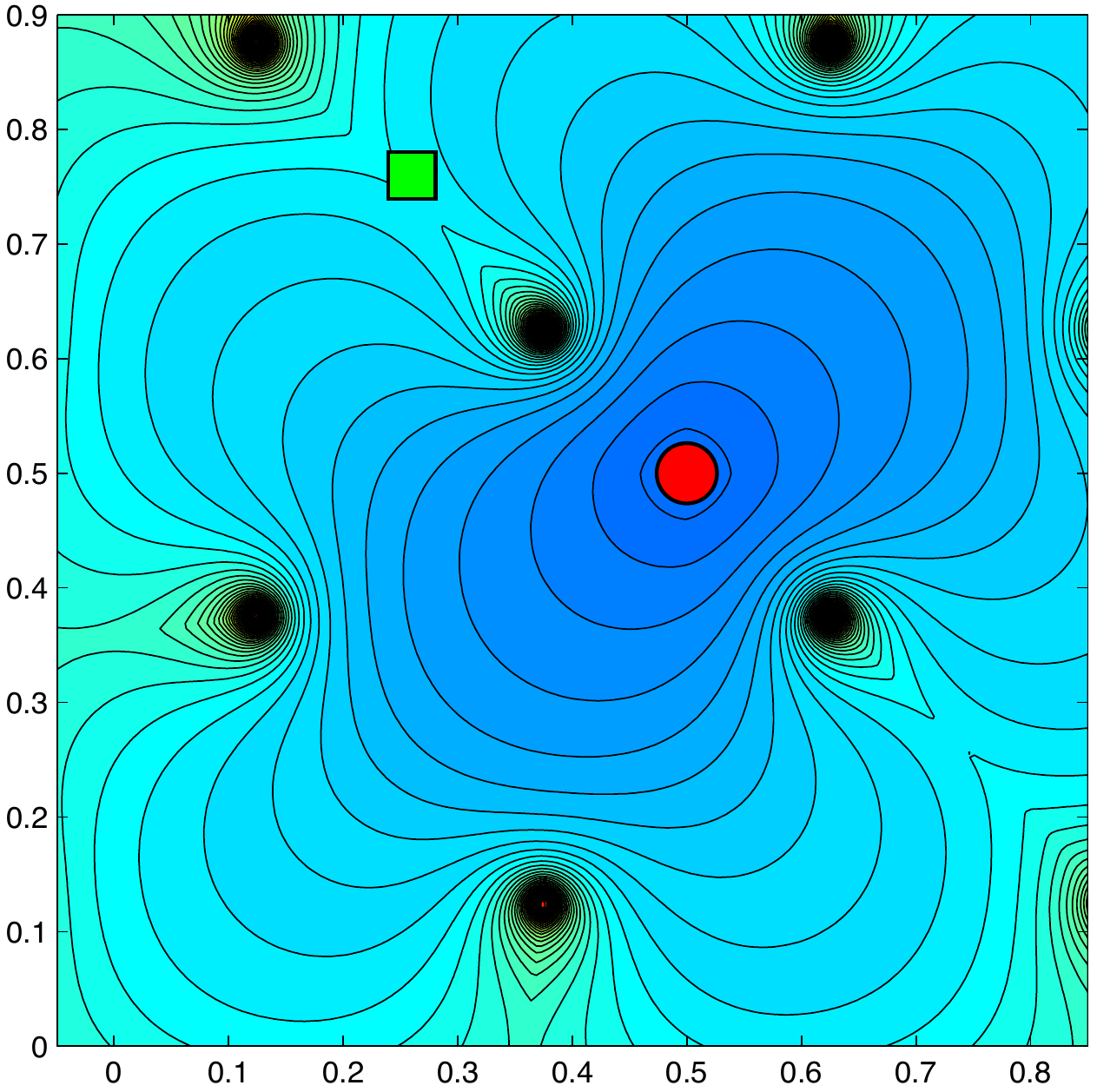} &
\includegraphics[scale=0.35]{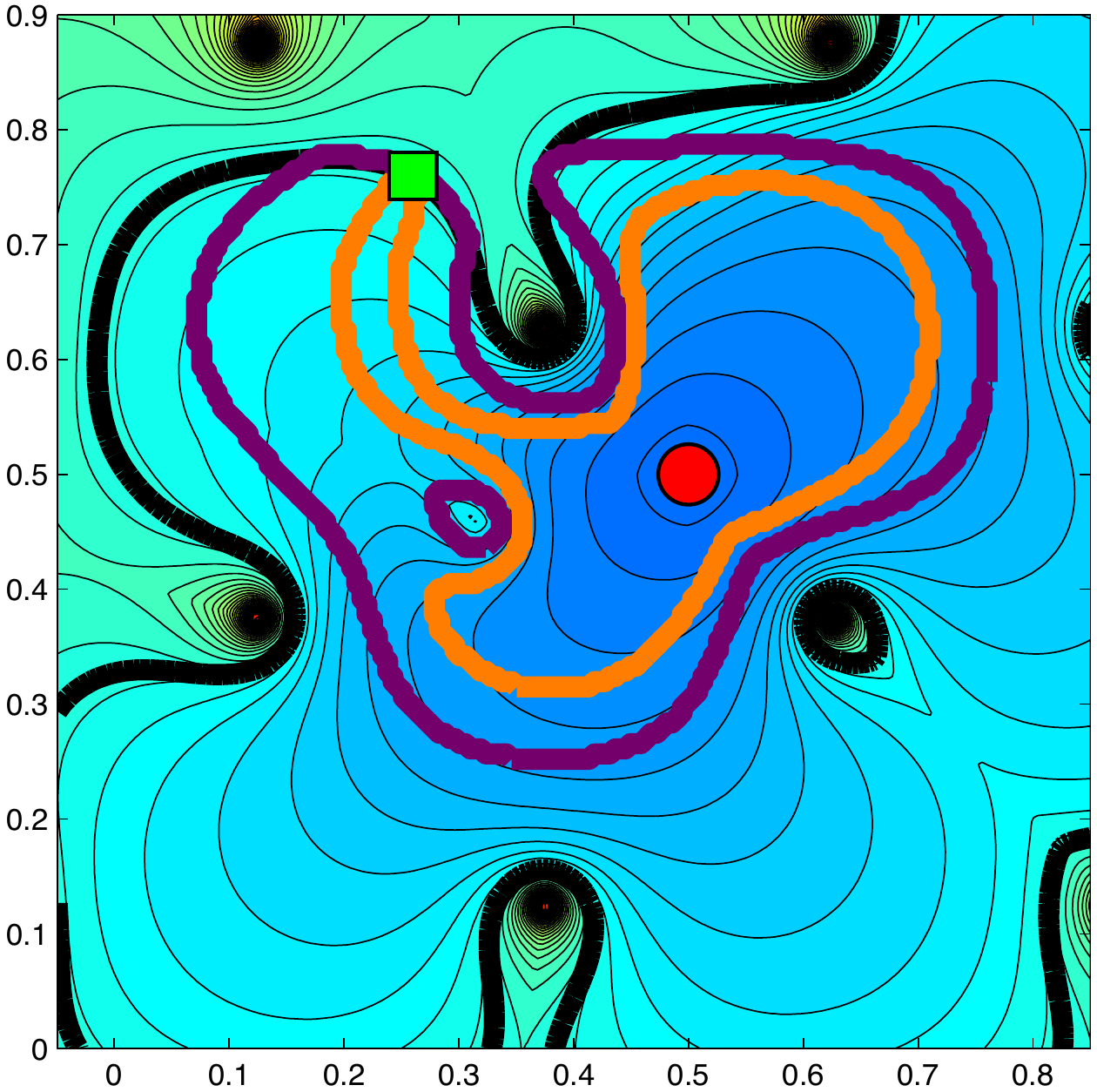} \\
\multicolumn{2}{c}{\includegraphics[scale=0.5]{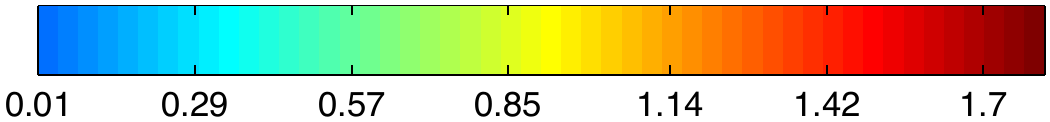} \ \ 
\includegraphics[scale=0.5]{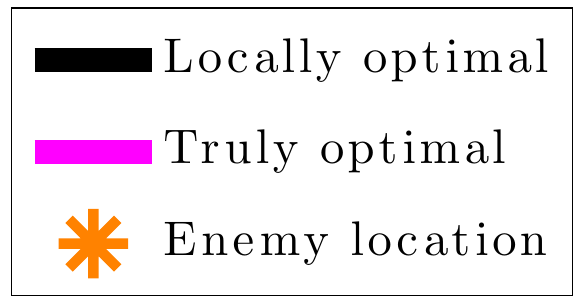}} &
\multicolumn{2}{c}{\includegraphics[scale=0.5]{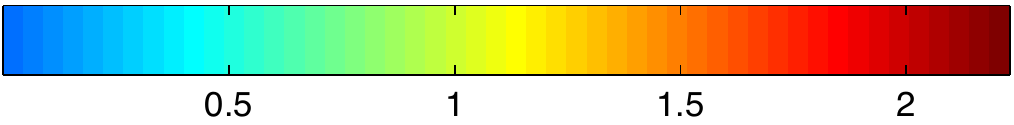} \ \ \includegraphics[scale=0.5]{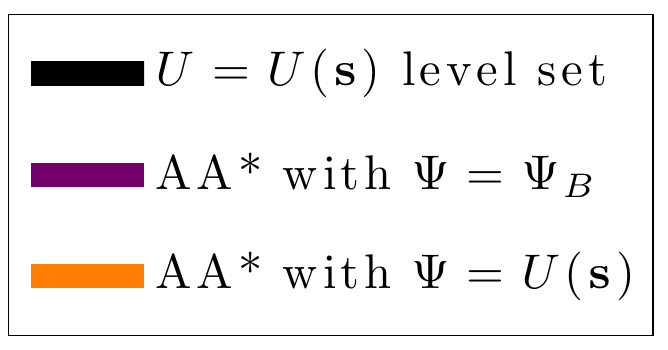}}
\end{tabular}
\end{center}
\end{adjustwidth}
\caption{\footnotesize \textbf{A.} Contours of the original speed function $f_0$. 
\textbf{B.} Contours of \textsl{``modified speed function''} $f = f_0 / K$ with the enemy locations shown by asterisks. 
\textbf{C.} Contours of the original solution to the problem with speed $f_0$ and constant running cost.  
\textbf{D.} Contours of the solution corresponding to the modified speed function $f = f_0 / K$ with $\partial L$ drawn in bold black. $\partial C_2$ is in dark purple using $\Upper = \Psi_B$, and in orange when using $\Upper = U(\bs)$. 
}
\label{fig:priorFig}
\end{figure}

}{%

\figstart
\begin{adjustwidth}{-1.5cm}{}
\begin{center}
\tabcolsep=2pt
\begin{tabular}{c c c c}
A. {\em Original speed $f_0$} &
B. {\em Modified speed $f = f_0 / C$} &
C. {\em Original solution $U_0$} &
D. {\em Modified solution $U$} \\
\includegraphics[scale=0.35]{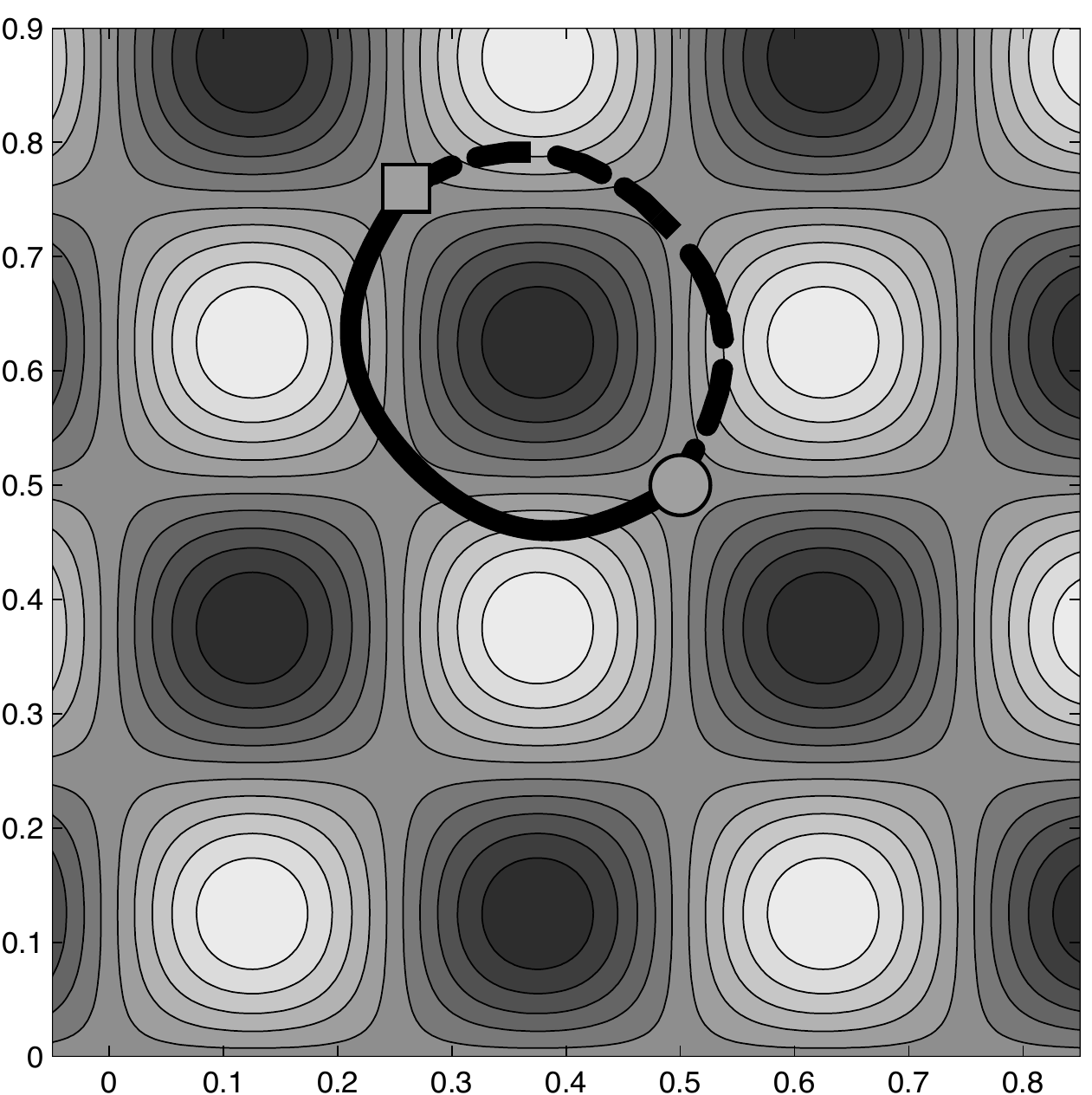} &
\includegraphics[scale=0.35]{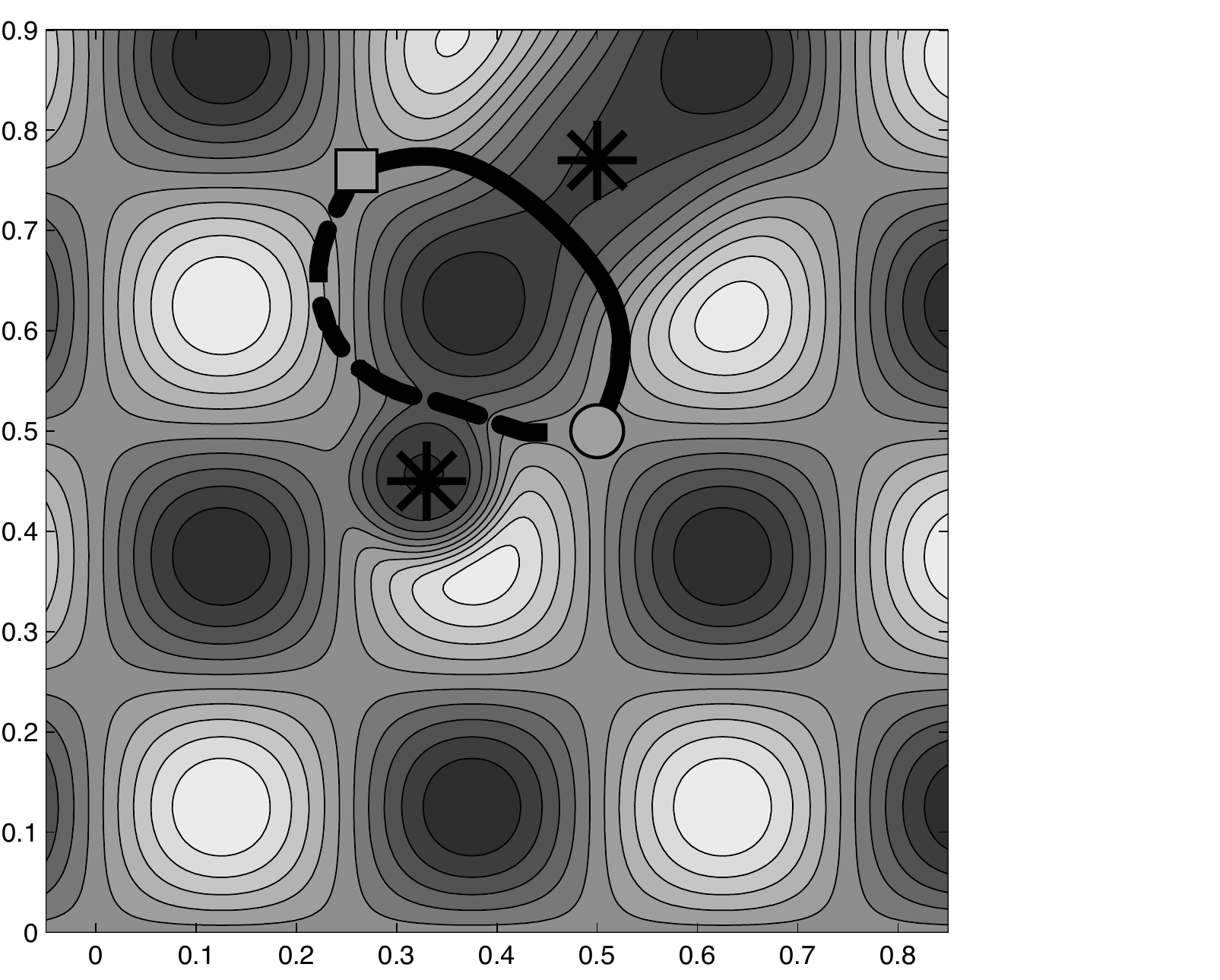} &
\includegraphics[scale=0.35]{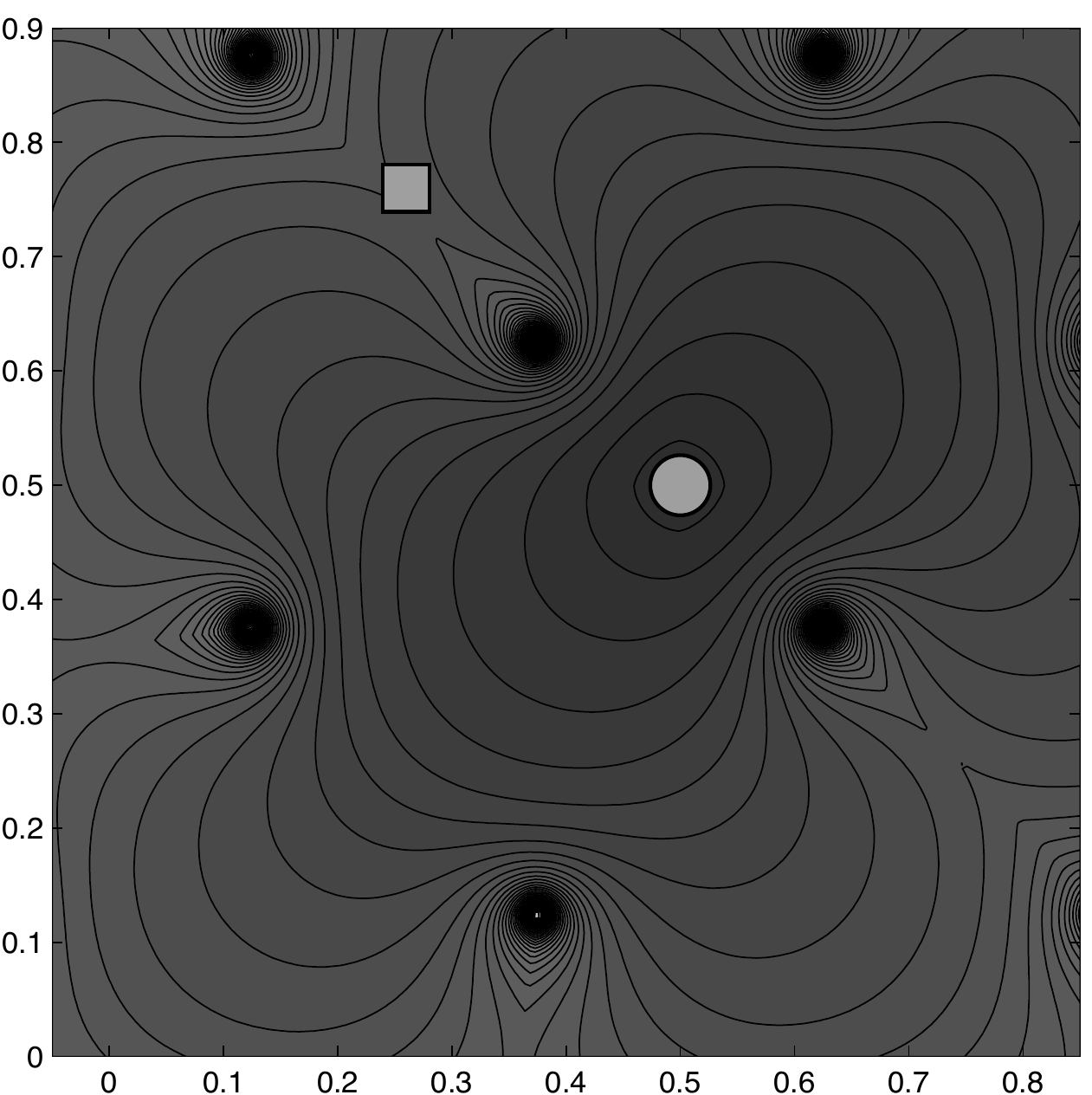} &
\includegraphics[scale=0.35]{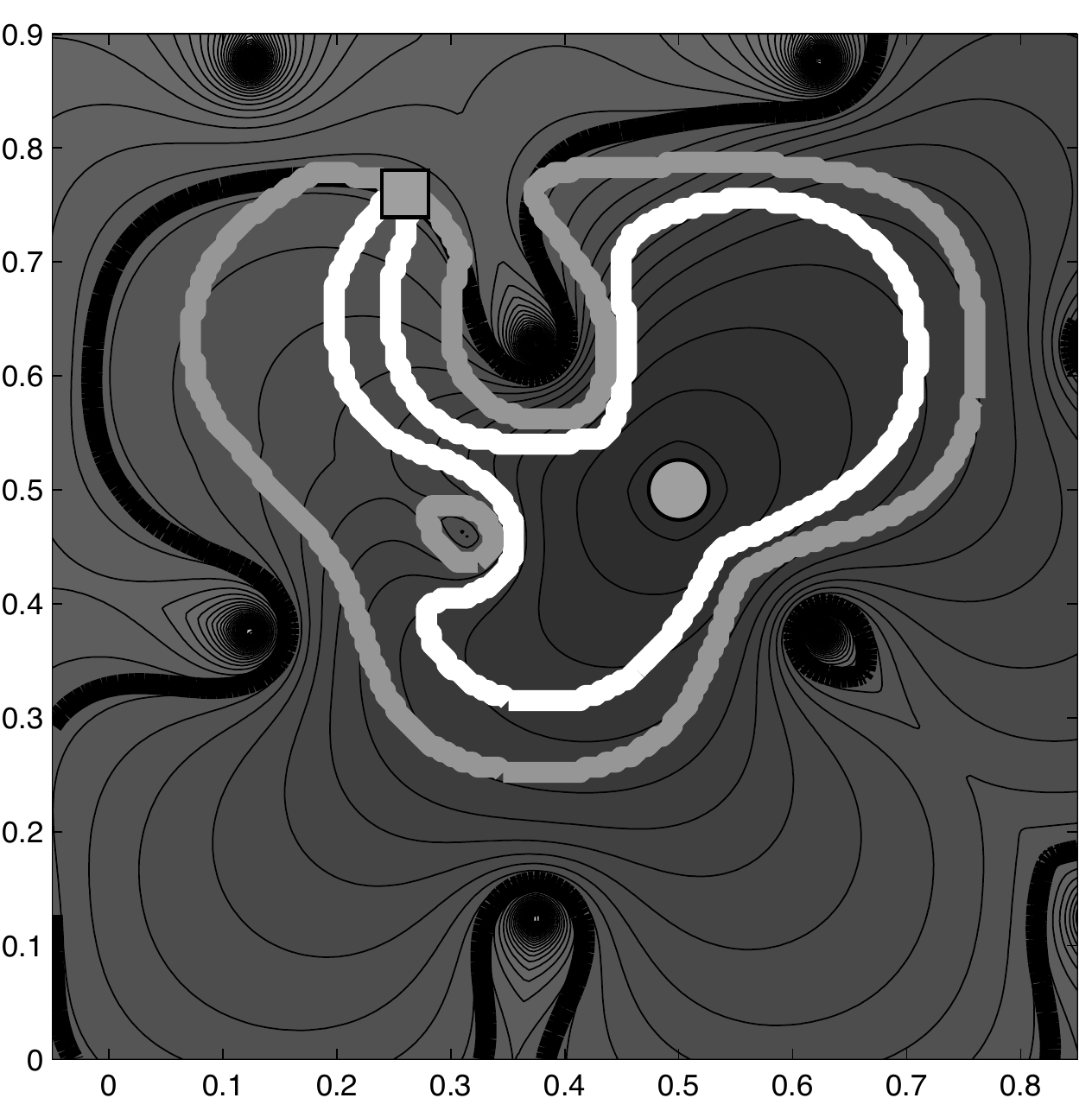} \\
\multicolumn{2}{c}{\includegraphics[scale=0.5]{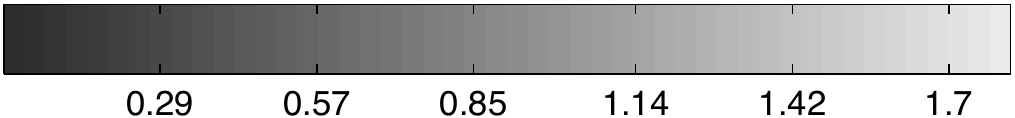} \ \ 
\includegraphics[scale=0.5]{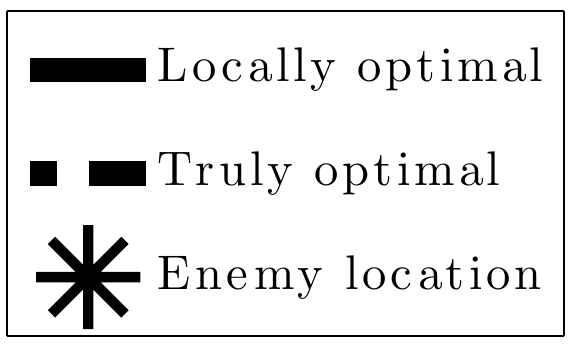}} &
\multicolumn{2}{c}{\includegraphics[scale=0.5]{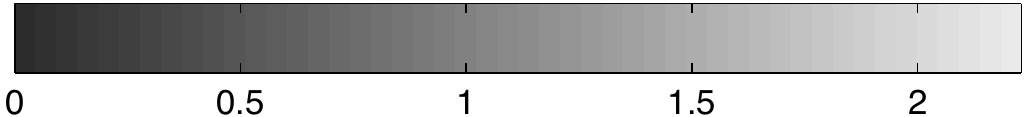} \ \ \includegraphics[scale=0.5]{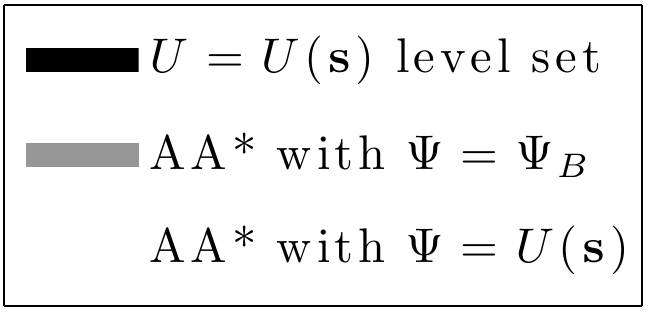}}
\end{tabular}
\end{center}
\end{adjustwidth}
\caption{\footnotesize \textbf{A.} Contours of the original speed function $f_0$. 
\textbf{B.} Contours of \textsl{``modified speed function''} $f = f_0 / K$ with the enemy locations shown by asterisks. 
\textbf{C.} Contours of the original solution to the problem with speed $f_0$ and constant running cost.  
\textbf{D.} Contours of the solution corresponding to the modified speed function $f = f_0 / K$. From out-to-in we show $\partial L$ (in bold black), $\partial C_2$ using $\Upper = \Psi_B$, and $\partial C_2$ using $\Upper = U(\bs)$. 
}
\label{fig:priorFig}
\end{figure}

}

\section{Conclusions}
\label{s:conclusions}


We have described a new A*-type modification of the Fast Marching Method solving Eikonal equations for a `single source/target' problem.
Unlike the prior methods for this problem, which were developed to mirror in the `standard A*' algorithm on graphs \cite{Hart_Astar},
our approach is based on a lesser known `alternative A*' \cite{Bertsekas_DPbook}.
These prior SA*-FMM methods \cite{FergusonStentz, Petres_thesis, Peyre_coarse, Peyre_landmark, Peyre_Geodesic, Yershov_1, Yershov_2}
either introduce additional errors that vanish slowly (if at all) under grid refinement, or must accept a much larger portion of the domain.
In contrast, our AA*-FMM is able to significantly restrict computations, with any additional errors quickly decreasing under grid refinement.
\fillmeup
One weakness of AA*-FMM is the reliance on an overestimate $\Upper$, especially when the feasibility of any $(\bs,\bt)$ trajectory is in question (e.g., in the presence of obstacles).
A good $\Upper$ can be also found from related control problems or based on Pontryagin Maximum Principle (PMP).  Here we mention two more approaches not tested in the current paper:
\begin{itemize}[leftmargin=6mm]\itemsep-2pt
\item
One can use $\Upper = \zeta U^C(\bs)$, where $\zeta > 1$ and $U^C$ is the solution found by FMM on a much coarser grid.
\item
One can also use the output of SA*-FMM on the same grid with an aggressive/inconsistent $\varphi$, setting $\Upper = U^*(\bs)$.
\end{itemize}
In the latter case, AA*-FMM should be viewed as a post-processing technique to improve the accuracy.  This might seem superfluous: after all, PMP could also be applied using the output of SA*-FMM as an initial guess. But as we show in Figures \ref{fig:sinOracle} and \ref{fig:sat_change}, the errors from SA*-FMM are likely to result in PMP converging to some other (locally, rather than globally) optimal trajectory.
\fillmeup
The effectiveness of the AA* domain restriction depends on the {\em quality} of
\zachEdit{$\varphi$ and}
$\Upper$.
If the initial $\Upper$ is overly conservative, it can also be improved dynamically using the Branch \& Bound techniques.
No benchmarking results for the latter approach were included here for the sake of brevity.
\fillmeup
We also list several desirable future extensions with significant impact on applications.
First, AA* can be used instead of SA* within D* and E* path replanners \cite{FergusonStentz, Phil}.
Second, the original AA* on graphs is applicable in both label-setting and label-correcting algorithms.
It should not be hard to incorporate the same idea into other non-iterative and fast iterative methods
for Hamilton-Jacobi PDEs.  Our preliminary results for the Locking Sweeping Method \cite{Bak} prove
the feasibility of this approach.
Third, since many gridpoints will never be used, allocating memory for the entire grid may be wasteful (particularly in high dimensions).
One approach, described in \cite{Peyre_coarse, Peyre_landmark, Peyre_Geodesic},
is to allocate gridpoints as needed and make use of a hash lookup table.
Our current implementation of AA*-FMM does not use this idea,
but we hope to explore it in the future.
Finally, we note that all of the A* techniques can be also trivially extended to problems with a single-source and multiple targets.
Similar underestimates can be also built for a moderately large set of sources $\{\bs_i\}$ (e.g., $\varphi = \min_i \varphi^0_i$).
\fillmeup
The error analysis in the Appendix relies on a conjecture, which so far has been only proven for a linear advection equation.  For the Eikonal case, we currently rely on experimental/numerical confirmation.  Nevertheless, we believe that a similar approach will be also useful in analyzing errors in more general domain restriction problems; e.g., for the errors due to an ``almost causal'' domain decomposition in \cite{Cacace_patchyFMM}.
\fillmeup
{\bf Aknowledgements.} The authors would like to thank Slav Kirov
for his contributions to the initial part of this project during the 2010-REU at Cornell University.
We would also like to thank Gabriel Peyr\'{e} for his input on the satellite image example used in Section \ref{ss:peyre_example}.

\section{Appendix: Why does it converge?}
\label{s:it_works}
If AA*-FMM is used with an inconsistent heuristic $\varphi$, the domain restriction usually affects the dependency graph (i.e., $G(\bs) \not\subset \hat{X}$), and the produced solution is larger than would result from running FMM on the full grid: $U^*(\bs) > U(\bs)$.
In this section we
analyze
why $(U^*(\bs) - U(\bs)) \to 0$ as $h \to 0$.
\fillmeup
We first note that the answer is simple if there exists an open set $\Omega_0 \subset \Omega$ such that
\begin{itemize}[leftmargin=6mm]\itemsep-2pt
\item the $(\bs,\bt)$-optimal trajectory lies in $\Omega_0$, and
\item and all gridpoints falling into $\Omega_0$ are accepted by AA*-FMM {\em regardless of $h$}.
\end{itemize}
In this case, an $\bar{\Omega}_0$-constrained viscosity solution will already yield the correct $u(\bs)$ in the limit.
In previous sections, we showed that such $\Omega_0$ often arises because $\Upper$ and/or $\varphi$ are
not tight.  But if the over/underestimates also improve in quality as $h \to 0$, then the AA*-FMM accepted region
shrinks under grid refinement, and a more careful argument is needed to explain the convergence.
\fillmeup
To address this, we compare
solutions produced by the original FMM
solving the same discretized system \eqref{Eikonal approx}
but on different grid subsets and with different boundary conditions.
For the rest of this section, we will not rely on the fact that $\hat{X}$ is defined through AA*-FMM.
As a benefit, our error analysis is also relevant for domain decomposition-based parallelizations of FMM;
e.g., see \cite{Cacace_patchyFMM}.\fillmeup
Consider a restriction of FMM computations to any
$\hat{X} \subset X$ containing both $\bs$ and $\bt$, and
define the ``restriction boundary'' set
$\Xi = \{ \bx \in X \backslash \hat{X} \, \mid \, N(\bx) \cap \hat{X} \neq \emptyset \}.$
For notational simplicity, we will assume that the $(\bs,\bt)$-optimal trajectory
is unique and that the upwind neighbors $(\bx^{}_{H}, \bx^{}_{V})$ are  uniquely defined for every gridpoint $\bx$.
\fillmeup
We will discuss the relationship between the following discretized solutions:
\begin{itemize}[leftmargin=6mm]\itemsep-2pt
\item
As before, $U$ denotes the solution on the entire $X$ with the boundary condition $U(\bt)=0$.
\item
$\hat{U}$ denotes the solution on $\hat{X}$ with the same boundary condition $\hat{U}(\bt)=0$.
We can also interpret it as a solution on $\hat{X} \bigcup \Xi$ with
$Q = \{\bt\} \bigcup \Xi$ and $q=+\infty$ on $\Xi$.  Recall that, if $\hat{X}$ is defined as the set of nodes \ACC{}
by AA*-FMM, then this method also produces the same solution (i.e., $U^*=\hat{U}$ on $\hat{X}$).
\item
$\bar{U}$ denotes the solution computed on $\hat{X} \bigcup \Xi$ with
$\bar{U}(\bt)=0$ and the more general boundary conditions $\bar{U}(\bx_i) = q_i$ specified$\Forall \bx_i \in \Xi$.
\end{itemize}
\begin{observ}
\label{obs:restrict}
The following properties are easy to verify based on the causality of \eqref{Eikonal approx}:
\begin{enumerate}[leftmargin=6mm]\itemsep-1pt
\item
$
q_i = U_i, \Forall \bx_i \in \Xi
\qquad \Longrightarrow \qquad
\bar{U}_j = U_j, \Forall \bx_j \in \hat{X};
$
\item
$
q_i \geq U_i, \Forall \bx_i \in \Xi
\qquad \Longrightarrow \qquad
\bar{U}_j \geq U_j, \Forall \bx_j \in \hat{X};
$
\item
$
\hat{U}_j \geq \bar{U}_j, \Forall \bx_j \in \hat{X};
$
\item
Suppose $C$ is a constant such that $C \geq \max_{\bx_j \in \hat{X}} \hat{U}_j$. Then\\
$
q_i \geq C, \Forall \bx_i \in \Xi
\qquad \Longrightarrow \qquad
\bar{U}_j = \hat{U}_j, \Forall \bx_j \in \hat{X}.
$
\item
Suppose $D(\bx)$ is the arclength of the shortest grid-aligned path within $\hat{X}$ from $\bx$ to $\bt$.
Then $C = \max_{\bx_j \in \hat{X}} D(\bx) / F_1 \, \geq \, \max_{\bx_j \in \hat{X}} \hat{U}_j.$
\end{enumerate}
\end{observ}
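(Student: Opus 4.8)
The plan is to derive all five parts from two structural facts about the Godunov update defined by \eqref{Eikonal discrete formula}--\eqref{Eikonal one-sided}: \emph{monotonicity} (the value returned at a node is a non-decreasing function of each neighbor's value, and in fact depends only on the \emph{smaller} neighbor in each coordinate direction) and \emph{causality} (the returned value strictly exceeds every neighbor value it actually uses, so the nodes of any solution inherit a strict acceptance ordering). First I would record that, by the definition of $\Xi$, every neighbor of a node in $\hat{X}$ lies in $\hat{X} \cup \Xi$; hence each of $U$, $\hat{U}$, $\bar{U}$ is the unique solution of a \emph{closed} discretized system on $\hat{X}$ with data prescribed on $\Xi \cup \cbraces{\bt}$, uniqueness following because the acceptance ordering lets one solve for the nodes one at a time. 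The one lemma doing the real work is a discrete comparison principle: if two solutions $W,Z$ of the \emph{same} system on $\hat{X}$ satisfy $W \geq Z$ on $\Xi \cup \cbraces{\bt}$, then $W \geq Z$ throughout $\hat{X}$.

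Property 1 is then just uniqueness: under $q_i = U_i$, the full-grid solution $U$ restricted to $\hat{X} \cup \Xi$ satisfies exactly the defining equations of $\bar{U}$, so $\bar{U} = U$ on $\hat{X}$. Properties 2 and 3 are immediate applications of the comparison lemma: for 2 the $\bar{U}$-data $q_i$ dominates the $U$-data $U_i$ on $\Xi$; for 3 the $\hat{U}$-data ($q_i = +\infty$) dominates the $\bar{U}$-data. I would prove the comparison lemma by the standard minimal-violator argument: assuming $\cbraces{\bx \in \hat{X} : W(\bx) < Z(\bx)}$ is nonempty, pick the element $\bx_m$ of smallest $W$-value; every neighbor that the update for $W_m$ actually reads has $W$-value below $W_m$, hence (by minimality) is not a violator and satisfies $W \geq Z$ there, so monotonicity forces $W_m \geq Z_m$, a contradiction.

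For Property 4 the key point is that once $q_i \geq C \geq \max_{\hat{X}} \hat{U}$, no node of $\Xi$ can ever act as an upwind neighbor: by Property 3 we have $\bar{U}_j \leq \hat{U}_j \leq C \leq q_i$ for every $\bx_j \in \hat{X}$ and every $\bx_i \in \Xi$, so in both the $\hat{U}$- and the $\bar{U}$-computation no $\Xi$-value is ever strictly smaller than an interior value, and the causal update therefore never reads it; consequently $\hat{U}|_{\hat{X}}$ and $\bar{U}|_{\hat{X}}$ solve one and the same closed system whose only effective boundary datum is $\bt$, and uniqueness gives $\bar{U} = \hat{U}$ on $\hat{X}$. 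For Property 5 I would walk the shortest grid-aligned path inside $\hat{X}$ from $\bx$ back to $\bt$, using the elementary bound that the computed value never exceeds the one-sided update: from \eqref{Eikonal discrete formula}, if (say) $\hat{U}_H \leq \hat{U}_V$ then $(\hat{U}_{ij} - \hat{U}_H)^2 \leq h^2/f_{ij}^2$, whence $\hat{U}_{ij} \leq \min\cbraces{\hat{U}_H, \hat{U}_V} + h/f_{ij} \leq \min\cbraces{\hat{U}_H, \hat{U}_V} + h/F_1$, and the one-sided case \eqref{Eikonal one-sided} gives the same bound. Taking the path-neighbor (whose $D$-value is exactly $h$ smaller) and inducting from $\hat{U}(\bt) = 0 = D(\bt)/F_1$ yields $\hat{U}(\bx) \leq D(\bx)/F_1$, and maximizing over $\hat{X}$ gives the claimed bound on $C$.

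The main obstacle I anticipate is making the comparison lemma airtight, since the update's dependence on only the smaller neighbor in each direction forces a short case analysis (two-sided versus one-sided update, and which coordinate directions are actually upwind) at the minimal violator; this is precisely where both monotonicity and causality must be invoked, although it is entirely standard for Godunov-type Eikonal discretizations. Everything else is bookkeeping built on this lemma and on the one-sided-update bound.
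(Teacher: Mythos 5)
Your proposal is correct, and it takes the approach the paper itself intends: the paper states Observation \ref{obs:restrict} without proof, asserting only that the properties are ``easy to verify based on the causality'' of \eqref{Eikonal approx}, and your argument is precisely the standard fleshing-out of that remark --- closing the system on $\hat{X}\cup\Xi$, a minimal-violator comparison principle built on monotonicity plus causality (which also yields the uniqueness you need for Properties 1 and 4), the never-read-a-large-boundary-value causality argument for Property 4, and the one-sided-update path induction for Property 5. No gaps.
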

For any specific $\bx_i \in X$, if we define $\hat{X} = X \backslash \{\bx_i\}$
and choose $q_i > U_i$ this might result in $\hat{U}(\bs) > U(\bs)$.
This ``add-one-gridpoint-to-$Q$'' procedure motivates our definition of {\em sensitivity coefficients}:
$$
\alpha_i = \alpha(\bx_i) = \frac{\partial U(\bs)}{\partial U_i}
\text{ or, more rigorously, }
\alpha_i  = \frac{\partial \hat{U}(\bs)}{\partial q_i}
\text{ computed on $\hat{X} = X \backslash \{\bx_i\}$ with $q_i = U_i$.}
$$
Due to the monotonicity of \eqref{Eikonal approx}, $\alpha_i \geq 0$
and it is strictly positive if and only if $\bx_i \in G(\bs)$.
\begin{lemma}
\label{lm:alpha_relevance}
The net effect of a domain restriction can be bounded from above using $\alpha$'s even for a {\em general} set $\hat{X}$:
\begin{enumerate}[leftmargin=6mm]\itemsep-2pt
\item
If $q(\bx) \geq U(\bx), \Forall \bx \in \Xi,$ then
$\displaystyle
\bar{U}(\bs) - U(\bs) \; \leq  \; \sum\limits_{\bx \in \Xi} \alpha(\bx) \left(q(\bx)-U(\bx) \right).
$
\item
If $C \geq \hat{U}(\bx), \Forall \bx \in \hat{X}$, then
$\displaystyle
\hat{U}(\bs) - U(\bs) \; \leq  \; C \sum\limits_{\bx \in \Xi} \alpha(\bx).
$
\end{enumerate}
\end{lemma}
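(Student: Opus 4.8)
The plan is to exploit the control/probabilistic interpretation of the discretized system \eqref{Eikonal approx}, which is exactly the tool advertised for this Appendix. Unrolling the upwind update \eqref{Eikonal discrete formula}--\eqref{Eikonal one-sided} reinterprets it as a controlled Markov chain on the grid: for any grid subset with boundary data $q$ prescribed on $\Xi\cup\{\bt\}$, the computed value $\bar U(\bs)$ equals the minimum, over all admissible feedback policies $\pi$, of the expected accumulated running cost plus the boundary value at the (random) exit node, i.e. $\bar U(\bs)=\min_\pi\bigl(\mathbb{E}_\pi[\text{cost}]+\sum_{\bx\in\Xi}\mathbb{P}_\pi(\text{exit at }\bx)\,q(\bx)\bigr)$. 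For each fixed $\pi$ this is \emph{affine} in $q$, so $\bar U(\bs)$ is a concave, non-decreasing function of $q$. At the reference data $q=U|_\Xi$ property (1) of Observation~\ref{obs:restrict} gives $\bar U(\bs)=U(\bs)$; let $\pi^*$ denote a policy attaining this minimum (the reference-optimal ``steepest-descent of $U$'' feedback rule).

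For part 1, I would simply evaluate the raised-data objective along the fixed suboptimal policy $\pi^*$. Since $\pi^*$ is optimal for the reference data, $\mathbb{E}_{\pi^*}[\text{cost}]+\sum\mathbb{P}_{\pi^*}(\text{exit at }\bx)\,U(\bx)=U(\bs)$, and therefore
\[
\bar U(\bs)-U(\bs)\;\le\;\sum_{\bx\in\Xi}\mathbb{P}_{\pi^*}(\text{exit at }\bx)\,\bigl(q(\bx)-U(\bx)\bigr),
\]
using $q(\bx)\ge U(\bx)$. It then remains to show $\mathbb{P}_{\pi^*}(\text{exit at }\bx)\le\alpha(\bx)$. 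This is where the single-point definition of $\alpha$ pays off: $\alpha(\bx_i)$ is computed on $X\setminus\{\bx_i\}$, so relative to the full-$\Xi$ boundary problem every \emph{other} node of $\Xi$ is freed. Since those freed nodes already carry their equilibrium value $U$, the reference solution is unchanged, but a walk that under $\pi^*$ would have been absorbed at some $\bx_j\in\Xi$ may now pass through $\bx_j$ and later reach $\bx_i$; hence the absorption probability at $\bx_i$ can only grow, giving $\mathbb{P}_{\pi^*}(\text{exit at }\bx_i)\le\alpha(\bx_i)$ and the claimed bound.

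For part 2, I would reduce to part 1 by imposing the boundary data $q(\bx)=\max\{C,U(\bx)\}$ on $\Xi$. On one hand $q\ge U|_\Xi$, so part 1 applies; on the other hand $q\ge C\ge\max_{\hat X}\hat U$, so property (4) of Observation~\ref{obs:restrict} forces $\bar U=\hat U$ on $\hat X$, in particular $\bar U(\bs)=\hat U(\bs)$. The sum is then controlled by $C$ because only relevant nodes contribute: if $\alpha(\bx)>0$ then $\bx\in G(\bs)$, so $U(\bx)<U(\bs)\le\hat U(\bs)\le C$, whence $q(\bx)=C$ and $q(\bx)-U(\bx)\le C$, while nodes with $\alpha(\bx)=0$ drop out. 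Combining,
\[
\hat U(\bs)-U(\bs)\;=\;\bar U(\bs)-U(\bs)\;\le\;\sum_{\bx\in\Xi}\alpha(\bx)\bigl(q(\bx)-U(\bx)\bigr)\;\le\;C\sum_{\bx\in\Xi}\alpha(\bx).
\]

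The hard part will be making the ``exit-probability'' framework fully rigorous for this specific finite-difference scheme: one must reinterpret the two-sided update \eqref{Eikonal discrete formula} as a genuine Markov transition (with nonnegative weights summing appropriately) so that $\alpha(\bx)$ literally is an absorption probability, and one must handle the kinks where the minimizing policy, the choice of upwind neighbors, or the one- versus two-sided branch switches --- at such points $\alpha(\bx)$ should be read as a one-sided directional derivative / supergradient component rather than a classical partial derivative. The crux is the comparison $\mathbb{P}_{\pi^*}(\text{exit at }\bx)\le\alpha(\bx)$; I expect the cleanest justification is precisely the pathwise ``freeing a node only adds nonnegatively weighted propagation paths'' argument above, which sidesteps differentiation entirely and relies only on the monotone causality already used to establish Observation~\ref{obs:restrict}.
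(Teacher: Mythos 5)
Your proposal follows essentially the same route as the paper's own proof: both reinterpret the upwind discretization as a stochastic shortest path problem, evaluate the reference-optimal policy as a (suboptimal) policy for the perturbed boundary data to get the key inequality, and then bound the resulting exit probabilities by $\alpha(\bx)$ via a pathwise coupling argument (the paper's step $\tilde{\alpha}(\bx) \le \alpha(\bx)$, stated there as ``restricting removes paths'' rather than your equivalent ``freeing nodes adds paths''). Your part-2 choice $q(\bx)=\max\{C,U(\bx)\}$ is a minor refinement of the paper's $q \equiv C$ --- it dispenses with verifying $C \ge U$ on $\Xi$, which the paper's invocation of part 1 implicitly requires --- but the argument is otherwise identical.
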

\begin{proof}
The upwind finite difference discretization \eqref{Eikonal discrete formula} is equivalent to
a semi-Lagrangian discretization:
\begin{equation}
\label{eq:semiL}
U(\bx_{ij}) \ \ = \ \  \min_{\beta \in [0, 1]}
\left\{ \frac{\abs{\beta \bx^{}_{H} + (1-\beta) \bx^{}_{V} - \bx_{ij}}}{f(\bx)} \ + \
\beta U(\bx^{}_{H}) \ + \ (1-\beta) U(\bx^{}_{V}) \right\}.
\end{equation}
Despite the very different Eulerian perspective and notation, \eqref{Eikonal discrete formula} can be actually derived from
Kuhn-Tucker optimality conditions for \eqref{eq:semiL}; see \cite{Tsitsiklis, SethVladOUMtheory, Vlad_MSSP}.
Moreover,
the latter can be also viewed as the dynamic programming equation for
a {\em Stochastic Shortest Path Problem} on the grid $X$; see \cite{Vlad_MSSP} for a detailed discussion.
In this interpretation, the transition from $\bx_{ij}$ to the neighboring node (either $\bx^{}_H$ or $\bx^{}_V$)
happens probabilistically, with respective probabilities $\beta$ and $(1-\beta)$, and
$\frac{| ( \beta \bx^{}_{H} + (1-\beta) \bx^{}_{V} - \bx_{ij} |}{f(\bx)}$ is the cost we incur for choosing
this probability distribution.  The process continues until we
reach
$\bt$,
and the goal is to select
$\beta_*: X \to [0,1]$ that minimizes the expected cumulative cost up to that termination.
We note that, for $\bx_{ij} = \bs$, we have $\alpha(\bx^{}_{V})  = (1-\beta_*(\bs))$, $\alpha(\bx^{}_{H})  = \beta_*(\bs)$
and $\alpha$ values on the rest of $G(\bs)$ can be similarly computed using \eqref{eq:semiL} recursively; see \cite{Chacon_thesis}.
Moreover, if we start from $\bs$ and use the optimal ``stochastic routing policy'' $\beta_*(\cdot)$,
then $\alpha(\bx)$ can be naturally interpreted as a probability of 
passing through $\bx$ before arriving at $\bt$.
\fillmeup
Suppose now we use $\beta_*(\cdot)$, but on a $\hat{X}$-restricted problem, starting from $\bs$ and terminating the process
(+ paying the additional cost of $q(\bx)$) if we transition into any $\bx \in \Xi$ before reaching $\bt$.  Denote by $\tilde{U}$ the expected total cost of using this policy and by $\tilde{\alpha}(\bx)$ the probability of reaching $\bx$ before termination.
We first note that  $\tilde{\alpha}(\bx) \leq \alpha(\bx), \Forall \bx \in \hat{X} \cup \Xi$ since some stochastic paths previously leading through $\bx$ are now removed due to an earlier entry to $\Xi$.  Secondly,  $\tilde{U} \geq \bar{U}$, since
the latter is found by optimizing over all possible $\beta: \hat{X} \to [0,1]$, including the restriction of $\beta_*(\cdot)$.
Thus,
$$
\bar{U}(\bs) - U(\bs) \; \leq \; \tilde{U}(\bs) - U(\bs) \; = \;
\sum\limits_{\bx \in \Xi} \tilde{\alpha}(\bx) \left( q(\bx) - U(\bx) \right)
\; \leq \; \sum\limits_{\bx \in \Xi} \alpha(\bx) \left( q(\bx) - U(\bx) \right),
$$
which completes the proof of part 1.  To prove part 2, select $q(\bx) = C, \Forall \bx \in \Xi.$
Since the exit-penalty $C$ is prohibitively high, the stochastic path starting from $\bs \in \hat{X}$ and using the optimal routing policy will avoid $\Xi$ with probability 1.  Thus, $\bar{U}(\bs) = \hat{U}(\bs)$ (see the last part of Observation \ref{obs:restrict}), and using the above result
$$
\hat{U}(\bs) - U(\bs) \; \; \leq \; \sum\limits_{\bx \in \Xi} \alpha(\bx) \left( C - U(\bx) \right)
\; \leq \; C \sum\limits_{\bx \in \Xi} \alpha(\bx).
\vspace{-0.4cm}
$$
\end{proof}

Let $d(\bx)$ be the distance from $\bx$ to the characteristic passing through $\bs$ (i.e., the $(\bs,\bt)$-optimal trajectory).
\begin{conjecture}
\label{conj}
There exists a constant $\rho > 0$ such that, for small enough $h$, $\alpha(\bx) \leq e^{-\rho [d(\bx)]^2 / h}$.
\end{conjecture}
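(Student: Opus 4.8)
The plan is to exploit the probabilistic reading of the scheme developed just above: under the optimal stochastic routing policy $\beta_*(\cdot)$, the coefficient $\alpha(\bx)$ is exactly the probability that the Markov chain $\{\bx_n\}$ started at $\bs$ visits $\bx$ before absorption at $\bt$. Hence, if $\bx$ lies at transverse distance $d(\bx)$ from the $(\bs,\bt)$-optimal trajectory $\Gamma$, then $\alpha(\bx)\le \mathbb{P}\parens{\max_n \operatorname{dist}(\bx_n,\Gamma)\ge d(\bx)}$, and the whole statement reduces to a large-deviation estimate for the transverse spreading of the chain. The natural Lyapunov function is $\chi(\bx)=u(\bx)+v(\bx)-u(\bs)$, which (as observed earlier, $u+v-u(\bs)\ge 0$ with equality exactly on $\Gamma$) is nonnegative, vanishes precisely on $\Gamma$, and --- by smoothness of $u,v$ near $\Gamma$ together with nondegeneracy of its transverse Hessian --- satisfies $\chi(\bx)\ge c_0\,[d(\bx)]^2$ for some $c_0>0$ in a fixed tube around $\Gamma$. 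Bounding $\alpha(\bx)$ thus amounts to bounding $\mathbb{P}\parens{\max_n \chi(\bx_n)\ge c_0\,[d(\bx)]^2}$.

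The engine I would use is that $\chi(\bx_n)$ is an approximate supermartingale along the chain. Evaluating \eqref{eq:semiL} at its optimizer gives $\mathbb{E}[U(\bx_{n+1})\mid\mathcal{F}_n]=U(\bx_n)-\ell_n/f(\bx_n)$ exactly, where $\ell_n=\Theta(h)$ is the semi-Lagrangian step length and the step direction is the (discrete) characteristic direction $-f\nabla u$. A second-order Taylor expansion of $V$ at the two candidate neighbors, whose spread is $O(h)$, gives $\mathbb{E}[V(\bx_{n+1})\mid\mathcal{F}_n]=V(\bx_n)+\nabla v\cdot(\text{step})+O(h^2)$. Combining the two, the conditional drift of $\chi$ is
\begin{equation*}
\mathbb{E}[\chi(\bx_{n+1})\mid\mathcal{F}_n]-\chi(\bx_n)\;=\;-\ell_n f\parens{\tfrac{1}{f^2}+\nabla u\cdot\nabla v}\;+\;O(h^2).
\end{equation*}
Since $|\nabla u|=|\nabla v|=1/f$, Cauchy--Schwarz gives $\nabla u\cdot\nabla v\ge -1/f^2$, so the parenthesized factor is $\ge 0$ and the drift is at most $O(h^2)$ \emph{unconditionally} --- in particular with no need to sign the focusing or defocusing of neighboring characteristics. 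Thus $\chi(\bx_n)-Cnh^2$ is a supermartingale. Moreover $\nabla\chi=\nabla u+\nabla v$ vanishes on $\Gamma$, so $\nabla\chi=O(d)$ nearby and the martingale part of the increment $\chi(\bx_{n+1})-\chi(\bx_n)$ has conditional variance proxy $O(d^2h^2)=O(\chi(\bx_n)h^2)$; equivalently, the transverse coordinate itself moves by only $O(h)$ per step.

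To turn this into a Gaussian tail I would first control the number of steps and then apply an exponential martingale inequality. The value $U(\bx_n)$ serves as a clock: the identity above yields $\mathbb{E}[U(\bx_{n+1})\mid\mathcal{F}_n]\le U(\bx_n)-ch$, so $U(\bx_n)+cnh$ is a nonnegative supermartingale and the absorption time satisfies $N=O(1/h)$ with overwhelming probability. Feeding the drift bound $O(h^2)$, the variance proxy $O(\chi_n h^2)$, and $N=O(1/h)$ into a Freedman/Bernstein-type inequality --- or, equivalently, applying Azuma--Hoeffding directly to the transverse coordinate, whose increments are $O(h)$ over $O(1/h)$ steps --- produces $\mathbb{P}\parens{\max_{n\le N}\chi(\bx_n)\ge \eta}\le \exp(-\rho'\eta/h)$. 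Taking $\eta=c_0[d(\bx)]^2$ then gives exactly $\alpha(\bx)\le \exp\!\parens{-\rho\,[d(\bx)]^2/h}$ with $\rho=\rho' c_0$, which is the claimed bound.

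The main obstacle is regularity. Both the drift identity and the quadratic lower bound $\chi\ge c_0 d^2$ require $u$ and $v$ (and their discrete counterparts) to be smooth, with first and second derivatives controlled uniformly in a fixed tube around $\Gamma$; this fails where $v$ has a shock or where characteristics of $u$ focus, and it is exactly this geometry-dependent behavior that blocks a fully general argument. Two further points must be handled uniformly in $h$: the discrepancy between the discrete characteristic directions built from $U,V$ and the continuous ones, so that the $O(h^2)$ drift and $O(h)$ increment bounds hold with $h$-independent constants; and the state-dependence of the variance proxy, which makes $\chi$ behave like a squared-Bessel-type process and calls for an integrating-factor (Gronwall) refinement of the plain Azuma estimate in order to keep $\rho$ bounded away from $0$ as $h\to 0$. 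For linear advection all of these obstructions vanish --- $\Gamma$ is straight, $\chi$ is genuinely quadratic, and the increments are essentially i.i.d.\ --- which is precisely why the estimate is at present provable only in that case.
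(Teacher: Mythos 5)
You should first be aware that the paper has no proof of this statement to compare against: it is stated as a \emph{conjecture} precisely because the authors could not establish it in this generality. They cite a rigorous proof only for an upwind discretization of a constant-coefficient advection PDE \cite[Ch.~4]{Chacon_thesis}, note that the same argument covers the Eikonal case only when all characteristics are parallel (which fails for $Q=\{\bt\}$), and otherwise rely on numerical evidence (Figure \ref{fig:alphas}). Judged as an attempt at an open problem, your skeleton is the natural one and is fully consistent with the framework the paper builds in the proof of Lemma \ref{lm:alpha_relevance}: $\alpha(\bx)$ is the visiting probability of the chain driven by the optimal discrete routing policy $\beta_*$, the Lyapunov function $\chi = u+v-u(\bs)$ vanishes exactly on the optimal trajectory $\Gamma$, and Cauchy--Schwarz, $\nabla u\cdot\nabla v \ge -1/f^2$, is the correct mechanism forcing the leading-order drift of $\chi$ to be nonpositive. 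In the constant-coefficient case this is essentially how the known proof goes.

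However, the proposal is not a proof, and the gaps you acknowledge in your final paragraph are not technicalities to be cleaned up later --- they are exactly the obstructions that made the authors state this as a conjecture. (1) Your drift computation mixes objects of different types: the exact identity $\mathbb{E}[U(\bx_{n+1})\mid\mathcal{F}_n]=U(\bx_n)-\ell_n/f(\bx_n)$ holds for the \emph{discrete} $U$ (the chain's kernel comes from the discrete minimizer of \eqref{eq:semiL}), while the Taylor expansion holds for the \emph{continuous} $v$. If you build $\chi_n$ from continuous $u$, you lose the exact identity and the drift bound requires the discrete expected step direction to align with $-f\nabla u$; if you build it from discrete $U$ (where Cauchy--Schwarz saves the drift regardless of direction), then the increment/variance bound $|\Delta\chi_n|=O(d\,h+h^2)$ and the lower bound $\chi \ge c_0 d^2 - 2\norm{U-u}_\infty$ both require quantitative local gradient coherence and sup-norm rates for the first-order upwind scheme in the tube. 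No such estimates are known in general near a non-straight characteristic, none are supplied, and obtaining them is comparable in depth to the conjecture itself. (2) The ``fixed tube around $\Gamma$ with uniformly bounded second derivatives'' does not exist: $v$ is singular at $\bs$ (where the chain starts) and $D^2u$ blows up at $\bt$ (where every path ends and characteristics focus), so the endpoint neighborhoods must be excised and handled separately. (3) The bound $\chi \ge c_0[d(\bx)]^2$ is an extra transverse-nondegeneracy hypothesis on the Hessian of $u+v$ along $\Gamma$; uniqueness of the optimal trajectory gives only $\chi>0$ off $\Gamma$, and in a degenerate (say quartic) case your argument yields $\alpha\le e^{-\rho[d(\bx)]^4/h}$, which is weaker than the statement. (4) The fallback of ``Azuma applied directly to the transverse coordinate'' is invalid as written: that coordinate is not a supermartingale, since off $\Gamma$ the characteristics of $u$ can defocus and carry the chain away from $\Gamma$ deterministically, so a Jacobi-field/Gronwall correction is needed, which again hinges on the unavailable alignment estimates. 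In short: a sound research program that matches the provable special case, but with its central analytic ingredients still missing.
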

As of right now, we only have a rigorous proof of this statement for an upwind discretization
of a constant-coefficient advection PDE \cite[Chapter 4]{Chacon_thesis}.  The same proof also
covers the Eikonal equation when all characteristics are parallel, but this clearly does not
hold for the case $Q=\{\bt\}$.
Still, the numerical evidence (see Figure \ref{fig:alphas}) indicates that this exponential decay is also present
in the current context as well.

\figstart
\begin{center}
\tabcolsep=20pt
\begin{tabular}{c c}
A &
B\\
\iftoggle{usecolor}{%
\includegraphics[scale=0.44]{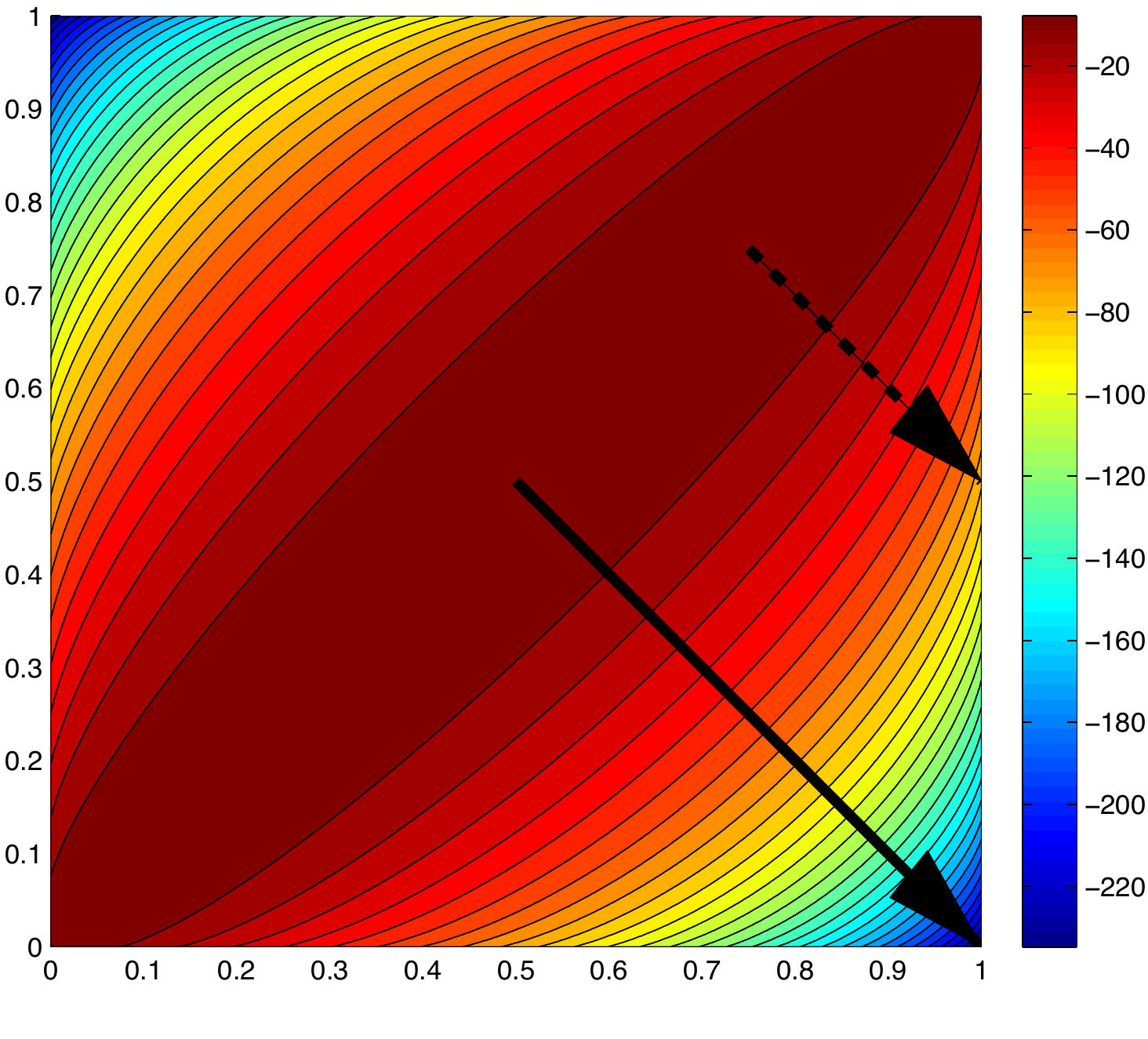} &
}{%
\includegraphics[scale=0.44]{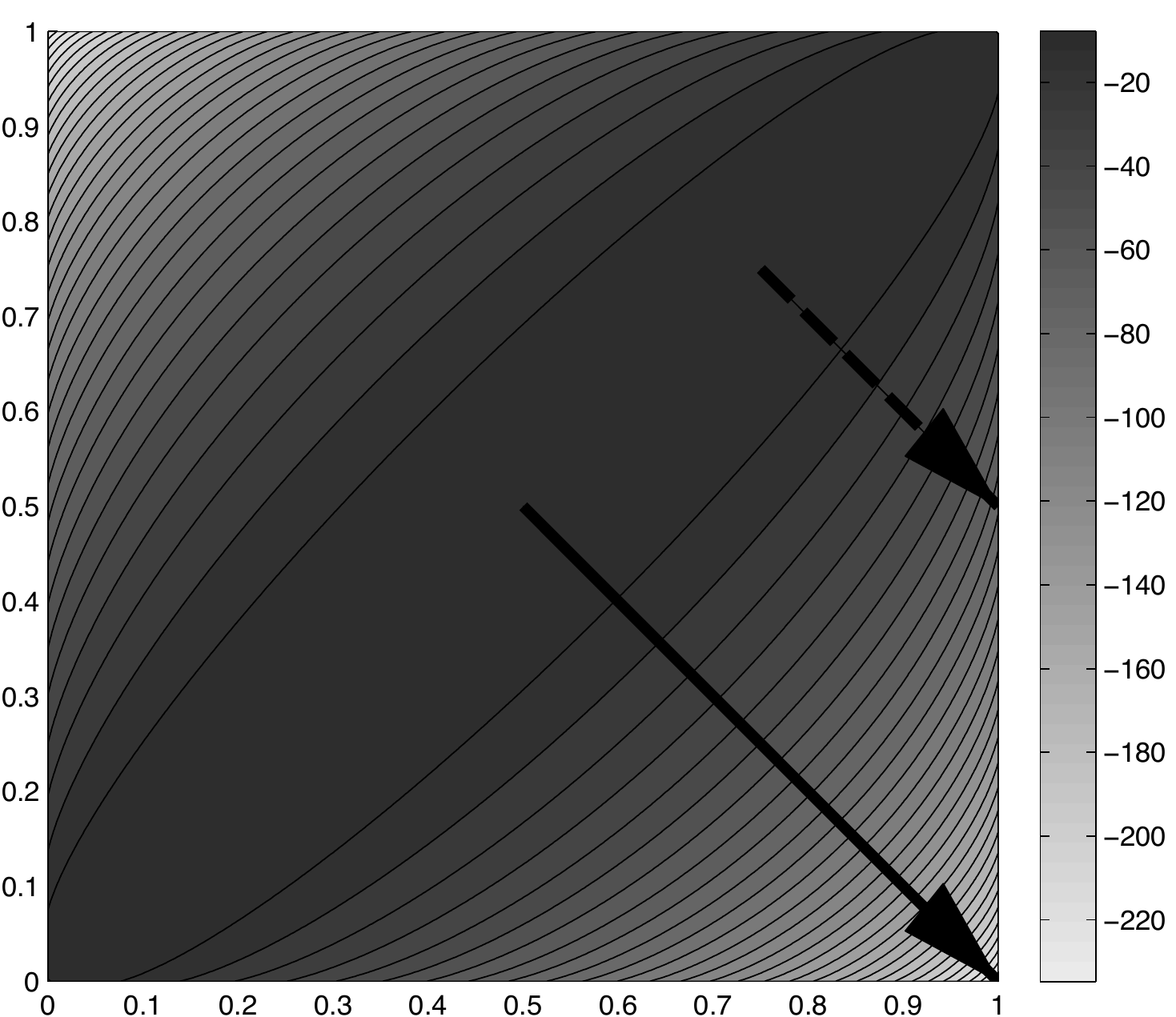} &
}
\includegraphics[scale=0.44]{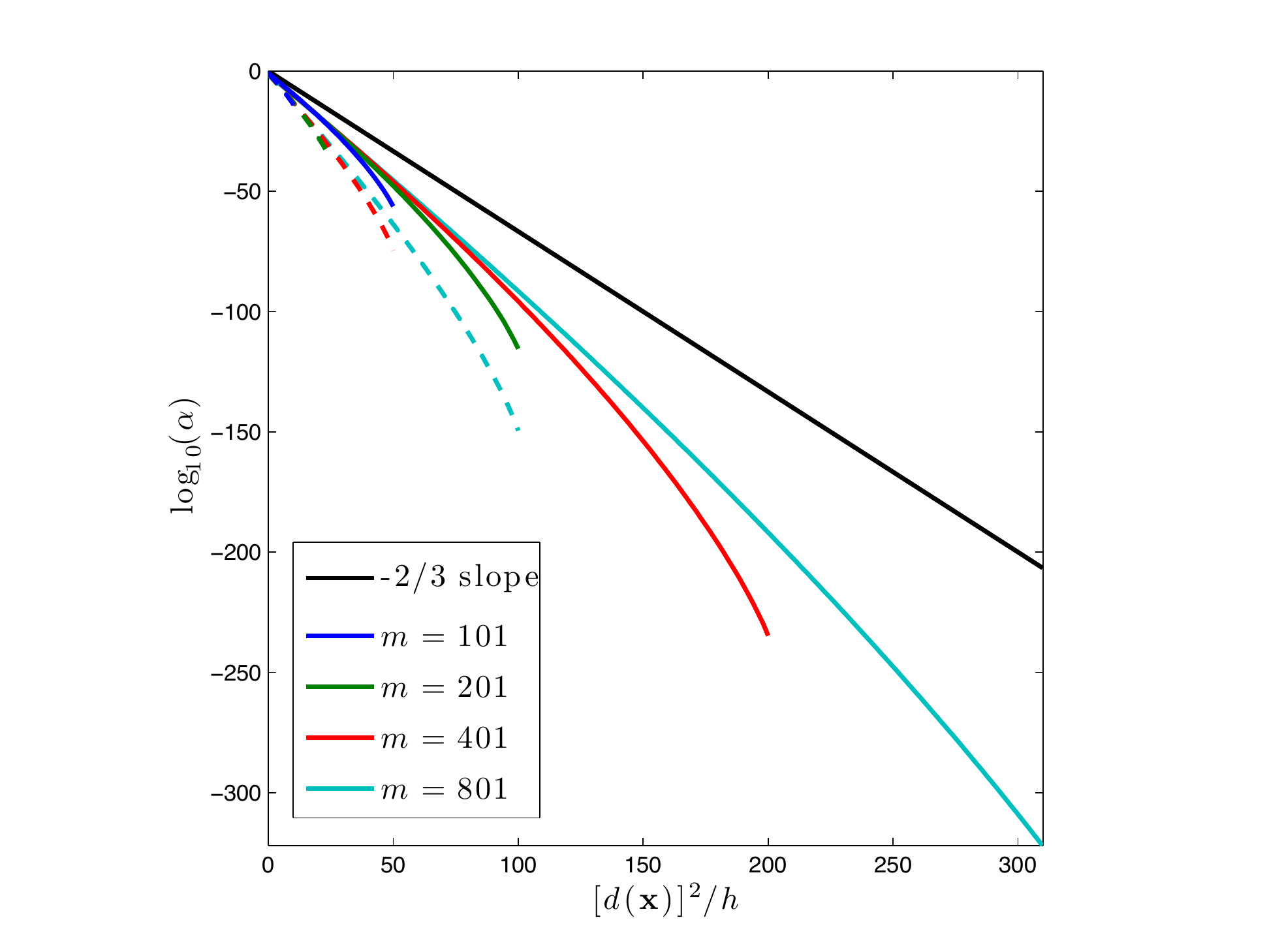}
\end{tabular}
\caption{\footnotesize Alpha values decaying away from the characteristic.  
Subfigure \textbf{A:} shows the level sets of $\log_{10}(\alpha)$ for the constant 
speed example considered in \S \ref{ss:example_constant}.  
The solid and dashed arrows are perpendicular to the $(\bs,\bt)$-optimal trajectory.
Subfigure \textbf{B} shows the rate of decay of $\log_{10}(\alpha)$ along each of these 
arrows computed for several different grid resolutions.
}
\label{fig:alphas}
\end{center}
\end{figure}

\begin{thm}
Let $\{X^h\}$ be a family of Cartesian grids on $\Omega$ with gridsize $h=1/(m-1)$ such that both $\bs$ and $\bt$ are gridpoints for all $m$.
Define $\hat{X}^h = \{ \bx \in X^h \mid d(\bx) < r$ \}, where $r = O(h^{\mu}),$ for some $\mu \in [0,\frac{1}{2})$.
Let $U^h$ and $\hat{U}^h$ be numerical solutions of the system \eqref{Eikonal approx} on $X^h$ and $\hat{X}^h$ respectively.
If Conjecture \ref{conj} holds, then $\left(\hat{U}^h(\bs) - U^h(\bs) \right) \to 0$ as $h \to 0$.
\end{thm}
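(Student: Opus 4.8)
The plan is to reduce everything to the one-sided estimate of Lemma~\ref{lm:alpha_relevance}(2) and then exploit the super-polynomial decay of the sensitivity coefficients granted by Conjecture~\ref{conj}. First I would record that restricting FMM to $\hat{X}^h \subset X^h$ can only raise the computed value at $\bs$, so $\hat{U}^h(\bs) - U^h(\bs) \geq 0$; this follows from Observation~\ref{obs:restrict}(2) under the interpretation $q \equiv +\infty$ on the restriction boundary $\Xi^h$. It therefore suffices to bound this difference from above and show it tends to $0$. Letting $\Xi^h$ be the restriction boundary of $\hat{X}^h$ and choosing a constant $C^h \geq \max_{\bx \in \hat{X}^h}\hat{U}^h(\bx)$, Lemma~\ref{lm:alpha_relevance}(2) gives $0 \leq \hat{U}^h(\bs) - U^h(\bs) \leq C^h \sum_{\bx \in \Xi^h}\alpha(\bx)$.

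The next step is to control each of the three factors. For $C^h$ I would invoke Observation~\ref{obs:restrict}(5), so that $C^h = \max_{\bx \in \hat{X}^h} D(\bx)/F_1$, where $D(\bx)$ is the length of the shortest grid-aligned path inside $\hat{X}^h$ to $\bt$. Since $r = O(h^{\mu})$ with $\mu < 1$ forces $r/h \to \infty$, the tube $\hat{X}^h$ is many gridpoints thick for small $h$, hence grid-connected, and a grid path shadowing the fixed, finite-length optimal trajectory stays inside it; this yields $C^h = O(1)$. For the boundary cardinality, $\Xi^h$ lies in a thin shell around a curve of bounded length, so crudely $|\Xi^h| \leq |X^h| = O(h^{-n})$ (a sharper $O(h^{-(n-1)})$ is available but unnecessary). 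Finally, every $\bx \in \Xi^h$ satisfies $\bx \notin \hat{X}^h$, hence $d(\bx) \geq r$ by the definition of $\hat{X}^h$, so Conjecture~\ref{conj} gives $\alpha(\bx) \leq e^{-\rho d(\bx)^2/h} \leq e^{-\rho r^2/h}$ for all sufficiently small $h$.

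Combining the three estimates yields $0 \leq \hat{U}^h(\bs) - U^h(\bs) \leq C^h\,|\Xi^h|\,e^{-\rho r^2/h} = O(h^{-n})\,e^{-\rho r^2/h}$. Since $r = O(h^{\mu})$, we have $r^2/h = O(h^{2\mu-1})$, and the hypothesis $\mu \in [0,\tfrac{1}{2})$ makes the exponent $2\mu - 1 < 0$, so $r^2/h \to +\infty$ as $h \to 0$. The exponentially small factor $e^{-\rho r^2/h}$ therefore overwhelms the polynomial prefactor $O(h^{-n})$, the right-hand side tends to $0$, and the claim follows.

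I expect the main obstacle to be the justification of $C^h = O(1)$, namely verifying that a grid-aligned path of uniformly bounded arclength from every gridpoint of the tube to $\bt$ genuinely lives inside $\hat{X}^h$; this is where the geometry of the tube and the connectivity of the grid interact, and it rests essentially on $r \gg h$. I would note, however, that the argument is robust: even a polynomially growing bound $C^h = O(h^{-k})$ would be absorbed by the exponential factor, so only a crude, non-sharp estimate of $C^h$ is actually needed. The genuinely deep difficulty is of course packaged into Conjecture~\ref{conj}, which is here assumed rather than established.
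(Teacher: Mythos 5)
Your proposal is correct and follows essentially the same route as the paper's own proof: both reduce to Lemma~\ref{lm:alpha_relevance}(2), bound $C^h$ via the grid-aligned path length of Observation~\ref{obs:restrict}(5), apply Conjecture~\ref{conj} on $\Xi^h$ where $d(\bx)\geq r$, and conclude since $r^2/h\to\infty$ for $\mu<\tfrac12$. The only cosmetic difference is that you use the crude count $|\Xi^h|=O(h^{-n})$ where the paper counts the boundary shell more sharply as $O(Sr^{n-2}/h^{n-1})$; as you correctly note, the exponential factor absorbs any polynomial prefactor, so this does not matter.
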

\begin{proof}
We note that the $n$-volume of the optimal-trajectory-centered $r$-cylinder approaches zero, though the total number of gridpoints in $\hat{X}^h$ grows as $h \to 0$.
For convenience, we also define
\alexEdit{$k = r^2/h = O(h^{2\mu-1}),$ which tends to $+\infty$ as $h \to 0$.}
If $S$ is the path length of the $(\bs,\bt)$-optimal trajectory,
then the
number of gridpoints in $\Xi^h$ is $O(\frac{S r^{n-2}}{h^{n-1}}) = O( k^\nu )$, where
$\nu = \frac{(n-1) - \mu (n-2)}{1-2\mu} > 0$.
Considering the shortest grid-aligned and $\hat{X}^h$-constrained path from any $\bx \in \hat{X}^h$ to $\bt$,
it is easy to show that $D^h = (S+r) \sqrt{n}$ is the upper bound for that path's length.
Thus, $C^h = D^h / F_1$ is an upper bound for $\max_{\bx \in \hat{X}^h} \hat{U}^h(\bx)$.
If Conjecture \ref{conj} holds, then asymptotically
$\alpha^h(\bx) \leq e^{-\rho r^2 / h} = e^{-\rho k},$ for all $\bx \in \Xi_h$.
By Lemma \ref{lm:alpha_relevance},  $\left(\hat{U}^h(\bs) - U^h(\bs) \right)$ is bound from above by
$\left[C^h \sum\limits_{\bx \in \Xi^h} \alpha^h(\bx) \right] = O(k^\nu  e^{-\rho k}),$
which converges to $0$ under grid refinement.
\end{proof}


\end{document}